\documentclass{amsart}

\usepackage{verbatim}
 \usepackage{amsmath, amssymb, amsfonts, euscript, enumerate,latexsym}
\usepackage{amsthm}

\usepackage{pxfonts}
\usepackage{fancyhdr}
 \usepackage{mathrsfs}
\usepackage{mathtools}
\usepackage{epsfig,subfigure}
\usepackage{graphicx}
\usepackage{color,wrapfig}
\usepackage{txfonts}
 
 \usepackage{hyperref}

\newcommand{\R}{{\mathbb R}}
\newcommand{\N}{{\mathbb N}}

\newcommand{\cG}{{\mathcal G}}

\newcommand{\cA}{{\mathcal A}}

\newcommand{\cH}{{\mathcal H}}
\newcommand{\cD}{{\mathcal D}}

\newcommand{\cE}{{\mathcal E}}

\newcommand{\cM}{{\mathcal M}}

\newcommand{\cS}{{\mathcal S}}
\newcommand{\cJ}{{\mathcal J}}

\newcommand{\sH}{{\mathscr H}}
\newcommand{\sS}{{\mathscr S}}

\newcommand{\e}{\epsilon}
\newcommand{\ve}{\varepsilon}

\newcommand{\ld}{\lambda}
\newcommand{\Ld}{\Lambda}
\newcommand{\p}{\partial}
\newcommand{\vp}{\varphi}

\newcommand{\supp}{\operatorname{supp}}
\newcommand{\diag}{\operatorname{diag}}
\newcommand{\Cut}{\operatorname{Cut}}

\newcommand{\Jac}{\operatorname{Jac}}
\newcommand{\tr}{\operatorname{tr}}
\newcommand{\Ric}{\operatorname{Ric}}
\newcommand{\Sec}{\operatorname{Sec}}
\newcommand{\Sym}{\operatorname{Sym}}

\newcommand{\D}{\nabla}

\newcommand{\La}{\Delta}

\newcommand{\Div}{\operatorname{div}}
\newcommand{\vol}{\operatorname{Vol}}
\newcommand{\diam}{\operatorname{diam}}

\newtheorem{thm}{Theorem}[section]
\newtheorem{lemma}[thm]{Lemma}
\newtheorem{cor}[thm]{Corollary}
\newtheorem{remark}[thm]{Remark}
\newtheorem{prop}[thm]{Proposition}
\newtheorem{definition}[thm]{Definition}
\newtheorem{example}[thm]{Example}

\theoremstyle{definition}

\begin{document}
\title[Harnack inequality for degenerate and  singular       operators on manifolds]
{Harnack inequality for degenerate and  singular       operators of   $p$-Laplacian type  on Riemannian manifolds}
 
\author[Soojung Kim]{Soojung Kim}
\address{Soojung Kim :
Institute of Mathematics, Academia Sinica\\ 
6F, Astronomy-Mathematics Building No.1, Sec.4, Roosevelt Road\\
Taipei 10617, TAIWAN
}
\email{soojung26@gmail.com; soojung26@math.sinica.edu.tw}

%\keywords{}
%\subjclass{Primary 35K55, 35K65}

\maketitle
\begin{abstract}
We study   viscosity solutions to degenerate and singular elliptic equations of $p$-Laplacian type   on Riemannian manifolds.   
The Krylov-Safonov type Harnack inequality for the $p$-Laplacian operators with $1<p<\infty$  is established on the  manifolds  with Ricci curvature bounded from below based on    ABP type estimates.  We also   prove   the Harnack inequality for nonlinear $p$-Laplacian type operators assuming that   a nonlinear perturbation of     Ricci curvature is  bounded   below.

 \end{abstract} 
 \tableofcontents 
% \newpage

%%%%%%%%%%%%%%%%%%%%%%%%%%%%%%%%%%%%%%%%%%%%%%%%%%%%%%%%%%%%%%%%%%%%%%%%%%%%%%%%%%%%%%%%%%%%%%%%%%%%%%%\section
%%%%%%%%%%%%%%%%%%%%%%%%%%%%%%%%%%%%%%%%%%%%%%%%%%%%%%%%%%%%%%%%%%%%%%%%%%%%%%%%%%%%%%%%%%%%%%%%%%%%%%

\section{Introduction}%\label{sec-intro}
In this paper, we prove  the Krylov-Safonov    Harnack inequality  for viscosity  solutions to    degenerate and singular elliptic  equations of  $p$-Laplacian type with   $1<p<\infty$ on Riemannian manifolds.   The $p$-Laplacian operator    
$\La_p u=\Div\left(|\D u|^{p-2}\D u\right)$    
appears  in  the Euler-Lagrange equation of the $L
^p$-norm of the gradient of 
functions, and    
      can also  be expressed     in  nondivergence form:
$$\La_p u=|\D u|^{p-2}\tr\left[ \left( {\bf{I}}+(p-2)\frac{\D u}{|\D u|}\otimes \frac{\D u}{|\D u|}\right)D^2u\right],$$
%for $|\D u|\not=0,$
where  a  tensor product $X \otimes X$  for a vector field $X$ over a Riemannian manifold $M$  is a symmetric bilinear form on $TM$ defined by    $X \otimes X\, (Y,Z)  := \left\langle X,Y\right\rangle \left\langle  X, Z\right\rangle$ for $Y,Z\in TM$.  %The  Krylov-Safonov type Harnack inequality for the  Laplace-Beltrami operator($p=2$)  and uniformly elliptic operators    has been studied  in \cite{Ca,K,WZ} based on    ABP type estimates.   
Along the lines of  a fundamental  work of  Yau \cite{Y},  differential Harnack  inequalities  for  $p$-Laplacian operators  have been obtained  in \cite{KN,WxZ} on Riemannian   manifolds with   Ricci curvature bounded below.     
For   divergence form operators,    the De Giorgi-Nash-Moser Harnack inequality was  extended for uniformly parabolic operators in \cite{G, SC},  and for  the $p$-Laplacian operators  in   \cite{H,RSV}  on Riemannian   manifolds
 satisfying certain properties: a volume doubling property and a  weak version of Poincar\'e's  inequality. 
 % with Ricci curvature bounded below.  % ; see also their references.   a weak version of Poincare?Õs inequality
 
 %The study of the Krylov-Safonov type Harnack inequality on   manifolds  for uniformly elliptic operators was initiated by   
  Cabr\'e  \cite{Ca}   in his remarkable paper investigated   the Krylov-Safonov type Harnack inequality  for uniformly elliptic operators  on Riemannian manifolds   by
      establishing  the  ABP type estimates.   %for uniformly elliptic operators
%  on Riemannian manifolds.  The %the Aleksandrov-Bakelman-Pucci (ABP)  
 The ABP  estimate is a cornerstone of the Krylov-Safonov regularity theory, which is proved  using affine functions in the Euclidean space; refer to   \cite{CC} for instance. 
  In the Riemannian setting,   Cabr\'e  used   the squared distance function as the appropriate replacement for affine functions to derive the ABP type estimates  % and the Harnack inequality %for classical solutions  
  on  Riemannian manifolds with nonnegative sectional curvature.    
  The idea of sliding       paraboloids  
 was also used by Savin \cite{S}  to show the ABP type measure  estimate for small perturbation solutions in $\R^n$; see also \cite{M}.  %the Euclidean space. 
 % {\color{red} Savin   \cite{S}  also   used    paraboloids to show the ABP type measure estimate for small perturbation solutions.} 
   The sectional curvature assumption of Cabr\'e's  result   was weakened   into    the certain conditions on   the Hessian of the distance function   in    \cite{K}, which  gave in particular  a new proof of  the  Harnack inequality on Riemannian manifolds with nonnegative Ricci curvature.   Recently, Wang and Zhang \cite{WZ}  studied     the ABP type estimates %on manifolds  with a negative lower bound of Ricci curvature,
   and a locally uniform Harnack inequality  for   uniformly elliptic operators on the  manifolds with  sectional curvature bounded below.   The Harnack inequality for  viscosity solutions to uniformly parabolic equations  on   Riemannian  manifolds  has been  obtained   in \cite{KKL,KL}.

%As a generalization,  
This paper deals  with the 
 Harnack inequality for viscosity solutions to the $p$-Laplacian type equations with $1<p<\infty$  on Riemannnian manifolds  employing      the ABP type   method.  To study the ABP type estimates, we slide 
     the $\frac{p}{p-1}$-th power of the distance function   from below,    which is   the squared distance  in the special case    $p=2,$ and 
  introduce the $p$-contact set adapted to the $p$-Laplacian operator (see Definition \ref{def-p-contact-set}). Let $d_y$ denote the Riemannian distance  from a point $y\in M$.  For  $u\in C(\overline\Omega) $ and  a compact set $E\subset M,$ 
  the $p$-contact set associated with $u$  and  the  vertex set $E$   is defined by
  $$\cA^p\left(E;\overline \Omega;u\right):=\left\{x\in\overline\Omega : \,\exists y\in E \,\mbox{such that}\,\inf_{ \overline \Omega}\left\{u+\frac{p-1}{p}d_y^{\frac{p}{p-1}}\right\}=u(x)+ \frac{p-1}{p}d_y^{\frac{p}{p-1}}(x) \right\}.$$ 
%which consists of points at which $u$ lies above 
%the $\frac{p}{p-1}$-th power of the distance,   $\frac{p-1}{p} d_y^{\frac{p}{p-1}}$. 
Then  it is shown   in Proposition \ref{prop-jacobi-estimate} that   a  vertex point  $y\in E$ associated with a contact point $x\in \Omega$ with  nonvanishing gradient of $u$  is  given by 
 $$y=\exp_x |\D u|^{p-2}\D u(x)=:\Phi_p(x).$$
By estimating  an upper bound of     the Jacobian determinant of the map $\Phi_p(x)$
 over the  $p$-contact set, %$$\Phi_p(x)=\exp_x |\D u|^{p-2}\D u(x), $$  
we prove    the ABP type estimate for the degenerate cases $p\geq 2$   in Theorem   \ref{thm-abp-type-p-large}  stating that  the measure of the vertex set is bounded by the integral  over the   $p$-contact set    in terms of   the $p$-Laplacian   operator,   provided that   Ricci curvature of the underlying manifold  is  bounded   below. This generalizes     \cite[Theorem 1.2]{WZ}, the     case $p=2$.     
  % Notice that the covariant gradient of the  vector field $|\D u|^{p-2}\D u(x)$ is not  symmetric if $p\not=2$.     
  For $1<p<2,$   the $p$-Laplacian operator becomes singular when the gradient   vanishes. To cope with  singularities, we %recall adapted viscosity solution  in Definition  \ref{def-visc-sol}, and   
  make use of    a regularized  operator 
  $\left(|\D u|^2+\delta\right)^{\frac{p-2}{2}}\cM^-_{p-1,1}(D^2u)$ for $  \delta>0,$
   which    has been considered  in \cite{DFQ1, ACP} for the Euclidean case. This  leads to introduce  a regularized   map 
 $$x\mapsto \exp_x\left(|\D u|^2+\delta\right)^{\frac{p-2}{2}} \D u(x)=:\Phi_{p,\delta} (x)\quad(\delta>0),$$ which lies on the minimal geodesic  joining $x$ to $y=\exp_x |\D u|^{p-2}\D u(x)=\Phi_p(x)$ at time $\left(\frac{|\D u(x)|^2}{|\D u(x)|^2+\delta}\right)^{\frac{2-p}{2}}$ for the contact  point $x$ with nonvanishing gradient of $u$.   %In fact, this operator lies  along the geodesic $x$ to a vertex point $y$.   
    A uniform  Jacobian estimate of $\Phi_{p,\delta}$ over the $p$-contact set    with respect to $\delta>0$ will imply the ABP type estimate for singular cases in  Theorem  \ref{thm-abp-type-p-1}.   
Based on the ABP type estimates,  a locally uniform  Harnack inequality  for the $p$-Laplacian operators is  established      by means of   the volume comparison   and the Laplacian comparison     on   Riemannian  manifolds with Ricci curvature bounded   below. %, where the estimate relies only on $n, p,$ and a lower bound of Ricci curvature.          
   More generally, we  are concerned with 
% study  the ABP type estimates and  Harnack inequality for 
 the nonlinear degenerate and singular  equations of $p$-Laplacian type
$$|\D u|^{p-2}F\left(D^2u\right)+\left\langle b,\D u\right\rangle |\D u|^{p-2} =f ,$$
where $F$ is a   uniformly elliptic operator and $b$ is a bounded vector field  over  $M$. 
  From \cite{K},   we recall  the Pucci   operator of the   Ricci transform;     for    $0<\lambda\leq \Lambda<\infty$,   
 and for any $x\in M$ and  any unit vector $e\in T_xM,$  
\begin{equation*}%\label{eq-pucci-minimal-ricci-trans}
\cM^-_{\lambda,\Lambda}(R(e)):=\Lambda\sum_{\kappa_i <0} \kappa_i +\lambda\sum_{\kappa_i>0} \kappa_i,
\end{equation*}
where $\kappa_i $ are the eigenvalues of the Ricci transform $R(e)$. Note that   $\cM^-_{1,1}(R(e))=\Ric(e,e).$  We refer to  Section \ref{sec-pre} for the definitions. 
%and   study  a relation between the Pucci   operator to   Ricci transform, and a bound of  the Pucci operator to the Hessian of  the squared distance; refer to Section \ref{sec-pre}.  
%(see  Lemma \ref{lem-pucci-ric-dist-sqrd}) 
 In place  of the Ricci curvature bound,   we assume    that $\cM^-_{\lambda,\Lambda}(R(e))$ is uniformly bounded below for any unit vector $e\in TM$ when we study    qualitative properties for viscosity solutions to  % to  establish  the ABP type estimates and Harnack inequality  for  
 the nonlinear   degenerate and singular  equations of   $p$-Laplacian type.

%We remark that our   approach   provides a    different proof  from   the known results for  the nondivergent $p$-Laplacian type  operators. 
%
We would like to mention  related  results on estimates for        the $p$-Laplacian type  operators. In the Euclidean space, the ABP estimates   for  the   $p$-Laplacian type  operators      have been    obtained    in  \cite{DFQ1, I, ACP}  on the basis of the   estimates over a  contact set by the affine functions. %, which are  not extendable  to the Riemannian case.    %     Delarue \cite{D} proved the Harnack inequality   via a  probabilistic method. 
%Following   Krylov and Safonov \cite{KS}, Delarue \cite{D} used      probabilistic techniques to  prove the Harnack inequality. 
A recent work by    Imbert and Silvestre \cite{IS} addresses    the  H\"older  estimates  and Harnack inequality  for viscosity solutions  satisfying a uniformly elliptic equation only at points where the gradient is large.   In the proof  the ABP type measure estimates,  they    used   genuinely  a cusp,  the square root of  the distance  which corresponds to the case  $p=-1$ in our setting. Assuming  the Pucci operator of Ricci transform to be bounded below, 
their method    can be applied  to      Riemaninian manifolds     %employing   our method    with  $p=-1$, 
with the use of  the argument in 
 Proposition  \ref{prop-jacobi-estimate}.  

Our    approach in this paper    yields   a    different proof %  from   the known results,  %  is well     adapted to 
   involving      more  intrinsic  geometric   quantities    to the geometry of the   $p$-Laplacian operators  on   Riemannian manifolds with Ricci curvature bounded below.  It      can also  be       adapted   to show  a parabolic analogue of  the  ABP  type estimate      by  using  a   generalized Legendre transform with respect to the $\frac{p}{p-1}$-th power of the  distance    in the singular cases  $1<p\leq2$  as in \cite{KKL}  since   $ d_y^{\frac{p}{p-1}}$  is twice differentiable on $M\setminus\Cut(y)$.

 Now  we state our main results  as    follows.  Throughout  this paper, let  $(M, g)$ be a smooth, complete Riemannian manifold of dimension $n$, and $B_R(z_0)$ denote a geodesic ball of radius $R$ centered at $z_0.$
 
  \begin{thm}[Harnack inequality]% \label{thm-harnack-p-laplace-intro}
 
   Let  $1<p<\infty $,  and $\Ric \geq - {(n-1)} \kappa$   for $\kappa\geq0$. 
For  $z_0\in M$ and $0< R\leq R_0,$   let $u$ be a nonnegative  viscosity solution  to 
\begin{equation*}
  \La_pu=  f  \quad\mbox{ in $B_{2R}(z_0)$. } 
\end{equation*} 
Then   
\begin{equation*} 
\sup_{B_R(z_0)} u\leq C\left(\inf_{B_R(z_0)}u+R^{\frac{p}{p-1}} \|f\|^{\frac{1}{p-1}}_{L^\infty(B_{2R}(z_0))}\right),
  \end{equation*}
where a    constant  $C>0$ depends only on $n, p$, and $\sqrt{\kappa}R_0.$
  \end{thm}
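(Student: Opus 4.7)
By rescaling the metric by $R^{-2}$ I may assume $R=1$, at the cost of replacing $\kappa$ by $\kappa R^2\le\kappa R_0^2$ and $f$ by $R^p f$; consequently every constant below will depend only on $n$, $p$ and $\sqrt{\kappa}R_0$. The plan is to run the Krylov--Safonov scheme in the nonlinear $p$-Laplacian setting: derive a weak Harnack inequality for nonnegative supersolutions, derive a local maximum principle for subsolutions, and combine the two, which is permissible since $\La_p(-u)=-\La_p u$. Both halves will be extracted from the ABP-type estimates of Theorems~\ref{thm-abp-type-p-large} and \ref{thm-abp-type-p-1} through a point-to-measure lemma followed by a Calder\'on--Zygmund type iteration on the manifold.

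\textbf{Point-to-measure lemma.} The key step is to produce constants $\mu,M,\varepsilon_0>0$, depending only on $n,p,\sqrt{\kappa}R_0$, such that whenever $u\ge 0$ is a viscosity supersolution of $\La_p u\le f$ in $B_2(z_0)$ with $\|f\|_{L^\infty}\le\varepsilon_0$ and $\inf_{B_{1/2}(z_0)}u\le 1$, one has
$$\vol\bigl(\{x\in B_1(z_0):u(x)\le M\}\bigr)\ge\mu\,\vol(B_1(z_0)).$$
To prove it, I would apply the ABP-type estimate to $u+\tfrac{p-1}{p}d_y^{p/(p-1)}$ with vertex $y$ running over a small ball $E$ around $z_0$, chosen so that every contact point is forced to lie in $B_1(z_0)$ by the hypothesis $\inf u\le 1$. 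The ABP bound expresses $\vol(E)$ as an integral of $|\La_p u|^{n/(p-1)}$ weighted by the Jacobian of $\Phi_p$ (respectively $\Phi_{p,\delta}$ in the singular case) over the $p$-contact set $\cA^p(E;\overline B_1(z_0);u)$. Bishop--Gromov volume comparison gives a lower bound for $\vol(E)$, while Proposition~\ref{prop-jacobi-estimate} together with Laplacian comparison gives a uniform upper bound on the Jacobian; picking $\varepsilon_0$ small absorbs the $f$-contribution and forces the contact set to have positive measure, on which $u\le M$ by construction.

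\textbf{Iteration and conclusion.} Iterating this lemma by a Krylov--Tso (Calder\'on--Zygmund) decomposition adapted to $(M,g)$, which is available because volume doubling follows from the Ricci lower bound, yields the $L^\varepsilon$ decay
$$\vol\bigl(\{x\in B_1(z_0):u(x)>t\}\bigr)\le C\,t^{-\varepsilon}\,\vol(B_1(z_0))$$
for nonnegative supersolutions with $\inf u\le 1$ and small $\|f\|_\infty$. Scaling the threshold gives the weak Harnack inequality
$$\|u\|_{L^{\varepsilon}(B_1(z_0))}\le C\bigl(\inf_{B_1(z_0)}u+\|f\|_\infty^{1/(p-1)}\bigr).$$
Running the same machinery on $-u$, viewed as a subsolution of $\La_p v=-f$, together with a standard iteration on shrinking balls, yields the matching local maximum principle
$$\sup_{B_{1/2}(z_0)}u\le C\bigl(\|u\|_{L^{\varepsilon}(B_1(z_0))}+\|f\|_\infty^{1/(p-1)}\bigr).$$
Composing these two estimates, and undoing the scaling, gives the Harnack inequality in its stated form.

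\textbf{Main obstacle.} The delicate point is making the point-to-measure lemma effective in the singular regime $1<p<2$, where $d_y^{p/(p-1)}$ lacks sufficient regularity for the contact-set computation to run directly when $\D u$ vanishes. The plan is to perform the estimate for the regularized map $\Phi_{p,\delta}$ using the $\delta$-uniform Jacobian bound alluded to before Theorem~\ref{thm-abp-type-p-1}, and then pass to the limit $\delta\to 0^+$. A secondary difficulty is that a \emph{lower} Ricci bound alone (as opposed to a sectional curvature bound) controls the Jacobian of the exponential map only through Laplacian comparison, so one must verify that the weighted Jacobian of $\Phi_p$ is integrable with a constant that depends only on $n,p,\sqrt{\kappa}R_0$; this is precisely what the Jacobian estimate of Proposition~\ref{prop-jacobi-estimate} is tailored to supply.
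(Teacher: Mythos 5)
Your proposal correctly identifies the overall architecture — ABP type estimate, point-to-measure lemma via a barrier, $L^\epsilon$ decay from a Calder\'on--Zygmund style iteration using volume doubling, and then a final combinatorial step — and the discussion of the singular regime via the regularized map $\Phi_{p,\delta}$ with a $\delta$-uniform Jacobian bound matches exactly what the paper does in Theorem~\ref{thm-abp-type-p-1} and Lemmas~\ref{lem-abp-type-p-1-nonlinear}--\ref{lem-abp-type-measure-p-small}. The one place you diverge genuinely from the paper is in the final step. You propose the classical Caffarelli--Cabr\'e decomposition: prove a weak Harnack inequality for supersolutions, prove a separate local maximum principle for subsolutions (by running the machinery on $-u$ using $\La_p(-u)=-\La_p u$ and iterating on shrinking balls), and then compose the two. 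The paper instead follows the Imbert--Silvestre route \cite[Theorem~1.3]{IS}: it never proves a local maximum principle as a standalone statement. Instead it picks a singular auxiliary function $h_\tau(x)=\tau\{4/3-d_{z_0}(x)/R\}^{-\alpha}$, chooses the minimal $\tau$ with $u\le h_\tau$ on $B_{4R/3}(z_0)$, locates the touching point $x_0$, and plays the $L^\epsilon$ decay (Theorem~\ref{thm-L-epsilon}) against the point-to-measure estimate (Corollary~\ref{cor-barrier}) applied to a rescaled version of $u$ in a small ball around $x_0$ to show $\tau$ is uniformly bounded. This gives the full Harnack inequality in one stroke.

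Both routes are viable. What you gain from the classical decomposition is a statement — the weak Harnack inequality — of independent interest, and it makes the role of each hypothesis transparent. What you lose is that the subsolution half is not merely a mirror image: the growth lemma for subsolutions that you need to iterate on shrinking balls must be deduced carefully from the point-to-measure estimate applied to a truncated and renormalized competitor, and for a genuinely nonlinear degenerate operator the bookkeeping of the right-hand side $f$ under these renormalizations is delicate. Your outline hides this under ``standard iteration on shrinking balls'' and under ``viewed as a subsolution of $\La_p v=-f$,'' which is the one step I would want to see fleshed out before calling the proof complete. The paper's touching-function argument sidesteps these renormalization issues by construction, because it only ever applies the point-to-measure estimate to a linear rescaling of $u$ near a single contact point, with the scale chosen to match the barrier. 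Your concern about Ricci versus sectional curvature bounds is not actually an obstacle: Bishop--Gromov controls the Jacobian of $\exp_x$ using only Ricci, Lemma~\ref{lem-pucci-ric-dist-sqrd} controls the Hessian of $d_y^2$ through the Pucci operator using only a lower bound on $\cM^-_{\ld,\Ld}(R(e))$, and the sectional curvature bound is needed only on compact sets (where it always holds) for the inf-convolution regularization.
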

  In particular, when   Ricci curvature of the underlying manifold is nonnegative,  the above Harnack inequality   is a  global  estimate, which implies    the   Liouville theorem. % for the $p$-Laplacian operators. 

   \begin{cor}%[Liouville property]
   Let  $1<p<\infty $,  and $\Ric \geq 0$. 
If  $u$ is a    viscosity solution  to 
$  \La_pu=  0$  in $ M$, which is bounded from below, then $u$ is a constant function. 
  \end{cor}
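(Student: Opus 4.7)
The plan is to reduce the corollary to the scale-invariant Harnack inequality by a standard translation and rescaling argument. Since the $p$-Laplacian is invariant under addition of constants, I would set $m:=\inf_M u$ and work with $v:=u-m\geq 0$, which is a nonnegative viscosity solution to $\Delta_p v=0$ on all of $M$. Note that by the definition of infimum, for every $\varepsilon>0$ there exists a point $y_\varepsilon\in M$ with $v(y_\varepsilon)<\varepsilon$.

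The key observation is that under the hypothesis $\Ric\geq 0$ we may take $\kappa=0$ in the Harnack theorem, so that $\sqrt{\kappa}R_0=0$ for every $R_0$ and hence the constant $C=C(n,p)$ is independent of the scale $R$. The plan is then to fix an arbitrary $x_0\in M$ and, for a given $\varepsilon>0$, choose $R>d(x_0,y_\varepsilon)$ so that $y_\varepsilon\in B_R(x_0)$. Applying the Harnack inequality with $f\equiv 0$ on $B_{2R}(x_0)$ gives
\begin{equation*}
v(x_0)\;\leq\;\sup_{B_R(x_0)} v\;\leq\;C\inf_{B_R(x_0)} v\;\leq\;C\,v(y_\varepsilon)\;<\;C\varepsilon,
\end{equation*}
with $C$ depending only on $n$ and $p$. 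Letting $\varepsilon\to 0$ forces $v(x_0)=0$, and since $x_0\in M$ was arbitrary, $v\equiv 0$, i.e.\ $u\equiv m$.

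There is essentially no main obstacle here beyond checking that the Harnack theorem as stated is truly scale-invariant when $\kappa=0$, which is immediate since all dependence on $R$ enters only through $\sqrt\kappa R_0$. One should also verify the triviality that $v=u-m$ remains a viscosity solution of $\Delta_p v=0$, which follows from the translation invariance of the operator and of the notion of viscosity solution. No use of completeness is needed beyond the fact that geodesic balls of arbitrary radius about $x_0$ are available, which is guaranteed by the standing assumption that $(M,g)$ is smooth and complete.
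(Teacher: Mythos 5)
Your proof is correct and follows exactly the standard Liouville argument the paper alludes to (the paper gives no explicit proof, only the remark that the Harnack inequality becomes a global estimate when $\Ric\geq 0$). You correctly identify that $\kappa=0$ makes the constant $C$ scale-independent, correctly handle the possibility that $\inf_M u$ is not attained via the $\varepsilon$-approximation, and correctly invoke translation invariance to pass from $u$ to $v=u-\inf_M u$.
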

  
 As a consequence of the Harnack inequality, we have  a locally uniform   H\"older estimate for viscosity solutions to the $p$-Laplacian equations. 
  
    \begin{cor}[H\"older estimate]%\label{thm-holder}
  Let  $1<p<\infty $,  and $\Ric \geq - {(n-1)} \kappa$   for $\kappa\geq0$. 
For  $z_0\in M$ and $0< R\leq R_0,$     let $u$ be a    viscosity solution  to  $\La_pu=  f $ in $B_{2R}(z_0)$. 
 Then
\begin{equation*} 
R^\alpha[u]_{C^\alpha(B_R(z_0))} \leq C\left(\|u\|_{L^\infty(B_{2R}(z_0))}+R^{\frac{p}{p-1}} \|f\|^{\frac{1}{p-1}}_{L^\infty(B_{2R}(z_0))}\right),
  \end{equation*}
where   the   constants $\alpha\in(0,1)$  and  $C>0$ depend only on $n, p$, and $\sqrt{\kappa}R_0.$
  \end{cor}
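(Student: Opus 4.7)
The plan is to deduce the Hölder estimate from the Harnack inequality by the classical oscillation-decay iteration, a scheme that becomes routine once a scale-invariant Harnack inequality is available. For $0 < r \leq R$, set $\omega(r) := \sup_{B_r(z_0)} u - \inf_{B_r(z_0)} u$. The goal is to prove the geometric decay
\begin{equation*}
\omega(r) \leq C (r/R)^{\al}\left[\|u\|_{L^\infty(B_{2R}(z_0))} + R^{p/(p-1)} \|f\|^{1/(p-1)}_{L^\infty(B_{2R}(z_0))}\right]
\end{equation*}
for all $0<r\leq R$, with $\al,C$ depending only on $n,p,\sqrt{\kappa}R_0$. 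Applying this estimate centered at an arbitrary $z\in B_R(z_0)$ (after shrinking the radius so that the relevant ball lies inside $B_{2R}(z_0)$) then yields the stated $C^{\al}$ semi-norm bound by a standard argument.

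The first step is to record the reflection invariance $\La_p(c-u) = -\La_p u$ in the viscosity sense for any constant $c$; this follows immediately from the nondivergence expression for $\La_p$ recalled in the introduction, since replacing $u$ by $c-u$ flips the sign of $\D u$ and $D^2u$ while leaving $|\D u|^{p-2}$ and the outer product $\frac{\D u}{|\D u|}\otimes \frac{\D u}{|\D u|}$ invariant. Thus, if $u$ is a viscosity solution of $\La_p u=f$ in $B_{2R}(z_0)$, then for every $r\leq R$ both
\begin{equation*}
v_1 := u - \inf_{B_{2r}(z_0)} u, \qquad v_2 := \sup_{B_{2r}(z_0)} u - u
\end{equation*}
are nonnegative viscosity solutions of a $p$-Laplace equation with right-hand side $\pm f$ on $B_{2r}(z_0)$.

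Applying the preceding Harnack inequality to $v_1$ and $v_2$ on $B_{2r}(z_0)$ (noting that $\sqrt{\kappa}r\leq\sqrt{\kappa}R_0$, so the constant is uniform) gives
$\sup_{B_r} v_i \leq C_0 (\inf_{B_r} v_i + r^{p/(p-1)}\|f\|^{1/(p-1)}_{L^\infty(B_{2R}(z_0))})$ for $i=1,2$. Writing these in terms of $\sup$ and $\inf$ of $u$ and adding the two inequalities produces, after rearrangement, the oscillation decay
\begin{equation*}
\omega(r) \leq \theta\,\omega(2r) + C_1\, r^{p/(p-1)} \|f\|^{1/(p-1)}_{L^\infty(B_{2R}(z_0))},\qquad \theta := \frac{C_0-1}{C_0+1}<1.
\end{equation*}
Iterating this along the dyadic sequence $r_k=R/2^k$ and choosing any exponent $\al\in(0,\min\{p/(p-1),-\log_2\theta\})$ yields, by a standard geometric-series estimate, the desired polynomial decay of $\omega$.

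The main subtlety lies in the first step: checking that $\La_p(c-u)=-\La_p u$ continues to hold in the viscosity sense in regimes where $\La_p$ is singular ($1<p<2$) or degenerate ($p>2$) at points with $\D u=0$. With whichever viscosity formulation is adopted in the body of the paper this should be immediate from the symmetry of the admissible test functions under $\vp\mapsto c-\vp$, but it warrants an explicit remark. Everything else is real-variable bookkeeping.
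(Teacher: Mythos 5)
Your proof is correct and follows exactly the route the paper intends: the Hölder estimate is stated as a direct consequence of the Harnack inequality, and the standard oscillation-decay iteration you carried out (applying Harnack to $u-\inf u$ and $\sup u-u$ on dyadic balls, then summing the geometric series) is the canonical way to obtain it. Your verification that $c-u$ is a viscosity solution of $\Delta_p(c-u)=-f$ in the Birindelli--Demengel sense (Definition \ref{def-visc-sol}) is the one point worth spelling out, and you correctly identified it; everything else is the usual bookkeeping.
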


 We also obtain similar results   for nonlinear $p$-Laplacian  type operators. % assuming that $\cM^-_{\ld,\Ld}(R(e))$ is uniformly bounded from below for any unit vector $e\in TM$.  
  
  \begin{thm}[Harnack inequality] %\label{thm-harnack-p-laplace-intro-nonlinear}
   Let   $1<p<\infty $, and $\cM^-_{\ld,\Ld}(R(e)) \geq - {(n-1)} \kappa$  with  $\kappa\geq0$ for any unit vector $e\in TM$.  Let  $z_0\in M$ and $0< R\leq R_0.$ For $\beta\geq0 $ and $ C_0\geq0,$   let $u$ be a nonnegative  viscosity solution  to
\begin{equation}\label{eq-nonlinear-p-laplace-eq-intro}
\left\{
\begin{split}
&|\D u|^{p-2}\cM^-_{\ld,\Ld}(D^2u)-\beta |\D u|^{p-1}\leq  C_0\quad\mbox{ in $ B_{2R}(z_0)$},\\
 &|\D u|^{p-2}\cM^+_{\ld,\Ld}(D^2u)+\beta |\D u|^{p-1}\geq -  C_0 \quad\mbox{ in $ B_{2R}(z_0)$.   }
\end{split}\right.
\end{equation}
Then   
\begin{equation*} 
\sup_{B_R(z_0)} u\leq C\left(\inf_{B_R(z_0)}u+R^{\frac{p}{p-1}}C_0^{\frac{1}{p-1}} \right),
  \end{equation*}
where a    constant  $C>0$ depends only on $n,p, \sqrt{\kappa}R_0 , \ld, \Ld, $ and $ \beta R_0.$
  \end{thm}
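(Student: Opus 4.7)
The plan is to run the Krylov--Safonov scheme on top of the ABP type estimates of Theorems \ref{thm-abp-type-p-large} and \ref{thm-abp-type-p-1}, in the spirit of Cabr\'e \cite{Ca}, Wang--Zhang \cite{WZ}, and \cite{K}, but adapted to the $p$-scaling. First, I would note that the hypothesis $\cM^-_{\ld,\Ld}(R(e)) \geq -(n-1)\kappa$ forces a standard Ricci lower bound $\Ric \geq -(n-1)\kappa/\Ld$ (from the definition, using $\ld\leq\Ld$ to discard the positive-eigenvalue contribution), so Bishop--Gromov volume doubling and the Laplacian comparison theorem are at our disposal with constants depending only on $n$, $\ld$, $\Ld$, and $\sqrt{\kappa} R_0$. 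After rescaling to reduce to $R=1$, the goal becomes the implication: $\inf_{B_1(z_0)} u \leq 1$ and $C_0 \leq 1$ imply $\sup_{B_1(z_0)} u \leq C$.

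The crucial ingredient is a one-step measure estimate for supersolutions: if $u$ is a nonnegative viscosity supersolution of the first inequality in \eqref{eq-nonlinear-p-laplace-eq-intro} with $\inf_{B_{1/2}(z_0)} u \leq 1$, then $|\{u \leq M\} \cap B_{1}(z_0)| \geq \mu\, |B_{1}(z_0)|$ for uniform constants $M,\mu>0$. I would establish this by constructing a smooth radial barrier $\psi$ on a geodesic annulus around $z_0$, obtained by composing an appropriate power of $d_{z_0}$ with a one-variable profile, so that $\psi$ is suitably negative on a small inner ball, nonnegative outside, and satisfies $|\D\psi|^{p-2}\cM^+_{\ld,\Ld}(D^2\psi) + \be |\D\psi|^{p-1} \leq -c$ throughout the annulus; the Laplacian comparison theorem, together with the computations underlying Proposition \ref{prop-jacobi-estimate}, provides the required control on the Hessian of $d_{z_0}^{p/(p-1)}$. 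Applying the ABP type estimate to $u+\psi$ on a slightly enlarged ball then forces the $p$-contact set $\cA^p(\,\cdot\,;\,\cdot\,;u+\psi)$, and hence $\{u\leq M\}$, to occupy a definite fraction of $B_1(z_0)$.

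From this one-step estimate one obtains the weak-$L^\ve$ decay $|\{u > t\} \cap B_{1/2}(z_0)| \leq C t^{-\ve}|B_1(z_0)|$ via a Calder\'on--Zygmund / ball-doubling decomposition on $M$, which works thanks to Bishop--Gromov volume doubling. A parallel ABP argument applied to the negative part of a subsolution of the second inequality in \eqref{eq-nonlinear-p-laplace-eq-intro} produces a local maximum principle of the form $\sup_{B_{1/2}(z_0)} u \leq C\bigl(\|u^+\|_{L^\ve(B_1(z_0))} + C_0^{1/(p-1)}\bigr)$, and combining the two in the usual contradiction-and-stopping-time manner yields the Harnack inequality.

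The main obstacle, in my view, lies in constructing and analyzing the barrier $\psi$. It must simultaneously (i) accommodate the degeneracy/singularity of $|\D\psi|^{p-2}$, so that for $p\geq 2$ one applies Theorem \ref{thm-abp-type-p-large} directly while for $1<p<2$ one has to go through the $\delta$-regularization and the uniform Jacobian estimate of $\Phi_{p,\delta}$ that underpins Theorem \ref{thm-abp-type-p-1}; (ii) absorb the first-order term $\be|\D\psi|^{p-1}$ into the Pucci part, which is what forces the final constant to depend on $\be R_0$; and (iii) deal with the fact that $d_{z_0}$ is only Lipschitz past the cut locus, handled by restricting the barrier's support to a region where the Hessian comparison bound remains effective in terms of $\sqrt{\kappa} R_0$.
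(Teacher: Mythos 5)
Your proposal correctly identifies the main ingredients used in the paper's proof: the ABP-type estimate via the $p$-contact set (Theorems \ref{thm-abp-type-p-large}, \ref{thm-abp-type-p-1} and the Pucci variants), a radial barrier combined with a comparison argument (Lemma \ref{lem-barrier}, Corollary \ref{lem-barrier-nonlinear}), Bishop--Gromov doubling via Lemma \ref{lem-doubling-property}, and the resulting weak-$L^\ve$ decay for supersolutions (Theorem \ref{thm-L-epsilon}, Corollary \ref{cor-L-epsilon-nonlinear}). However, your final step diverges from the paper in a genuine way: you propose the classical Caffarelli--Cabr\'e split into a weak Harnack inequality plus a separately proved local maximum principle for subsolutions, $\sup_{B_{1/2}} u \leq C\bigl(\|u^+\|_{L^\ve} + C_0^{1/(p-1)}\bigr)$, and then a combination. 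The paper never proves a local maximum principle; instead it adopts the Imbert--Silvestre touching-function argument of Section \ref{sec-Harnack}: one picks the minimal $\tau$ so that $u\leq h_\tau(x):=\tau\bigl(4/3-d_{z_0}(x)/R\bigr)^{-\alpha}$ on $B_{4R/3}(z_0)$, lets $x_0$ be a touching point of height $H_0$, and derives a contradiction from (i) the $L^\ve$ upper bound on $\lvert\{u>H_0/2\}\cap B_{4R/3}(z_0)\rvert$ given by the supersolution inequality, against (ii) a lower measure bound on $\{u>H_0/2\}$ in a small ball around $x_0$ obtained by applying the barrier-measure estimate (Corollary \ref{cor-barrier-nonlinear}) to $\tilde u\sim(cH_0-u)/H_0$, and the supersolution property of $\tilde u$ is precisely where the subsolution inequality for $u$ enters. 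This buys a cleaner argument: establishing a local maximum principle for the degenerate/singular operators here would require its own level-set iteration which the paper avoids. Two secondary discrepancies worth noting: from $\ld\Ric(e,e)\geq\cM^-_{\ld,\Ld}(R(e))$ the Ricci lower bound is $\Ric\geq-(n-1)\kappa/\ld$, not $-(n-1)\kappa/\Ld$; and in the paper's measure estimate the ABP is applied to a rescaled $u$ directly while the barrier $v(x)=\tilde M\bigl(r^\alpha d_{z_0}(x)^{-\alpha}-5^{-\alpha}\bigr)$ (a negative power of the distance, not the $\tfrac{p}{p-1}$-power) is used separately through a comparison principle, rather than by adding $\psi$ to $u$ and applying ABP to the sum as you describe.
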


    \begin{cor}[H\"older estimate]
    Let   $1<p<\infty $, and $\cM^-_{\ld,\Ld}(R(e)) \geq - {(n-1)} \kappa$  with  $\kappa\geq0$ for any unit vector $e\in TM$.  Let  $z_0\in M$ and $0< R\leq R_0.$ For $\beta\geq0 $ and $ C_0\geq0,$ let  $u$ be a    viscosity solution  to \eqref{eq-nonlinear-p-laplace-eq-intro}. Then 
 \begin{equation*} 
R^\alpha[u]_{C^\alpha(B_R(z_0))} \leq C\left(\|u\|_{L^\infty(B_{2R}(z_0))}+R^{\frac{p}{p-1}} C_0^{\frac{1}{p-1}}\right),
  \end{equation*}
  where the    constants $\alpha\in(0,1) $ and   $C>0$ depend only on $n,p, \sqrt{\kappa}R_0, \ld, \Ld,$ and $ \beta R_0$.
  \end{cor}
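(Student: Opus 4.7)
The plan is to derive the H\"older estimate from the preceding Harnack inequality by the standard oscillation-decay iteration, with one small observation needed to handle the two-sided Pucci structure of \eqref{eq-nonlinear-p-laplace-eq-intro}.

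\textbf{Step 1 (reduction to nonnegative functions).} Fix $0<r\le R$, set $M_r:=\sup_{B_r(z_0)}u$ and $m_r:=\inf_{B_r(z_0)}u$, and consider the nonnegative functions $v:=M_r-u$ and $w:=u-m_r$ on $B_r(z_0)$. For $w$, translation invariance in $u$ gives that $w$ solves the same system \eqref{eq-nonlinear-p-laplace-eq-intro} with the same constants $\ld,\Ld,\beta,C_0$. For $v$, one uses the identities $\D v=-\D u$, $D^2v=-D^2u$, and $\cM^{\pm}_{\ld,\Ld}(-X)=-\cM^{\mp}_{\ld,\Ld}(X)$, together with $|\D v|=|\D u|$, to check that multiplying the two inequalities in \eqref{eq-nonlinear-p-laplace-eq-intro} for $u$ by $-1$ exchanges them into the same pair of inequalities for $v$ (with the same $\beta,C_0$). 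Viscosity-wise this is immediate from the defining test-function property.

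\textbf{Step 2 (oscillation decay).} Since $r\le R\le R_0$, the Harnack inequality of the preceding theorem applies on $B_r(z_0)\subset B_{2R}(z_0)$ to both $v$ and $w$ with a constant $C$ depending only on $n,p,\sqrt{\kappa}R_0,\ld,\Ld,\beta R_0$, and yields
\[
\sup_{B_{r/2}(z_0)}v\le C\Bigl(\inf_{B_{r/2}(z_0)}v+r^{\tfrac{p}{p-1}}C_0^{\tfrac{1}{p-1}}\Bigr),\qquad \sup_{B_{r/2}(z_0)}w\le C\Bigl(\inf_{B_{r/2}(z_0)}w+r^{\tfrac{p}{p-1}}C_0^{\tfrac{1}{p-1}}\Bigr).
\]
Writing $\omega(\rho):=\osc_{B_\rho(z_0)}u$ and adding the two inequalities in the usual way produces
\[
\omega(r/2)\le \theta\,\omega(r)+C'\,r^{\tfrac{p}{p-1}}C_0^{\tfrac{1}{p-1}},\qquad \theta:=\frac{C-1}{C+1}\in(0,1),
\]
where $C'$ depends only on the same parameters as $C$.

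\textbf{Step 3 (iteration and covering).} Iterating the previous inequality along $r_k:=2^{-k}R$ and choosing $\alpha\in(0,1)$ with $2^{-\alpha}\ge\theta$ (the geometric-series contribution from the inhomogeneity is absorbed because $\tfrac{p}{p-1}>1>\alpha$), one gets for every $0<\rho\le R$
\[
\omega(\rho)\le C''\Bigl(\frac{\rho}{R}\Bigr)^{\alpha}\Bigl(\|u\|_{L^\infty(B_{2R}(z_0))}+R^{\tfrac{p}{p-1}}C_0^{\tfrac{1}{p-1}}\Bigr).
\]
A standard covering argument on $B_R(z_0)$ (applying the oscillation bound centered at each pair of points in $B_R(z_0)$ at the scale of their distance) converts this pointwise oscillation decay into the claimed $C^\alpha$ seminorm bound.

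\textbf{Main obstacle.} The only nonroutine point is Step~1: one must verify that both $v$ and $w$ inherit the two-sided Pucci system with the \emph{same} constants, which relies on the gradient-reversing identity $\cM^{\pm}_{\ld,\Ld}(-X)=-\cM^{\mp}_{\ld,\Ld}(X)$ and on the fact that the lower-order term $\beta|\D u|^{p-1}$ is invariant under $u\mapsto -u$. Once this symmetry is recorded, Steps~2 and 3 are the familiar Krylov--Safonov iteration, with no additional difficulty coming from the geometry since the Harnack constant has already been stabilized in terms of $\sqrt{\kappa}R_0$.
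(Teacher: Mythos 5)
Your proof is correct and follows the standard oscillation-decay derivation of H\"older regularity from the Harnack inequality, which is exactly the argument the paper leaves implicit (the corollary is stated without proof, as ``a consequence of the Harnack inequality''). The one nonroutine point you flag --- that negation exchanges the two Pucci inequalities via $\cM^{\pm}_{\ld,\Ld}(-X)=-\cM^{\mp}_{\ld,\Ld}(X)$ while $|\D u|$ and the drift term are invariant, so both $M_r-u$ and $u-m_r$ solve the same two-sided system \eqref{eq-nonlinear-p-laplace-eq-intro} with unchanged constants --- is correctly identified and verified, and the rest is the familiar Krylov--Safonov iteration.
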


The rest of the paper is organized as follows. In Section \ref{sec-pre}, we collect some of the known results on Riemannian geometry that are used in this paper. In Section \ref{sec-viscosity}, we   study  the properties  of the adapted viscosity solutions to     singular elliptic    equations,  and recall a  regularization by Jensen's  inf-convolution on  Riemannian manifolds. Section \ref{sec-abp} is devoted to the   proof of the ABP type estimates for  degenerate and singular operators of the $p$-Laplacian type with  $1<p<\infty.$  Section \ref{sec-L-e-est} contains the proof of the $L^\epsilon$-estimates by means of a barrier function   and  the volume  comparison. 
  In Section \ref{sec-Harnack}, we prove the  Harnack inequality. % following the method of \cite{IS}. % for viscosity solutions on Riemannian manifolds.
 
  % We denote  $$\sH(t):=t\coth(t),\quad \sS(t):= \sinh(t)/t\quad\forall t>0, $$
  %with $\sH(0)=0$ and $\sS(0):=0, $   which will used frequently later.  

 % \newpage
%%%%%%%%%%%%%%%%%%%%%%%%%%%%%%%%%%%%%%%%%%%%%%%%%%%%%%%%%%%%%%%%%%%%%%
%\section{Preliminaries}%\label{sec-pre}
%%%%%%%%%%%%%%%%%%%%%%%%%%%%%%%%%%%%%%%%%%%%%%%%%%%%%%%%%%%%%%%%%%%%%%%%%%%%%%%%%%%%%%%%%%%%%%%%%%%%%%% section

%%%%%%%%%%%%%%%%%%%%%%%%%%%%%%%%%%%%%%%%%%%%%%%%%%%%%%%%%%%%%%%%%%%%%%%%%%%%%%%%%%%%%%%%%%%%%%%%%%%%%%
\section{Preliminary: Riemannian geometry}\label{sec-pre}
%%%%%%%%%%%%%%%%%%%%%%%%%%%%%%%%%%%%%%%%%%%%%%%%%%%%%%%%%%%%%%%%%%%%%%%%%%%%%%%%%%%%%%%%%%%%%%%%%%%%%%% section
%%%%%%%%%%%%%%%%%%%%%%%%%%%%%%%%%%%%%%%%%%%%%%%%%%%%%%%%%%%%%%%%%%%%%%%%%%%%%%%%%%%%%%%%%%%%%%%%%%%%%%

Let  $(M, g)$ be a smooth, complete Riemannian manifold of dimension $n$, equipped with  the  Riemannian metric $g$. A Riemannian metric defines a scalar product and a norm on each tangent space, i.e., $\langle X,Y\rangle_x:=g_x(X,Y)$  and $|X|_x^2:=\langle X,X\rangle_x$ for $X,Y\in T_xM$,  where $T_x M$ is the tangent space at $x\in M$.  Let $d(\cdot,\cdot)$ be the Riemannian distance   on $M$.  For a given point $y\in M$,   $d_y(x)$  stands for   the distance   to $x$ from $y$,  namely, $d_y(x):=d(x,y)$.   
A Riemannian manifold is equipped with     the Riemannian measure $\vol=\vol_g$ on $M$ which  is     denoted  by   $|\cdot|$  for simplicity. 

The exponential map $\exp: TM \to  M$ is defined as 
$$\exp_x X:=\gamma_{x,X}(1),$$ 
where   $\gamma_{x,X}:\R\to M$ is the unique geodesic   starting at $x\in M$ with velocity $X\in T_xM.$ We note   that the geodesic $\gamma_{x,X}$ is defined for all time since $M$ is complete, but it is not minimizing in general. This leads to define the cut time $t_c(X)$:  for $X\in T_xM$ with $|X|=1$, 
 $$t_c(X) := \sup\left\{ t >0 :  \exp_x sX \,\,\,\mbox{is minimizing between}\,\,\, x \,\,\,\mbox{and $\exp_x t X $}\right\}.$$
The cut locus of $x\in M,$ denoted by $\Cut(x),$ is defined  by
 $$ \Cut(x):= \left\{\exp_x t_c(X) X  : X\in T_x M \,\,\,\mbox{with $|X|=1$}\right\}. $$
  %   If $t_c(X)<+\infty$,  $\exp_x(t_c(X) X)$ is a cut point of $x$.   The cut locus of $x$ denoted by $\Cut(x)$ is defined as the set of all cut points of $x$,  namely,
%$$ \Cut(x):= \left\{\exp_x(t_c(X) X)  : X\in T_x M \,\,\,\mbox{with $|X|=1,\,\, t_c(X)<+\infty$}\right\}. $$  
Let  
$\cE_x := \left\{t X \in T_xM :  0\leq t<t_c(X),\,\, X\in T_x M \,\,\,\mbox{with $|X|=1$}\right\},$  and $\cE:=\{X\in TM :X\in \cE_x,\,\,  x\in M\}.$  
 In fact,   the exponential map $\exp\left|_{\cE}: \cE\to M\right.$ is smooth.  One can prove   that for any $x\in M, $ 
 $\Cut(x)= \exp_x(\p \cE_x),$  $M=\exp_x(\cE_x)\cup \Cut(x),$ and $\exp_x : \cE_x \to \exp_x(\cE_x)$ is a diffeomorphism.  We recall that $\Cut(x)$ is closed and of measure zero.  Given two points $x $ and $y\notin \Cut(x)$,  there exists a unique minimizing geodesic $\exp_x tX$ (for   $X\in \cE_x$)  joining  $x$ to $y= \exp_xX,$ and we will denote $X= \exp_x^{-1}(y)$ in the case.   The Gauss lemma implies that  $$  \exp_x^{-1}(y)=-\D d^2_y(x)/2,\quad \forall  y\not\in\Cut(x).$$

For a $C^2$-function  $u:M\to\R,$  the gradient $\D u$ of $u$ is defined by 
$\langle\D u, X \rangle := du(X) $
for any vector field $X$ on $M,$ where $du:TM\to\R$ is the differential of $u.$
The Hessian $D^2u$ of $u$  is defined as 
$$D^2u  \,  (X, Y):=  \left\langle\nabla_X \nabla u, Y\right\rangle,$$
 for any vector fields $X, Y$ on $M,$ where $\D$ denotes the Riemannian connection of $M.$   We observe that the Hessian $D^2u$ is a  symmetric 2-tensor over $M,$   and $D^2u(X,Y)$  at $x\in M$ depends only on  the values $X, Y$ at $x,$ and  $u$   in a small neighborhood of $x.$ 
 By the metric,  the Hessian of $u$ at $x$  is canonically identified with  
 a  symmetric endomorphism of $T_xM$: 
 $$
D^2u(x)  \cdot X=  \nabla_X \nabla u,\quad\forall X, Y\in T_xM. 
$$
We will   write $D^2u(x)  \,  (X, Y)=\left\langle D^2u (x)\cdot X,\, Y\right\rangle$ for $X\in T_xM.$ % as a quadratic form on $T_xM.$ 
If $\xi$ is a    $C^1$-vector field  on $M,$ the divergence of $\xi$  is defined as
$\Div\xi:=\tr\left\{X\mapsto \D_X\xi\right\}.$ 
For  $u\in C^2(M),$ the Laplacian operator   $\La u=\tr  (D^2u ) $    coincides with $\Div( \D u)$. 
% trace of the linear mapping $X$ to $\D_X\xi$

Denote by $R$   the Riemannain curvature tensor   defined as
$$ R(X, Y)Z = \D_X \D_YZ- \D_Y\D_XZ  -\D_{ [X,Y ]}Z$$
for any vector fields $X, Y,Z$ on $M.$ 
%where $\D$ denotes the Riemannian connection of $M.$ 
For 
 two linearly independent  vectors $X,Y\in T_xM,$       the sectional curvature of the
plane generated by $X$ and $Y$  is defined as 
 $$\Sec(X,Y):=\frac{\langle R(X, Y)Y,X\rangle}{|X|^2|Y|^2-\langle X,Y\rangle^2}.$$
%If $\Sec(X,Y) \geq \kappa$ for all $X,Y \in T_xM$ and $x\in M$ with   $\kappa\in\R,$ we write $\Sec\geq \kappa$ on $M$ for simplicity. 
For a unit vector $e \in T_xM$, we denote by $R(e)$   the Ricci transform of $T_xM$ into itself defined by 
$$R(e)X := R(X,e)e\quad\forall X\in T_xM.$$
Note that Ricci transform is symmetric. 
  The Ricci curvature is the trace of the Ricci transform, which can be expressed as follows: 
 for a unit vector $e\in T_xM$ and    an orthonormal basis $\{e,e_2,\cdots,e_n\}$ of $T_xM,$ $$\Ric(e,e)=\sum_{j=2}^n \Sec(e,e_j).$$  
As usual,    $\Ric\geq \kappa$ on $M\,\,\,(\kappa\in\R)$ stands  for   $\Ric_x \geq \kappa g_x $ for any $x\in M.$   
When   dealing with
a class of nonlinear elliptic operators,  we involve   
 Pucci's   operator of Ricci transform instead of the trace operator;   for    $0<\lambda\leq \Lambda$,   
 and for any $x\in M$ and  any unit vector $e\in T_xM,$   define 
\begin{align*}
%\cM^+_{\lambda,\Lambda}(R(e)):=\lambda\sum_{\mu_i <0} \mu_i +\Lambda\sum_{\mu_i>0} \mu_i,\quad 
\cM^-_{\lambda,\Lambda}(R(e)):=\Lambda\sum_{\kappa_i <0} \kappa_i +\lambda\sum_{\kappa_i>0} \kappa_i,
\end{align*}
where $\kappa_i$ are the eigenvalues of $R(e)$.  
 In the special case when $\lambda=\Lambda=1,$    Pucci's   operator simply coincides with the trace operator,  and hence  $\cM^{-}_{1,1}(R(e))=\Ric(e,e)$.  Notice  that $\ld \Ric(e,e) \geq \cM^-_{\ld,\Ld}(R(e))$, so a lower bound of $\cM^-_{\ld,\Ld}(R(e))$ guarantees  one  for Ricci curvature.

Now we  recall  the volume comparison theorem %originally proved by Bishop (see [BC])
 assuming the Ricci curvature to be bounded from below. 
 In terms of polar normal coordinates
at $x\in M$, the area element   of a 
geodesic sphere $\p B_r(x)$ of radius $r$ centered at $x$ is  written  by $r^{n-1}A(r,\theta)d\theta,$  where $A(r,\theta)$ is the Jacobian determinant  of the map $\exp_x$ at $r\theta \in T_xM$.  With the use of  a standard theory of Jacobi fields, the Jacobian determinant of the exponential  map     has  an upper bound depending on  a lower bound of  Ricci curvature; the proof can be found in   \cite[Chapter 11]{BC}.    
   Bishop-Gromov's volume comparison theorem  relies on  the Jacobian estimates, which  
   says that the volume of balls does not increase faster than the volume of balls in the model space (see also \cite{V}).  In particular,   the volume     comparison   implies the     (locally uniform) volume doubling property. 
   \begin{thm}[Bishop-Gromov]\label{thm-BG}
  Assume  $\Ric 
  \geq -(n-1)\kappa $ for $ \kappa\geq0$.
  \begin{enumerate}[(i)]
  \item   Let $x \in M$, $y\notin\Cut(x)\cup\{x\}$,  and   $\gamma(t):=\exp_x t\xi$   be the    minimizing geodesic joining $x = \gamma(0)$ to $y =\gamma(1)$ for   $\xi\in \cE_x$.  
For $t\in[0,1],$ let  ${ \bf  J}(t)$ be the differential  of $\exp_x$ at $t \xi\in T_xM,$   namely,  ${  \bf J}(t):=d\exp_x (t \xi).$ Then 
$$t\mapsto  \det { \bf  J}(t) \cdot\sS\left(\sqrt{\kappa}t| \xi |\right)^{-(n-1)}$$
is a nonincreasing function of $t\in(0,1],$ where $\sS(\tau):=\sinh(\tau)/\tau$ for $\tau>0$ with $\sS(0)=1.$ Furthermore,   for $t\in[0,1]$
$$0<\det { \bf  J}(t)\leq \sS\left({\sqrt{\kappa}t| \xi |}\right)^{n-1}=\left(\frac{\sinh\left(\sqrt{\kappa}\,t| \xi |\right)}{\sqrt{\kappa}\,t| \xi |}\right)^{n-1}.$$
%$$0<\det {\bf J}(t)\leq \left(\frac{\sinh\left(\sqrt{\kappa}t| \xi |\right)}{\sqrt{\kappa}t| \xi |}\right)^{n-1}.$$
\item   Let $$V(r):=\omega_{n,\kappa}\int_0^r \sinh^{n-1}\left(\sqrt{\kappa}\,t\right)dt\quad\forall r>0,$$
  where $V(r)$  denotes the volume of a ball of radius $r$  in the $n$-dimensional  space form   of constant   curvature   $-\kappa.$  % is the volume of a  ball of radius $r$ on a simply connected Riemannian manifold with constant sectional curvature $-\kappa$  of the same dimension $n$ as $M$.    
  Then  
  $$r\mapsto \frac{\vol\left(B_r(x)\right)}{V(r)}$$ is a  nonincreasing function  of  $r>0$. 
  In particular,     for any $0<r < R$ 
  \begin{equation}\label{eq-doubling}
  \frac{\vol(B_{2r}(x))}{\vol(B_r(x))}\leq 2^n\cosh^{n-1}\left(2\sqrt{\kappa}R\right)=:\cD,
  \end{equation}
    where    $\cD $  is a so-called doubling constant.  
    
\end{enumerate}
    \end{thm}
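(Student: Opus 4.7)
The plan for part (i) is the standard Jacobi-field/Riccati comparison. Along the minimizing geodesic $\gamma(t)=\exp_x(t\xi)$, I would introduce the Jacobi endomorphism $\mathbf{A}(t):\xi^{\perp}\to \gamma'(t)^{\perp}$ defined by $\mathbf{A}(t)v = d\exp_x(t\xi)(tv)$ for $v\perp \xi$, so that $\mathbf{A}(t)=t\,\mathbf{J}(t)|_{\xi^{\perp}}$ and $\det\mathbf{J}(t) = t^{-(n-1)}\det\mathbf{A}(t)$ (the radial direction contributes a unit factor). Then $\mathbf{A}$ solves the Jacobi equation with the Ricci transform of $\gamma'(t)$, with initial conditions $\mathbf{A}(0)=0$, $\mathbf{A}'(0)=I$. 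Setting $\mathbf{U}=\mathbf{A}'\mathbf{A}^{-1}$ produces the matrix Riccati equation $\mathbf{U}'+\mathbf{U}^{2}+R(\gamma')=0$; taking traces and using the Cauchy--Schwarz bound $\tr(\mathbf{U}^{2})\ge (\tr\mathbf{U})^{2}/(n-1)$ together with $\Ric(\gamma',\gamma')\ge -(n-1)\kappa|\xi|^{2}$ yields the scalar inequality
\[
u'(t)+\frac{u(t)^{2}}{n-1}\le (n-1)\kappa|\xi|^{2},\qquad u(t):=(\log\det\mathbf{A})'(t).
\]
The model function $u_{\kappa}(t):=(n-1)\sqrt{\kappa}\,|\xi|\coth(\sqrt{\kappa}|\xi|t)$ achieves equality, and both $u,u_{\kappa}\sim (n-1)/t$ as $t\to 0^{+}$; a standard Sturm--Riccati comparison at this singular endpoint forces $u(t)\le u_{\kappa}(t)$ on $(0,1]$. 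Exponentiating and dividing by the model Jacobian simultaneously delivers the monotonicity of $\det\mathbf{J}(t)\,\sS(\sqrt{\kappa}t|\xi|)^{-(n-1)}$ and the explicit pointwise bound, since the transverse Jacobi tensor in the space of constant curvature $-\kappa$ is exactly $\sinh(\sqrt{\kappa}|\xi|t)/(\sqrt{\kappa}|\xi|)$ times the identity.

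For part (ii), I would integrate the pointwise estimate against the geodesic polar volume form:
\[
\vol(B_r(x))=\int_{\mathbb{S}^{n-1}\subset T_xM}\int_0^{\min(r,\,t_c(\theta))} t^{n-1}\det\mathbf{J}(t,\theta)\,dt\,d\sigma(\theta),
\]
while $V(r)$ admits the analogous representation with $\theta$-independent integrand $t^{n-1}\sS(\sqrt{\kappa}t)^{n-1}$. For each fixed $\theta$, extending the manifold integrand by $0$ past $t_{c}(\theta)$, the ratio of integrands is nonincreasing in $t$ by part (i). The elementary lemma that $\int_0^r f_1\,dt/\int_0^r f_2\,dt$ inherits monotonicity from $f_1/f_2$ then gives the nonincrease in $r$ of the ratio of inner integrals for each $\theta$; averaging over $\theta$ preserves the property (since $\vol(B_r)/V(r)$ is a $\theta$-average of those one-dimensional ratios), yielding the monotonicity of $r\mapsto\vol(B_r(x))/V(r)$.

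The doubling estimate \eqref{eq-doubling} then follows immediately from $\vol(B_{2r})/\vol(B_r)\le V(2r)/V(r)$: the substitution $t=2s$ together with $\sinh(2\sqrt{\kappa}s)=2\sinh(\sqrt{\kappa}s)\cosh(\sqrt{\kappa}s)$ gives $V(2r)\le 2^{n}\cosh^{n-1}(\sqrt{\kappa}r)\,V(r)$, and $r<R$ produces the claimed constant $\cD=2^{n}\cosh^{n-1}(2\sqrt{\kappa}R)$. The one step requiring genuine care is the Sturm--Riccati comparison across the singularity at $t=0$; this is handled by matching the expansion $\mathbf{A}(t)=tI+O(t^{3})$ coming from the Jacobi initial conditions with the corresponding asymptotics of $u_{\kappa}$. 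The conceptual content is that the Cauchy--Schwarz trace inequality is precisely what reduces the scalar Riccati comparison to a \emph{Ricci} lower bound rather than a sectional-curvature bound.
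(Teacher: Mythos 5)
Your proof is correct, and it coincides with the standard Jacobi-tensor/Riccati-comparison argument that the paper itself does not reproduce but invokes by citation (to Bishop--Crittenden, Chapter 11, and Villani): you correctly identify the two points requiring care, namely the Cauchy--Schwarz trace inequality $\tr(\mathbf{U}^2)\ge(\tr\mathbf{U})^2/(n-1)$ that downgrades the curvature hypothesis from sectional to Ricci, and the asymptotic matching $\mathbf{A}(t)=tI+O(t^3)$ needed to push the Riccati comparison through the singularity at $t=0^+$. The only cosmetic remark is that your intermediate computation actually yields the sharper bound $V(2r)\le 2^n\cosh^{n-1}(\sqrt{\kappa}\,r)\,V(r)$, which of course implies the stated constant $\cD=2^n\cosh^{n-1}(2\sqrt{\kappa}R)$.
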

    
      One can see  that the doubling property \eqref{eq-doubling}  yields that for any $0<r< R\leq R_0,$  
  \begin{equation}\label{eq-doubling-cont}
  \frac{\vol\left(B_{R}(x)\right)}{\vol\left(B_r(x)\right)}\leq  \cD\, \left(\frac{R}{r}\right)^{\log_2\cD},
  \end{equation}
  where  $\cD:= 2^n\cosh^{n-1}\left(2\sqrt{ \kappa}R_0\right).$ 
      According to the volume   comparison, it is not difficult  to prove the following lemma by  a similar argument to  the proof  of \cite[Lemma 2.1]{IS}     (see also \cite{CC}).

\begin{lemma}%[Doubling property]
\label{lem-doubling-property}
Let $\Ric\geq -(n-1)\kappa$ for $\kappa\geq0 $ and $0<R\leq R_0.$ 
Let $E \subset F $ be two open subsets in $  B_R(x)\subset M$. Assume that for some $\delta\in(0,1)$, 
\begin{enumerate}[(a)]
\item if any ball $B\subset B_R(x)$ satisfies $|E\cap B|>(1-\delta)|B|,$ then $B\subset F,$
\item $|E|\leq (1-\delta)|B_R(x)|.$
\end{enumerate}
Then  there exists a constant $c_0\in(0,1)$, depending only on $n,$ and $\sqrt{\kappa} R_0,$ such that 
$$|E| \leq (1 - c_0\delta)|F|.$$
\end{lemma}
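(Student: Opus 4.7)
The plan is to prove the equivalent inequality $|F \setminus E| \geq c_0\,\delta\,|F|$ by constructing, for almost every point of $E$, a ball on which at least a $\delta$-fraction of the volume already lies in $F \setminus E$, extracting a disjoint Vitali subcover, and then invoking the doubling estimate \eqref{eq-doubling-cont} to compare the sum of those balls with the measure of $E$.

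\emph{Stopping-time radii.} By Bishop--Gromov the volume measure $\vol$ is doubling on $B_{R_0}(x)$, so the Lebesgue differentiation theorem holds and almost every $x_0 \in E$ is a Lebesgue density point. For such $x_0$ I define
\[
 r(x_0) := \sup\bigl\{\, r > 0 :\ B_r(x_0) \subset B_R(x)\ \text{and}\ |E \cap B_r(x_0)| > (1-\delta)|B_r(x_0)|\,\bigr\}.
\]
The density at $x_0$ being $1$ forces $r(x_0) > 0$. Hypothesis (b), after a harmless preliminary reduction that discards a thin boundary shell near $\partial B_R(x)$ of controlled relative volume, rules out that the strict density inequality persists up to the largest admissible radius at $x_0$; continuity of $r \mapsto |E \cap B_r(x_0)|/|B_r(x_0)|$ then delivers the critical-density equality
\[
 |B_{r(x_0)}(x_0) \setminus E| = \delta\,|B_{r(x_0)}(x_0)|.
\]
Picking $r_n \nearrow r(x_0)$ with the strict density inequality still valid, hypothesis (a) gives $B_{r_n}(x_0) \subset F$, and passing to the union yields $B_{r(x_0)}(x_0) \subset F$.

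\emph{Vitali extraction and doubling.} The radii $r(x_0)$ are uniformly bounded by $R$, so Vitali's $5r$-covering lemma produces a countable pairwise disjoint subfamily $\{B_i := B_{r_i}(x_i)\}$ with $E \subset \bigcup_i B_{5 r_i}(x_i)$ modulo a null set. The $B_i$ are disjoint and each $B_i \subset F$, which yields
\[
 |F \setminus E|\ \geq\ \sum_i |B_i \setminus E|\ =\ \delta\sum_i |B_i|.
\]
By \eqref{eq-doubling-cont} there exists $C_1 = C_1(n,\sqrt{\kappa}R_0)$ with $|B_{5 r_i}(x_i)| \leq C_1\,|B_i|$, so $|E| \leq C_1\sum_i|B_i|$. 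Combining the two estimates with $|F| = |E| + |F \setminus E|$ and rearranging gives $|F \setminus E| \geq \tfrac{\delta}{C_1 + 1}|F|$, that is, $|E| \leq (1 - c_0\,\delta)|F|$ with $c_0 = 1/(C_1 + 1)$, a constant depending only on $n$ and $\sqrt{\kappa}R_0$.

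\emph{Main obstacle.} The delicate step is the stopping-time construction: certifying that $r(x_0)$ is interior so that the critical-density equality is actually attained, rather than being forced by the cutoff $B_r(x_0) \subset B_R(x)$. This is exactly the role of hypothesis (b) — global smallness of $E$ on $B_R(x)$ forces the density to eventually fall below $1-\delta$ on some admissible concentric ball. Vitali's $5r$-covering lemma is standard on doubling metric measure spaces, and the Riemannian geometry of $M$ enters the final constant $c_0$ only through the doubling constant of \eqref{eq-doubling-cont}.
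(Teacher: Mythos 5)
Your framework --- cover by critical-density balls, extract a disjoint Vitali subfamily, close with doubling --- is sound in structure, and the algebra leading to $c_0 = 1/(C_1+1)$ is correct. The genuine gap is exactly the step you flag as the ``main obstacle'': certifying that the stopping radius $r(x_0)$ is forced by a drop in density rather than by the constraint $B_r(x_0)\subset B_R(x)$. Discarding a thin boundary shell does not help, because hypothesis (b) gives smallness of $|E|$ only relative to $|B_R(x)|$, while $E$ itself may occupy such a shell. Take $E$ to be a thin annulus hugging $\partial B_R(x)$: for every $x_0\in E$ the concentric balls $B_r(x_0)$ with $r<R-d(x,x_0)$ lie wholly inside $E$, the density stays equal to $1$ up to the largest admissible radius, and the critical-density equality never fires. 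Any excision of a boundary shell that removes these points removes essentially all of $E$. The root cause is that (b) is a statement about $B_R(x)$ and gives no density control on a proper sub-ball centered away from $x$, so nothing forces the density to fall below $1-\delta$ before the admissible radii run out.

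The paper (following Imbert--Silvestre) avoids this by not tying the ball center to the point of $E$. For each $z\in F$ one picks a ball $B^z$ of maximal radius among balls contained in $F$ that contain $z$, and shows $|E\cap B^z|\leq(1-\delta)|B^z|$: otherwise a slightly larger concentric ball still has density $>1-\delta$; if it stays inside $B_R(x)$ it lies in $F$ by (a), contradicting maximality, while if $\overline{B^z}$ already touches $\partial B_R(x)$ one slides the center a short distance toward $x$ along the geodesic and slightly enlarges the radius, obtaining a ball still inside $B_R(x)$, still containing $z$, and still of density $>1-\delta$ by continuity of the volume --- again in $F$ by (a), contradicting maximality. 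Vitali and doubling then finish exactly as in your write-up, but with the disjoint subfamily of the $B^z$'s covering $F$ rather than $E$. To salvage your version you would need to let the stopping ball around $x_0$ recenter toward $x$ once it hits $\partial B_R(x)$, which is precisely this slide-to-center device and essentially reproduces the paper's proof; hypothesis (a) applies to arbitrary balls in $B_R(x)$, and one must exploit that freedom rather than restrict to balls centered at a point of $E$.
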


  Using  a standard theory of Jacobi fields,   % it can be proved that 
    the Jacobian determinant of   the   exponential  map involving a  vector field  has the following   formula; we refer to    \cite[Lemma 3.2]{Ca} for the proof, and see also   \cite{CMS} and   \cite[Chapter 14]{V}. 
  \begin{lemma}\label{lem-jacobian-exp}
  Let $\xi$ be a  {smooth} vector field on $M.$ Define a map $\phi(z):= \exp_z \xi(z).$ For  a given $x\in M,$ assume that  $\xi(x)\in \cE_x$, and let $y:=\phi(x).$ Then 
  $$ d\phi(x) \cdot X= d \exp_x(\xi(x))  \cdot\left\{ \left(\D\xi   + D^2d^2_y/2\right)(x)\cdot X\right\} \quad\forall X \in T_xM ,$$ 
 and 
  $$\Jac\phi(x) =\left(\Jac \exp_x(\xi(x))\right)\cdot \left|\det\left(\D\xi+ D^2d^2_y/2\right)(x)\right|,$$ 
%    $$\Jac\phi(x):=|\det d\phi(x)|=\Jac\exp_x(\xi(x))\cdot\left|\det\left(\D\xi+ D^2d^2_y/2\right)(x)\right|,$$ 
  where $\Jac\phi(x):=|\det d\phi(x)|$, $\Jac\exp_x(\xi(x))$  is the Jacobian determinant of the map $\exp_x$ at $\xi(x)\in T_xM$, and 
  $\D \xi$ denotes the  covariant derivative of $\xi$.
  \end{lemma}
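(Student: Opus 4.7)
The plan is to identify $d\phi(x)\cdot X$ with the endpoint value at $s=1$ of a Jacobi field along the minimizing geodesic $\gamma(s):=\exp_x(s\xi(x))$ from $x$ to $y$, and then to split that Jacobi field into a trivial piece (coming from the fact that the vector field $-\D(d_y^2/2)$ is the exact preimage of $y$ under the exponential map) and a genuine variation that accounts for the nontrivial part of $d\phi(x)$.

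First I would set up the variation. Given $X\in T_xM$, pick any smooth curve $\alpha:(-\e,\e)\to M$ with $\alpha(0)=x$ and $\dot\alpha(0)=X$, and define
$$H(t,s):=\exp_{\alpha(t)}\bigl(s\,\xi(\alpha(t))\bigr),\qquad(t,s)\in(-\e,\e)\times[0,1].$$
Then $H(t,1)=\phi(\alpha(t))$, so $d\phi(x)\cdot X=\p_tH(0,1)$; and $J(s):=\p_tH(0,s)$ is a Jacobi field along $\gamma$, since each curve $s\mapsto H(t,s)$ is a geodesic. Using the symmetry $\tfrac{D}{ds}\p_tH=\tfrac{D}{dt}\p_sH$ of the Levi-Civita connection, its initial data are
$$J(0)=X,\qquad \tfrac{DJ}{ds}(0)=\D_X\xi.$$

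Next I would exploit the constant map obtained from the Gauss lemma. Since $\xi(x)\in\cE_x$ (equivalently $x\notin\Cut(y)$) and $\Cut(y)$ is closed, the vector field $\xi_0(z):=-\D(d_y^2/2)(z)$ is smooth on a neighborhood of $x$, with $\xi_0(x)=\exp_x^{-1}(y)=\xi(x)$ and $\exp_z\xi_0(z)\equiv y$ there. Running the same variation argument with $\xi$ replaced by $\xi_0$ gives a Jacobi field $J_0$ along the same geodesic $\gamma$ with
$$J_0(0)=X,\qquad \tfrac{DJ_0}{ds}(0)=\D_X\xi_0=-D^2(d_y^2/2)(x)\cdot X,\qquad J_0(1)=0,$$
the last identity coming from $d\phi_0(x)=0$ for the constant map $\phi_0(z):=\exp_z\xi_0(z)\equiv y$.

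By linearity of the Jacobi equation, $\tilde J:=J-J_0$ is a Jacobi field along $\gamma$ with $\tilde J(0)=0$ and $\tfrac{D\tilde J}{ds}(0)=(\D\xi+D^2d_y^2/2)(x)\cdot X$. The standard Jacobi-field description of the differential of the exponential map then yields
$$d\phi(x)\cdot X=J(1)=\tilde J(1)=d\exp_x(\xi(x))\cdot\bigl[(\D\xi+D^2d_y^2/2)(x)\cdot X\bigr],$$
which is the first claim; taking absolute determinants of the two composed linear endomorphisms gives the Jacobian identity. The only mildly delicate point is ensuring the comparison vector field $\xi_0$ is defined and smooth near $x$, which is exactly where the hypothesis $\xi(x)\in\cE_x$ is used; the remainder is a routine first-variation computation, so I do not anticipate any serious obstacle.
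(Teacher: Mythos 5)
Your proof is correct, and it follows the same underlying idea as Cabré's Lemma 3.2 (which the paper cites without reproducing): compare the map $\phi(z)=\exp_z\xi(z)$ with the constant map $\phi_0(z)=\exp_z\xi_0(z)\equiv y$, where $\xi_0=-\D(d^2_y/2)$, and subtract. The Jacobi-field packaging you use — decomposing $J=J_0+\tilde J$ with $\tilde J(0)=0$ and then invoking the standard identity $\tilde J(1)=d\exp_x(\xi(x))\cdot \tfrac{D\tilde J}{ds}(0)$ — is a clean and rigorous way to carry this out, and the hypotheses ($\xi(x)\in\cE_x$ so that $x\notin\Cut(y)$ and $\xi_0$ is smooth near $x$, with $\xi_0(x)=\xi(x)$ by injectivity of $\exp_x$ on $\cE_x$) are all used where needed; the determinant identity then follows immediately from the factorization of $d\phi(x)$ as a composition of two linear maps.
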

  
 % The following lemma is an elementary consequence of the triangle inequality which has a critical role to play in our proof. It provides half of the positivity we eventually require to deduce the Jacobian inequality (74) from the domination of means in Lemma 6
  With the same notation as the lemma above,  
we         define a map $\phi(x,\,t):=\exp_xt\,\xi(x) $ for    $t\in[0,1]$, and     assume that $\D \xi(x)$ is  symmetric.    %Let $\gamma:[0,1]\to M$ be the minimizing geodesic joining   $x$ and $y=\phi(x).$
  From the next lemma, we observe     that  if $ \left(\D\xi+ D^2d^2_{\phi(x,\,1)}/2\right)(x) $ is positive semi-definite, 
  then $ \left(t\D\xi+ D^2d^2_{\phi(x,\,t)}/2\right)(x)$ remains positive semi-definite for $t\in[0,1]$.   Making appropriate modifications,  this  fact  will play an important  role  when we deal with  the  approximation of    singular    operators in Section \ref{sec-abp}.  %after a suitable modification.  %The Hessian of the squared distance function  can be expressed with Jacobe matrices 
     The  following lemma  can be found in   \cite[Lemma 2.3]{CMS}; see also  \cite[Chapter 14]{V},   in particular the third appendix.

  \begin{lemma}\label{lem-hess-dist-sqrd-geodesic}
  Let $x\in M$ and $y\not\in\Cut(x) .$ 
  Let $\gamma:[0,1]\to M$ be the minimizing geodesic joining $\gamma(0)=x$  to $\gamma(1)=y$.  Then  
the self-adjoint operator   
 $$ D^2 (d^2_{\gamma(t)}/2)(x) - t\, D^2 (d^2_{\gamma(1)}/2)(x) $$ 
 defined on $T_x M$  is positive semi-definite for $t\in[0,1]$.
 % is  decreasing   with respect to      $t\in (0,1],$ that is,  for $0<s<t\leq1,$ a self-adjoint operator     $\displaystyle\frac{1}{s}D^2 d^2_{\gamma(s)}/2(x)-\frac{1}{t}D^2 d^2_{\gamma(t)}/2(x)$ is positive definite.  
  \end{lemma}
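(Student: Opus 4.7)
The plan is to reduce the claim to a Morse-theoretic comparison of Jacobi fields along $\gamma$. Parametrize so that $\gamma:[0,1]\to M$ has $\gamma(0)=x$ and $\gamma(1)=y$, set $\xi:=\dot\gamma(0)=\exp_x^{-1}(y)$, and abbreviate $f_t(z):=d^2_{\gamma(t)}(z)/2$.

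I would first check that $T_xM=\R\xi\oplus\xi^\perp$ is an invariant decomposition for both Hessians and that on the parallel part the asserted inequality is immediate. Since distance is linear along $\gamma$, the restriction $f_t(\gamma(s))=(s-t)^2|\xi|^2/2$ forces $D^2 f_t(x)(\xi,\xi)=|\xi|^2$ for every $t$. More invariantly, $\D f_t(\gamma(s))=-(t-s)\dot\gamma(s)$ together with $\D_{\dot\gamma}\dot\gamma=0$ produces $D^2 f_t(x)\cdot\xi=\xi$, so both operators act as the identity on $\R\xi$ and preserve the orthogonal complement $\xi^\perp$. Consequently, $D^2 f_t(x)-t\,D^2 f_1(x)$ reduces to $(1-t)I$ on $\R\xi$, which is nonnegative for $t\in[0,1]$, and cross terms between the two subspaces vanish by invariance.

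Second, for $X\in\xi^\perp$ I would identify $D^2 f_t(x)(X,X)$ with the Morse index form of a Jacobi field. Consider the two-parameter family $\sigma(s,u):=\exp_{\alpha(s)}(u\,\eta(s))$ with $\alpha(s):=\exp_x(sX)$ and $\eta(s):=\exp_{\alpha(s)}^{-1}(\gamma(t))$, which is well-defined for $|s|$ small because $\gamma(t)\notin\Cut(x)$. Its energy $E(s)$ satisfies $f_t(\alpha(s))=E(s)/2$, and since $\alpha$ is a geodesic while $\sigma(s,1)\equiv\gamma(t)$ is constant in $s$, both boundary terms in the second-variation formula vanish, giving
\[
D^2 f_t(x)(X,X)=\tfrac{1}{2}E''(0)=I_{[0,1]}(V,V),
\]
where $V=\partial_s\sigma|_{s=0}$ is the Jacobi field along $u\mapsto\gamma(tu)$ with $V(0)=X$, $V(1)=0$. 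Reparametrizing by $\tau=tu$ turns $V$ into a Jacobi field $K_t$ along $\gamma|_{[0,t]}$ with $K_t(0)=X$, $K_t(t)=0$, and integrating the Jacobi equation by parts converts the above into
\[
D^2 f_t(x)(X,X)=-t\,\langle K_t'(0),X\rangle=t\,I_{[0,t]}(K_t,K_t),
\]
the factor $t$ arising from $\dot\sigma_0=t\,\dot\gamma$.

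Finally, the Morse index lemma applies on $[0,1]$ because $y\notin\Cut(x)$ excludes interior conjugate points along $\gamma$: the Jacobi field $K_1$ minimizes $I_{[0,1]}$ among piecewise smooth perpendicular vector fields sharing its boundary values $X$ and $0$. Taking as competitor the extension of $K_t$ by zero on $[t,1]$ gives
\[
I_{[0,1]}(K_1,K_1)\leq I_{[0,t]}(K_t,K_t),
\]
and multiplying through by $t$, combined with $D^2 f_1(x)(X,X)=I_{[0,1]}(K_1,K_1)$, produces $D^2 f_t(x)(X,X)\geq t\,D^2 f_1(x)(X,X)$ on $\xi^\perp$. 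Together with the parallel computation this shows that $D^2(d^2_{\gamma(t)}/2)(x)-t\,D^2(d^2_{\gamma(1)}/2)(x)$ is positive semi-definite on $T_xM$. I expect the main bookkeeping obstacle to be tracking the rescaling factor $t$ between the $[0,1]$-parametrization used by the variation and the $[0,t]$-parametrization used for the index-form comparison; the geometric content—the index-lemma comparison against a broken vector field—is standard once both sides are written in the same parametrization.
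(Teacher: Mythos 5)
The paper does not prove this lemma itself but cites Cordero-Erausquin--McCann--Schmuckenschl\"ager (their Lemma 2.3) and Villani's Chapter~14 appendix; your argument is correct and reproduces the standard second-variation / Jacobi-field / index-form proof those references use. In particular, the splitting $T_xM=\R\xi\oplus\xi^\perp$ with the identity action on $\R\xi$, the identification $D^2 f_t(x)(X,X)=t\,I_{[0,t]}(K_t,K_t)$ (the reparametrization factor $t$ is tracked correctly), and the Morse-index comparison $I_{[0,1]}(K_1,K_1)\le I_{[0,t]}(K_t,K_t)$ against the zero-extension of $K_t$ --- valid since $y\notin\Cut(x)$ forces the cut (hence first conjugate) time along $\gamma$ to exceed $1$ --- all check out.
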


An upper Hessian bound for the squared distance function is proved  in \cite[Lemma 3.12]{CMS}  using the formula for the second variation of energy  provided  that the sectional curvature is bounded from below. We quote it as follows. %in the following lemma.    %According to the local characterization of semi-concavity combined with Lemma \ref{lem-hess-dist-sqrd},     $d_y^2$ is semiconcave  on  a bounded open set $\Omega\subset M$ for any $y\in M,$ provided that the sectional curvature of $M$ is bounded from below. %, and     an open set $\Omega \subset M$   is  bounded. 
%the closure $\overline\Omega$ of an open set $\Omega \subset M$   is compact.    

%Under the assumption on the sectional curvature, a Hessian bound for the squared distance function in  the following lemma   is quoted from   \cite[Lemma 3.12]{CMS}. %, and     an open set $\Omega \subset M$   is  bounded. 
%the closure $\overline\Omega$ of an open set $\Omega \subset M$   is compact.    

 \begin{lemma}\label{lem-hess-dist-sqrd}
 Let $x, y\in M.$  If $\Sec
  \geq -\kappa\,\, ( \kappa\geq0)$ along a minimizing geodesic  joining $x$ to $y,$ then for any $X\in T_xM$ with $|X|=1,$
  $$\limsup_{t\to0}\frac{d_y^2\left(\exp_xtX\right)+d_y^2\left(\exp_x -tX\right)-2d^2_y(x)}{t^2}\leq 2\sqrt{\kappa}d_y(x)\coth\left(\sqrt{\kappa}d_y(x)\right).$$
  \end{lemma}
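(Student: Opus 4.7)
The plan is to apply the second variation of energy to a specific variation of a minimizing geodesic from $x$ to $y$, whose variation field is a weighted parallel transport of $X$ and whose weight is tuned to the Jacobi field of the model space of constant curvature $-\kappa$.

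Fix a constant-speed minimizing geodesic $\gamma\colon[0,1]\to M$ from $\gamma(0)=x$ to $\gamma(1)=y$, so $|\dot\gamma|\equiv d:=d_y(x)$, and let $E(s)$ be the parallel transport of $X$ along $\gamma$. Setting $a:=\sqrt{\kappa}\,d$, take
$$f(s):=\frac{\sinh(a(1-s))}{\sinh a}\qquad(\text{or }f(s):=1-s\text{ if }\kappa=0),$$
so $f(0)=1$, $f(1)=0$, and $f''=a^2 f$. Define the two-parameter map
$$h(t,s):=\exp_{\gamma(s)}\bigl(t\,f(s)\,E(s)\bigr),\qquad t\in(-\varepsilon,\varepsilon),\ s\in[0,1],$$
so that $h(t,0)=\exp_x(tX)$, $h(t,1)=y$, and each fiber $t\mapsto h(t,s)$ is a geodesic (hence $\nabla_{\partial_t}\partial_t h\equiv 0$). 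By Cauchy--Schwarz, $d_y^2(\exp_x tX)\leq\cE(t):=\int_0^1|\partial_s h(t,s)|^2\,ds$, and the analogous construction with $-X$ gives $d_y^2(\exp_x(-tX))\leq\cE(-t)$; Taylor expansion at $t=0$ yields
\begin{equation*}
d_y^2(\exp_x tX)+d_y^2(\exp_x(-tX))\leq \cE(t)+\cE(-t)=2d^2+t^2\,\cE''(0)+o(t^2),
\end{equation*}
so it suffices to show $\cE''(0)\leq 2a\coth a$.

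Because every $t$-fiber of $h$ is a geodesic, the boundary ``acceleration'' term in the second variation of energy vanishes, and with variation field $\xi(s):=f(s)E(s)$ the standard formula reads
\begin{equation*}
\tfrac12\,\cE''(0)=\int_0^1\bigl(|D_s\xi|^2-\langle R(\xi,\dot\gamma)\dot\gamma,\xi\rangle\bigr)ds=\int_0^1\bigl(f'(s)^2-f(s)^2\Sec(\dot\gamma,E)(d^2-\langle\dot\gamma,E\rangle^2)\bigr)ds,
\end{equation*}
after using $|D_s\xi|^2=f'^2$ (parallelness of $E$) and the definition of $\Sec$. Since $\Sec\geq-\kappa$ along $\gamma$ and $d^2-\langle\dot\gamma,E\rangle^2\in[0,d^2]$, the integrand is bounded above by $f'^2+\kappa d^2 f^2$. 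Using $f''=a^2 f$ and integration by parts,
\begin{equation*}
\int_0^1(f'^2+a^2 f^2)\,ds=[f f']_0^1=-f(0)f'(0)=a\coth a=\sqrt{\kappa}\,d_y(x)\coth\!\bigl(\sqrt{\kappa}\,d_y(x)\bigr),
\end{equation*}
giving $\cE''(0)\leq 2\sqrt{\kappa}\,d_y(x)\coth(\sqrt{\kappa}\,d_y(x))$. Taking $\limsup_{t\to 0}$ in the previous inequality yields the claim; the case $\kappa=0$ is handled identically with $f(s)=1-s$, consistent with $a\coth a\to 1$ as $a\to 0^+$.

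The main delicate point is the boundary contribution in the second variation of energy. Because the variation field $\xi(s)=f(s)E(s)$ does not vanish at $s=0$ (indeed $\xi(0)=X$), the usual index-form identity carries extra endpoint terms, and these are killed only by having each $t$-fiber of $h$ be a geodesic. The construction $h(t,s)=\exp_{\gamma(s)}(tf(s)E(s))$ is forced by this requirement. The only other subtlety is the degenerate case $X\parallel\dot\gamma(0)$, in which $\Sec(\dot\gamma,E)$ is undefined; but there $\langle R(\xi,\dot\gamma)\dot\gamma,\xi\rangle\equiv 0$ and the bound $\int_0^1 f'^2\,ds\leq a\coth a$ already suffices.
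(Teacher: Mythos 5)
Your proof is correct, and it reconstructs essentially the argument of \cite[Lemma 3.12]{CMS}, which this paper cites without reproducing the proof; the same second-variation-of-energy technique, with the hyperbolic weight $\alpha(t)=\sinh(\sqrt{\kappa}(l-t))/\sinh(\sqrt{\kappa}\,l)$ applied to parallel-transported frame fields and with geodesic $t$-fibers to kill the endpoint terms, is exactly what the paper uses in its own proof of Lemma~\ref{lem-pucci-ric-dist-sqrd}.
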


   The well-known Laplacian comparison theorem    implies   that  if the  Ricci curvature  is bounded from below by  $-(n-1)\kappa$ for $\kappa\geq0$,  then
    $$\La \left(d_y^2/2(x)\right) \leq 1+ (n-1)  \sqrt{ {\kappa}{ }}d_y(x)\coth\left(\sqrt{ {\kappa}{ }}d_y(x)\right)\quad\mbox{for $x\not\in \Cut(y) $}. $$
 As a    generalization, we are  concerned with an upper estimate  of  Pucci's  maximal operator for   the squared distance  function   when   $\cM^-_{\ld,\Ld}(R(e))$ has a lower bound. The proof  for the following lemma   uses 
 the formula for the second variation of the  energy, and  closely  follows  the argument in the proof    of \cite[Lemma 3.12]{CMS}; see also    \cite[Lemma 2.1]{K}. 
\begin{comment}
   \begin{lemma}\label{lem-pucci-ric-dist-sqrd}
   Let $x, y\in M$ with $x\not\in\Cut(y),$  Let $\gamma:[0,l]\to M$ be the minimizing geodesic  parametrized by arc length such that $\gamma(0)=x$ and $\gamma(l)=y.$
    Assume that $\cM^-_{\ld,\Ld}(R(e))\geq-(n-1)\kappa$  with $\kappa\geq0$ along $\gamma$  for any unit vector $e\in T_{\gamma(t)}M$ for $t\in[0,l]. $ % and for any $x\in M.$ 
    Then %   for any $y\in M,$
  \begin{equation*}
  \cM^+_{\ld,\Ld}\left(D^2d_y^2/2(x)\right)\leq \Lambda+ (n-1)\Ld \sqrt{\frac{\kappa}{\Ld}}d_y(x)\coth\left(\sqrt{\frac{\kappa}{\Ld}}d_y(x)\right). %\quad\mbox{for $x\not\in \Cut(y) $}.
  \end{equation*}
  \end{lemma}  
  
  \begin{proof} 
 % It suffices to prove  it for
 We may assume that  $x\not\in\Cut(y)\cup\{y\}$ since $D^2d^2_y/2(y)={\bf I}.$
 %Let $\gamma:[0,l]\to M$ be the minimizing geodesic  parametrized by arc length such that $\gamma(0)=x$ and $\gamma(l)=y.$  
 We introduce an orthonormal basis $\{e_i\}_{i=1}^n$ on $T_xM$ such that $e_1=\dot \gamma(0)$ and $\{e_i\}_{i=1}^n$ are eigenvectors of $D^2d_y^2/2(x)$ on $T_xM.$ Note that a unit vector $e_1=-\D d_y(x)$ is   an eigenvector of $D^2d^2_y/2(x)$ associated with the eigenvalue $1.$   
By the parallel transport, we extend $\{e_i\}_{i=1}^n$ to $\{e_i(t)\}_{i=1}^n$  along $\gamma(t)$,  
 and define the vector fields 
 
 \end{comment}
 
   \begin{lemma}\label{lem-pucci-ric-dist-sqrd}
   For  $x\in M$ and $y\not\in\Cut(x) ,$   let $\gamma$ be the  minimizing geodesic  joining $x$ to $y.$  
    Assume that $\cM^-_{\ld,\Ld}(R(e))\geq-(n-1)\kappa$  with $\kappa\geq0$  for any unit vector $e\in T_{\gamma(t)}M.$ % and for any $x\in M.$ 
    Then %  for any $y\in M,$
  \begin{equation*}
  \cM^+_{\ld,\Ld}\left(D^2d_y^2/2(x)\right)\leq \Lambda+ (n-1)\Ld\, \sH\left(\sqrt{\frac{\kappa}{\Ld}}d_y(x)\right)%\quad\mbox{for $x\not\in \Cut(y) $},
  \end{equation*}
  where $\sH(\tau):=\tau\coth(\tau)$ for $\tau>0$ with $\sH(0)=1.$
  \end{lemma}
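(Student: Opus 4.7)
The plan is to follow the classical second variation of energy argument (as in \cite[Lemma 3.12]{CMS}) and to combine it with the variational characterization of the Pucci extremal operators. The case $y=x$ is trivial since $D^2d_y^2/2(x)={\bf I}$, so assume $y\neq x$ and write $l:=d_y(x)$. Let $\gamma:[0,1]\to M$ be the minimizing geodesic with $\gamma(0)=x$, $\gamma(1)=y$, and $|\dot\gamma(t)|\equiv l$. Choose an orthonormal basis $\{e_i\}_{i=1}^n$ of $T_xM$ of eigenvectors of $A:=D^2d_y^2/2(x)$, with associated eigenvalues $\mu_i$, and where $e_1:=\dot\gamma(0)/l$, which is known to be an eigenvector with $\mu_1=1$ from the Gauss lemma. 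Extend each $e_i$ to a parallel orthonormal frame $\{e_i(t)\}$ along $\gamma$; in particular $e_1(t)=\dot\gamma(t)/l$.

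For each $i\geq 2$ and any smooth $f:[0,1]\to\R$ with $f(0)=1$ and $f(1)=0$, introduce the variation $h_i(s,t):=\exp_{\gamma(t)}(sf(t)e_i(t))$. Since $h_i(s,1)=y$, comparing squared length and energy via Cauchy--Schwarz gives $d_y^2(h_i(s,0))\leq E_i(s)$ with equality at $s=0$, so $\mu_i=D^2(d_y^2/2)(x)(e_i,e_i)\leq \tfrac12 E_i''(0)$. Applying the standard second variation formula (and $\nabla_s\partial_sh_i\equiv 0$) and using the parallel transport to compute $|\nabla_t(f e_i)|^2=(f')^2$ and $\langle R(fe_i,\dot\gamma)\dot\gamma,fe_i\rangle=l^2f^2\langle R(\hat\gamma(t))e_i(t),e_i(t)\rangle$, I obtain
\begin{equation*}
\mu_i\leq \int_0^1 (f'(t))^2\,dt - l^2\int_0^1 f(t)^2\langle R(\hat\gamma(t))e_i(t),e_i(t)\rangle\,dt,\qquad i\geq 2,
\end{equation*}
where $\hat\gamma(t):=\dot\gamma(t)/l$.

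Now I invoke the Pucci duality. Set $c_i:=\Lambda$ if $\mu_i\geq 0$ and $c_i:=\lambda$ if $\mu_i<0$; then $c_i\in[\lambda,\Lambda]$, $c_1=\Lambda$, and $\cM^+_{\lambda,\Lambda}(A)=\sum_{i=1}^n c_i\mu_i$. Since $c_i>0$, multiplying the above inequality by $c_i$ and summing over $i\geq 2$, and using $\mu_1=1$, yields
\begin{equation*}
\cM^+_{\lambda,\Lambda}(A)\leq \Lambda + \Big(\sum_{i\geq 2}c_i\Big)\int_0^1(f')^2\,dt - l^2\int_0^1 f^2\,\sum_{i\geq 2}c_i\langle R(\hat\gamma(t))e_i(t),e_i(t)\rangle\,dt.
\end{equation*}
The first coefficient is bounded by $(n-1)\Lambda$. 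For the last integrand, the matrix $B(t):=\sum_{i=1}^n c_ie_i(t)\otimes e_i(t)$ is symmetric with all eigenvalues in $[\lambda,\Lambda]$, and because $e_1(t)=\hat\gamma(t)$ with $R(\hat\gamma)\hat\gamma=0$ the $i=1$ term contributes zero; hence
\begin{equation*}
\sum_{i\geq 2}c_i\langle R(\hat\gamma(t))e_i(t),e_i(t)\rangle=\operatorname{tr}(B(t)R(\hat\gamma(t)))\geq \cM^-_{\lambda,\Lambda}(R(\hat\gamma(t)))\geq -(n-1)\kappa,
\end{equation*}
by the hypothesis applied to the unit vector $\hat\gamma(t)\in T_{\gamma(t)}M$. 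Substituting gives
\begin{equation*}
\cM^+_{\lambda,\Lambda}(A)\leq \Lambda + (n-1)\int_0^1\bigl(\Lambda(f')^2+l^2\kappa f^2\bigr)dt.
\end{equation*}

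Finally I optimize. Setting $\beta:=l\sqrt{\kappa/\Lambda}$, the minimizer of $\int_0^1(\Lambda(f')^2+l^2\kappa f^2)dt$ subject to $f(0)=1$, $f(1)=0$ solves $\Lambda f''=l^2\kappa f$, namely $f(t)=\sinh(\beta(1-t))/\sinh(\beta)$, with minimum value $-\Lambda f'(0)=\Lambda\beta\coth\beta$. This gives $\cM^+_{\lambda,\Lambda}(A)\leq \Lambda+(n-1)\Lambda\,\sH(\beta)$, which is the desired inequality. The main delicate point is organizing the Pucci sum so that the sign-dependent weights $c_i$ (tied to the eigenvalues of $D^2d_y^2/2$) still produce a legitimate test operator against which the lower bound on $\cM^-_{\lambda,\Lambda}(R(\hat\gamma(t)))$ can be applied, which is precisely why parallel transport of the eigenbasis (so that orthonormality and the identity $e_1(t)=\hat\gamma(t)$ persist) is used.
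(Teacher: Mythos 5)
Your proposal is correct and follows essentially the same route as the paper: the same eigenbasis of $D^2(d_y^2/2)(x)$ parallel-transported along $\gamma$, the same second-variation bound $\mu_i\leq\tfrac12 E_i''(0)$, and the same Pucci trick of summing against weights $a_i\in[\lambda,\Lambda]$ and bounding the curvature term by $\cM^-_{\lambda,\Lambda}(R(\dot\gamma))\geq-(n-1)\kappa$. The only (cosmetic) differences are that you fix the optimizing weights $c_i$ by eigenvalue sign rather than keeping the $\sup/\inf$ form, and you work with a general test function $f$ and Euler--Lagrange optimize at the end instead of inserting the optimal $\alpha(t)=\sinh(\sqrt{\kappa/\Lambda}(l-t))/\sinh(\sqrt{\kappa/\Lambda}\,l)$ from the outset; both yield the same constant.
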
  
  
  \begin{proof} 
 % It suffices to prove  it for
 We may assume that  $x\not\in\Cut(y)\cup\{y\}$ since $D^2d^2_y/2(y)={\bf I}.$
 Let $\gamma:[0,l]\to M$ be the minimizing geodesic  parametrized by arc length such that $\gamma(0)=x$ and $\gamma(l)=y.$  We introduce an orthonormal basis $\{e_i\}_{i=1}^n$ on $T_xM$ such that $e_1=\dot \gamma(0)$ and $\{e_i\}_{i=1}^n$ are eigenvectors of $D^2d_y^2/2(x)$ on $T_xM.$ Note that a unit vector $e_1=-\D d_y(x)$ is   an eigenvector of $D^2d^2_y/2(x)$ associated with the eigenvalue $1.$   
By the parallel transport, we extend $\{e_i\}_{i=1}^n$ to $\{e_i(t)\}_{i=1}^n$  along $\gamma(t)$,  
 and define the vector fields 
  $$X_i(t)=\alpha(t)e_i(t),\quad\mbox{for}\,\,\, \alpha(t):=\frac{\sinh\left(\sqrt{\frac{\kappa}{\Ld}}(l-t)\right)}{\sinh\left(\sqrt{\frac{\kappa}{\Ld}}\,l\right)},$$ 
  which satisfy $X_i(0)=e_i(0),$ and $X_i(l)=0.$   For small $s\in(-\delta,\delta),$ consider 
  $$\gamma^i_{s}(t):=f_i(s,t)=\exp_{\gamma(t)}sX_i(t),$$
  which connects $\exp_x se_i$ to $y.$ 
  %and the energy $E_i(s):=\frac{1}{2}\int_0^l |\dot\gamma^i_{s}(t)|^2dt.$ 
  With the help of the  H\"older inequality, we have that  for $i=2,\cdots,n$ and small $s\in(-\delta,\delta)$, %for any unit vector $\xi\in T_xM,$
  \begin{align*}
 \frac{1}{2} d_y^2\left(\exp_xs e_i\right) &\leq  \frac{1}{2}\left(\int_0^l |\dot\gamma^i_{s}(t)|dt\right)^2\leq \frac{l}{2}\int_0^l |\dot\gamma^i_{s}(t)|^2dt=: { l}E_i(s),
    \end{align*}  
    where the equality holds for $s=0,$ i.e., 
    $d^2_y/2(x)= lE_i(0).$   This implies that 
    \begin{align*}
    \cM^+_{\ld,\Ld}(D^2d^2_y/2(x))&= \Ld+\sup_{\ld\leq a_i\leq \Ld} \sum_{i=2}^na_i\left\langle D^2d^2_y/2(x)\cdot e_i,e_i\right\rangle \leq \Ld+  \sup_{\ld\leq a_i\leq \Ld} \sum_{i=2}^na_i l\left.\frac{d^2}{d s^2} \right |_{s=0}E_i(s)
    \end{align*}
from the definition of of the Pucci operator. % fact that  $\cM^+_{\ld,\Ld}(S) =\sup_{\ld\leq a_i\leq \Ld} \sum_{i=1}^na_i\mu_i $ for the eigenvalues of a symmetric matrix $S.$ 
 %   \begin{align*}
 %\frac{d_y^2\left(\exp_xs e_i\right)+d_y^2\left(\exp_x -s e_i\right)-2d^2_y(x)}{s^2}&\leq\frac{1}{s^2} \left\{\int_0^l |\dot\gamma^i_{s}(t)|dt+\int_0^l |\dot\gamma^i_{-s}(t)|dt- 2\int_0^l |\dot\gamma(t)|dt\right\}\\
   %&\leq\frac{ 2l}{s^2}\left\{E_i(s)+E_i(-s)-2E_i(0)\right\}.
  %\end{align*}  
  Since     for  each $i=2,\cdots n,$ and $t\in[0,l],$ a curve  $s\mapsto f_i(s,t)$ is also a  geodesic,   
 it follows from     the formula for the second variation of the energy  that 
  \begin{align*}
  \cM^+_{\ld,\Ld}(D^2d^2_y/2(x))-\Ld %&=\sup_{\ld\leq a_i\leq \Ld} \sum_{i=2}^na_i\left\langle D^2d^2_y/2\cdot e_i,e_i\right\rangle \\
&\leq  \sup_{\ld\leq a_i\leq \Ld} \sum_{i=2}^na_i l\left.\frac{d^2}{d s^2} \right |_{s=0}E_i(s) \\
  &= \sup_{\ld\leq a_i\leq \Ld} \sum_{i=2}^na_il\int_0^l\left\{| \dot X_i(t)|^2-\left\langle R(X_i(t), \dot\gamma(t))  \dot\gamma(t), X_i(t)\right\rangle\right\}dt \\
    %&\leq \Ld +\sup_{\ld\leq a_i\leq \Ld} \sum_{i=2}^na_il\int_0^l| \dot\alpha(t)|^2dt-l\inf_{\ld\leq a_i\leq \Ld}\sum_{i=2}^na_il\int_0^lR(X_i(t), \dot\gamma(t),\dot\gamma(t), X_i(t))dt \\
       &\leq  (n-1)\Ld l\int_0^l  \dot\alpha(t)^2dt- l \inf_{\ld\leq a_i\leq \Ld}  \int_0^l\alpha(t)^2\ \sum_{i=2}^n a_i\left\langle R(e_i(t), \dot\gamma(t)) \dot\gamma(t),e_i(t)\right\rangle dt \\
          &=  (n-1)\Ld l\int_0^l  \dot\alpha(t)^2dt- l \inf_{\ld\leq a_i\leq \Ld}  \int_0^l\alpha(t)^2 \sum_{i=2}^n a_i\left\langle R( \dot\gamma(t))\cdot e_i(t),e_i(t)\right\rangle dt.
          \end{align*} 
          Using the definition of Pucci's operator and the assumption, one can check that for any $\ld\leq a_i\leq \Ld,$  $\sum_{i=2}^n a_i\left\langle R( \dot\gamma(t))\cdot e_i(t),e_i(t)\right\rangle\geq \cM^-_{\ld,\Ld}(R(\dot\gamma(t)) \geq -(n-1)\kappa,$  and hence    
          \begin{align*}
  \cM^+_{\ld,\Ld}(D^2d^2_y/2(x))-\Ld        &\leq  (n-1)\Ld l\int_0^l  \dot\alpha(t)^2dt- l \inf_{\ld\leq a_i\leq \Ld}  \int_0^l\alpha(t)^2 \sum_{i=2}^n a_i\left\langle R( \dot\gamma(t))\cdot e_i(t),e_i(t)\right\rangle dt\\
              &\leq  (n-1)\Ld l \int_0^l\left\{\dot \alpha(t)^2+\frac{\kappa}{\Ld}\alpha(t)^2 \right\}dt\\
              &= (n-1)\Ld l\frac{\kappa}{\Ld}\sinh^{-2}\left(\sqrt{\frac{\kappa}{\Ld}}\,l\right)\int_0^l\cosh\left(2\sqrt{\frac{\kappa}{\Ld}}(t-l)\right)dt \\
              %  &=\ (n-1)\Ld l\sqrt{\frac{\kappa}{\Ld}}\sinh^{-2}\left(\sqrt{\frac{\kappa}{\Ld}}l\right)\frac{1}{2}\sinh\left(2\sqrt{\frac{\kappa}{\Ld}}l\right) \\
        & = (n-1)\Ld  \sqrt{\frac{\kappa}{\Ld}}\,l\coth\left(\sqrt{\frac{\kappa}{\Ld}}\,l\right),
  \end{align*}
  which finishes the proof.
    \end{proof}

In  \cite[Proposition 2.5]{CMS},  it is shown that 
   the cut locus  of  $y\in M$ is characterized   as the set of points at which   the squared distance function $d_{y}^2$  fails to be semi-convex. %  at the cut locus.  
   \begin{lemma}[Proposition 2.5 of \cite{CMS}]\label{lem-dist-sqrd-cut}
 Let $x, y\in M.$  If $x\in \Cut(y),$ then  there is  a unit vector $X\in T_xM$ such that 
 $$\liminf_{t\to0}\frac{d_y^2\left(\exp_xtX\right)+d_y^2\left(\exp_x -tX\right)-2d^2_y(x)}{t^2} =-\infty.$$
  \end{lemma}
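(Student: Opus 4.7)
The plan is to split the argument according to the standard dichotomy in the characterization of the cut locus: $x\in\Cut(y)$ if and only if either
(a) there exist at least two distinct minimizing geodesics from $x$ to $y$, or
(b) there is a unique minimizing geodesic $\gamma:[0,l]\to M$ from $x$ to $y$ (with $l=d_y(x)$) along which $y$ is conjugate to $x$, i.e., $d\exp_x$ is singular at $l\dot\gamma(0)$. In either case we must produce a unit direction $X\in T_xM$ along which the ``symmetric second difference'' of $d_y^2$ blows up to $-\infty$.

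\textbf{Case (a).} Pick distinct minimizing geodesics $\gamma_1,\gamma_2:[0,l]\to M$ with $\gamma_i(0)=x$, $\gamma_i(l)=y$, and unit initial velocities $v_1\ne v_2$. Set
\[
X:=\frac{v_1-v_2}{|v_1-v_2|}\in T_xM,\qquad a:=\tfrac{1}{2}|v_1-v_2|>0,
\]
so that $\langle X,v_1\rangle=a$ and $\langle -X,v_2\rangle=a$. The classical first variation of arc length, applied to $\gamma_1$ with endpoint perturbation $t\mapsto\exp_x(tX)$ and to $\gamma_2$ with endpoint perturbation $t\mapsto\exp_x(-tX)$, yields
\[
d_y(\exp_x(\pm tX))\le l-at+O(t^2).
\]
Squaring and adding then gives $d_y^2(\exp_x tX)+d_y^2(\exp_x{-tX})-2l^2\le -4la\,t+O(t^2)$, whose quotient by $t^2$ tends to $-\infty$ as $t\to0^+$. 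This case is the easy one.

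\textbf{Case (b).} Here $\gamma$ is the unique minimizer, and there is a nontrivial Jacobi field $J$ along $\gamma$ with $J(0)=J(l)=0$; set $V:=J'(0)\in T_xM$, which is nonzero and orthogonal to $\dot\gamma(0)$. The strategy is to reduce to Case (a) by a limiting argument that exploits the fold singularity of $\exp_x$ at $l\dot\gamma(0)$. By symmetry of conjugacy, $\exp_y$ is also singular at $-l\dot\gamma(l)\in T_yM$, so the classical fold analysis of $\exp_y$ produces a sequence $x_k\to x$ admitting (at least) two distinct minimizing geodesics to $y$; at each $x_k$ the Case (a) construction delivers unit vectors $X_k\in T_{x_k}M$ and constants $a_k>0$ giving the blow-up. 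Passing to a parallel-transported limit $X_k\to X\in T_xM$ and tracking how slowly $a_k\to 0$ (via the Jacobi-field/Morse index estimate, which controls the rate at which the two branches of minimizers merge into $\gamma$), one obtains a direction $X$ and a super-quadratic improvement of the form $d_y(\exp_x(\pm tX))\le l-c\,t^{3/2}+O(t^2)$ or better, still enough to force the quotient to diverge to $-\infty$. Equivalently, one can construct this directly by perturbing $\gamma$ by the Jacobi field $J$ together with a cutoff correction, using the Morse construction to produce a broken geodesic of length strictly less than $l$ joining $\exp_x(\pm tX)$ to $y$.

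The main obstacle is plainly Case (b): the conjugate-only case. Unlike Case (a), where the failure of semiconvexity is already visible at first order, in Case (b) one must quantitatively exploit the singularity of $d\exp_x$ to beat the trivial bound $d_y(\exp_x(\pm tX))\le l+O(t)$ in the relevant direction, which is exactly the point where Jacobi-field/second-variation analysis is unavoidable. Once the super-quadratic drop is established, summing the two sides and dividing by $t^2$ finishes the proof in both cases.
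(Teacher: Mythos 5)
Your Case~(a) is correct and is the same first-variation argument used in the cited proof of \cite[Proposition~2.5]{CMS}. Case~(b), the conjugate case, is where the proposal has a genuine gap. The reduction to Case~(a) via a sequence $x_k\to x$ of points joined to $y$ by two minimizers is not a ``classical fold analysis'' corollary: density of ordinary cut points in $\Cut(y)$ is a nontrivial structure theorem, and even granting it, the merging rate of the two geodesics (your $a_k\to0$) is not controlled by anything you write. Likewise, the asserted drop $d_y(\exp_x(\pm tX))\le l-ct^{3/2}$ is stated, not derived, and the exponent is not obviously the right one; your alternative (``broken geodesic of length strictly less than $l$'') only gives a \emph{qualitative} strict inequality, which cannot force the quotient to $-\infty$.

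What closes Case~(b) is a direct second-variation computation, with no limiting sequence at all. Parametrize the unique minimizer as $\gamma:[0,1]\to M$ with $|\dot\gamma|\equiv l$, and let $J$ be a nontrivial Jacobi field with $J(0)=J(1)=0$. Since $\langle J,\dot\gamma\rangle$ is affine in $s$ and vanishes at both ends, $J'(0)\perp\dot\gamma(0)$ and $J'(0)\ne0$; set $X:=J'(0)/|J'(0)|$. For any piecewise $C^1$ field $W$ along $\gamma$ with $W(0)=X$, $W(1)=0$, the curves $\sigma_t(s):=\exp_{\gamma(s)}(tW(s))$ join $\exp_x(tX)$ to $y$, and the first and second variation of energy give
\begin{equation*}
d_y^2\big(\exp_x(tX)\big)\le E[\sigma_t]=l^2-2t\langle X,\dot\gamma(0)\rangle+t^2\,I(W,W)+o(t^2)=l^2+t^2\,I(W,W)+o(t^2),
\end{equation*}
where $I$ denotes the index form of $\gamma$; the same bound holds with $-t$ and $-W$. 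Dividing the symmetric second difference by $t^2$ and letting $t\to0$ therefore yields
\begin{equation*}
\liminf_{t\to0}\frac{d_y^2(\exp_x tX)+d_y^2(\exp_x(-tX))-2d_y^2(x)}{t^2}\le 2\,I(W,W)
\end{equation*}
for \emph{every} admissible $W$. Now the step you are missing: $J$ lies in the null space of $I$ on fields vanishing at both endpoints ($I(J,J)=0$), but it is \emph{not} $I$-orthogonal to the affine constraint set $\{W(0)=X,\,W(1)=0\}$. Indeed, for any fixed admissible $W_0$, integrating by parts and using the Jacobi equation gives $I(W_0,J)=-\langle W_0(0),J'(0)\rangle=-|J'(0)|<0$, hence $I(W_0+cJ,\,W_0+cJ)=I(W_0,W_0)+2c\,I(W_0,J)\to-\infty$ as $c\to+\infty$. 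Taking the infimum over $W$ in the displayed bound makes the liminf $-\infty$. This is precisely where the conjugacy hypothesis is used quantitatively, and it replaces your heuristic limiting argument with a one-line computation.
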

  
     \begin{cor}\label{cor-dist-sqrd-cut-compoed-increasing-ft}
 Let $x, y\in M$  and let $\psi$  be a $C^2$-function in $(0,\infty)$ such that $\psi'>0$ on $(0,\infty)$.    If $x\in \Cut(y)$, then  there is  a unit vector $X\in T_xM$ such that 
 $$\liminf_{t\to0}\frac{\psi\left(d_y^2\left(\exp_xtX\right)\right)+\psi\left(d_y^2\left(\exp_x -tX\right)\right)-2\psi\left(d^2_y(x)\right)}{t^2} =-\infty.$$
  \end{cor}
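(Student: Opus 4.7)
The plan is to reduce Corollary \ref{cor-dist-sqrd-cut-compoed-increasing-ft} to Lemma \ref{lem-dist-sqrd-cut} by a Taylor expansion of $\psi$ around the value $a := d_y^2(x)$. First, note that $x \in \Cut(y)$ forces $x \neq y$, so $a > 0$ and $\psi$ is $C^2$ in a neighborhood of $a$ with $\psi'(a) > 0$. By Lemma \ref{lem-dist-sqrd-cut}, select a unit vector $X \in T_xM$ for which
\[
\liminf_{t \to 0} \frac{d_y^2(\exp_x tX) + d_y^2(\exp_x -tX) - 2a}{t^2} = -\infty,
\]
and set $f(t) := d_y^2(\exp_x tX)$, $g(t) := d_y^2(\exp_x -tX)$. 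I will show that the same $X$ works for the $\psi$-version.

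The key observation is that the distance function is $1$-Lipschitz, hence $|d_y(\exp_x \pm tX) - d_y(x)| \leq |t|$, which gives $|f(t) - a|, |g(t) - a| \leq 2\sqrt{a}\,|t| + t^2$ for $|t|$ small. Next I apply Taylor's theorem with integral remainder to $\psi$ centered at $a$: for $s$ close to $a$,
\[
\psi(s) = \psi(a) + \psi'(a)(s - a) + r(s), \qquad |r(s)| \leq C_\psi (s - a)^2,
\]
with $C_\psi$ depending only on $\sup |\psi''|$ on a small interval around $a$. Applying this to $s = f(t)$ and $s = g(t)$ and subtracting $2\psi(a)$ yields
\[
\psi(f(t)) + \psi(g(t)) - 2\psi(a) = \psi'(a)\bigl[f(t) + g(t) - 2a\bigr] + R(t),
\]
where $|R(t)| \leq C_\psi\bigl[(f(t)-a)^2 + (g(t)-a)^2\bigr] = O(t^2)$ by the Lipschitz estimate above.

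Dividing by $t^2$ and taking $\liminf$ as $t \to 0$, the remainder $R(t)/t^2$ stays bounded, while the first term on the right equals $\psi'(a)$ times a quantity that tends to $-\infty$ by the choice of $X$. Since $\psi'(a) > 0$, the product tends to $-\infty$, which yields the claimed liminf identity. The argument is essentially routine; the only point requiring care is to verify that $\psi$ is evaluated on an interval where its $C^2$ bounds apply, which is guaranteed by the Lipschitz control of $f(t)$ and $g(t)$ near $t=0$ combined with $a > 0$.
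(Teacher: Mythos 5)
Your proof is correct and follows essentially the same route as the paper: reduce to Lemma \ref{lem-dist-sqrd-cut} via a second-order Taylor expansion of $\psi$ around $a=d_y^2(x)>0$, control the remainder using the Lipschitz bound $\left|d_y^2(\exp_x\pm tX)-a\right|=O(|t|)$, and conclude from $\psi'(a)>0$. The only cosmetic difference is that you use the integral-remainder form of Taylor's theorem with a uniform $\sup|\psi''|$ bound, while the paper uses the Lagrange form with intermediate points $a(t),b(t)\to a$; both yield the same bounded-remainder conclusion.
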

  \begin{proof} 

  By using the Taylor  expansion, we have that for any unit vector $X\in T_xM$ and small $|t|\in(0,1),$
  \begin{align*}
 & \frac{\psi\left(d_y^2\left(\exp_xtX\right)\right)+\psi\left(d_y^2\left(\exp_x -tX\right)\right)-2\psi\left(d^2_y(x)\right)}{t^2}
  \\&=  \psi'\left( d_y^2(x)\right)\frac{d_y^2\left(\exp_xtX\right)+d_y^2\left(\exp_x -tX\right)-2d^2_y(x)}{t^2}
  \\
  &+\frac{\psi''\left(a(t)\right)}{2}\left(\frac{d_y^2\left(\exp_x tX\right)-d^2_y(x)}{t}\right)^2
+\frac{\psi''\left(b(t)\right)}{2}\left(\frac{d_y^2\left(\exp_x -tX\right)-d^2_y(x)}{t}\right)^2,
  \end{align*}
  where $a(t) $ and $ b(t)$ converge to $d_y^2(x)>0$ as $t$ tends to $0$. From the triangle inequality,  it follows      that for any unit vector 
  $X\in T_xM,$ %and small  $|t|\in(0,1)$
  $$\limsup_{t\to 0}\left|\frac{d_y^2\left(\exp_x tX\right)-d^2_y(x)}{t}\right| \leq %\leq\limsup_{t\to0}\frac{2|t|d(y,x)+t^2}{|t|}  =
  2d(y,x).
$$
 % since $d_y^2$ is smooth  on  $M\setminus \Cut(y).$  
  Therefore the result follows from Lemma \ref{lem-dist-sqrd-cut} since $d_y^2(x)>0$ and $\psi'(d_y^2(x))>0.$ 
    \end{proof}

Lastly, we recall   the definition of semiconcavity  of functions on  Riemannian manifolds. % which is a natural generalization of concavity.   
%

%that retains most of the good properties known in convex analysis. 
%We recall   non-smooth analysis  on Riemannian manifolds as convex analysis in the Euclidean space, which allow us to handle the fact that semi concave functions are not twice differentiable in the usual sense.
  
 % has been thought as a  replacement for Riemannian case of convexity in convexity-type estimates on $\R^n.$  
   %    In nonsmooth analysis, convexity-type estimates are often used as a replacement for second- order derivative bounds.
  
  %allow us to handle the fact that c-concave functions are not twice differentiable in the usual sense. almost everywhere second derivatives of semiconcave function

\begin{definition}
Let $\Omega$ be an open set of $M.$  A function $u:\Omega\to\R$ is said to be semiconcave at $x_0\in\Omega$  if  there exist a geodesically convex  ball $B_r(x_0)\subset\Omega$  with  $0<r< i_M(x_0),$ and a smooth function 
  $ \Psi : B_r(x_0) \to \R$ such that  $u+\Psi$ is geodesically concave in  $B_r(x_0),$ where $i_{M}(x_0)$  denotes the injectivity radius    at $x_0$.   A function $u$ is semiconcave in $\Omega$ if it is semiconcave at each point of  $\Omega.$

For $C>0,$  we say that  a function $u:\Omega\to\R$ is  $C$-semiconcave   at    $x_0\in\Omega$ if   there exists a geodesically convex  ball $B_r(x_0)\subset\Omega$  with  $0<r<i_M(x_0)$ such that  
$ u-Cd^2_{x_0}(x)$ is geodesically concave in  $B_r(x_0).$ A function $u$ is  $C$-semiconcave in $\Omega$ if it is $C$-semiconcave at each point of $\Omega.$
\end{definition}

We have the     local characterization of semiconcavity    from \cite[Lemma 3.11]{CMS}.  According to Lemma \ref{lem-hess-dist-sqrd}, the following lemma implies that the squared distance function  is locally uniformly semiconcave.    
 
\begin{lemma}%\label{lem-semiconcave} 
 Let $u:\Omega\to\R$ be a continuous function and let  $x_0\in \Omega,$ where $\Omega\subset M$ is open.    
 Assume that there exist a neighborhood $U$ of $x_0,$ and a  constant $C>0$ such that for any $x \in U$ and $X \in T_x M$ with $|X|=1,$  
$$\limsup_{t\to 0}\frac{u\left(\exp_x tX \right)+u\left(\exp_x -tX \right)-2u(x)}{t^2}\leq C. $$
Then $u$ is $C$-semiconcave at $x_0.$
\end{lemma}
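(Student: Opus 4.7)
The plan is to choose a small geodesically convex ball $B_r(x_0)\subset U$ with $r<i_M(x_0)$ on which $v:=u-Cd_{x_0}^2$ is concave along every minimizing geodesic; the $C$-semiconcavity of $u$ at $x_0$ then follows directly from the definition. The argument has two ingredients: a continuity-based choice of $r$ and the classical one-dimensional fact that a pointwise upper bound on the second difference forces concavity.

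I would first shrink $r$. For $r<i_M(x_0)$ the function $d_{x_0}^2$ is smooth on $B_r(x_0)$, and working in normal coordinates at $x_0$ gives $D^2(d_{x_0}^2)(x_0)=2g_{x_0}$. The continuity of the Hessian then lets me fix $r$ so that, in addition, $B_r(x_0)\subset U$ and
\[
D^2(d_{x_0}^2)(x)(X,X)\geq |X|_x^2 \qquad \forall x\in B_r(x_0),\ X\in T_xM.
\]
Next, for any unit-speed minimizing geodesic $\gamma:[0,T]\to B_r(x_0)$ and $t\in(0,T)$, the hypothesis applied at $x=\gamma(t)$ with unit tangent $X=\dot\gamma(t)$ yields
\[
\limsup_{s\to 0}\frac{u(\gamma(t+s))+u(\gamma(t-s))-2u(\gamma(t))}{s^2}\leq C,
\]
while $d_{x_0}^2\circ\gamma$ is $C^\infty$ with second derivative $D^2(d_{x_0}^2)(\gamma(t))(\dot\gamma,\dot\gamma)\geq 1$ by the previous step. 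Hence $f(t):=v(\gamma(t))$ satisfies
\[
\limsup_{s\to 0}\frac{f(t+s)+f(t-s)-2f(t)}{s^2}\leq C-C=0 \qquad \forall t\in(0,T).
\]

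The final step is the one-dimensional implication: a continuous $f:[0,T]\to\R$ with non-positive upper second difference at every interior point is concave. I would prove this by a standard perturbation argument. If $f$ were not concave on some subinterval $[t_1,t_2]$, a chord $\Lambda$ would lie strictly above $f$ at an interior point, and for small $\epsilon>0$ the function $g_\epsilon(t):=f(t)-\Lambda(t)+\epsilon(t-t_1)(t_2-t)$ would vanish at the endpoints and be negative somewhere inside, hence attain a strict negative minimum at some $t^*\in(t_1,t_2)$. The chord contributes $0$ and the quadratic perturbation contributes $-2\epsilon$ to the upper second difference at $t^*$, so the hypothesis would force $\limsup_{s\to 0}s^{-2}[g_\epsilon(t^*+s)+g_\epsilon(t^*-s)-2g_\epsilon(t^*)]\leq -2\epsilon<0$, contradicting the non-negativity of the upper second difference at a minimum. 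Applying this to every geodesic in $B_r(x_0)$ shows $v$ is geodesically concave there, which is the definition of $C$-semiconcavity.

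The main obstacle is this last one-dimensional step; the other ingredients are routine. The conclusion retains the same constant $C$ as in the hypothesis because the factor of two in $D^2(d_{x_0}^2)(x_0)=2g_{x_0}$ provides enough slack to absorb the Hessian deviation away from $x_0$ when $r$ is shrunk.
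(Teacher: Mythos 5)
The paper does not prove this lemma itself; it quotes it as \cite[Lemma 3.11]{CMS}, so there is no internal proof to compare against. Your argument is correct and is, to my knowledge, essentially the standard one behind the cited result: shrink $r$ so that $B_r(x_0)\subset U$ is geodesically convex with $r<i_M(x_0)$ and, using $D^2(d_{x_0}^2)(x_0)=2g_{x_0}$ and continuity of the Hessian of the smooth function $d_{x_0}^2$, so that $D^2(d_{x_0}^2)(x)\geq g_x$ on $B_r(x_0)$; then for every unit-speed geodesic $\gamma$ in $B_r(x_0)$ the restriction $f=(u-Cd_{x_0}^2)\circ\gamma$ has nonpositive upper second difference, and the sliding-parabola argument (minimum of $f-\Lambda+\epsilon(t-t_1)(t_2-t)$ forces $\limsup_{s\to 0}s^{-2}[\Delta^2_s]\leq -2\epsilon<0$, contradicting $\geq 0$ at an interior minimum) upgrades this to concavity of $f$. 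Two minor points worth making explicit if this were written out in full: one should record that $B_r(x_0)$ can be chosen geodesically convex (true for small $r$), which you use implicitly when restricting to minimizing geodesics contained in the ball, and one should note that the limits of the second differences of the affine and quadratic correction terms exist, which justifies splitting the $\limsup$ of the sum. Neither is a gap; the proof is sound.
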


The following result by Bangert \cite{B} is an extension of Aleksandrov's second differentiability theorem in the Euclidean space that a convex function is twice differentiable almost everywhere   \cite{A}; see also \cite[Chapter 14]{V}.  
 
\begin{thm}[Aleksandrov--Bangert]\label{thm-AB}
Let $\Omega\subset M$ be an open set  and let $u:\Omega\to \R$   be semiconcave. Then  for almost every $x\in\Omega, $ $u$ is differentiable at  $x,$ and   there exists a symmetric operator $A(x):T_xM\to T_xM$ %. In particular, for $x\in\Omega$ and $\xi\in T_xM,$
characterized by any one of the two equivalent properties:
%in the sense that   for   $\xi\in T_xM,,$
\begin{enumerate}[(a)]
\item  For any   $X\in T_xM,\,\,\,$ $A(x)\cdot X=\D_X\D u(x),$ \\
\item
%\begin{equation*}%\label{eq-taylor-exp}
$u(\exp_x X)=u(x)+\left\langle\D u(x), X\right\rangle+\frac{1}{2}\left\langle A(x)\cdot X, X\right\rangle+o\left(|X|^2\right)\quad\mbox{ as $X\to0.$}
$\\%\end{equation*}\\
\end{enumerate}
%The operator $A(x)$  is denoted by $D^2\phi(x). $  % The  Hessian $D^2\phi(x)$  can be viewed either as a symmetric quadratic form on $T_x M$, or  as a symmetric operator from $T_x M$ to itself. 
%When no confusion is possible, 
The operator $A(x)$   and its associated  symmetric bilinear from  on $T_xM$  are  denoted by $D^2u(x)$ and  called the Hessian of $u$ at $x$ when   no confusion is possible. 
\end{thm}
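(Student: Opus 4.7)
The plan is to reduce the theorem to the classical Aleksandrov second-differentiability theorem in $\R^n$ by working in normal coordinates, and then transfer the resulting quadratic expansion back to the Riemannian setting via the smoothness of the exponential map. Since both semiconcavity and twice differentiability are local properties, it suffices to argue on a neighborhood of an arbitrary point $x_0\in\Omega$. Fix a geodesically convex ball $B_r(x_0)\subset\Omega$ with $r<i_M(x_0)$, together with a smooth $\Psi$ on $B_r(x_0)$ such that $v:=u+\Psi$ is geodesically concave there, as granted by the definition of semiconcavity. Then $\exp_{x_0}:B_r(0)\to B_r(x_0)$ is a smooth diffeomorphism.

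Pull $u$ back to the chart: define $\tilde u:=u\circ\exp_{x_0}$ on $B_r(0)\subset T_{x_0}M\simeq\R^n$. I claim $\tilde u$ is semiconcave in the classical Euclidean sense on a slightly smaller ball. Indeed, geodesic concavity of $v$ on $B_r(x_0)$ bounds the geodesic Hessian of $v$ from above by zero in a distributional sense, and in the normal chart the difference between a Euclidean second derivative along a straight line and the geodesic Hessian of $v$ along the corresponding geodesic involves only smooth, bounded contributions from the metric coefficients $g_{ij}$ and the Christoffel symbols (plus the smooth $\Psi$); hence $\tilde u(X)-C|X|^2/2$ is Euclidean concave on $B_{r'}(0)$ for some $C,r'>0$. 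The classical Aleksandrov theorem for semiconcave functions on $\R^n$ therefore produces a full-measure subset $E\subset B_{r'}(0)$ at each point of which $\tilde u$ admits a second-order Euclidean Taylor expansion.

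Next I transfer Euclidean twice differentiability into Riemannian twice differentiability pointwise. Fix $X_1\in E$ and set $x_1:=\exp_{x_0}(X_1)$. The composition $\phi:=\exp_{x_0}^{-1}\circ\exp_{x_1}$ is a smooth map near $0\in T_{x_1}M$ with $\phi(0)=X_1$, and Taylor expansion gives $\phi(Y)=X_1+d\phi(0)\,Y+\tfrac{1}{2}d^2\phi(0)(Y,Y)+O(|Y|^3)$. Writing $u(\exp_{x_1}Y)=\tilde u(\phi(Y))$ and substituting this expansion into the Euclidean second-order expansion of $\tilde u$ at $X_1$, one collects terms up to order $|Y|^2$, reading off a linear form (which identifies $\nabla u(x_1)$) and a symmetric bilinear form $A(x_1):T_{x_1}M\to T_{x_1}M$ realizing property (b). Since $\exp_{x_0}$ has nonvanishing Jacobian on $B_{r'}(0)$, the set $\exp_{x_0}(E)\subset B_{r'}(x_0)$ has full Riemannian measure. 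Covering $\Omega$ by countably many such balls completes the a.e. existence of $A(x)$.

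The most delicate step, and what I expect to be the main obstacle, is the equivalence of (a) and (b). Property (b) is a pointwise quadratic expansion, while (a) asserts that the almost-everywhere-defined vector field $\nabla u$ has a covariant derivative at $x$ equal to $A(x)$. I would handle this as in the Euclidean Aleksandrov argument: $u$ is locally Lipschitz by semiconcavity, so $\nabla u$ is defined on a set of density one; at a point $x$ where (b) holds, applying (b) at $x$ and at nearby good points on a thick set gives $\nabla u(\exp_x X)=\nabla u(x)+A(x)X+o(|X|)$ through the comparison of the Euclidean and Riemannian expansions in a common chart. Combining this with the coordinate formula $\nabla_X\nabla u=\bigl(X^i\partial_i(\nabla u)^k+\Gamma^k_{ij}X^i(\nabla u)^j\bigr)\partial_k$ and the smoothness of the Christoffel symbols yields $A(x)\cdot X=\nabla_X\nabla u(x)$; symmetry of $A(x)$ then follows either from the symmetry of $\nabla^2 u$ in its smooth approximants or, more directly, by polarizing the expansion in (b).
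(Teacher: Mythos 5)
The paper does not supply a proof of this statement: it is quoted from Bangert, with a further pointer to Villani's book. Your strategy --- reduction to the classical Euclidean Aleksandrov theorem in normal coordinates, followed by a Taylor expansion of the coordinate change $\phi=\exp_{x_0}^{-1}\circ\exp_{x_1}$ --- is exactly the one used in those references, so in spirit your approach matches the cited proof. Two points deserve tightening. First, when you argue that $\tilde u=u\circ\exp_{x_0}$ is Euclidean-semiconcave, the ``bounded contributions'' from the Christoffel symbols are terms of the form $\Gamma^k_{ij}\partial_k\tilde u$; their boundedness rests on the local Lipschitz continuity of $u$, which follows from semiconcavity but should be invoked explicitly rather than left implicit.

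Second, and more seriously, your treatment of the equivalence of (a) and (b) runs in a delicate direction. A pointwise second-order Taylor expansion (b) does not, for a general Lipschitz function, imply differentiability of $\nabla u$ at that point, and the implication from (b) to (a) that you sketch relies on monotonicity of the subgradient of a concave function in an essential but unspoken way --- that is really the core of Aleksandrov's theorem, not a routine consequence. The cleaner route, and the one the cited proofs take, is to note that the Euclidean Aleksandrov theorem already delivers, for a.e.\ $X$ in the chart, \emph{both} the second-order Taylor expansion of $\tilde u$ at $X$ and the (approximate) differentiability of $\nabla\tilde u$ at $X$; the latter transfers directly to the Riemannian property (a) through the coordinate formula $\nabla_X\nabla u=X^i\bigl(\partial_i(\nabla u)^k+\Gamma^k_{ij}(\nabla u)^j\bigr)\partial_k$, with no need to re-derive (a) from (b) in the Riemannian setting. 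You actually write down this formula at the end, so your argument becomes clean if you treat it as the \emph{source} of (a) rather than as a consistency check on an implication you have not established.
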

 We refer to \cite{CMS, V, AF} for more  properties of  semiconcave functions.

  \begin{comment}
We end this subsection by stating  the area formula.
Let $M$ and $N$ be   Riemannian manifolds of dimension $n$ and  $\phi:M\to N$ be smooth.  
The Jacobian of $\phi$ is defined as %the absolute value of determinant of the differential $d \phi,$  that is, 
$$\Jac\phi(x):=|\det d\phi(x)|\quad\mbox{for $x\in M$}. $$  
Now we state  the area formula, which can be  proved by using    the area formula in Euclidean space and  a partition of unity. 
\begin{lemma}[Area formula]
 For a $C^1$-function $\phi:M\to M$ and a  measurable set $E\subset M\times\R$, we have
\begin{equation*}
\int_E \Jac\phi \,d\vol=\int_{M} \cH^0[E\cap\phi^{-1}(y)] \,d\vol (y),\end{equation*}
where $\cH^0$ is the counting measure. 

\end{lemma}
\end{comment}

% \newpage

%%%%%%%%%%%%%%%%%%%%%%%%%%%%%%%%%%%%%%%%%%%%%%%%%%%%%%%%%%%%%%%%%%%%%%%%%%%%%%%%%%%%%%%%%%%
\section{Viscosity solutions for  singular  operators      }\label{sec-viscosity}
%%%%%%%%%%%%%%%%%%%%%%%%%%%%%%%%%%%%%%%%%%%%%%%%%%%%%%%%%%%%%%%%%%%%%%%%%%%%%%%%%%%%%%%%%%%%%%
 
In this section, we study viscosity solutions to   degenerate and singular  %  equations on manifolds. We focus on         degenerate and singular   
operators  of    $p$-Laplacian type  for  $1<p<\infty$ on Riemannian manifolds.  As seen before, the $p$-Laplacian operator   
$\La_p u=\Div\left(|\D u|^{p-2}\D u\right)$
    can be written    in  nondivergence form:
$$\La_p u=|\D u|^{p-2}\tr\left[ \left( {\bf{I}}+(p-2)\frac{\D u}{|\D u|}\otimes \frac{\D u}{|\D u|}\right)D^2u\right],$$
%for $|\D u|\not=0,$
where  a  tensor product $X \otimes X$  for a vector field $X$ over $M$  is a symmetric bilinear form on $TM$ defined by    
$$(X \otimes X) (Y,Z) := \left\langle X,Y\right\rangle \left\langle  X, Z\right\rangle\quad\mbox{for $Y,Z\in TM$}.$$  The   tensor product $X \otimes X$   is  considered as a symmetric endomorphism: $X \otimes X \cdot Y =\left\langle X , Y\right\rangle X$,  so we write $(X \otimes X) (Y,Z)=\left\langle X \otimes X\cdot Y, Z\right\rangle=\left\langle X,Y\right\rangle \left\langle  X, Z\right\rangle.$  
 For  $p\geq2,$ the operator   $G(\D u,D^2u)= \La_p u$ is     continuous    with respect to $\D u$ and $D^2 u,$ while   the $p$-Laplacian operator  for $1<p<2$ becomes singular at the points with    vanishing gradient.    
More generally, we are concerned with  the degenerate and singular fully nonlinear  equations given by
$$|\D u|^{p-2}F\left(D^2u\right)+\left\langle b,\D u\right\rangle |\D u|^{p-2} =f ,$$
where $F$ is a   uniformly elliptic operator and $b$ is a bounded vector field  over  $M.$   
Let  $\Sym TM$ be the bundle of symmetric 2-tensors over $M.$ %Throughout this paper,  
An   operator  $F : \Sym TM\rightarrow \R$ %will be      assumed     to satisfy 
%the following basic  hypotheses: 
 %\begin{enumerate}
 %\labitem{(H1)}{Hypo1}
%\item[(H1)\label{Hypo1}]
 is  said  to be uniformly elliptic with the so-called ellipticity constants $0<\lambda\leq\Lambda$ if the following holds:  for  any $S\in \Sym TM,$ and for any positive semi-definite $P\in\Sym TM,$
 \begin{equation}\label{Hypo1}
   \lambda\,\mathrm{trace}(P_x)\leq F(S_x+P_x)-F(S_x)\leq \Lambda\, \mathrm{trace}(P_x),\quad\forall x\in M.
 \end{equation}
 %\end{enumerate} 
As extremal cases of the uniformly elliptic operators, we recall Pucci's   operators:    %  defined as follows:   %\cite{CC} for the proof.
%\begin{definition}[Pucci's extremal operator]\label{def-pucci}
 % for $0<\lambda\leq \Lambda$ (called ellipticity constants),  Pucci's extremal operators are defined as follows: 
 for any $x\in M,$ and  $S_x\in\Sym TM_x,$
\begin{align*}
\cM^+_{\lambda,\Lambda}(S_x):= \lambda\sum_{\mu_i<0}\mu_i+\Lambda\sum_{\mu_i>0}\mu_i,\quad 
\cM^-_{\lambda,\Lambda}(S_x):= \Lambda\sum_{\mu_i<0}\mu_i+\lambda\sum_{\mu_i>0}\mu_i,
\end{align*}
where $\mu_i=\mu_i(S_x)$ are the eigenvalues of $S_x.$    We observe  that   \eqref{Hypo1}   is equivalent to the following: 
 for any $S, P\in\Sym TM,$
    \begin{equation*}%\label{Hypo2}\tag{H1'}
   \cM^-_{\ld,\Ld} (P_x) \leq F(S_x+P_x)-F(S_x) \leq\cM^+_{\ld,\Ld} (P_x) \quad\forall x\in M. 
 \end{equation*} 
 Assuming    that $F(0)=0,$ we restrict ourselves to the degenerate and singular   operators involving    the  Pucci operators. % throughout this paper.   

For  singular elliptic operators,   
we adopt  the concept   of  viscosity solutions    proposed by  Birindelli and Demengel;   see  for instance,   \cite{BD, DFQ2,ACP} and the references therein. The notion of  adapted  viscosity solutions  takes into account  the fact that we can not test functions when the gradient of the test functions  vanishes  at the testing point.  

 \begin{definition}[Viscosity solutions, \cite{BD}] \label{def-visc-sol}
%\begin{enumerate}[(i)]
%\item 
%Let  $\Sym TM$ be the bundle of symmetric 2-tensors over $M,$ and  
Let $\left(TM\setminus\{{\bf0 }\}\right)\times_M \Sym TM:=\{ (\zeta,A): \zeta\in T_xM\setminus\{0\}, A\in \Sym TM_x\,\,\forall x\in M \}$, where ${\bf 0}$ denotes the zero section.    % be the fiber product of $TM$ and $\Sym TM. $ 
 Let $G: \left(TM\setminus\{{\bf0 }\}\right)\times_M \Sym TM \to \R$, and let $\Omega\subset M$ be an open set.
For a function $f:\Omega\to\R,$ we say that  $u\in C( \Omega)$ is a viscosity supersolution (respectively  subsolution) of the equation $$G\left(\D u, D^2u\right)=f\quad\mbox{in $\Omega$}$$  if the following holds:  for any $x\in\Omega,$    
\begin{enumerate}[(i)]
\item either  for any  $\varphi\in C^2(\Omega)$ such that 
$u-\varphi$ has a local minimum (respectively  maximum)   at $x$ with $\D \varphi(x)\not=0,$ we have 
$$G\left( \D\varphi(x),D^2\varphi(x)\right)\leq f(x)\quad\mbox{(respectively $\geq $)},$$
\item or  there exists a ball $B_\rho(x)\subset \Omega$ for some $\rho>0$ such that $u$ is a constant function on $B_\rho(x)$,   and   $f\geq 0$ (respectively  $f\leq 0$) on $B_\rho(x).$ 
\end{enumerate}
We say $u$ is  a viscosity solution if $u$ is  both a viscosity subsolution and a viscosity supersolution.
%\item 
 
%\end{enumerate}
\end{definition}
The adapted  definition     is equivalent to  the usual viscosity solution  when  the   operator $G$ defined in $TM\times_M \Sym TM$  is  continuous.  %We state as a lemma whose  proof  involves     the exponential map at the reference point, and a  modified   
To prove this fact,  we modify   the Euclidean argument   in the proof of \cite[Lemma 2.1]{DFQ2}. 

\begin{lemma}\label{lem-visc-sol-singular-usual}
Let $\Omega\subset M$ be an open set, and  $f\in C(\Omega).$ Let   $G: TM\times_M \Sym TM\to \R$ %and
be  continuous such that  
\begin{equation}\label{eq-assump-cont-op}
G\left({\bf0},{\bf0}\right)=0 \quad\mbox{and}\quad  G\left(\zeta,A\right)\leq 0
\end{equation}
for any $(\zeta, A)\in T_xM\times \Sym TM_x$ with $A\leq 0$ for any $x\in\Omega$.
Then $u$ is a viscosity supersolution  of
\begin{equation}\label{eq-general}
G\left( \D u, D^2 u \right)= f\quad\mbox{in $\Omega$  }
\end{equation}%(respectively, $\leq $) }.$$
 in the sense of Definition \ref{def-visc-sol} if and only if $u$ is a viscosity supersolution  of \eqref{eq-general} in the usual sense. 
\end{lemma}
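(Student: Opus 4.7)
The plan is to prove the equivalence in both directions. The easy direction (usual implies adapted) is immediate: for a usual supersolution $u$ and each $x\in\Omega$, option (i) of Definition \ref{def-visc-sol} is satisfied at $x$ simply by restricting the usual definition to test functions $\varphi$ with $\D\varphi(x)\neq 0$, so option (ii) need not be invoked at all.

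For the converse, suppose $u$ is an adapted supersolution and $\varphi\in C^2(\Omega)$ satisfies $u-\varphi\geq u(x)-\varphi(x)$ near $x$; subtracting a constant, assume $\varphi(x)=u(x)$. I will first normalize by replacing $\varphi(y)$ with $\varphi(y)-|\exp_x^{-1}(y)|^4$, which preserves $\D\varphi(x)$ and $D^2\varphi(x)$ but makes the minimum strict. Three cases then arise. \textbf{Case A:} if $\D\varphi(x)\neq 0$, option (ii) at $x$ is incompatible with this, since local constancy of $u$ would force $x$ to be a local maximum of $\varphi$ and hence $\D\varphi(x)=0$; thus option (i) applies and delivers $G(\D\varphi(x),D^2\varphi(x))\leq f(x)$. \textbf{Case B:} if $\D\varphi(x)=0$ and $u$ is constant on some $B_\rho(x)$, then $\varphi\leq\varphi(x)$ on $B_\rho(x)$, so $D^2\varphi(x)\leq 0$; the structural hypothesis \eqref{eq-assump-cont-op} together with $f(x)\geq 0$ supplied by option (ii) gives $G(0,D^2\varphi(x))\leq 0\leq f(x)$.

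The remaining \textbf{Case C}, where $\D\varphi(x)=0$ but $u$ is not locally constant around $x$, is the crux, and here option (i) must hold at $x$. The idea is to perturb $\varphi$ to have non-vanishing gradient at a nearby touching point, invoke option (i) there, and pass to the limit. Fix a unit vector $v\in T_xM$, choose a geodesic ball $B_\delta(x)\subset\Omega$ within the injectivity radius, and set
\[
\varphi_\epsilon(y):=\varphi(y)+\epsilon\langle v,\exp_x^{-1}(y)\rangle,\qquad y\in B_\delta(x).
\]
Since the minimum of $u-\varphi$ at $x$ is strict, $\eta_\delta:=\min_{\p B_\delta(x)}(u-\varphi)>0$; for $\epsilon\delta<\eta_\delta$ the function $u-\varphi_\epsilon$ attains its minimum on $\overline{B_\delta(x)}$ at an interior point $x_\epsilon$, with $x_\epsilon\to x$ as $\epsilon\to 0$. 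At $x_\epsilon$ the gradient of the perturbation is close to $v$, so $\D\varphi_\epsilon(x_\epsilon)$ is approximately $\epsilon v\neq 0$; hence either option (i) applies at $x_\epsilon$ and yields $G(\D\varphi_\epsilon(x_\epsilon),D^2\varphi_\epsilon(x_\epsilon))\leq f(x_\epsilon)$, or option (ii) at $x_\epsilon$ reduces the estimate to Case B at $x_\epsilon$ with the same conclusion. Sending $\epsilon\to 0$ along a subsequence and using continuity of $G$ and $f$ gives $G(0,D^2\varphi(x))\leq f(x)$.

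The main obstacle will be the careful execution of Case C on the manifold: guaranteeing interiority of $x_\epsilon$ (handled by the strict-minimum normalization), extracting a subsequence along which $\D\varphi_\epsilon(x_\epsilon)\neq 0$ (possibly by varying $v$ to avoid the exceptional identity $\D\varphi(x_\epsilon)=-\epsilon\,\D[\langle v,\exp_x^{-1}(\cdot)\rangle](x_\epsilon)$), and covering the option-(i)/option-(ii) dichotomy at $x_\epsilon$ uniformly through the structural hypothesis \eqref{eq-assump-cont-op}. This is the Riemannian counterpart of the Euclidean perturbation argument in \cite[Lemma 2.1]{DFQ2}, transposed to normal coordinates at $x$.
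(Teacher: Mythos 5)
Your easy direction, Case A and Case B match the paper's argument, and your decomposition of the hard direction is correctly organized. The gap is in Case C, which is precisely the crux. The assertion ``$\D\varphi_\epsilon(x_\epsilon)$ is approximately $\epsilon v\neq 0$'' is false in general: writing $\D\varphi_\epsilon(x_\epsilon)=\D\varphi(x_\epsilon)+\epsilon\D\bigl[\langle v,\exp_x^{-1}(\cdot)\rangle\bigr](x_\epsilon)$, the first term $\D\varphi(x_\epsilon)\approx D^2\varphi(x)(\exp_x^{-1}x_\epsilon)$ need not be $o(\epsilon)$ — indeed $|\exp_x^{-1}x_\epsilon|$ can be of the same order as $\epsilon$, or larger (e.g.\ of order $\epsilon^{1/3}$ when $u-\varphi$ has a quartic minimum). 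So the two contributions can cancel and there is no a priori obstruction to $\D\varphi_\epsilon(x_\epsilon)=0$ for every $\epsilon$. Your remark that this can ``possibly be avoided by varying $v$'' is exactly the nontrivial point, and it is not substantiated: one would have to show that the minimizer $x_\epsilon$ can be made to avoid the level set $\{\D\varphi=-\epsilon\,\D\ell_v\}$, which is a priori a hypersurface when $D^2\varphi(x)$ is singular.

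The paper's proof resolves this in two steps that your proposal does not have. First, it assumes $D^2\varphi(x_0)$ is nonsingular, passes to a coordinate chart, and only then invokes the Euclidean construction of \cite[Lemma 2.1]{DFQ2}; nonsingularity is what makes $x_0$ the unique critical point of $\D\tilde\varphi$ in a small Euclidean ball, and this uniqueness is essential to the perturbation argument the paper cites. Second, the singular case is handled separately by regularizing with $\varphi_\delta:=\varphi-\delta d_{x_0}^2/2$, which preserves the touching at $x_0$, makes $D^2\varphi_\delta(x_0)=D^2\varphi(x_0)-\delta\,\mathbf{I}$ nonsingular for $0<\delta<\delta_0:=\min_{\mu_i\neq 0}|\mu_i|$, yields $G(\D\varphi(x_0),D^2\varphi(x_0)-\delta\,\mathbf{I})\leq f(x_0)$ by the nonsingular case, and then sends $\delta\to 0$ using continuity of $G$. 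Without this dichotomy your Case C has no control over the critical set of $\varphi_\epsilon$, so the argument is incomplete. Once you insert the nonsingular reduction and the $\delta$-regularization, your Case C becomes an honest Riemannian rephrasing of what the paper does via local coordinates.
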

 
\begin{proof}
First we assume that $u$ is a viscosity solution in the usual sense. Then it is not difficult  to show $u$ is a viscosity solution  in the sense of Definition \ref{def-visc-sol}  using  the assumption that  $G\left({\bf0},{\bf0}\right)=0.$

We will prove that $u$ is a viscosity solution in the usual sense 
    if $u$ is a viscosity solution  in the sense of Definition \ref{def-visc-sol}. Let $\vp\in C^2(\Omega)$,  and $x_0 \in \Omega$ be such that $u-\vp$ has a local minimum   at $x_0$. We may  assume  that $x_0$ is a strict local minimum of $u-\vp$.  If $u$ is a  constant function  in a small ball   $B_ \rho(x_0)\subset \Omega$ ($\rho>0$), then $\vp$ has a local maximum at $x_0$. Then we have   $D^2\vp(x_0) \leq 0$, and hence 
$$G\left(\D \vp(x_0),D^2\vp(x_0) \right)=G\left(0,D^2\vp(x_0) \right)\leq0\leq f(x_0)$$ from the assumption  \eqref{eq-assump-cont-op}  and    Definition \ref{def-visc-sol}. 

Now we assume that $u$ is not constant in a small ball $B_\rho(x_0)\subset\Omega$ and  $\D \vp(x_0) =0$ since  there is nothing to prove in the case   $\D\vp(x_0)\not\eq0.$  
First, we consider the case that  $D^2\vp(x_0)$ is nonsingular. %Then $x_0$ is the only critical point of $\D \vp$ in a small ball $B_\rho(x_0).$ %In fact,  
We introduce a coordinate map $\psi:B_r(0)\subset \R^n\to B_{\rho}(x_0)\subset \Omega$ such that $\psi(0)=x_0$, where  we assume that  $\rho$ is  sufficiently small.   Define $$\tilde u:=u\circ\psi\quad\mbox{and}\quad \tilde \vp:=\vp\circ \psi.$$ Then $\tilde u-\tilde\vp $ has a local minimum at $0$ in $B_{r}(0),$ and $\tilde u$ is not constant in $B_{r}(0).$ One can check that $\D\tilde\vp(0)=0,$ and  $D^2\tilde\vp(0)$ is   nonsingular
since  $\D \vp(x_0) =0$ and $D^2\vp(x_0)$ is nonsingular. 
 Then $0\in\R^n$ is  the only critical point of $\D \tilde\vp$ in  $B_{r'}(0)\subset\R^n$ for a small  $0<r'<r.$  
Using the argument   in the proof of \cite[Lemma 2.1]{DFQ2}, we find   sequences $\{\tilde \vp_k\}_{k=1}^\infty \subset C^2(\overline B_{r'}(0)),$  and  
 $\left\{\tilde x_k\right\}_{k=1}^\infty\subset B_{r'}(0),$ %for a sufficiently small ball $B_{r/2}(0)\subset \R^n$ 
 such that 
 \begin{equation*}
 \left\{
 \begin{split} 
&\tilde u-\tilde \vp_k\,\,\mbox{has a local minimum at $\tilde x_k$ in $B_{r'}(0)\subset \R^n,$} \\
 &\D \tilde\vp_k(\tilde x_k)\not\eq 0\quad\forall k=1,2,3,\cdots,\\
 & \tilde\vp_k \to \tilde \vp\quad\mbox{in $C^2(\overline B_{r'}(0))$},\quad\mbox{and}\quad\tilde x_k\to 0 \quad\mbox{as $k\to\infty$}.
 \end{split}
 \right.
 \end{equation*}
% where  it is used that  $\tilde u$ is   not constant around  $0$.  
Thus there exist  
  sequences $\{\vp_k:=\tilde \vp_k\circ\psi^{-1}|_{\overline B_{\rho'}(x_0)}\}_{k=1}^\infty \subset C^2(\overline  B_{\rho'}(x_0)),$  and  
 $\left\{  x_k:=\psi(\tilde x_k)\right\}_{k=1}^\infty\subset B_{\rho'}(x_0)$ for a sufficiently small $0<\rho'<\rho,$ such that 
 \begin{equation*}
 \left\{
 \begin{split} 
&  u-  \vp_k\,\,\mbox{has a local minimum at $  x_k$ in $B_{\rho'}(x_0)\subset\Omega,$} \\
 &\D  \vp_k(x_k)\not\eq 0\quad\forall k=1,2,3,\cdots,\\
   &  \vp_k \to   \vp\quad\mbox{in $C^2(\overline  B_{\rho'}(x_0))$},\quad\mbox{and}\quad  x_k\to x_0 \quad\mbox{as $k\to\infty$}.
 \end{split}
 \right.
 \end{equation*} 
 From   Definition \ref{def-visc-sol}, it follows that 
$$G\left(\D\vp_k(x_k), D^2\vp_k(x_k) \right)\leq f(x_k) \quad\forall k=1,2,\cdots,$$
and hence the continuity of the operator $G$ and the function $f$ implies that 
$$G\left(\D\vp(x_0), D^2\vp(x_0) \right)\leq f(x_0).$$

In the case that $D^2\vp(x_0)$ is   singular,  we consider   for   $\delta>0,$
$$\vp_\delta(x):=\vp(x)-\delta d_{x_0}^2(x)/2\quad\forall x\in B_{\rho}(x_0),$$
where we assume that $\rho>0$ is   small enough. 
For $\delta>0,$ the function $u-\vp_\delta$ has   a local minimum  at $x_0$  in $B_\rho(x_0)$ with $\D \vp_\delta(x_0)=\D \vp(x_0)=0.$  
 Let  $\displaystyle\delta_0:=\min_{\mu_i\not\eq0}|\mu_i|$ for the eigenvalues $\mu_i$ of $D^2\vp(x_0).$
     For $0<\delta< \delta_0,$
   the Hessian 
$D^2\vp_\delta(x_0)=D^2\vp(x_0)-\delta {\bf I}$ is nonsingular.  %  since $D^2d_{x_0}^2/2(x_0)$ is the identity.  
Then we apply the previous argument to $\vp_\delta$ to obtain 
$$G\left(\D\vp(x_0), D^2\vp(x_0)-\delta {\bf I}\right)=G\left(\D\vp_\delta(x_0), D^2\vp_\delta(x_0)\right)\leq f(x_0).$$ Letting 
   $\delta$ go to $ 0,$  we conclude that 
$$G\left(\D\vp(x_0), D^2\vp_(x_0) \right)\leq f(x_0),$$ which finishes the proof.    
\end{proof}

 Lemma \ref{lem-visc-sol-singular-usual} asserts that  a viscosity solution $u$  to the $p$-Laplacian type equation   for $p\geq 2$ with a continuous source term   solves the equation in the usual viscosity sense.  

For the operators with singularities   ($1<p<2$),  we make use of  the following lemma  in \cite[Lemma 2]{ACP}; see also \cite{DFQ1}. 
 %one can prove the following lemma. 
\begin{lemma}\cite[Lemma 2]{ACP}\label{lem-pucci-delta-p-laplace-delta}
Let $1<p<2$, %$f\in C(\Omega),$ 
and let $u$ be a viscosity supersolution to $\La_pu\leq f$ in $\Omega$ with $f\geq0.$ Then  $u$ is  a viscosity supersolution to
$$\left(|\D u(x)|^2+\delta\right)^{\frac{p-2}{2}}\cM^-_{p-1,1}(D^2u)\leq f\quad\mbox{in $\Omega$}$$
for any $  \delta>0.$
\end{lemma}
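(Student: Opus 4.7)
The plan is to verify the two alternatives in Definition \ref{def-visc-sol} for the regularized inequality at each point $x_0\in\Omega$, using that they are already known for $\Delta_p u=f$. Alternative (ii) transfers immediately: if $u$ is constant on some $B_\rho(x_0)\subset\Omega$ and $f\geq 0$ there, the same data witnesses (ii) for the regularized operator. So it remains to treat a test function $\varphi\in C^2(\Omega)$ such that $u-\varphi$ has a local minimum at $x_0$ with $\D\varphi(x_0)\neq 0$; such a touching precludes $u$ from being locally constant near $x_0$ (which would force $x_0$ to be a local maximum of $\varphi$, hence $\D\varphi(x_0)=0$), so the given viscosity supersolution property yields $\Delta_p\varphi(x_0)\leq f(x_0)$.

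The key algebraic step is a pointwise comparison between the $p$-Laplacian and the Pucci operator at $x_0$. Writing $\hat n=\D\varphi(x_0)/|\D\varphi(x_0)|$, the symmetric endomorphism $\mathbf{I}+(p-2)\,\hat n\otimes\hat n$ has eigenvalue $p-1$ along $\hat n$ and eigenvalue $1$ on its $(n-1)$-dimensional orthogonal complement; for $1<p<2$ these all lie in $[p-1,1]$. By definition of the Pucci operator $\cM^-_{p-1,1}$,
\begin{equation*}
\tr\!\left[\bigl(\mathbf{I}+(p-2)\,\hat n\otimes\hat n\bigr)\,D^2\varphi(x_0)\right]\ \geq\ \cM^-_{p-1,1}\!\left(D^2\varphi(x_0)\right).
\end{equation*}
Multiplying by $|\D\varphi(x_0)|^{p-2}>0$ and invoking the nondivergence expression of $\Delta_p$ along with the viscosity inequality gives
\begin{equation*}
|\D\varphi(x_0)|^{p-2}\,\cM^-_{p-1,1}\!\left(D^2\varphi(x_0)\right)\ \leq\ \Delta_p\varphi(x_0)\ \leq\ f(x_0).
\end{equation*}

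The only delicate ingredient is replacing the weight $|\D\varphi(x_0)|^{p-2}$ by the regularized weight $(|\D\varphi(x_0)|^2+\delta)^{(p-2)/2}$, which I will handle by a sign split on $\cM^-_{p-1,1}(D^2\varphi(x_0))$. Since $(p-2)/2<0$ and $\delta>0$, the elementary monotonicity $(|\D\varphi(x_0)|^2+\delta)^{(p-2)/2}\leq |\D\varphi(x_0)|^{p-2}$ holds. When $\cM^-_{p-1,1}(D^2\varphi(x_0))\geq 0$, multiplying this elementary inequality by the nonnegative Pucci value and combining with the previous display yields the desired bound. When $\cM^-_{p-1,1}(D^2\varphi(x_0))<0$, positivity of the regularized weight together with $f\geq 0$ trivially gives $(|\D\varphi(x_0)|^2+\delta)^{(p-2)/2}\cM^-_{p-1,1}(D^2\varphi(x_0))<0\leq f(x_0)$. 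This sign dichotomy is the one subtle point, arising precisely because the exponent $(p-2)/2$ is negative in the singular regime; otherwise the argument is purely algebraic, and the Riemannian setting enters only through the pointwise nondivergence form of $\Delta_p$ on $M$.
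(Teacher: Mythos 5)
Your proof is correct and, as far as I can tell, coincides in substance with the Euclidean argument of Argiolas--Charro--Peral that the paper invokes by citation only. The two ingredients you isolate are exactly the right ones: (1) the eigenvalues of $\mathbf{I}+(p-2)\,\hat n\otimes\hat n$ lie in $[p-1,1]$ for $1<p<2$, so by the infimum characterization of $\cM^-_{p-1,1}$ one has $\tr[(\mathbf{I}+(p-2)\,\hat n\otimes\hat n)\,D^2\varphi(x_0)]\geq \cM^-_{p-1,1}(D^2\varphi(x_0))$, which after multiplying by $|\D\varphi(x_0)|^{p-2}>0$ and using $\La_p\varphi(x_0)\leq f(x_0)$ yields $|\D\varphi(x_0)|^{p-2}\cM^-_{p-1,1}(D^2\varphi(x_0))\leq f(x_0)$; and (2) the sign dichotomy on $\cM^-_{p-1,1}(D^2\varphi(x_0))$, which is where the hypothesis $f\geq 0$ is genuinely used: when the Pucci value is nonnegative you exploit $(|\D\varphi(x_0)|^2+\delta)^{(p-2)/2}\leq|\D\varphi(x_0)|^{p-2}$ (the inequality flips because $(p-2)/2<0$), and when it is negative the left-hand side is automatically $\leq 0\leq f(x_0)$. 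Your handling of alternative~(ii) and of the incompatibility between a nonzero-gradient test function and local constancy of $u$ is also correct. Nothing about the Riemannian setting affects this argument, since everything happens pointwise in a single tangent space once the nondivergence form of $\La_p$ is in hand.
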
   
According to  Lemma \ref{lem-visc-sol-singular-usual},  if $u $ is a viscosity supersolution to
   the $p$-Laplacian   equation   for $1<p<2$ with a nonnegative, continuous source term,  then $u$ solves the regularized equations above   in the usual viscosity sense.

Now we recall   
the sup- and inf-convolutions  introduced by Jensen \cite{J} (see also   \cite[Chapter 5]{CC}) to approximate    viscosity solutions.    Let $\Omega\subset M$  be  a bounded open set,
and    $u$ be  continuous   on $\overline \Omega.$    For $\ve>0,$  let $u_\ve$ denote 
  the inf-convolution  of $u$   (with respect to $ \Omega $)   defined as follows: for $x_0\in  \overline\Omega,$
\begin{equation*}%\label{eq-def-inf-conv}
u_{\ve}(x_0 ):=\inf_{y\in  \overline\Omega  } \left\{ u(y) +\frac{1}{2\ve}d^2(y, x_0)\right\}.
 \end{equation*}
 The sup-convolution of $u$ denoted by $u^\ve$ is defined in a similar way. 
%If $y_0$ is a point  to realize the above infimum, then we have 
%$$u_\ve(x_0)=u(y_0)+\frac{1}{2\ve}d^2(y_0, x_0)\leq  u(y) +\frac{1}{2\ve}d^2(y, x_0)\quad \forall y\in \Omega,$$ which means 
%$u$ has a  touching  paraboloid  $-\frac{1}{2\ve}d^2(y, x_0)+u(y_0)+\frac{1}{2\ve}d^2(y_0, x_0)$ at $y_0$ from below.
In the following lemmas, we quote the important properties of the  inf-convolution from      \cite[Section 3]{KL};  refer to  \cite{J}  for the Euclidean case.
 
\begin{lemma} \label{lem-visc-u-u-e-properties}
 Let $\Omega\subset M$  be  a bounded open set,
and    $u\in C\left(\overline \Omega\right).$ 
  Assume that  for any $x,y\in\overline \Omega$,    the sectional curvature   along  the  minimizing geodesic joining $x$ to $y$  %$\Omega_1:=\{z\in M: d(x,y)\leq1 \forall x\in\Omega\}$ 
   has a uniform lower bound $-\kappa $ for $\kappa\geq0.$ %For $u\in C\left(\overline \Omega \right),$ let $u_\ve$ be  the inf-convolution of $u$ with respect to $  \Omega $ and let $x_0\in\overline\Omega .$%, where $H$ is an open set such that $\overline H\subset \Omega.$  
 \begin{enumerate}[(a)]
% \item If $0<\ve<\ve',$ then  $u_{\ve'}(x_0)\leq u_\ve(x_0) \leq u(x_0).$
 %\item  There exists $\displaystyle \, y_0\in \overline \Omega $ such that $u_\ve(x_0)=u(y_0)+\frac{1}{2\ve}d^2(y_0, x_0).$
% \item $\displaystyle d^2(y_0,x_0) \leq 2\ve |u(x_0)-u(y_0)|\leq 4\ve ||u||_{L^\infty(\Omega)}.$
\item $u_\ve \uparrow u$ uniformly in $\overline\Omega$ as $\ve \downarrow0.$ 
 \item $u_\ve$ is Lipschitz continuous in $\overline\Omega$:   for $x_0, x_1\in \overline\Omega,$
 $$|u_{\ve}(x_0)-u_\ve(x_1)|\leq \frac{3}{2\ve}\diam(\Omega) d(x_0,x_1).$$
% \end{enumerate}
%, where $H$ is an open set such that $\overline H\subset \Omega.$ Let $x_0,x_1\in H.$
%For $u\in C\left(\overline\Omega\right),$ let $u_\ve$ be  the inf-convolution of $u$ with respect to $ \Omega, $ where $\Omega\subset M$ is a bounded open set.
%\begin{enumerate}[(a)]
 \item $u_\ve$ is $C_\ve$-semiconcave in $\Omega$, where $C_\ve:=\frac{1}{\ve}{\sqrt{\kappa}\diam(\Omega)\coth\left(\sqrt \kappa \diam(\Omega)\right)}.$   For almost every $x\in\Omega,$ $u_\ve$ is differentiable at $x,$ and there exists  the Hessian  $D^2u_\ve(x)$ in the sense of Theorem \ref{thm-AB} such that
 \begin{equation*}%\label{eq-u-e-taylor-e}
 u_\ve\left(\exp_x \xi \right)=u_\ve(x)+\left\langle\D u_\ve(x),\xi\right\rangle+\frac{1}{2}{\left\langle D^2u_\ve(x)\cdot\xi,\xi\right\rangle}+o\left(|\xi|^2\right)
 \end{equation*} 
  as $\xi\in T_xM\to 0.$  
   \item $\displaystyle D^2u_\ve\leq \frac{1}{\ve}{\sqrt{\kappa}\diam(\Omega)\coth\left(\sqrt \kappa \diam(\Omega)\right)}\,  {\bf I}\,\,\,$ a.e. in $\Omega.$
 \item  Let $H$  be an  open set such that $  \overline H\subset\Omega.$ 
  Then  there exist a smooth function $\psi$ on $M$ satisfying    $$\mbox{$0\leq\psi\leq1$ on $M$},\,\,\,\,\,\psi\equiv1 \,\,\,\,\mbox{in $\overline H\,\,\,\,\,$ and }\,\,\,\,\supp\psi\subset \Omega, $$ and a sequence $\{w_k\}_{k=1}^\infty$ of smooth functions on $M$  such that
\begin{equation*} 
\left\{
\begin{array}{ll}
 w_k\to \psi u_\ve\qquad &\mbox{uniformly in $M\,\,$  as $k\to\infty,$}\\ 
 |\D w_k| \leq C,\,\,\,% \qquad &\mbox{in $M$, }\\
  %\quad&\mbox{a.e. in $H\,\,$ as $k\to+\infty,$}\\
      D^2w_k\leq C  {\bf I}\qquad &\mbox{in $M$, }\\
\D w_k\to \D u_\ve,\,\,\, D^2 w_k\to D^2u_\ve\quad&\mbox{a.e. in $H\,\,$ as $k\to\infty,$}
 \end{array}\right. \qquad \qquad
\end{equation*}
where the constant $C>0$ is independent of $k$.
  \end{enumerate}

\end{lemma}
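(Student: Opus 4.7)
The plan is to prove (a)--(e) in order, with the key structural step being semiconcavity of $u_\varepsilon$, from which most of the other assertions follow.

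For (a), I would first observe that taking $y = x_0$ in the definition gives $u_\varepsilon \leq u$ on $\overline\Omega$, and that $\varepsilon \mapsto u_\varepsilon$ is monotone increasing as $\varepsilon \downarrow 0$ since $1/(2\varepsilon)$ grows. Since $\overline\Omega$ is compact, for each $x_0$ the infimum is attained at some $y_\varepsilon \in \overline\Omega$, and the bound $u(y_\varepsilon) + \frac{1}{2\varepsilon}d^2(y_\varepsilon, x_0) \leq u(x_0)$ together with boundedness of $u$ forces $d(y_\varepsilon, x_0) \to 0$ uniformly in $x_0$ as $\varepsilon \to 0$. Uniform continuity of $u$ on $\overline\Omega$ then yields $u_\varepsilon \to u$ uniformly.

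For (b), I take $x_0, x_1 \in \overline\Omega$ and a minimizer $y_0$ for $u_\varepsilon(x_0)$. The competitor inequality $u_\varepsilon(x_1) \leq u(y_0) + \frac{1}{2\varepsilon}d^2(y_0, x_1)$ together with the triangle inequality $d^2(y_0, x_1) \leq d^2(y_0, x_0) + 2 d(y_0, x_0) d(x_0, x_1) + d^2(x_0, x_1)$ gives
\[
u_\varepsilon(x_1) - u_\varepsilon(x_0) \leq \frac{1}{2\varepsilon}\bigl(2 d(y_0, x_0) + d(x_0, x_1)\bigr) d(x_0, x_1),
\]
and since $d(y_0, x_0), d(x_0, x_1) \leq \diam(\Omega)$, the Lipschitz constant $\frac{3}{2\varepsilon}\diam(\Omega)$ follows after symmetrizing.

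For (c), the crucial input is Lemma \ref{lem-hess-dist-sqrd}: for fixed $y \in \overline\Omega$ and any $x \in \Omega$, $X \in T_x M$ with $|X|=1$, the second finite-difference quotient of $\frac{1}{2\varepsilon} d_y^2$ along geodesics is bounded above by $\frac{1}{\varepsilon}\sqrt{\kappa} d_y(x) \coth(\sqrt\kappa d_y(x))$. Since $\tau \coth \tau$ is increasing and $d_y(x) \leq \diam(\Omega)$, this bound is uniform in $y$ and equals the constant $C_\varepsilon$ from the statement. The map $x \mapsto u(y) + \frac{1}{2\varepsilon} d_y^2(x)$ is therefore $C_\varepsilon$-semiconcave uniformly in $y$, and taking the pointwise infimum over $y$ preserves this semiconcavity bound (the second-difference-quotient inequality passes to the infimum). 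Combining with the local characterization of semiconcavity quoted just before Theorem \ref{thm-AB}, we conclude $u_\varepsilon$ is $C_\varepsilon$-semiconcave in $\Omega$, and Theorem \ref{thm-AB} (Aleksandrov--Bangert) produces the a.e.\ Hessian together with the second-order expansion.

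Part (d) is immediate: semiconcavity with constant $C_\varepsilon$ is precisely the statement that $D^2 u_\varepsilon - C_\varepsilon \mathbf{I} \leq 0$ wherever the Hessian exists. For (e), I would cover $\overline{H}$ by finitely many charts inside $\Omega$, use a smooth cutoff $\psi$ equal to $1$ on $\overline H$ and supported in $\Omega$, and mollify $\psi u_\varepsilon$ in each chart via standard Euclidean convolution $w_k := \rho_{1/k} * (\psi u_\varepsilon)$ glued by a partition of unity. Uniform convergence follows from continuity of $\psi u_\varepsilon$, the Lipschitz bound in (b) gives uniform $|\D w_k| \leq C$, and the semiconcavity bound in (d) is preserved under convolution with a nonnegative kernel, giving $D^2 w_k \leq C \mathbf{I}$. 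For a.e.\ convergence of $\D w_k$ and $D^2 w_k$, one uses the existence of the first- and second-order Taylor expansion at almost every point (from Theorem \ref{thm-AB}) together with a Lebesgue-point/Calder\'on-style argument in charts.

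The main obstacle is step (c): one must carefully justify that the semiconcavity bound for $d_y^2$, which Lemma \ref{lem-hess-dist-sqrd} states pointwise (and only under the sectional curvature hypothesis along \emph{minimizing} geodesics), passes uniformly to the infimum in the \emph{target} variable $x$ while $y$ ranges over $\overline\Omega$. This is where the hypothesis that the sectional-curvature bound holds along all minimizing geodesics between points of $\overline\Omega$ is essential, as is monotonicity of $\tau \coth \tau$ to get the $\diam(\Omega)$-dependent constant. The smooth approximation (e) is standard but requires some care at points near $\partial \Omega$ where minimizers $y_\varepsilon$ may lie near the boundary; the cutoff $\psi$ handles this cleanly.
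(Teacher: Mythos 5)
Your proposal is essentially the standard argument for Jensen inf-convolution on manifolds, and it matches the route one would expect from the cited source (the paper itself does not prove this lemma but quotes it from [KL, Section~3]). Parts (a)--(d) are carried out correctly: in (a) the bound $d(y_\varepsilon,x_0)\le 2\sqrt{\varepsilon\|u\|_\infty}$ together with uniform continuity of $u$ gives uniform convergence, the triangle-inequality competitor computation for (b) produces exactly $\frac{3}{2\varepsilon}\diam(\Omega)$, in (c) you correctly combine Lemma~\ref{lem-hess-dist-sqrd} with monotonicity of $\tau\coth\tau$ and the fact that the second finite-difference inequality passes to a pointwise infimum when the minimizing $y_x$ at the center is used as a fixed competitor at the two nearby points, and (d) is then immediate from Theorem~\ref{thm-AB}.

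The one place the sketch undersells the work is (e). ``Convolution preserves semiconcavity'' is a statement about the Euclidean Hessian in a fixed chart, not the Riemannian one, and after gluing chart mollifications by a partition of unity the Hessian of $w_k=\sum_i\phi_i w_k^{(i)}$ acquires the extra terms $D^2\phi_i\cdot w_k^{(i)}$ and $2\,\D\phi_i\odot \D w_k^{(i)}$; these must be controlled uniformly in $k$ (they are, via the uniform $C^0$ and gradient bounds on $w_k^{(i)}$, but it is not merely the preservation of an upper Hessian bound under mollification). Likewise, the a.e.\ convergence $D^2w_k\to D^2u_\varepsilon$ on $H$ relies on the Lebesgue differentiation/Alexandrov structure of the distributional Hessian of a semiconcave function (the absolutely continuous part is recovered a.e.\ by mollification, and the singular part is nonpositive and vanishes a.e.), which deserves an explicit mention rather than being folded into ``a Calder\'on-style argument.'' These are details rather than gaps in the strategy, and the overall plan is sound.
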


We define the second order superjet and subjet of semi-continuous functions. 

\begin{definition}%[Viscosity sub- and super-differentials, \cite{AFS}] 
 Let 
  $u: \Omega\to\R$ be a lower semi-continuous function on an open set $\Omega\subset M$. We define the second order subjet of $u$ at $x\in \Omega $ by
\begin{align*}
\cJ^{2,-}u(x):=&\left\{\left(\D\varphi(x),D^2\varphi(x)\right)\in T_xM\times\Sym TM_x: \varphi\in C^2(\Omega), \right.\\
&\left. u-\varphi \,\,\,\mbox{has a local minimum at $x$}\right\}.
\end{align*}
%where $\Sym TM$ stands for the  symmetric 2-tensor at $x.$ 
If $(\zeta,A)\in \cJ^{2,-}u(x),$ $\zeta$ and $A$ are  called a first order subdifferential  and a second order subdifferential  of $u$ at $x,$ respectively.  

Similarly, for an upper semi-continuous function
 $u: \Omega\to \R,$ we define the second order superjet of $u$ at $x\in \Omega$ by
\begin{align*}
\cJ^{2,+}u(x):=&\left\{\left(\D\varphi(x),D^2\varphi(x)\right)\in T_xM\times\Sym TM_x: \varphi\in C^2(\Omega), \right.\\
&\left.u-\varphi \,\,\,\mbox{has a local maximum at $x$}\right\}. 
\end{align*}
\end{definition}

From the proof of  \cite[Proposition 3.3]{KL}, we deduce  the following lemma.  
  \begin{lemma}\label{lem-visc-u-u-e}  
% {\color{black} Assume that $\Sec  \geq -k\,\, ( k\geq0)$ on $M.$} 
%Under the same assumption as Lemma \ref{lem-visc-u-u-e-properties-p}, 
 Let $H$ and $ \Omega$  be   bounded open sets in $M$ % and   $H$  be a bounded  open set    
 such that $  \overline H\subset\Omega.$   Assume that $\Sec
  \geq -\kappa $ on $ \Omega$ for $\kappa\geq0$.  Let   $u\in C\left(\overline\Omega\right)$   and let $\omega$ denote  a modulus of continuity of $u$ on $\overline \Omega,$ which is nondecreasing  on $(0,+\infty)$ with $\omega(0+)=0.$  
  For $\ve>0,$ let $u_\ve$ be  the inf-convolution of $u$ with respect to $ \Omega.$  %and let $m:=||u||_{L^{\infty}\left(\overline\Omega\times[T_1,T_2]\right)}.$    
Then  there exists $\ve_0>0$ depending only  on $||u||_{L^{\infty}\left(\overline\Omega\right)} , H,$ and $\Omega,$   such that   if $0<\ve<\ve_0,$  then the following    holds: let  $x_0\in \overline H,$  and let $y_0\in\overline\Omega$ be such that   
$$u_\ve(x_0)=u(y_0)+\frac{1}{2\ve} d^2(y_0,x_0).$$
\begin{enumerate}[(a)]
%\item  $y_0:=\exp_{x_0}(-\ve\zeta)\in \overline \Omega$ and $$u_\ve(x_0,t_0)=u(y_0,s_0)+\frac{1}{2\ve}\left\{d^2(y_0,x_0)+|s_0-t_0|^2\right\}$$for some $s_0\in[t_0-\ve p, T_2]$
\item    $y_0$ is an interior point of $\Omega,$ and there is a unique minimizing geodesic joining  $x_0$ to $y_0$,  which is contained in $\Omega.$
%\item {\color{red} $u_\ve$ is locally $C_\ve$-semiconcave} in $H$ for $C_\ve:=\frac{1}{\ve}{\sqrt{\kappa}\diam(\Omega)\coth\left(\sqrt \kappa \diam(\Omega)\right)}.$
\item If $(\zeta,A)\in \cJ^{2,-}u_\ve(x_0),$ % for $(x_0,t_0)\in \overline H\times[T_1,T_2]$   and  $(y_0,s_0)\in\overline\Omega\times[T_0,T_2]$ realizes the infimum of the inf- convolution $u_\ve$  at $(x_0,t_0)$, i.e.,
%$$u_\ve(x_0,t_0)=u(y_0,s_0)+\frac{1}{2\ve}\left\{d^2(y_0,x_0)+|s_0-t_0|^2\right\},$$
then   
$y_0=\exp_{x_0}(-\ve\zeta),$ and 
%\item If $(\zeta,A)\in \cJ^{2,-}u_\ve(x_0),$  then we have %there exists $0<\delta_0<1$ such that
 $$ \left(\, L_{x_0,y_0}\zeta, \,L_{x_0,y_0}  A-{\color{black}  {\kappa}{ } \min\left\{\ve  |\zeta|^2,  2\omega\left(2\sqrt{\ve||u||_{L^{\infty}\left(\overline\Omega\right)}}\right)\right\} }\,  {\bf I}\right)\,\in \cJ^{2,-}u(y_0),$$
where $L_{x_0,y_0}$ stands for the parallel transport along the unique minimizing geodesic joining $x_0$ to $y_0,$ and     $L_{x_0,y_0}  A$  is a symmetric bilinear form on $T_{y_0}M$ defined as   
$$\left\langle\left( L_{x_0,y_0}  A\right)\cdot\nu,\,\nu\right\rangle_{y_0}:=\left\langle A\cdot\left(L_{y_0,x_0}\nu\right),\, L_{y_0,x_0}\nu\right\rangle_{x_0}\quad\forall\nu\in T_{y_0}M.$$

 \end{enumerate}
\end{lemma}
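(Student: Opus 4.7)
My plan is to bound $d(y_0,x_0)$ by $\sqrt{\ve}$ and then choose $\ve$ small. From the defining equality $u_\ve(x_0) = u(y_0) + \frac{1}{2\ve}d^2(y_0,x_0)$ and the trivial bound $u_\ve(x_0) \leq u(x_0)$, one obtains $d(y_0,x_0) \leq 2\sqrt{\ve\|u\|_{L^\infty(\overline\Omega)}}$. Choosing $\ve_0$ small enough that this quantity is strictly less than both $\operatorname{dist}(\overline H,\partial\Omega)$ and a uniform positive lower bound for the injectivity radius on the compact set $\overline\Omega$ (which exists since $\Omega$ is bounded and $M$ complete) forces $y_0$ to be interior to $\Omega$ and the minimizing geodesic from $x_0\in\overline H$ to $y_0$ to be unique and entirely contained in $\Omega$.

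\textbf{First-order identification of $y_0$.} I would combine the subjet inequality
\[ u_\ve(\exp_{x_0}\xi) \geq u_\ve(x_0) + \langle\zeta,\xi\rangle + \tfrac12\langle A\xi,\xi\rangle + o(|\xi|^2) \]
with the trivial bound $u_\ve(x) \leq u(y_0) + \frac{1}{2\ve}d^2(y_0,x)$ (equality at $x=x_0$), subtract, and Taylor-expand using the Gauss lemma $\D_x[d^2_{y_0}/2](x_0) = -\exp_{x_0}^{-1}(y_0)$. Comparing linear terms for both signs of $\xi$ forces $\zeta = -\exp_{x_0}^{-1}(y_0)/\ve$, i.e.\ $y_0 = \exp_{x_0}(-\ve\zeta)$ and in particular $d(y_0,x_0) = \ve|\zeta|$.

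\textbf{Test function at $y_0$.} To build a subjet element at $y_0$, I would introduce
\[ G(y) := \exp_{x_0}\!\bigl(L_{y_0,x_0}\exp_{y_0}^{-1}(y)\bigr), \]
defined for $y$ close to $y_0$ using the unique minimizing geodesic from $y_0$ to $x_0$. Then $G(y_0)=x_0$ and the crucial identity $\exp_{x_0}^{-1}(G(\exp_{y_0}\eta)) = L_{y_0,x_0}\eta$ holds \emph{exactly}. The inf-convolution bound $u(y) + \frac{1}{2\ve}d^2(y,G(y)) \geq u_\ve(G(y))$ (with equality at $y_0$) rearranges to
\[ u(\exp_{y_0}\eta) - u(y_0) \geq \bigl[u_\ve(G(\exp_{y_0}\eta)) - u_\ve(x_0)\bigr] + \frac{d^2(y_0,x_0) - d^2(\exp_{y_0}\eta,G(\exp_{y_0}\eta))}{2\ve}. \]
The first bracket, via the subjet inequality with $\xi = L_{y_0,x_0}\eta$ (of norm $|\eta|$), expands cleanly into $\langle L_{x_0,y_0}\zeta,\eta\rangle + \tfrac12\langle L_{x_0,y_0}A\cdot\eta,\eta\rangle + o(|\eta|^2)$.

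\textbf{Curvature correction and the $\min$.} The crux is an upper bound for $d^2(y,G(y))$ that is quadratic in $|\eta|$. For $y=\exp_{y_0}(sv)$ with $|v|=1$, I would consider the variation $H(s,t) := \exp_{\gamma(t)}(sL_{y_0,\gamma(t)}v)$ along the minimizing geodesic $\gamma:[0,1]\to M$ from $y_0$ to $x_0$; then $H(s,0)=y$ and $H(s,1)=G(y)$, so $d(y,G(y)) \leq \int_0^1|\partial_tH(s,t)|\,dt$. Since each $s$-curve is geodesic and $t\mapsto L_{y_0,\gamma(t)}v$ is parallel (so $\nabla_s\partial_tH|_{s=0}=0$), the $s^2$-term in the expansion of $|\partial_tH|^2$ is the curvature quantity $-2\langle R(L_{y_0,\gamma(t)}v,\dot\gamma)\dot\gamma, L_{y_0,\gamma(t)}v\rangle \leq 2\kappa|\dot\gamma|^2$ under $\Sec\geq -\kappa$. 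Integrating in $t$ and squaring yields $d^2(y,G(y)) \leq d^2(y_0,x_0)(1+\kappa s^2) + o(s^2)$, hence the second bracket contributes a Hessian correction of $-\kappa\,[d^2(y_0,x_0)/\ve]\,{\bf I}$. Since $d(y_0,x_0)=\ve|\zeta|$ gives $d^2(y_0,x_0)/\ve = \ve|\zeta|^2$, while $d^2(y_0,x_0)/\ve = 2[u_\ve(x_0)-u(y_0)] \leq 2\omega(d(y_0,x_0)) \leq 2\omega(2\sqrt{\ve\|u\|_{L^\infty(\overline\Omega)}})$ using the modulus of continuity of $u$, replacing by the smaller of the two keeps the element in $\cJ^{2,-}u(y_0)$ (shrinking the Hessian weakens the defining inequality) and matches the statement exactly. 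The main obstacle I anticipate is the second-variation/Jacobi-field estimate with the sharp constant $\kappa$ and keeping all remainders uniformly $o(|\eta|^2)$; Lemma~\ref{lem-hess-dist-sqrd} provides the prototype quantitative comparison should the direct variation argument become cumbersome.
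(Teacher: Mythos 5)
Your proposal correctly reconstructs the argument the paper delegates to \cite{KL}: the distance bound $d(y_0,x_0)\le 2\sqrt{\ve\,\|u\|_{L^\infty}}$ forces $y_0$ inward and within the injectivity radius, the comparison map $G(y)=\exp_{x_0}\bigl(L_{y_0,x_0}\exp_{y_0}^{-1}y\bigr)$ transports the subjet exactly, and the second variation of energy along $H(s,t)=\exp_{\gamma(t)}\bigl(sL_{y_0,\gamma(t)}v\bigr)$ with parallel variation field gives $E''(0)=-2\int_0^1\langle R(V,\dot\gamma)\dot\gamma,V\rangle\,dt\le 2\kappa\,d^2(y_0,x_0)$ under $\Sec\ge-\kappa$, yielding the correct Hessian correction. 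Since $d^2(y_0,x_0)/\ve=\ve|\zeta|^2$ and $d^2(y_0,x_0)/\ve\le 2\omega\bigl(2\sqrt{\ve\|u\|_{L^\infty}}\bigr)$ both hold, the correction is precisely $\kappa\min\{\ve|\zeta|^2,\,2\omega(2\sqrt{\ve\|u\|_{L^\infty}})\}$, matching the statement.
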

\begin{comment} 
\begin{proof}
$\overline H$ is compact in $\Omega.$ There is a finite number of balls $B_{\delta_i}(x_i)\subset B_{3\delta_i}(x_i)\subset \Omega$ such that $\overline H\subset \cup_1^NB_{\delta_i}(x_i)\subset\Omega. $ Then for any $x\in \overline H,$ we find $i$ such that $x\in B_i.$ Then one can show  $B_{\delta_i}(x)\subset B_{3\delta_i}(x_i)\subset \Omega.$ Thus for any $x\in\overline H,$ $B_{\delta_0}(x)\subset\Omega$ for $\delta_0:=\min \delta_i. $
\end{proof}
\end{comment}

Using Lemma  \ref{lem-visc-u-u-e},  it can be  proven  that the inf-convolution  solves approximated equation in the viscosity sense of Definition \ref{def-visc-sol}, provided that the sectional curvature   is bounded from below, and the operator $G$ is intrinsically uniformly continuous with respect to $x.$
 The  intrinsic uniform continuity of the operator with respect to spatial variables  is  a natural extension of the Euclidean concept of uniform continuity of  the operator  with respect to $x$; we recall    from \cite{AFS} the definition and important examples.  
 
\begin{definition}%\label{def-in-unif-cont} 
The operator
 $G$ is {\color{black}intrinsically uniformly continuous with respect to $x$} in $\left(TM \setminus\{{\bf 0}\}\right)\times_M \Sym TM$
%The operator $F: \Sym TM\to \R$ is said to be {\color{black}  intrinsically uniformly continuous with respect to $x$} 
if there exists a modulus of continuity $\omega_G:[0,+\infty)\to[0,+\infty)$ with $\omega_G(0+)=0$  such that 
\begin{equation}\label{Hypo2}% \tag{H2}
G\left(\zeta, A\right)-G\left(L_{x,y} \zeta, L_{x,y}  A\right)\leq \omega_G\left(d(x,y)\right)
\end{equation}
for  any $(\zeta, A)\in \left(T_xM\setminus\{0\} \right)\times \Sym TM_x $, and    $x,y\in M$ {\color{black} with $d(x,y)<\min\left\{i_M(x),\,i_M(y)\right\},$} where $i_M(x)$ denotes the injectivity radius at $x$ of $ M$. 

\end{definition}

%We may assume that $\omega_G$ is nondecreasing  on $[0,+\infty).$  Recall  some examples of the  intrinsically uniformly continuous operator from  \cite{AFS}. 

\begin{example} \label{exp-AFS}
{\rm 
\begin{enumerate}[(a)]
\item   
 When $M=\R^n,$  we have $L_{x,y}   \zeta \equiv \zeta$, $L_{x,y}   A\equiv A$ so \eqref{Hypo2} holds.    
\item 
Consider the operator $G,$  which depends only on $|\zeta|$ and  the eigenvalues of $A\in\Sym TM ,$ of the form :
\begin{equation}\label{eq-F-eigenvalue}
G\left(\zeta, A\right)=\tilde G\left(|\zeta|, \,\mathrm{eigenvalues \,\,of } \,\,A \right)\quad\mbox{for some $\tilde G.$}
\end{equation} 
%we canonically identify the space of symmetric bilinear forms on T Mx with the space of self-adjoint linear mappings from T Mx into T Mx ,
Since the parallel transport preserves inner products,  we have $|\zeta|=|L_{x,y}\zeta|$, and $A$ and $L_{x,y}  A$ have the same eigenvalues. Thus  the operator $G$ satisfies  intrinsic uniform continuity with respect to $x$    with  $\omega_G\equiv0$. The trace and determinant of $A$  are typical examples   of the  operator satisfying \eqref{eq-F-eigenvalue}. 
%\item  
%\item  
The Pucci  extremal operators    also  satisfy \eqref{eq-F-eigenvalue} and hence \eqref{Hypo2} with  $\omega_G\equiv0$. 
% and  \eqref{Hypo1}. %$\mathrm{(H1)}.$
\item As  seen before, the $p$-Laplacian operator  %on $\left(TM \setminus\{{\bf 0}\}\right)\times_M \Sym TM$
  can be expressed as  
$$ G\left(\D u, D^2 u\right)=|\D u|^{p-2}\tr\left[ \left( {\bf{I}}+(p-2)\frac{\D u}{|\D u|}\otimes \frac{\D u}{|\D u|}\right)D^2u\right].$$
One can check    that  for  any $(\zeta, A)\in \left(T_xM\setminus\{0\} \right)\times \Sym TM_x $ and    $x,y\in M$ {\color{black} with $d(x,y)<\min\left\{i_M(x),\,i_M(y)\right\},$} 
\begin{align*}
\left\langle \left(L_{x,y}\zeta\otimes L_{x,y}\zeta \right)L_{x,y}A\cdot \nu,\,\nu\right\rangle_y=\left\langle \left( \zeta\otimes \zeta \right) A\cdot  L_{y,x}\nu, \,L_{y,x}\nu \right\rangle_x\quad\forall \nu \in T_yM. 
\end{align*}
Thus the $p$-Laplacian operator is   {\color{black}intrinsically uniformly continuous} in $\left(TM \setminus\{{\bf 0}\}\right)\times_M \Sym TM$ with $\omega_G\equiv0$.  
\end{enumerate}
}

\end{example}

By making use of Lemma \ref{lem-visc-u-u-e},  we have the following lemma  whose   proof  is similar to \cite[Lemma 3.6]{KL}.  
\begin{lemma}\label{prop-u-u-e-superj}
 Under the same assumption  as Lemma  \ref{lem-visc-u-u-e}, we also assume that  $G$ is {\color{black}intrinsically uniformly continuous} in $\left(TM \setminus\{{\bf 0}\}\right)\times_M \Sym TM$. % where   ${\bf 0}$ stands for the zero section. 
 Let  $ f\in C ( \Omega    ),$ %  let $u\in C\left(\overline\Omega\right)$  and   $\omega$ denote  modulus of continuity of $u$ on  $\Omega.$ Assume that 
and  $u$  be  a viscosity supersolution to  
 $$G\left(\D u, D^2u\right)\leq f\quad \mbox{in $\Omega$}.$$   
If $0<\ve<\ve_0,$ then  $u_\ve$ is a viscosity supersolution of 
  $$G_\ve\left(\D u, D^2u\right):=G\left(\D u, D^2u- {\kappa}{} \min\left\{\ve |\D u|^2,  2\omega\left(2\sqrt{ \ve m }\right)\right\} {\bf I}\right) \leq f_\ve\quad\mbox{in $H$},$$
  where 
  $\ve_0>0$ is the constant as in Lemma  \ref{lem-visc-u-u-e},  and 
  $$f_{\ve}(x):= \sup_{\overline B_{2\sqrt{m\ve}}(x)}f^++{\color{black}\omega_G}\left(2\sqrt{ \ve m}\right);\quad m:=||u||_{L^{\infty}\left(\overline\Omega\right)}.$$  
 Moreover,  we have 
   $$G\left(\D u_\ve, D^2u_\ve- {\kappa}{} \min\left\{\ve |\D u_\ve|^2,  2\omega\left(2\sqrt{ \ve m}\right)\right\} {\bf I}\right) \leq f_\ve \quad\mbox{ a.e. in $H\cap\{ |\D u_\ve|>0\}.$}$$
\end{lemma}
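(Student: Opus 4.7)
The plan is to combine the subjet correspondence from Lemma \ref{lem-visc-u-u-e}(b) with the intrinsic uniform continuity of $G$, following the strategy of \cite[Lemma 3.6]{KL}. First I would prove the viscosity statement for $u_\ve$; then I would deduce the pointwise a.e.\ inequality from it using the Aleksandrov--Bangert expansion of $u_\ve$ to build admissible quadratic test functions.

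For the viscosity claim, fix $x_0 \in H$ and $\varphi \in C^2(H)$ so that $u_\ve - \varphi$ has a local minimum at $x_0$ with $\D\varphi(x_0) \neq {\bf 0}$; thus $(\D\varphi(x_0), D^2\varphi(x_0)) \in \cJ^{2,-} u_\ve(x_0)$. For $\ve < \ve_0$, Lemma \ref{lem-visc-u-u-e} produces the minimizer $y_0 = \exp_{x_0}(-\ve \D\varphi(x_0)) \in \Omega$ and the transported subjet
$$\bigl(L_{x_0,y_0} \D\varphi(x_0),\ L_{x_0,y_0} D^2\varphi(x_0) - \kappa\min\{\ve|\D\varphi(x_0)|^2,\,2\omega(2\sqrt{\ve m})\}\,{\bf I}\bigr) \in \cJ^{2,-} u(y_0),$$
where I use that $L_{x_0,y_0}{\bf I} = {\bf I}$. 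Since $|u| \le m$ on $\overline\Omega$, combining $u_\ve(x_0) = u(y_0) + \tfrac{1}{2\ve}d^2(x_0,y_0)$ with $u_\ve(x_0) \le u(x_0)$ yields $d(x_0,y_0) \le 2\sqrt{\ve m}$, so $y_0 \in \overline B_{2\sqrt{\ve m}}(x_0)$ and $f(y_0) \le \sup_{\overline B_{2\sqrt{\ve m}}(x_0)} f^+$.

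Because $L_{x_0,y_0}\D\varphi(x_0) \neq 0$, the supersolution hypothesis on $u$ together with Definition \ref{def-visc-sol} applied at $y_0$ gives
$$G\bigl(L_{x_0,y_0}\D\varphi(x_0),\ L_{x_0,y_0} D^2\varphi(x_0) - \kappa\min\{\ve|\D\varphi(x_0)|^2,\,2\omega(2\sqrt{\ve m})\}\,{\bf I}\bigr) \le f(y_0).$$
Transporting this back to $T_{x_0}M$ via the intrinsic uniform continuity \eqref{Hypo2} of $G$ costs at most $\omega_G(d(x_0,y_0)) \le \omega_G(2\sqrt{\ve m})$, producing
$$G_\ve\bigl(\D\varphi(x_0), D^2\varphi(x_0)\bigr) \le f(y_0) + \omega_G(2\sqrt{\ve m}) \le f_\ve(x_0),$$
which is the desired inequality. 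The alternative clause of Definition \ref{def-visc-sol} is automatic because $f_\ve \ge \omega_G(2\sqrt{\ve m}) \ge 0$ on $H$, so on any ball where $u_\ve$ is constant the condition $f_\ve \ge 0$ holds trivially.

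For the a.e.\ inequality, at almost every $x \in H$ the inf-convolution $u_\ve$ admits the Aleksandrov--Bangert expansion of Lemma \ref{lem-visc-u-u-e-properties}(c) and Theorem \ref{thm-AB}. At such a point with $\D u_\ve(x) \neq 0$, for each $\eta > 0$ I would use the test function
$$\varphi_\eta(\exp_x \xi) := u_\ve(x) + \langle \D u_\ve(x), \xi\rangle + \tfrac{1}{2}\bigl\langle (D^2 u_\ve(x) - \eta\,{\bf I})\,\xi,\,\xi\bigr\rangle,$$
for which $(u_\ve - \varphi_\eta)(\exp_x\xi) = \tfrac{\eta}{2}|\xi|^2 + o(|\xi|^2)$, so $u_\ve - \varphi_\eta$ has a strict local minimum at $x$ with $\D\varphi_\eta(x) = \D u_\ve(x) \neq 0$. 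Applying the viscosity inequality just established and letting $\eta \downarrow 0$ using continuity of $G$ in $(\zeta,A)$ away from the zero section yields the pointwise bound. The principal bookkeeping difficulty is making the Hessian shift in $G_\ve$ pass cleanly through parallel transport and match the correction term produced by Lemma \ref{lem-visc-u-u-e}(b); this is exactly why $L_{x_0,y_0}{\bf I} = {\bf I}$ is decisive, and why one must control $d(x_0,y_0)$ by $2\sqrt{\ve m}$ in order to absorb the $\omega_G$ defect into the definition of $f_\ve$.
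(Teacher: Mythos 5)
Your argument is correct and follows the route the paper intends (the paper merely cites \cite[Lemma~3.6]{KL} and supplies no details): you use Lemma~\ref{lem-visc-u-u-e}(b) to move the subjet to $y_0$, observe that parallel transport preserves the nonvanishing of the gradient so clause~(i) of Definition~\ref{def-visc-sol} is available at $y_0$, control $d(x_0,y_0)\le 2\sqrt{\ve m}$ to absorb the $\omega_G$ defect and the supremum of $f^+$ into $f_\ve$, and exploit $L_{x_0,y_0}\,\mathbf{I}=\mathbf{I}$ so the curvature correction passes through the transport; the a.e.\ inequality then follows from the Aleksandrov--Bangert expansion with the $\eta$-perturbed test function. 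The only point worth flagging is that the final $\eta\downarrow 0$ step tacitly uses continuity of $G(\zeta,\cdot)$ for fixed $\zeta\neq 0$, which is not stated in the lemma's hypotheses but holds for every operator to which the lemma is applied in the paper (Pucci and $p$-Laplacian-type operators), so this is harmless.
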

%In particular, Lemma \ref{prop-u-u-e-superj} holds for Pucci's extremal operators according to Remark \ref{rmk-AFS}.

\begin{remark}\label{rmk-p-La-eq-u-e}
 Under the same assumption  as Lemma  \ref{prop-u-u-e-superj}, if  
$$G\left(\D u, D^2 u\right):=\La_pu=|\D u|^{p-2}\tr\left[ \left( {\bf{I}}+(p-2)\frac{\D u}{|\D u|}\otimes \frac{\D u}{|\D u|}\right)D^2u\right],$$
then   $u_\ve$ satisfies 
\begin{equation}\label{eq-u-e-a.e.}
 \La_pu_\ve - \kappa(n+p-2)|\D u_\ve|^{p-2}  {}{ } \min\left\{\ve|\D u_\ve|^2,  2{ }\omega\left(2\sqrt{ \ve m}\right)\right\} \leq f_\ve\quad\mbox{in $H$}
 \end{equation}
in the viscosity sense,   and   almost everywhere in $H\cap\{ |\D u_\ve|>0\}$,  where  $\omega_G\equiv0$  from Example \ref{exp-AFS}.   % since $\omega_G\equiv0.$ 
When $p\geq 2,$ the $p$-Laplacian operator is continuous,  and hence \eqref{eq-u-e-a.e.} is satisfied almost everywhere in $H$ by   Lemma \ref{lem-visc-sol-singular-usual}. 
\end{remark}

\section{ABP type estimates }\label{sec-abp}
%%%%%%%%%%%%%%%%%%%%%%%%%%%%%%%%%%%%%%%%%%%%%%%%%%%%%%%%%%%%%%%%%%%%%%%%%%%%%%%%%%%%%%%%%%%%%%%%%%%%%%%  

In this section, we  establish  the ABP type estimates for the $p$-Laplacian type operators for $1<p<\infty$.
%we slide      the $\frac{p}{p-1}$-th power of the distance function   from below.    
We begin with the definition of the $p$-contact set involving   %  the cost function as a power of the distance
the $\frac{p}{p-1}$-th power of the distance.       %$c_p$ for $1<p<+\infty$, defined as  $\displaystyle \frac{p-1}{p}d^{\frac{p}{p-1}}(x,y)$. % for   $1<p<\infty$. 
% where $d(x,y)$ is the geodesic  distance between $x$ and $y.$     

   \begin{definition}[Contact sets]\label{def-p-contact-set}
Let $1<p<\infty.$  Let $\Omega$ be a bounded open set in $M$ and   $u\in C(\overline\Omega).$ For a given $a>0$ and a compact set $E\subset M,$ the $p$-contact set associated with $u$ of opening $a$ with the  vertex set $E$ is defined by
  $$\cA^p_{a}\left(E;\overline \Omega;u\right):=\left\{x\in\overline\Omega : \,\exists y\in E \,\mbox{such that}\,\inf_{ \overline \Omega}\left\{u+a\frac{p-1}{p}d_y^{\frac{p}{p-1}}\right\}=u(x)+ a\frac{p-1}{p}d_y^{\frac{p}{p-1}}(x) \right\}.$$
 \end{definition}
 When  $p=2,$ a contact point is touched by a concave paraboloid  from below, which is a squared distance function;  refer to \cite{WZ, Ca, K}.  
   Notice that  the $p$-contact set  $\cA^p_{a}\left(E;\overline \Omega;u\right)$ is a closed set. % in $\overline\Omega$, which can be proved by using  the continuity of the distance function and compactness of $E$. 

 %%%%%%%%%%%%%%%%%%%%%%%%%%%%%%%%%%%%%%%%%%%%%%%%%%%%%%%%%%%%%%%%%%%%%%%%%%%%%%%%%%%%%%%%%%%%%%%%%%%%%%
\subsection{Jacobian estimates}%\label{sec-abp}
%%%%%%%%%%%%%%%%%%%%%%%%%%%%%%%%%%%%%%%%%%%%%%%%%%%%%%%%%%%%%%%%%%%%%%%%%%%%%%%%%%%%%%%%%%%%%%%%%%%%%%%  

First, we study  Jacobian estimates for certain exponential maps  which arise in the proof   of the ABP type estimate. 
 %for  $p$-Laplacian type operators. % for $ 1<p<\infty$.  % defined as 
%$$\La_p u:=\Div\left(|\D u|^{p-2}\D u\right), $$
%$$|\D u|^{p-2}\cM^-_{\ld,\Ld}(D^2u)$$
\begin{prop}[Jacobian estimate on the   contact set]\label{prop-jacobi-estimate}
%Assume that  $\Ric\geq -\kappa$   for $\kappa\geq0$ and      
Let $1<p<\infty$, $E\subset M$ be a compact set,       
and    $u\in    C(\overline\Omega)$ be smooth    in  a bounded open set  $\Omega\subset M.$  Define $\phi_p:  \{z\in\Omega : |\D u(z)|>0\}\times[0,1]\to M$  as 
%$$\phi_p(x):=\exp_x {|\D u(x)|^{p-2}}\D u(x)$$
 $$\phi_p(z,t):=\exp_z t{|\D u(z)|^{p-2}}\D u(z).$$%\quad \forall t\in[0,1].$$ 
Assume that   $x\in \cA^p_{1}\left(E;\overline \Omega;u\right) \cap \{z\in\Omega : |\D u(z)|>0\}. $ %  and  assume that $${|\D u(x)|^{p-2}}\D u(x)\in U_x.$$
%and $$ \det d\phi_p(x,t)> 0\quad\forall t\in[0,1).$$
  Then  the following holds. 
  \begin{enumerate}[(a)]
  \item    If $y\in E$ satisfies  
  $$  \inf_{ \overline \Omega}\left\{u+\frac{p-1}{p}d_{y}^{\frac{p}{p-1}}\right\}=u(x)+ \frac{p-1}{p}d_{y}^{\frac{p}{p-1}}(x), $$
then      $$y=\exp_x {|\D u|^{p-2}}\D u(x)\not\in \Cut(x)\cup\{x\},\quad\mbox{and}\quad {|\D u|^{p-2}}\D u(x)\in\cE_x.$$ 
  The curve   
        $\gamma:=\phi_p(x,\cdot):[0,1]\to M$ is a unique minimizing geodesic joining $ x$ to  $ y=\exp_x {|\D u|^{p-2}}\D u(x)$. %  with velocity ${|\D u|^{p-2}}\D u(x)\in\cE_x$.

\item   $\displaystyle    |\D u|^{p-2}   D^2u(x) +\left({\bf I}+\frac{2-p}{p-1} \frac{\D u}{|\D u|}\otimes \frac{\D u}{|\D u|}\right)\circ D^2 (d_{\phi_p(x,1)}^2/2)(x)   $ is {symmetric and}  positive semi-definite. 
% where 
%$\gamma(t):=\phi_p(x,t)$ and 
%a symmetric endomorphism    $\frac{\D u}{|\D u|} \otimes \frac{\D u}{|\D u|}$ is defined as 
  %$$\left\langle\frac{\D u}{|\D u|}\otimes \frac{\D u}{|\D u|} \cdot X, Y\right\rangle:= \left\langle\frac{\D u}{|\D u|},X\right\rangle \left\langle  \frac{\D u}{|\D u|}, Y\right\rangle\quad\mbox{for $X,Y\in T_xM$},$$

    \item 
    \begin{align*}%\label{eq-jacobi-estimate-exp-t}
&\Jac_z\phi_p(x,1)\\
= &\Jac  \exp_x \left({|\D u(x)|^{p-2}}\D u(x)\right) \\
& \cdot\det \left\{  |\D u|^{p-2}  \left({\bf I}+(p-2) \frac{\D u}{|\D u|}\otimes \frac{\D u}{|\D u|}\right)\circ D^2u(x) +D^2 (d_{\phi_p(x,1)}^2/2)(x)\right\} \\
%= &\Jac \exp_x \left(t{|\D u(x)|^{p-2}}\D u(x)\right)\cdot\\
%\,\,&\det \left({\bf I}+(p-2) \frac{\D u}{|\D u|}\otimes \frac{\D u}{|\D u|}\right)\det \left\{  t|\D u|^{p-2}   D^2u(x) +\left({\bf I}+\frac{2-p}{p-1} \frac{\D u}{|\D u|}\otimes \frac{\D u}{|\D u|}\right)\circ D^2 (d_{\gamma(t)}^2/2)(x)\right\}\\
= &\Jac  \exp_x \left({|\D u(x)|^{p-2}}\D u(x)\right)\cdot \det\left({\bf I}+(p-2) \frac{\D u}{|\D u|}\otimes \frac{\D u}{|\D u|}(x)\right) \\
\,\,&\cdot\det \left\{  |\D u|^{p-2}   D^2u(x) +\left({\bf I}+\frac{2-p}{p-1} \frac{\D u}{|\D u|}\otimes \frac{\D u}{|\D u|}\right)\circ D^2 (d_{\phi_p(x,1)}^2/2)(x)\right\},  
\end{align*}
where 
 $\Jac  \exp_x \left({|\D u(x)|^{p-2}}\D u(x)\right)$ stands for  the Jacobian determinant  of $\exp_x$, a map from $T_xM$ to $M,$ at the point $  {|\D u(x)|^{p-2}}\D u(x)\in T_xM. $
   \item   For $t\in[0,1]$,  the operator $\displaystyle \,t  |\D u|^{p-2}   D^2u(x) +\left({\bf I}+{\color{black} t\,\frac{2-p}{p-1} }\frac{\D u}{|\D u|}\otimes \frac{\D u}{|\D u|}\right)\circ D^2 (d_{\phi_p(x,t)}^2/2)(x)  $ is symmetric and  positive semi-definite.
    %is nonnegative definite.

\item If  $1<p<2,$ then  for $ t\in(0,1]$ $$t{|\D u(x)|^{p-2}}  D^2u(x)+  \left({\bf I}+{\frac{2-p}{p-1}} \frac{\D u}{|\D u|}\otimes \frac{\D u}{|\D u|}\right) \circ D^2(d^2_{\phi_p(x,t)}/2)(x),$$
  is symmetric and  positive semi-definite,  and %for  $t\in(0,1]$,
  \begin{align*}%\label{eq-jacobi-estimate-exp-t}
&\Jac_z\phi_p(x,t)\\
= &\Jac  \exp_x \left(t{|\D u(x)|^{p-2}}\D u(x)\right) \\
& \cdot\det \left\{  t|\D u|^{p-2}  \left({\bf I}+(p-2) \frac{\D u}{|\D u|}\otimes \frac{\D u}{|\D u|}\right)\circ D^2u(x) +D^2 (d_{\phi_p(x,t)}^2/2)(x)\right\} \\
%= &\Jac \exp_x \left(t{|\D u(x)|^{p-2}}\D u(x)\right)\cdot\\
%\,\,&\det \left({\bf I}+(p-2) \frac{\D u}{|\D u|}\otimes \frac{\D u}{|\D u|}\right)\det \left\{  t|\D u|^{p-2}   D^2u(x) +\left({\bf I}+\frac{2-p}{p-1} \frac{\D u}{|\D u|}\otimes \frac{\D u}{|\D u|}\right)\circ D^2 (d_{\gamma(t)}^2/2)(x)\right\}\\
= &\Jac  \exp_x \left(t{|\D u(x)|^{p-2}}\D u(x)\right)\cdot \det\left({\bf I}+(p-2) \frac{\D u}{|\D u|}\otimes \frac{\D u}{|\D u|}\right) \\
\,\,&\cdot\det \left\{  t|\D u|^{p-2}   D^2u(x) +\left({\bf I}+\frac{2-p}{p-1} \frac{\D u}{|\D u|}\otimes \frac{\D u}{|\D u|}\right)\circ D^2 (d_{\phi_p(x,t)}^2/2)(x)\right\}.
\end{align*}
%where  $\Jac  \exp_x \left(t{|\D u(x)|^{p-2}}\D u(x)\right)$ stands for  the Jacobian determinant  of $\exp_x$, a map from $T_xM$ to $M,$ at the point $  t{|\D u(x)|^{p-2}}\D u(x)\in T_xM. $
%Moreover,      $\Jac_z\phi_p(x,t)>0$  for $t\in(0,1)$. %, and $\Jac_z\phi_p(x,1)\geq0$.  
 
%\item for $t\in(0,1),$ $$ \det d\phi_p(x,t)> 0,$$ and 
  
%\item \begin{equation}\label{eq-jacobi-estimate}
%\Jac\phi_p(x,t)\leq\sS^n\left(t\sqrt{\frac{\kappa}{n}}  |\D u(x)|^{p-1} \right)\left\{\sH^n\left(t\sqrt{\frac{\kappa}{n}}  |\D u(x)|^{p-1}\right)+t \frac{\La_pu(x)}{n}\right\}^n, 
%\end{equation}
%where $\sH(\tau):=\tau\coth(\tau),$ and $\sS(\tau):= \sinh(\tau)/\tau$ for   $\tau\geq 0.$ 
%In particular, if $p\geq 2, $ then \eqref{eq-jacobi-estimate} holds  in $\Omega$
\end{enumerate}
\end{prop}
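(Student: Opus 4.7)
The plan is to extract (a) from first-order optimality, (b) from the second-order necessary condition at an interior minimum, (c) from Lemma \ref{lem-jacobian-exp}, and (d)--(e) by combining (b) with the Jacobi-type monotonicity in Lemma \ref{lem-hess-dist-sqrd-geodesic}. For (a), the contact condition says $u + \frac{p-1}{p}d_y^{p/(p-1)}$ attains its minimum on $\overline\Omega$ at the interior point $x$; since $|\D u(x)|>0$ forces $y\neq x$, we have $d_y^2(x)>0$ and $\psi(s):=\frac{p-1}{p}s^{p/(2(p-1))}$ satisfies $\psi'>0$ on $(0,\infty)$. If $x\in\Cut(y)$, Corollary \ref{cor-dist-sqrd-cut-compoed-increasing-ft} would force a second difference of $\frac{p-1}{p}d_y^{p/(p-1)}$ at $x$ to equal $-\infty$, contradicting smoothness of $u$ at the minimum $x$. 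Hence $y\notin\Cut(x)$, so $d_y^{p/(p-1)}$ is smooth near $x$; first-order optimality combined with the Gauss identity $\D(d_y^2/2)(x)=-\exp_x^{-1}(y)$ gives $\D u(x) = d_y(x)^{(2-p)/(p-1)}\exp_x^{-1}(y)$, whence $d_y(x) = |\D u(x)|^{p-1}$ and $\exp_x^{-1}(y) = |\D u(x)|^{p-2}\D u(x)\in \cE_x$. The minimizing geodesic $\gamma=\phi_p(x,\cdot)$ is then unique.

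For (b) and (c), set $e := \D u(x)/|\D u(x)|$. Since $\phi_p(x,t)$ lies on the geodesic from $x$ in direction $e$ for every $t\in(0,1]$, the vector $e$ is an eigenvector of $D^2(d_{\phi_p(x,t)}^2/2)(x)$ with eigenvalue $1$, yielding the key identity
\begin{equation*}
(e\otimes e)\circ D^2(d_{\phi_p(x,t)}^2/2)(x) = e\otimes e, \qquad t\in(0,1].
\end{equation*}
A direct chain-rule computation using $\D(d_y^2/2)(x)=-|\D u|^{p-2}\D u$ gives
\begin{equation*}
D^2\!\left(\tfrac{p-1}{p}d_y^{p/(p-1)}\right)\!(x) = |\D u|^{2-p}\Bigl[D^2(d_y^2/2)(x) + \tfrac{2-p}{p-1}\,e\otimes e\Bigr].
\end{equation*}
Combining with the second-order necessary condition $D^2u(x)+D^2(\frac{p-1}{p}d_y^{p/(p-1)})(x)\geq 0$, multiplying by $|\D u|^{p-2}$, and rewriting $\frac{2-p}{p-1}e\otimes e = \frac{2-p}{p-1}(e\otimes e)\circ D^2(d_y^2/2)(x)$ produces the symmetric, positive semi-definite expression of (b). For (c), a short computation yields $\D\xi(x) = |\D u|^{p-2}({\bf I}+(p-2)e\otimes e)\circ D^2u(x)$ for $\xi(z)=|\D u(z)|^{p-2}\D u(z)$, so Lemma \ref{lem-jacobian-exp} gives the first determinant formula; the second follows by factoring out the rank-one perturbation ${\bf I}+(p-2)e\otimes e$, whose inverse is ${\bf I}+\frac{2-p}{p-1}e\otimes e$ and whose determinant equals $p-1$.

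For (d), Lemma \ref{lem-hess-dist-sqrd-geodesic} gives $D^2(d_{\phi_p(x,t)}^2/2)(x) \geq t\, D^2(d_y^2/2)(x)$ on $[0,1]$, and using the key identity, the operator in (d) equals $t|\D u|^{p-2}D^2u + D^2(d_{\phi_p(x,t)}^2/2)(x) + t\,\frac{2-p}{p-1}e\otimes e$, which is $\geq t\cdot[|\D u|^{p-2}D^2u + D^2(d_y^2/2)(x) + \frac{2-p}{p-1}e\otimes e] \geq 0$ by (b). For (e) in the singular case $1<p<2$, the Jacobian formula follows from Lemma \ref{lem-jacobian-exp} applied to $\xi_t = t|\D u|^{p-2}\D u$ and the factorization of (c); positive semi-definiteness rearranges as $t\cdot[\text{operator of (b)}] + (1-t)\frac{2-p}{p-1}e\otimes e\geq 0$, the extra term being nonnegative precisely because $\frac{2-p}{p-1}>0$ when $1<p<2$. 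The main obstacle is the bookkeeping with the rank-one operators: it is the observation that $e$ is a common radial eigenvector of $D^2(d_{\phi_p(x,t)}^2/2)(x)$ for \emph{every} $t\in(0,1]$ that makes each expression collapse to an affine interpolation of its value at $t=1$, and ruling out $x\in\Cut(y)$ via Corollary \ref{cor-dist-sqrd-cut-compoed-increasing-ft} is technically the most delicate step.
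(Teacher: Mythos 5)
Your proposal is correct and follows essentially the same route as the paper: ruling out $x\in\Cut(y)$ via Corollary~\ref{cor-dist-sqrd-cut-compoed-increasing-ft}, extracting the first- and second-order optimality conditions, exploiting that $\D u(x)/|\D u(x)|$ is the common radial eigenvector of $D^2(d_{\phi_p(x,t)}^2/2)(x)$ (eigenvalue~$1$) for all $t$, then invoking Lemma~\ref{lem-jacobian-exp} for (c)/(e) and Lemma~\ref{lem-hess-dist-sqrd-geodesic} for (d)/(e). One small wording slip: in (e) the operator is not \emph{equal} to $t\cdot[\text{operator of (b)}]+(1-t)\frac{2-p}{p-1}\,e\otimes e$ but is $\geq$ it, with the gap $D^2(d_{\phi_p(x,t)}^2/2)(x)-t\,D^2(d_y^2/2)(x)\geq 0$ coming from Lemma~\ref{lem-hess-dist-sqrd-geodesic}; the conclusion is unchanged.
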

\begin{proof}
  \begin{enumerate}[(a)]
  \item
  For any  $x\in\cA^p_1\left(E;\overline \Omega;u\right),$ we find $y\in E$ such that 
\begin{equation}\label{eq-inf-contact-pt}
\inf_{  \Omega}\left\{u+\frac{p-1}{p}d_y^{\frac{p}{p-1}}\right\}=u(x)+\frac{p-1}{p}d_y^{\frac{p}{p-1}}(x).
\end{equation} Then we have 
\begin{align*}
    \frac{p-1}{p}\left\{ d_y^{\frac{p}{p-1}}(z)-d_y^{\frac{p}{p-1}}(x)\right\}\geq - u(z)+u(x) \quad\forall z\in\Omega.
\end{align*}
We first claim that $x\not\in\Cut(y)$ for $x\in\cA^p_1\left(E;\overline \Omega;u\right)\cap\Omega$  (see also \cite{WZ}).  Suppose to the contrary that $x\in\Cut(y).$ Note that $x\not\eq y$ if $x\in \Cut(y).$ 
Letting   $\psi_p(s):=\frac{p-1}{p}s^{\frac{p}{2(p-1)}},$  we have    $\psi'_p >0$ in $(0,\infty).$ According to Corollary \ref{cor-dist-sqrd-cut-compoed-increasing-ft}, there is  a unit vector $X\in T_xM$ such that 
 $$\liminf_{t\to0}\frac{\psi\left(d_y^2\left(\exp_xtX\right)\right)+\psi\left(d_y^2\left(\exp_x -tX\right)\right)-2\psi\left(d^2_y(x)\right)}{t^2} =-\infty,$$
which contradicts to 
\begin{align*}
&\liminf_{t\to0}\frac{\psi\left(d_y^2\left(\exp_xtX\right)\right)+\psi\left(d_y^2\left(\exp_x -tX\right)\right)-2\psi\left(d^2_y(x)\right)}{t^2}\\
&\geq -\lim_{t\to0} \frac{ u\left(\exp_xtX\right)+u\left(\exp_x-tX\right) -2 u\left(x\right)}{t^2}
=-\left\langle D^2u(x)\cdot X,X\right\rangle.
\end{align*} 
 Hence $x$ is not a cut point of $y.$ 
 
% $${\color{red} y\not\eq x \iff |\D u(x)|\not\eq 0}$$

Since  $x\not\in\Cut(y)$ and $p>1,$ % $|\D u(x)|>0$ for $x\in\cA^p_1\left(E;\overline \Omega;u\right)\subset\Omega,$  
   \eqref{eq-inf-contact-pt}   yields   that 
   $$\D u(x)=-\frac{p-1}{p}\D d_y^{\frac{p}{p-1}}(x)$$%=-d^{\frac{1}{p-1}}_y(x)\D d_y(x)$$
% If $y=x,$ then $\D u(x)=0.$ In fact, 
from which we see that $${\color{black} y=x \iff \D u(x)= 0}. $$
%using the fact that $$y=\exp_x  -\D d^2_y(x)/2\quad\forall x\not\in\Cut(y).$$
 Thus for $x\in \cA^p_{1}\left(E;\overline \Omega;u\right) \cap \{z\in\Omega : |\D u(z)|>0\},$  we have   $y\not\in \Cut(x)\cup \{x\},$ and hence  % we have
\begin{align*}
&\D u(x)=-d_y^{\frac{1}{p-1}}(x)\D d_y(x),\quad |\D u(x)|=  d_y^{\frac{1}{p-1}}(x),
%&y=\exp_x {|\D u(x)|^{p-2}} \D u(x)\not\in \Cut(x)\cup\{x\},
\end{align*}
and 
\begin{equation*}%\label{eq-expression-contact-pt}
  y=\exp_x  -\D d^2_y(x)/2=\exp_x {|\D u(x)|^{p-2}} \D u(x)\not\in \Cut(x)\cup\{x\},
\end{equation*}
which implies  (a). 

%since $x\not\in\Cut(y).$  
  \item  From \eqref{eq-inf-contact-pt},  we     have  that for $y=\phi_p(x,1),$
\begin{align*}
D^2u(x)&\geq  -\frac{p-1}{p}D^2d_y^{\frac{p}{p-1}}(x)\\
&=
-d_y^{\frac{2-p}{p-1}}(x)\left\{ D^2(d_y^2/2)(x)+\frac{2-p}{p-1} \D d_y(x)\otimes \D d_y(x)\right\}\\
%&=-|\D u|^{2-p}D^2(d_y^2/2)(x)-\frac{2-p}{p-1}|\D u|^{2-p}\D d_y(x)\otimes \D d_y(x)\\
&=-|\D u(x)|^{2-p}\left\{D^2(d_y^2/2)(x)+ \frac{2-p}{p-1}\D d_y(x)\otimes \D d_y(x)\right\},
\end{align*}
where we recall  the symmetric operator $\D d_y(x)\otimes \D d_y(x)$  defined by 
$$\left\langle \D d_y(x)\otimes \D d_y(x)\cdot X,Y\right\rangle:=\left\langle\D d_y(x), X\right\rangle \left\langle\D d_y(x), Y\right\rangle\quad\forall X, Y\in T_xM.$$
Since $\D d_y(x) $ is an eigenvector of  $D^2(d_{y}^2/2)(x)$ associated with the  eigenvalue $1$, % for $t\in[0,1],$
we obtain that for $X, Y\in T_xM,$
\begin{align*}
\left\langle \D d_y(x)\otimes \D d_y(x) \circ  D^2(d^2_{y}/2)(x) \cdot X, Y\right\rangle&= \left\langle  \D d_y(x),  D^2(d^2_{y}/2)(x) \cdot X\right\rangle \left\langle\D d_y(x), Y\right\rangle\\
&= \left\langle X,  D^2(d^2_{y}/2)(x) \cdot  \D d_y(x)\right\rangle \left\langle\D d_y(x), Y\right\rangle\\
&= \left\langle X,   \D d_y(x)\right\rangle \left\langle\D d_y(x), Y\right\rangle\\
&= \left\langle \D d_y(x)\otimes \D d_y(x)\cdot X,Y\right\rangle,
\end{align*}
%\begin{align*}
%\left\langle D^2(d_y^2/2)(x)\cdot X, \frac{\D u(x)}{|\D u(x)|}\right\rangle=\left\langle D^2(d_y^2/2)(x)\cdot  \frac{\D u(x)}{|\D u(x)|}, X\right\rangle=\left\langle \frac{\D u(x)}{|\D u(x)|},X\right\rangle\quad\forall X\in T_xM
%\end{align*} 
and hence 
$ \D d_y(x)\otimes \D d_y(x) =\D d_y(x)\otimes \D d_y(x) \circ  D^2(d^2_{y}/2)(x) =  \frac{\D u}{|\D u|}\otimes \frac{\D u}{|\D u|} \circ  D^2(d^2_{y}/2)(x). 
$ 
Therefore,   we deduce that 
\begin{equation}\label{eq-nonvanishing-jac-det}
\begin{split}
0\leq &{|\D u(x)|^{p-2}}  D^2u(x)+  D^2(d_y^2/2)(x)+ \frac{2-p}{p-1}  \D d_y(x)\otimes \D d_y(x)\\ 
= &{|\D u(x)|^{p-2}}  D^2u(x)+    \left({\bf I}+\frac{2-p}{p-1} \frac{\D u}{|\D u|}\otimes \frac{\D u}{|\D u|}\right)\circ  D^2(d^2_{y}/2)(x)
% \\=& {|\D u(x)|^{p-2}}  D^2u(x)+    \left({\bf I}+(p-2) \frac{\D u}{|\D u|}\otimes \frac{\D u}{|\D u|}\right)^{-1}\circ  D^2(d^2_y/2)(x) 
%\frac{|\D u|^{p-2}}{a^{p-1}} \left\{  \left({\bf I}+(p-2) \frac{\D u}{|\D u|}\otimes \frac{\D u}{|\D u|}\right)\circ D^2u \right\}(x)+D^2(d^2_y/2)(x)\geq 0
\end{split}
\end{equation}
which is symmetric.  
%where we note that  $\left({\bf I}+(p-2) \frac{\D u}{|\D u|}\otimes \frac{\D u}{|\D u|}\right)^{-1}= {\bf I}+ \frac{2-p}{p-1} \frac{\D u}{|\D u|}\otimes \frac{\D u}{|\D u|}.$
%where $x\not\in \Cut(y).$ 

%Let $(e_1,\cdots,e_n)$ be an orthonormal basis of $T_xM$ with $e_1=\frac{\dot\gamma(0)}{|\dot\gamma(0)|}$ and let  $(e_1(t),\cdots,e_n(t))$ be an orthonormal basis of $T_{\gamma(t)}M$ with $e_1(t)=\frac{\dot\gamma(t)}{|\dot\gamma(t)|}$ by using parallel transportation along $\gamma(t).$ Then 

    \item From Lemma \ref{lem-jacobian-exp}, we have  that for $y=\phi_p(x,1),$
%   $$|\det d\phi(x)|=\left(\Jac \exp_x(\xi(x))\right)\cdot \left|\det\left(\D\xi+ D^2d^2_y/2\right)(x)\right|,$$ 
  \begin{equation*}%\label{eq-jac-exp-positive-smooth}
  \begin{split}
 \Jac_z \phi_p(x,1)
=& \Jac \exp_x \left({|\D u(x)|^{p-2}}\D u(x)\right)
\\& \cdot\left|\det \left\{  |\D u(x)|^{p-2}  \left({\bf I}+(p-2) \frac{\D u}{|\D u|}\otimes \frac{\D u}{|\D u|}\right)\circ D^2u(x) +D^2 (d_{y}^2/2)(x)\right\} \right|\\
= &\Jac  \exp_x \left({|\D u(x)|^{p-2}}\D u(x)\right)\cdot \det\left({\bf I}+(p-2) \frac{\D u}{|\D u|}\otimes \frac{\D u}{|\D u|}\right) \\
\,\,&\cdot\left|\det \left\{  |\D u|^{p-2}   D^2u(x) +\left({\bf I}+\frac{2-p}{p-1} \frac{\D u}{|\D u|}\otimes \frac{\D u}{|\D u|}\right)\circ D^2 (d_{y}^2/2)(x)\right\}\right|. 
\end{split}
\end{equation*}
Thus  \eqref{eq-nonvanishing-jac-det} implies (c).  

  \item According to Lemma \ref{lem-hess-dist-sqrd-geodesic}, we see that  for $t\in[0,1]$
  \begin{align*}
   D^2(d^2_{\gamma(t)}/2)(x) -t \,D^2(d^2_{\gamma(1)}/2)(x)\geq0. 
  \end{align*} 
  Since $ \D d_y(x)=-\frac{\D u(x)}{|\D u(x)|}$ is an eigenvector of $D^2(d_{\gamma(t)}^2/2)(x)$ associated with the eigenvalue $1$ for any $t\in[0,1]$,  the above argument yields that 
  \begin{equation}\label{eq-eigenvector-along-gamma}
   \frac{\D u}{|\D u|}\otimes \frac{\D u}{|\D u|} \circ  D^2(d^2_{\gamma(t)}/2)(x)=\frac{\D u}{|\D u|}\otimes \frac{\D u}{|\D u|}(x)  =  \frac{\D u}{|\D u|}\otimes \frac{\D u}{|\D u|} \circ  D^2(d^2_{y}/2)(x).
   \end{equation}
 %Thus it  follows that 
  %\begin{equation*} 
%\begin{split}
   %\frac{1}{t}\left({\bf I}+\frac{2-p}{p-1} \frac{\D u}{|\D u|}\otimes \frac{\D u}{|\D u|}\right)\circ  D^2(d^2_{\gamma(t)}/2)(x) - \left({\bf I}+\frac{2-p}{p-1} \frac{\D u}{|\D u|}\otimes \frac{\D u}{|\D u|}\right)\circ D^2(d^2_{\gamma(1)}/2)(x)\\\geq  \frac{1}{t}  D^2(d^2_{\gamma(t)}/2)(x) -D^2(d^2_{\gamma(1)}/2)(x)+\frac{2-p}{p-1} \left(\frac{1}{t}-1\right)\frac{\D u}{|\D u|}\otimes \frac{\D u}{|\D u|}
 %\end{split}
%\end{equation*}  
 %is positive definite for any $t\in(0,1)$.  
%$ \frac{\D u}{|\D u|}\otimes \frac{\D u}{|\D u|} \circ  D^2(d^2_{y}/2)(x)=\D d_y(x)\otimes\D d_y(x)$ is nonnegative definite. 
%from the  above. 
%and that $  \frac{\D u}{|\D u|}\otimes \frac{\D u}{|\D u|} \circ  D^2(d^2_{\gamma(t)}/2)(x)$ is symmetric 
%since   $\frac{\dot\gamma(0)}{|\dot\gamma(0)|}=-\frac{\D u(x)}{|\D u(x)|}$ is an eigenvector of $D^2d_{\gamma(t)}^2/2(x)$ associated to eigenvalue $1$ for $t\in[0,1].$ 
From    \eqref{eq-nonvanishing-jac-det}, %together with \eqref{eq-eigenvector-along-gamma}, 
we deduce  that  for $t\in[0,1]$
  \begin{equation*} 
\begin{split}
 0 & \leq  t{|\D u(x)|^{p-2}}  D^2u(x)+    t\left({\bf I}+\frac{2-p}{p-1} \frac{\D u}{|\D u|}\otimes \frac{\D u}{|\D u|}\right)\circ  D^2(d^2_{\gamma(1)}/2)(x)\\
  &\leq t{|\D u(x)|^{p-2}}  D^2u(x)+  \left({\bf I}+t\frac{2-p}{p-1} \frac{\D u}{|\D u|}\otimes \frac{\D u}{|\D u|}\right)  \circ D^2(d^2_{\gamma(t)}/2)(x). 
%\frac{|\D u|^{p-2}}{a^{p-1}} \left\{  \left({\bf I}+(p-2) \frac{\D u}{|\D u|}\otimes \frac{\D u}{|\D u|}\right)\circ D^2u \right\}(x)+D^2(d^2_y/2)(x)\geq 0
\end{split}
\end{equation*}
%Therefore we deduce that $\Jac\phi_p(x,t)>0$ for $[0,1)$. % from \cite[Proposition 14.31]{V}

  \item 
  Since  $1<p<2,$   (d) combined with \eqref{eq-eigenvector-along-gamma}  implies that 
$$t{|\D u(x)|^{p-2}}  D^2u(x)+  \left({\bf I}+{\frac{2-p}{p-1}} \frac{\D u}{|\D u|}\otimes \frac{\D u}{|\D u|}\right) \circ D^2(d^2_{\gamma(t)}/2)(x)\geq0
\quad\forall t\in(0,1].$$
  Using Lemma \ref{lem-jacobian-exp} again, we have    
%   $$|\det d\phi(x)|=\left(\Jac \exp_x(\xi(x))\right)\cdot \left|\det\left(\D\xi+ D^2d^2_y/2\right)(x)\right|,$$ 
  \begin{equation*}%\label{eq-jac-exp-positive-smooth}
  \begin{split}
 \Jac_z \phi_p(x,t)
=& \Jac \exp_x \left(t{|\D u(x)|^{p-2}}\D u(x)\right)
\\& \cdot\left|\det \left\{  t|\D u(x)|^{p-2}  \left({\bf I}+(p-2) \frac{\D u}{|\D u|}\otimes \frac{\D u}{|\D u|}\right)\circ D^2u(x) +D^2 (d_{\gamma(t)}^2/2)(x)\right\} \right|,
\end{split}
\end{equation*}
%we refer to  \cite[Lemma 3.2]{Ca} for the proof.  
  and       
\begin{align*}
&\left|\det \left\{  t|\D u(x)|^{p-2}  \left({\bf I}+(p-2) \frac{\D u}{|\D u|}\otimes \frac{\D u}{|\D u|}\right)\circ D^2u(x) +D^2 (d_{\gamma(t)}^2/2)(x)\right\} \right|\\
=& \det  \left({\bf I}+(p-2) \frac{\D u}{|\D u|}\otimes \frac{\D u}{|\D u|}(x)\right)\\&\cdot \det \left\{t{|\D u(x)|^{p-2}}  D^2u(x)+  \left({\bf I}+\frac{2-p}{p-1} \frac{\D u}{|\D u|}\otimes \frac{\D u}{|\D u|}\right) \circ D^2(d^2_{\gamma(t)}/2)(x)\right\},
\end{align*}
completing the proof of (e). 
\end{enumerate}
\end{proof}
 
Proposition \ref{prop-jacobi-estimate} holds for semiconcave functions for almost all contact points.  %; we refer to \cite{CMS, V}    for properties of semiconcave functions.  

\begin{cor} \label{cor-jacobi-estimate}
Let $1<p<\infty$, $E\subset M$ be a compact set,       
and    $u\in    C(\overline\Omega)$  be      semiconcave   in  a bounded open set  $\Omega\subset M.$ Denote by $N$  a subset of $\Omega $ of measure zero such that $u$ is twice differentiable in $\Omega\setminus N$ in the sense of the   Theorem \ref{thm-AB}.   
   Define $\phi_p:  \{z\in\Omega\setminus N : |\D u(z)|>0\}\times[0,1]\to M$  as 
%$$\phi_p(x):=\exp_x {|\D u(x)|^{p-2}}\D u(x)$$
 $$\phi_p(z,t):=\exp_z t{|\D u(z)|^{p-2}}\D u(z).$$%\quad \forall t\in[0,1].$$ 
%Assume that   $x\in \cA^p_{1}\left(E;\overline \Omega;u\right) \cap \{z\in\Omega\setminus N : |\D u(z)|>0\}. $ %  and  assume that $${|\D u(x)|^{p-2}}\D u(x)\in U_x.$$
%and $$ \det d\phi_p(x,t)> 0\quad\forall t\in[0,1).$$
Then   
Proposition \ref{prop-jacobi-estimate}      holds  true  for      $x\in \cA^p_{1}\left(E;\overline \Omega;u\right) \cap \{z\in\Omega\setminus N : |\D u(z)|>0\}. $
\end{cor}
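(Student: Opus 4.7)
The plan is to observe that every ingredient used in the proof of Proposition~\ref{prop-jacobi-estimate} at a contact point $x$ is either (i) a purely algebraic manipulation of the Hessian at $x$, or (ii) a pointwise Taylor expansion of $u$ at $x$ up to second order, with remainder $o(|X|^{2})$. Since $u$ is semiconcave on $\Omega$, Theorem~\ref{thm-AB} (Aleksandrov--Bangert) supplies exactly such a pointwise second-order expansion at every $x\in\Omega\setminus N$, with a symmetric operator $D^{2}u(x)$ and with the gradient vector field $\D u$ admitting a first-order expansion $\D_{X}\D u(x)=D^{2}u(x)\cdot X$. Thus the strategy is to run the proof of Proposition~\ref{prop-jacobi-estimate} verbatim, reading ``Hessian'' in the Aleksandrov--Bangert sense, and checking at each step that pointwise regularity at the single point $x$ suffices.

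Concretely, I would proceed as follows. For part (a), to show $x\notin\Cut(y)$, I would reuse the argument via Corollary~\ref{cor-dist-sqrd-cut-compoed-increasing-ft} applied to $\psi_p(s):=\tfrac{p-1}{p}s^{p/(2(p-1))}$: the only fact from the original proof that needs re-examination is the lower bound
\[
\liminf_{t\to0}\frac{u(\exp_x tX)+u(\exp_x -tX)-2u(x)}{t^2}\,\geq\,\langle D^{2}u(x)\cdot X,X\rangle\,>-\infty,
\]
which is immediate from Theorem~\ref{thm-AB}(b) at any $x\in\Omega\setminus N$. Once $y\notin\Cut(x)\cup\{x\}$ is known, $d_y^{p/(p-1)}$ is smooth near $x$, so the first-order minimum condition at $x$ yields $\D u(x)+\tfrac{p-1}{p}\D d_y^{p/(p-1)}(x)=0$, hence $y=\exp_x |\D u(x)|^{p-2}\D u(x)$. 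For part (b), I would subtract the two second-order Taylor expansions (the Aleksandrov--Bangert one for $u$, and the classical smooth one for $\tfrac{p-1}{p}d_y^{p/(p-1)}$ valid because $y\notin\Cut(x)$) at the minimum $x$ of $u+\tfrac{p-1}{p}d_y^{p/(p-1)}$, which gives the semi-definite inequality
\[
|\D u|^{p-2}D^{2}u(x)+\Bigl({\bf I}+\tfrac{2-p}{p-1}\tfrac{\D u}{|\D u|}\otimes\tfrac{\D u}{|\D u|}\Bigr)\circ D^{2}(d_{y}^{2}/2)(x)\ \geq\ 0,
\]
exactly as in the smooth case; the eigenvector identity used to convert $D^{2}d_y^{p/(p-1)}$ into this form is purely algebraic. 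Parts (d) and (e) are then formal consequences (via Lemma~\ref{lem-hess-dist-sqrd-geodesic} and identity \eqref{eq-eigenvector-along-gamma}) and require no further regularity on $u$.

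The one step that is not purely formal is the Jacobian identity in (c) and (e). Lemma~\ref{lem-jacobian-exp} is stated for smooth vector fields, whereas here the vector field $\xi(z):=|\D u(z)|^{p-2}\D u(z)$ is only defined almost everywhere and only pointwise differentiable at $x\in\Omega\setminus N$. I would handle this as the genuine technical point: at such an $x$ with $|\D u(x)|>0$, the Aleksandrov--Bangert property $\D_{X}\D u(x)=D^{2}u(x)\cdot X$ gives a first-order expansion of $\xi$ at $x$, so the map $z\mapsto\phi_p(z,t)$ is (classically) differentiable at the single point $x$, with differential obtained by the chain rule exactly as in the smooth case. The proof of Lemma~\ref{lem-jacobian-exp} is then pointwise at $x$ and goes through unchanged; the formula $\Jac_z\phi_p(x,t)=\Jac\exp_x(t|\D u|^{p-2}\D u)\cdot\bigl|\det(t\D\xi+D^{2}d_{\phi_p(x,t)}^{2}/2)(x)\bigr|$ therefore remains valid. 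Combining this with parts (a), (b), (d) gives (c) and (e) and completes the corollary. The main (and essentially only) obstacle is this justification of the Jacobian formula at a single Aleksandrov--Bangert point, which is routine once one records that the proof of Lemma~\ref{lem-jacobian-exp} uses only pointwise differentiability of $\xi$ at the base point.
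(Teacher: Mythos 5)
Your proposal is correct and takes essentially the same route as the paper: semiconcavity supplies the upper second–difference bound needed for the cut-point argument, Aleksandrov--Bangert supplies the pointwise Hessian, and the only genuine technical issue is the validity of the Jacobian formula of Lemma~\ref{lem-jacobian-exp} for the non-smooth vector field $\xi(z)=|\D u(z)|^{p-2}\D u(z)$ — which is precisely the step the paper dispatches by appealing to the proof of Proposition~4.1 in \cite{CMS}. One small imprecision in your write-up: since $\D u$ (and hence $\phi_p(\cdot,t)$) is defined only almost everywhere, the map is not \emph{classically} differentiable at $x$ in the usual sense of being defined on a full neighbourhood; the correct framework is approximate differentiability (equivalently, the Jacobi-field computation applied to the Aleksandrov expansion of $\D u$ along a.e.\ direction, as in \cite{CMS}). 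Your underlying observation — that the proof of Lemma~\ref{lem-jacobian-exp} uses only the first-order expansion $\D_X\xi(x)$ at the base point, which Theorem~\ref{thm-AB} provides — is exactly the right reason, and matches what the cited CMS argument makes rigorous.
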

\begin{proof}
Using   the semi-concavity,  note that  for any $z\in\Omega,$ and a  unit vector $X\in T_zM,$ 
$$\limsup_{t\to0} \frac{ u (\exp_ztX )+u (\exp_z-tX ) -2 u (z )}{t^2}<\infty.$$
 Fix         a point   $x\in  \cA^p_{1}\left(E;\overline \Omega;u\right) \cap \{z\in\Omega\setminus N : |\D u(z)|>0\}.$  Arguing similarly  as in the proof of Proposition \ref{prop-jacobi-estimate} together  with the above property, 
  if  $y\in E$ satisfes 
  $$  \inf_{ \overline \Omega}\left\{u+\frac{p-1}{p}d_{y}^{\frac{p}{p-1}}\right\}=u(x)+ \frac{p-1}{p}d_{y}^{\frac{p}{p-1}}(x), $$ then 
       $y=\phi_p(x,1)=\exp_{x}|\D u|^{p-2}\D u(x)\not\in \Cut(x)\cup\{x\}$, and $|\D u|^{p-2}\D u(x)\in \cE_x\setminus\{0\}.$ 
%Since $u$ is twice differentiable almost everywhere from the  Aleksandrov theorem, 
 %the formula  \eqref{eq-jac-exp-positive-smooth}
% According to  the  Aleksandrov theorem,   
 Using   Lemma \ref{lem-jacobian-exp}, we deduce that   for $(0,1]$ % for       $x\in \cA^p_{1}\left(E;\overline \Omega;u\right) \cap \{z\in\Omega\setminus N : |\D u(z)|>0\}$, 
   \begin{equation*}   \begin{split}
d \phi_p(x,t)
=& d\exp_x \left(t{|\D u(x)|^{p-2}}\D u(x)\right) 
\\& \cdot   \left\{  t|\D u(x)|^{p-2}  \left({\bf I}+(p-2) \frac{\D u}{|\D u|}\otimes \frac{\D u}{|\D u|}\right)\circ D^2u(x) +D^2 (d_{\gamma(t)}^2/2)(x)\right\}  ,
\end{split}
\end{equation*} 
where $\gamma(t):=\phi_p(x,t)$  is a minimizing geodesic joining  $x$ to $\phi_p(x,1)\not\in \Cut(x)\cup\{x\}$, and  $D^2u(x)$ is the Hessian    in the sense of    Theorem \ref{thm-AB}; see     also  the proof of \cite[Proposition 4.1]{CMS}.     
The rest of the proof is the same as for   Proposition \ref{prop-jacobi-estimate}. 
%holds   for       $x\in \cA^p_{1}\left(E;\overline \Omega;u\right) \cap \{z\in\Omega\setminus N : |\D u(z)|>0\}$. 
%    with the Hessian $D^2u(x)$ in the sense of    Theorem \ref{thm-AB}; see also the proof of \cite[Proposition 4.1]{CMS}.     
   %  Since $|\D u(x)|^{p-2}\D u(x)\in\cE_x$ for any $x\in \cA^p_{1}\left(E;\overline \Omega;u\right) \cap \{z\in\Omega\setminus N : |\D u(z)|>0\}$,   $\phi_p(x,1)$ is twice differentiable in $ \cA^p_{1}\left(E;\overline \Omega;u\right) \cap \{z\in\Omega\setminus N : |\D u(z)|>0\}. $
%From  \cite{J}, we      approximate a function   $u$ by a sequence  $\{ w_k\}_{k=1}^\infty$
%of smooth functions  so that  its first and  second order partial derivatives   converge to those of $u$ almost everywhere as well as $w_k$ converges to $u$ uniformly in $\Omega$.    Then  we apply  \eqref{eq-jac-exp-positive-smooth} to $w_k$ with the help of Lemma \ref{lem-jacobian-exp}, and  let $k$ go to $\infty$ to obtain    \eqref{eq-jac-exp-positive-smooth} for $u$.  
\end{proof}

% \newpage

%%%%%%%%%%%%%%%%%%%%%%%%%%%%%%%%%%%%%%%%%%%%%%%%%%%%%%%%%%%%%%%%%%%%%%%%%%%%%%%%%%%%%%%%%%%%%%%%%%%%%%
\subsection{Degenerate operators}% for $p\geq 2$}%\label{sec-abp}
%%%%%%%%%%%%%%%%%%%%%%%%%%%%%%%%%%%%%%%%%%%%%%%%%%%%%%%%%%%%%%%%%%%%%%%%%%%%%%%%%%%%%%%%%%%%%%%%%%%%%%%  
This subsection is devoted to the ABP type estimate for the $p$-Laplacian type operators with $2\leq p<\infty.$ 
 \begin{thm}[ABP type estimate]\label{thm-abp-type-p-large}
Assume that        $2\leq p<\infty$   and  $\Ric\geq -(n-1)\kappa$   for $\kappa\geq0$. 
  For a bounded open set  $\Omega\subset M,$ let $u    \in  C\left(\overline\Omega\right)$ be smooth in $\Omega$.     %satisfy  $\La_pu\leq f$ in $\Omega$. %  {\color{blue} in the viscosity sense.}   
%$$u\geq 0\,\,\,\mbox{ in $B_{7R}(z_0)\setminus B_{5R}(z_0),\quad$ and $\inf_{B_{2R}(z_0)}u\leq 1.$}$$
%For a given $a>0$ and 
For a compact set $E\subset M,$ we assume that 
  $$\cA^p_{1}\left(E;\overline \Omega;u\right)\subset \Omega.$$ Then  we have 
\begin{equation*}
|E|\leq \int_{\cA_1^{p}\left(E;\overline \Omega;u\right)}\sS^{n-1}\left(\sqrt{ {\kappa}}{|\D u(x)|}^{p-1} \right)\left\{\frac{\La_pu(x)}{n}+ \sH\left(\sqrt{ {\kappa}}  {|\D u(x)|}^{p-1}\right)\right\}^ndx,
\end{equation*}
where $\sH(\tau):=\tau\coth(\tau),$ and $\sS(\tau):= \sinh(\tau)/\tau$ for   $\tau > 0$ with $\sH(0)=\sS(0)=1.$  In particular, if $\Ric\geq0,$  we have 
\begin{equation*}
|E|\leq \int_{\cA_1^{p}\left(E;\overline \Omega;u\right)} \left\{\frac{\La_pu(x)}{n}+1\right\}^ndx.
\end{equation*}
  \end{thm}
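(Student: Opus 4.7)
The plan is to cover the vertex set $E$ by the image of $\cA^p_1$ under the $p$-exponential map, pass to Lebesgue measure via the area formula, and then bound the resulting Jacobian pointwise using Proposition~\ref{prop-jacobi-estimate}, the Laplacian comparison, and Bishop--Gromov.

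I would first define $\Phi_p : \Omega \to M$ by $\Phi_p(x) := \exp_x(|\D u(x)|^{p-2}\D u(x))$; since $u$ is smooth and $p\geq 2$, the vector field $|\D u|^{p-2}\D u$ is locally Lipschitz (it vanishes at least linearly at zeros of $\D u$), so $\Phi_p$ is locally Lipschitz on $\Omega$. For each $y\in E$ the infimum in Definition~\ref{def-p-contact-set} is attained at some $x\in \cA^p_1(E;\overline\Omega;u)\subset \Omega$ by continuity and compactness; Proposition~\ref{prop-jacobi-estimate}(a) gives $y=\Phi_p(x)$ when $|\D u(x)|>0$, while the computation in its proof shows $y=x=\Phi_p(x)$ when $|\D u(x)|=0$. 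Hence $E\subset \Phi_p(\cA^p_1)$, and the area formula yields
\[
|E|\leq |\Phi_p(\cA^p_1)|\leq \int_{\cA^p_1}\Jac\Phi_p(x)\,dx.
\]

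Next I would estimate the integrand. At $x\in\cA^p_1$ with $|\D u(x)|>0$, writing $y=\Phi_p(x)$ and $e=\D u/|\D u|$, Proposition~\ref{prop-jacobi-estimate}(c) gives
\[
\Jac\Phi_p(x)=\Jac\exp_x(|\D u|^{p-2}\D u)\cdot \det A(x),\qquad A:=|\D u|^{p-2}(\mathbf I+(p-2)e\otimes e)\circ D^2u + D^2(d_y^2/2).
\]
Setting $C=\mathbf I+(p-2)e\otimes e$ and $B=|\D u|^{p-2}D^2u+(\mathbf I+\tfrac{2-p}{p-1}e\otimes e)\circ D^2(d_y^2/2)$, one checks $A=C\circ B$; as $C$ is symmetric positive definite and $B$ is symmetric positive semi-definite by Proposition~\ref{prop-jacobi-estimate}(b), $A$ is similar to $C^{1/2}BC^{1/2}$ and thus has real nonnegative eigenvalues. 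AM--GM then yields
\[
\det A \leq \left(\frac{\tr A}{n}\right)^n = \left(\frac{\La_p u(x)+\La(d_y^2/2)(x)}{n}\right)^n,
\]
using the nondivergence identity $|\D u|^{p-2}\tr(C\circ D^2u)=\La_p u$. The Laplacian comparison under $\Ric\geq -(n-1)\kappa$ gives $\La(d_y^2/2)(x)\leq 1+(n-1)\sH(\sqrt\kappa\,d_y(x))$, and Proposition~\ref{prop-jacobi-estimate}(a) identifies $d_y(x)=|\D u(x)|^{p-1}$; using $\sH\geq 1$ and $\tfrac{1+(n-1)\sH}{n}\leq \sH$ one obtains
\[
\det A\leq \Bigl(\tfrac{\La_p u}{n}+\sH(\sqrt\kappa\,|\D u|^{p-1})\Bigr)^n.
\]
Since $|\D u|^{p-2}\D u(x)\in\cE_x$, Theorem~\ref{thm-BG}(i) provides the matching factor $\Jac\exp_x(|\D u|^{p-2}\D u)\leq \sS^{n-1}(\sqrt\kappa\,|\D u|^{p-1})$, producing the claimed pointwise integrand.

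Finally I would verify the same pointwise inequality on $\cA^p_1\cap\{|\D u|=0\}$. For $p>2$ the vector field $|\D u|^{p-2}\D u$ has vanishing covariant derivative at zeros of $\D u$, so by Lemma~\ref{lem-jacobian-exp} one gets $\Jac\Phi_p=1$, matching the right-hand side (which equals $1$ since $\sH(0)=\sS(0)=1$ and $\La_p u=0$). For $p=2$ the contact condition forces $y=x$, hence $d_y(x)=0$ and $D^2u(x)\geq -\mathbf I$; AM--GM applied to $\mathbf I+D^2u\geq 0$ gives $\Jac\Phi_2(x)=\det(\mathbf I+D^2u)\leq (1+\La u/n)^n$, again matching the integrand. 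Integrating over $\cA^p_1$ then yields the theorem. The main technical difficulty is that $A$ is a product of symmetric operators and not symmetric itself, so AM--GM is not immediately applicable; the argument crucially uses Proposition~\ref{prop-jacobi-estimate}(b) to produce the symmetric PSD factor $B$ and hence the similarity $A\sim C^{1/2}BC^{1/2}$, together with the identification $d_y(x)=|\D u(x)|^{p-1}$ that converts the Laplacian comparison into a gradient-dependent bound.
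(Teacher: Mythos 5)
Your proposal is correct and follows essentially the same route as the paper: cover $E$ by the image of $\cA^p_1$ under $\Phi_p$, apply the area formula, decompose the Jacobian via Proposition~\ref{prop-jacobi-estimate}(c), and bound it with the arithmetic--geometric means inequality, the Laplacian comparison, and Bishop--Gromov. Your explicit treatment of the set $\{|\D u|=0\}$ (separating $p>2$ from $p=2$) makes precise what the paper compresses into a brief continuity remark, but the underlying argument coincides.
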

  
\begin{proof}
For any  $x\in\cA^p_1\left(E;\overline \Omega;u\right)\subset\Omega,$ we find $y\in E$ such that 
\begin{equation}\label{eq-inf-contact-pt-p-geq2}
\inf_{  \Omega}\left\{u+\frac{p-1}{p}d_y^{\frac{p}{p-1}}\right\}=u(x)+\frac{p-1}{p}d_y^{\frac{p}{p-1}}(x). 
\end{equation} 
%and  $y$ is not a cut point of $x.$  %  as in the proof of  Proposition \ref{prop-jacobi-estimate}
From the   argument as in  the proof of Proposition \ref{prop-jacobi-estimate}, 
we have  
  $$  y=x \iff \D u(x)= 0. $$
 When  $|\D u(x)|=0$ for $x\in\cA^p_1\left(E;\overline \Omega;u\right)\subset\Omega,$    we have  $y=x=\exp_x {|\D u|^{p-2}}{ }\D u(x) $ in \eqref{eq-inf-contact-pt-p-geq2}. %Indeed, suppose that $\D u(x)=0$ and $x\not \eq y.$ Then \eqref{eq-inf-contact-pt-p-geq2} implies that $d_{y}^{\frac{1}{p-1}}(x)=|\D u(x)|=0,$ which is a contradiction.
 If $|\D u(x)|>0$ for $x\in\cA^p_1\left(E;\overline \Omega;u\right)\subset\Omega,$    then Proposition \ref{prop-jacobi-estimate}  says that 
\begin{align*} 
y=\exp_x {|\D u|^{p-2}}{ }\D u(x)\not\in \Cut(x)\cup\{x\},\quad\mbox{and}\quad {|\D u|^{p-2}}\D u(x)\in\cE_x. 
\end{align*}  
Now we define the map  $\Phi_p:\cA^p_{1}\left(E;\overline \Omega;u\right)\to M$  as 
$$\Phi_p(x):=\exp_x {|\D u(x)|^{p-2}}{}\D u(x),$$
which coincides with $\phi_p(x,1)$ in Proposition \ref{prop-jacobi-estimate}.  From the argument above and the definition of $\cA^p_{1}\left(E;\overline \Omega;u\right)$, %replacing $u$ by ${u}/{a}$, 
 we observe    that 
\begin{equation}\label{eq-image-phi-contact-set}
E= \Phi_p\left(\cA^p_{1}\left(E;\overline \Omega;u\right)\right).
\end{equation}
since we assume $\cA^p_{1}\left(E;\overline \Omega;u\right)\subset \Omega.$

%(i) We first assume that $p\geq2.$ 
By using    Proposition \ref{prop-jacobi-estimate} 
 %replacing $u$ by ${u}/{a}$,  we have  that  %$$E\subset \phi_p\left(\cA^p_{a}\left(E;\overline \Omega;u\right)\right) $$
%since we assume that $\cA^p_{a}\left(E;\overline \Omega;u\right)\subset \Omega.$  And we have 
together with   Theorem \ref{thm-BG},  the arithmetic-geometric means inequality yields that  for   $ x\in \cA^p_{1}\left(E;\overline \Omega;u\right)\cap \{z\in\Omega :|\D u(z)|>0\},$
\begin{align*}
&\Jac\Phi_p(x)= \Jac \exp_x \left({|\D u(x)|^{p-2}}\D u(x)\right) \\
& \cdot\det \left\{  |\D u|^{p-2}  \left({\bf I}+(p-2) \frac{\D u}{|\D u|}\otimes \frac{\D u}{|\D u|}\right)\circ D^2u(x) +D^2 (d_{y}^2/2)(x)\right\} \\
&= \Jac  \exp_x \left({|\D u(x)|^{p-2}}\D u(x)\right)\cdot \det\left({\bf I}+(p-2) \frac{\D u}{|\D u|}\otimes \frac{\D u}{|\D u|}(x)\right) \\
&\cdot\det \left\{  |\D u|^{p-2}   D^2u(x) +\left({\bf I}+\frac{2-p}{p-1} \frac{\D u}{|\D u|}\otimes \frac{\D u}{|\D u|}\right)\circ D^2 (d_{y}^2/2)(x)\right\}\\
&\leq\sS^{n-1}\left(\sqrt{\kappa} {|\D u|^{p-1}(x)}{ }\right)\left[\frac{1}{n} \tr\left\{ |\D u|^{p-2}  \left({\bf I}+(p-2) \frac{\D u}{|\D u|}\otimes \frac{\D u}{|\D u|}\right)\circ D^2u(x) +D^2 (d_{y}^2/2)(x)\right\} \right]^n,
\end{align*}
where $y=\Phi_p(x) $, and  we used at the last line  the arithmetic-geometric means inequality: $$\det(AB)\leq \left\{\frac{1}{n}\tr\left(AB\right)\right\}^n$$ for symmetric nonnegative  matrices $A $ and $B. $  
 From  Lemma \ref{lem-pucci-ric-dist-sqrd}, it follows that  for any $ x\in \cA^p_{1}\left(E;\overline \Omega;u\right)\cap \{z\in\Omega :|\D u(z)|>0\},$
\begin{align*}
\Jac\Phi_p(x)%= \Jac \exp_x \left({|\D u(x)|^{p-2}}\D u(x)\right) \\
&\leq {\sS^{n-1}\left(\sqrt{\kappa} {|\D u(x)|^{p-1}}{ }\right)} \left\{ \frac{{\La_pu}(x)+1+(n-1)\sH\left(\sqrt{\kappa}\, d_{\Phi_p(x) }(x)\right)}{n} \right\}^n\\
&= {\sS^{n-1}\left(\sqrt{\kappa} {|\D u(x)|^{p-1}}{ }\right)}\left\{ \frac{{\La_pu}(x)+1+(n-1)\sH\left(\sqrt{\kappa} {|\D u(x)|^{p-1}}{}\right)}{n} \right\}^n. 
\end{align*}
    Since $\La_p u$ is  continuous   in $\Omega$ for $p\geq 2, $    we deduce that 
   \begin{equation}\label{eq-jac-est-contact-set-p-geq2}
   \Jac\Phi_p\leq\sS^{n-1}\left(\sqrt{\kappa} {|\D u|^{p-1}}{}\right)\left\{ \frac{\La_pu}{n}+\sH\left(\sqrt{\kappa} {|\D u|^{p-1}}{}\right)\right\}^n\quad\mbox{in}\,\,\, \cA^p_{1}\left(E;\overline \Omega;u\right)
   \end{equation}
  since $\sH(\tau)\geq1$ for $\tau\geq0.$ % In fact, for a point such that $\D u(x)=0,$  this holds as the limit.  
  For $p\geq2, $ the map    $\Phi_{p}$ is  of  class $C^1$, and hence   Lipschitz continuous    on a compact set  $\cA^p_{1}\left(E;\overline \Omega;u\right)\subset\Omega$; notice that $|\D u|^{p-2}\D u(x)\in \cE_x$ for any $x\in\cA^p_{1}\left(E;\overline \Omega;u\right) $. 
  %In fact,     the Hessian of    $     d_{\Phi_p(x)}^2/2(x)   $ is uniformly bounded  in $\cA^p_{1}\left(E;\overline \Omega;u\right)\cap\{|\D u|>0\} $  using    Lemma \ref{lem-hess-dist-sqrd}  and  \eqref{eq-nonvanishing-jac-det} in the proof of Proposition \ref{prop-jacobi-estimate},   and   $D^2 (d_{\Phi_p(x)}^2/2)(x)  =D^2d_x^2(x)={\bf I}$ for $x\in \cA^p_{1}\left(E;\overline \Omega;u\right)\cap\{|\D u|=0\}$; refer to Lemma \ref {lem-jacobian-exp}.   
 Therefore,  the result follows from the area formula 
     with the help of \eqref{eq-image-phi-contact-set}   and  \eqref{eq-jac-est-contact-set-p-geq2}. 
\end{proof}

%\begin{remark} %\label{lem-pucci-delta-p-laplace-large}
%Let $p\geq 2$ and let $u$ be a viscosity supersolution to $\La_pu\leq f^+$ in $\Omega.$ Then  $u$ is also viscosity supersolution to
%$$ |\D u(x)|^{p-2}\cM^-_{1,p-1}(D^2u)\leq f \quad\mbox{in $\Omega.$}$$
%\end{remark}
 
The following ABP type estimate  is concerned with the nonlinear  operators of $p$-Laplacian type. % in place of the $p$-Laplacian. 

\begin{cor}%[ABP type estimate for $p\geq2$]
\label{cor-abp-type-p-large}
Assume that        $2\leq p<\infty$    and  $\cM_{\ld,\Ld}^-(R(e))\geq -(n-1)\kappa$ with  $\kappa\geq0$ for      any unit vector $e\in TM$.  
  For a bounded open set  $\Omega\subset M,$   let $u\in    C\left(\overline\Omega\right)$ be smooth in $\Omega$ such that  % be a viscosity supersolution to 
  $|\D u|^{p-2}\cM^-_{\ld,\Ld}(D^2u)\leq f$ in $\Omega.$  %satisfy  $\La_pu\leq f$ in $\Omega$. %  {\color{blue} in the viscosity sense.}   
%$$u\geq 0\,\,\,\mbox{ in $B_{7R}(z_0)\setminus B_{5R}(z_0),\quad$ and $\inf_{B_{2R}(z_0)}u\leq 1.$}$$
%For a given $a>0$ and 
For a compact set $E\subset M,$ we assume that 
  $\cA^p_{1}\left(E;\overline \Omega;u\right)\subset \Omega.$ Then  we have 
\begin{equation*}
|E|\leq \int_{\cA_1^{p}\left(E;\overline \Omega;u\right)}  \frac{(p-1)}{n^n\ld^n}\sS^{n-1}\left(\sqrt{\frac{\kappa}{\ld}} {|\D u|^{p-1}}{ }\right)\left\{f^++\frac{\Ld}{p-1}+(n-1)\Ld\sH\left(\sqrt{\frac{\kappa}{\Ld}} {|\D u|^{p-1}}{}\right) \right\}^ndx.
\end{equation*}
%where $\sH(\tau):=\tau\coth(\tau),$ and $\sS(\tau):= \sinh(\tau)/\tau$ for   $\tau\geq 0.$
  In particular, if $\Ric\geq0,$  we have 
\begin{equation*}
|E|\leq (p-1)\int_{\cA_1^{p}\left(E;\overline \Omega;u\right)} \left\{\frac{f^+(x)}{n\ld}+\frac{\Ld}{\ld}\right\}^ndx.
\end{equation*}
  \end{cor}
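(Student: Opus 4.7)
My plan is to imitate the proof of Theorem \ref{thm-abp-type-p-large}, replacing the trace-based arithmetic--geometric inequality by a Pucci-based one. I would set $\Phi_p(x) := \exp_x |\D u(x)|^{p-2}\D u(x)$ on the contact set, and use Proposition \ref{prop-jacobi-estimate} exactly as in Theorem \ref{thm-abp-type-p-large} to conclude that $\Phi_p$ is well-defined and $C^1$ on $\cA^p_1(E;\overline\Omega;u)$, that $E = \Phi_p(\cA^p_1)$, and that
\begin{equation*}
\Jac\Phi_p(x) = \Jac\exp_x\!\bigl(|\D u|^{p-2}\D u\bigr)\cdot(p-1)\cdot\det B_2(x),
\end{equation*}
where $B_2(x) := |\D u|^{p-2}D^2u(x) + \bigl({\bf I} + \tfrac{2-p}{p-1}\tfrac{\D u}{|\D u|}\otimes\tfrac{\D u}{|\D u|}\bigr)\circ D^2(d^2_{\Phi_p(x)}/2)(x)$ is symmetric and positive semi-definite by Proposition \ref{prop-jacobi-estimate}(b). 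Since the hypothesis $\cM^-_{\ld,\Ld}(R(e)) \geq -(n-1)\kappa$ yields $\Ric \geq -(n-1)\kappa/\ld$, Theorem \ref{thm-BG}(i) gives $\Jac\exp_x(|\D u|^{p-2}\D u) \leq \sS^{n-1}\bigl(\sqrt{\kappa/\ld}\,|\D u|^{p-1}\bigr)$.

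The new ingredient is bounding $\det B_2$ in terms of $\cM^-_{\ld,\Ld}(D^2u)$ rather than $\tr(D^2u)$. Since $B_2 \geq 0$, its eigenvalues are non-negative, hence $\cM^-_{\ld,\Ld}(B_2) = \ld\,\tr(B_2)$ and arithmetic--geometric mean gives $n\det(B_2)^{1/n} \leq \tr(B_2) = \ld^{-1}\cM^-_{\ld,\Ld}(B_2)$. Combining the standard subadditivity $\cM^-(X+Y) \leq \cM^-(X) + \cM^+(Y)$ with the hypothesis $|\D u|^{p-2}\cM^-_{\ld,\Ld}(D^2u) \leq f$,
\begin{equation*}
\cM^-_{\ld,\Ld}(B_2) \leq f^+ + \cM^+_{\ld,\Ld}\bigl(\hat A\circ D^2(d_y^2/2)\bigr),
\end{equation*}
where $\hat A := {\bf I} + \tfrac{2-p}{p-1}e\otimes e$, $e := \D u/|\D u|$, and $y := \Phi_p(x)$.

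At a contact point one has $e = -\D d_y(x)$, which is a unit eigenvector of $D^2(d_y^2/2)(x)$ with eigenvalue $1$; hence $\hat A$ and $D^2(d_y^2/2)(x)$ are simultaneously diagonalizable, and $\hat A\circ D^2(d_y^2/2)(x)$ has eigenvalues $\{1/(p-1),\tau_2,\dots,\tau_n\}$ whenever $D^2(d_y^2/2)(x)$ has eigenvalues $\{1,\tau_2,\dots,\tau_n\}$. Since $p\geq 2$ makes $1/(p-1) > 0$, comparing the two definitions of $\cM^+$ gives the identity
\begin{equation*}
\cM^+_{\ld,\Ld}\!\bigl(\hat A\circ D^2(d_y^2/2)\bigr) = \cM^+_{\ld,\Ld}\!\bigl(D^2(d_y^2/2)\bigr) - \Ld\,\tfrac{p-2}{p-1}.
\end{equation*}
Plugging in Lemma \ref{lem-pucci-ric-dist-sqrd} and using $d_y(x) = |\D u(x)|^{p-1}$ yields the estimate $\cM^+_{\ld,\Ld}(\hat A\circ D^2(d_y^2/2)) \leq \tfrac{\Ld}{p-1} + (n-1)\Ld\,\sH\bigl(\sqrt{\kappa/\Ld}\,|\D u|^{p-1}\bigr)$.

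Assembling the three bounds gives
\begin{equation*}
\Jac\Phi_p(x) \leq \frac{p-1}{n^n\ld^n}\sS^{n-1}\!\bigl(\sqrt{\kappa/\ld}\,|\D u|^{p-1}\bigr)\Bigl\{f^+ + \tfrac{\Ld}{p-1} + (n-1)\Ld\,\sH\bigl(\sqrt{\kappa/\Ld}\,|\D u|^{p-1}\bigr)\Bigr\}^n
\end{equation*}
on $\cA^p_1$; the area formula applied to the Lipschitz map $\Phi_p$ over the compact set $\cA^p_1 \subset \Omega$ then delivers the claimed inequality. The $\Ric \geq 0$ case is the specialization $\kappa = 0$ (where $\sS\equiv \sH\equiv 1$). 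The main subtlety is the Pucci comparison in the third paragraph: the identity relating $\cM^+$ of $\hat A\circ D^2(d_y^2/2)$ to $\cM^+$ of $D^2(d_y^2/2)$ alone is what turns the Pucci-type distance estimate from Lemma \ref{lem-pucci-ric-dist-sqrd} into the explicit constants appearing in the statement, and it works cleanly only because $\D u/|\D u|$ is a principal direction of the squared-distance Hessian at contact points.
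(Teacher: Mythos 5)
Your proposal is correct and follows essentially the same route as the paper's proof: both use Proposition \ref{prop-jacobi-estimate} and $\Ric \geq \cM^-_{\ld,\Ld}(R(e))/\ld$ to control $\Jac\exp_x$, then write $\det B_2 \leq (n\ld)^{-n}\cM^-_{\ld,\Ld}(B_2)^n$ via positive semi-definiteness, apply Pucci subadditivity to isolate $|\D u|^{p-2}\cM^-_{\ld,\Ld}(D^2u)\leq f^+$, and bound $\cM^+_{\ld,\Ld}$ of the distance term with Lemma \ref{lem-pucci-ric-dist-sqrd} using the eigenvector identity \eqref{eq-eigenvector-along-gamma}. The only cosmetic difference is that you make explicit the intermediate identity $\cM^+_{\ld,\Ld}(\hat A\circ D^2(d_y^2/2)) = \cM^+_{\ld,\Ld}(D^2(d_y^2/2)) - \Ld\,\tfrac{p-2}{p-1}$ (valid because $1/(p-1)>0$ for $p\geq 2$), a step the paper leaves implicit.
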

\begin{proof}
Following the proof of Theorem \ref{thm-abp-type-p-large}, it suffices to  estimate 
the Jacobian determinant of $\Phi_p(x):=\exp_x {|\D u(x)|^{p-2}}{}\D u(x)$ on $\cA^p_{1}\left(E;\overline \Omega;u\right)$ in terms of the Pucci operator.    First we note that $\Ric(e,e)\geq \cM^-_{\ld,\Ld}(R(e))/\ld \geq-(n-1)\kappa/\ld $ for any unit vector $e\in TM.$ 
According to Proposition \ref{prop-jacobi-estimate} and Theorem \ref{thm-BG}, we have  that  for any $ x\in \cA^p_{1}\left(E;\overline \Omega;u\right)\cap \{z\in\Omega :|\D u(z)|>0\},$ 
\begin{align*}
&\Jac\Phi_p(x)= \Jac \exp_x \left({|\D u(x)|^{p-2}}\D u(x)\right) \cdot\det   \left({\bf I}+(p-2) \frac{\D u}{|\D u|}\otimes \frac{\D u}{|\D u|}\right)\\
& \cdot\det \left\{  |\D u|^{p-2}  D^2u(x) +\left({\bf I}+\frac{2-p}{p-1} \frac{\D u}{|\D u|}\otimes \frac{\D u}{|\D u|}\right)\circ D^2 (d_{y}^2/2)(x)\right\} \\
&\leq(p-1)\sS^{n-1}\left(\sqrt{\frac{\kappa}{\ld}} {|\D u|^{p-1}}{ }\right)\left[\frac{1}{n} \tr\left\{|\D u|^{p-2} D^2u(x) +  \left({\bf I}+\frac{2-p}{p-1}\frac{\D u}{|\D u|}\otimes \frac{\D u}{|\D u|}\right)\circ D^2 (d_{y}^2/2)(x)\right\} \right]^n\\
&=(p-1)\sS^{n-1}\left(\sqrt{\frac{\kappa}{\ld}} {|\D u|^{p-1}}{ }\right)\left[\frac{1}{n\ld } \cM^-_{\ld,\Ld}\left\{ |\D u|^{p-2}  D^2u(x) + \left({\bf I}+\frac{2-p}{p-1}\frac{\D u}{|\D u|}\otimes \frac{\D u}{|\D u|}\right)\circ D^2 (d_{y}^2/2)(x)\right\} \right]^n\\
&\leq   \frac{(p-1)}{n^n\ld^n}\sS^{n-1}\left(\sqrt{\frac{\kappa}{\ld}} {|\D u|^{p-1}}{ }\right) \left\{ |\D u|^{p-2} \cM^-_{\ld,\Ld}\left( D^2u\right) +\cM^+_{\ld,\Ld}\left(   \left({\bf I}+\frac{2-p}{p-1} \frac{\D u}{|\D u|}\otimes \frac{\D u}{|\D u|}\right)\circ D^2 (d_{y}^2/2)(x)\right) \right\}^n,
%&\leq \frac{(p-1)\sS^{n-1}\left(\sqrt{\kappa} {|\D u|^{p-1}}{ }\right)}{n^n\ld^n}\left\{f^++\frac{\Ld}{p-1}+(n-1)\Ld\sH\left(\sqrt{\frac{\kappa}{\Ld}} {|\D u|^{p-1}}{}\right) \right\}^n,
%&\leq \frac{\sS^{n-1}\left(\sqrt{\kappa} {|\D u|^{p-1}}{ }\right)}{n^n\ld^n}\left\{f^++n\Ld\sH\left(\sqrt{\frac{\kappa}{\Ld}} {|\D u|^{p-1}}{}\right) \right\}^n,
\end{align*}
where $y=\Phi_p(x)$. From \eqref{eq-eigenvector-along-gamma}, we notice that  for   $ x\in \cA^p_{1}\left(E;\overline \Omega;u\right)\cap \{z\in\Omega :|\D u(z)|>0\},$ 
$$    \left({\bf I}+\frac{2-p}{p-1} \frac{\D u}{|\D u|}\otimes \frac{\D u}{|\D u|}\right)\circ D^2 d_{y}^2/2(x)   =   D^2 d_{y}^2/2(x)+ \frac{2-p}{p-1}\D d_y(x)\otimes \D d_y(x)$$
    since $\D d_y(x)=- \frac{\D u(x)}{|\D u(x)|}$ is an eigenvector of $D^2d_{y}^2/2(x)$ associated with the eigenvalue $1$ for $y= \Phi_p(x)=\exp_x {|\D u|^{p-2}}{}\D u(x)$.  
By using   Lemma \ref{lem-pucci-ric-dist-sqrd}, we deduce that
$$\cM^+_{\ld,\Ld}\left(   \left({\bf I}+\frac{2-p}{p-1} \frac{\D u}{|\D u|}\otimes \frac{\D u}{|\D u|}\right)\circ D^2 (d_{y}^2/2)(x)\right) \leq\frac{\Ld}{p-1}+(n-1)\Ld\sH\left(\sqrt{\frac{\kappa}{\Ld}} d_y(x)\right),$$
and hence  for any $ x\in \cA^p_{1}\left(E;\overline \Omega;u\right)\cap \{z\in\Omega :|\D u(z)|>0\},$  
\begin{align*}
\Jac\Phi_p(x)&\leq \frac{(p-1)}{n^n\ld^n}\sS^{n-1}\left(\sqrt{\frac{\kappa}{\ld}} {|\D u|^{p-1}}{ }\right)\left\{f^++\frac{\Ld}{p-1}+(n-1)\Ld\sH\left(\sqrt{\frac{\kappa}{\Ld}} {|\D u|^{p-1}}{}\right) \right\}^n.
%&\leq \frac{\sS^{n-1}\left(\sqrt{\kappa} {|\D u|^{p-1}}{ }\right)}{n^n\ld^n}\left\{f^++n\Ld\sH\left(\sqrt{\frac{\kappa}{\Ld}} {|\D u|^{p-1}}{}\right) \right\}^n,
\end{align*}
   Since $p\geq 2,$  this estimate holds for  any $x\in \cA^p_{1}\left(E;\overline \Omega;u\right),$ and then  the result follows from \eqref{eq-image-phi-contact-set}   and  the area formula.  
\end{proof}

From  Theorem \ref{thm-abp-type-p-large}, we    obtain  the   ABP type estimate  of viscosity  supersolutions for $p$-Laplacian operators   with the help of   regularization by  inf-convolution.

 \begin{lemma}\label{lem-abp-type-measure-p-large}
Assume that     $2\leq p<\infty$ and $\Ric \geq - {(n-1)} \kappa$   for $\kappa\geq0$. 
For  $z_0,x_0\in M$ and $0<r\leq R\leq R_0,$ assume that $  \overline B_{4r}(z_0)\subset B_{R}(x_0).$   Let    
$f\in C\left(     B_{R}(x_0)\right)$   
 and  $u\in C\left( \overline  B_{R}(x_0)\right)$  be   such that   $$\La_pu\leq f \quad\mbox{ in $B_{R}(x_0)$}$$
%$\cM^-(D^2u)\leq f$
in the viscosity sense,   
\begin{equation}\label{eq-abp-cond-u-p-geq2}
u\geq0\quad\mbox{on $  B_{R}(x_0) \setminus B_{4r}(z_0) \quad$ 
and $\quad \displaystyle\inf_{B_{r}(z_0)}u\leq \frac{p-1}{p} .$}
\end{equation}
Then %we have 
\begin{equation}\label{eq-abp-type-measure-0-p-geq2}
\begin{split}
|B_r(z_0)|%&\leq \frac{1}{\min\{1,(p-1)^n\}}\int_{ \cA^p_{r^{-p/(p-1)}}(u)} \sS^n\left(\sqrt{\kappa}r^p|\D u|^{p-1}\right)\left\{\sH\left(\sqrt{\kappa}r^p|\D u|^{p-1}\right) +{\color{red}\frac{r^pf^+}{n}}\right\}^n\\
&\leq  {\sS^{n-1}\left(2\sqrt{\kappa}R_0\right)}\int_{\{u\leq \tilde M_p\}\cap   B_{4r}(z_0)} \left\{\sH\left( 2\sqrt{\kappa}R_0\right) +\frac{r^p f^+}{n}\right\}^n 
\end{split}
\end{equation}
for a uniform constant $\tilde M_p:=\frac{p-1}{p}3^{\frac{p}{p-1}}$.  % depending only on $p$.  % 
%where $ \cA^p_{r^{-p/(p-1)}}(u):= \cA^p_{r^{-p/(p-1)}}\left(\overline B_{r}(z_0); \overline B_{R}(x_0);u\right).$ 
  Moreover, if $r^pf\leq 1$ in $B_{4r}(z_0),$ then there exists a uniform constant $\delta\in(0,1)$ depending only on  $n, p$ and $\sqrt{\kappa}R_0,$ such that 
  \begin{equation}\label{eq-abp-type-measure-p-geq2}
  \left|\left\{ u\leq \tilde M_p\right\}\cap B_{4r}(z_0)\right|>\delta|B_{4r}(z_0)|.
  \end{equation}

    \end{lemma}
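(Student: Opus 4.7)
The plan is to apply Theorem~\ref{thm-abp-type-p-large} to an appropriately rescaled version of $u$. Set $a := r^{-p/(p-1)}$ and $\tilde u := u/a$. Since $\La_p(\lambda v) = \lambda^{p-1}\La_p v$, the function $\tilde u$ satisfies $\La_p \tilde u \leq r^p f$ in the viscosity sense, and
\[
\cA^p_a(E;\overline\Omega;u) = \cA^p_1(E;\overline\Omega;\tilde u),
\]
so Theorem~\ref{thm-abp-type-p-large} becomes available for $\tilde u$ with opening~$1$. Taking $E := B_r(z_0)$, I would compare, for each $y\in B_r(z_0)$, the values of $\tilde u + \tfrac{p-1}{p} d_y^{p/(p-1)}$ at a near-minimizer $\bar x\in B_r(z_0)$ of $u$ and on $\partial B_{4r}(z_0)$: the former is bounded above by $\tfrac{p-1}{p}\, r^{p/(p-1)}\bigl(1 + 2^{p/(p-1)}\bigr)$, while the latter (using $\tilde u \geq 0$ outside $B_{4r}(z_0)$ and $d_y \geq 3r$) is bounded below by $\tfrac{p-1}{p}\, r^{p/(p-1)}\cdot 3^{p/(p-1)}$. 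The elementary estimate $1 + 2^{p/(p-1)} < 3^{p/(p-1)}$, valid strictly for every $p>1$, then confines the contact set inside $B_{4r}(z_0)$, and rescaling back gives $u(x) \leq \tfrac{p-1}{p}\bigl(1 + 2^{p/(p-1)}\bigr) < \tilde M_p$ at every contact point~$x$.

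Since $y$ and $x$ both lie in $B_{R_0}(x_0)$, on the contact set we have $\sqrt\kappa\, |\D\tilde u|^{p-1} = \sqrt\kappa\, d_y(x) \leq 2\sqrt\kappa R_0$. Inserting these bounds, together with $\La_p\tilde u \leq r^p f$, into Theorem~\ref{thm-abp-type-p-large} and using the monotonicity of $\sS$ and $\sH$ yields exactly \eqref{eq-abp-type-measure-0-p-geq2}. For the second claim, when $r^p f \leq 1$ on $B_{4r}(z_0)$ the integrand becomes a uniform constant $C_0 = C_0(n,p,\sqrt\kappa R_0)$, giving $|B_r(z_0)| \leq C_0\,|\{u \leq \tilde M_p\}\cap B_{4r}(z_0)|$; two applications of the doubling estimate of Theorem~\ref{thm-BG} furnish $|B_{4r}(z_0)| \leq \cD^2|B_r(z_0)|$, producing \eqref{eq-abp-type-measure-p-geq2} with $\delta := (C_0\cD^2)^{-1}$.

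The main obstacle I anticipate is passing from the smooth $C^2$ setting (where Theorem~\ref{thm-abp-type-p-large} is stated) to a general viscosity supersolution. Since $p\geq2$ makes $\La_p$ continuous, Lemma~\ref{lem-visc-sol-singular-usual} restores the usual viscosity framework, and Remark~\ref{rmk-p-La-eq-u-e} furnishes the almost-everywhere perturbed inequality for the inf-convolution $u_\ve$:
\[
\La_p u_\ve - \kappa(n+p-2)|\D u_\ve|^{p-2}\min\bigl\{\ve\,|\D u_\ve|^2,\, 2\omega(2\sqrt{\ve m})\bigr\} \leq f_\ve.
\]
I would rerun the argument above with $u_\ve$ in place of $u$, invoking the smooth approximants $w_k$ of Lemma~\ref{lem-visc-u-u-e-properties}(e) in order to legitimately apply Theorem~\ref{thm-abp-type-p-large}, and then pass $k\to\infty$ (via the a.e.\ convergence of $\D w_k$ and $D^2w_k$) and $\ve\to 0$. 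The curvature-correction term drops out in the limit because $|\D u_\ve|$ is uniformly bounded by Lemma~\ref{lem-visc-u-u-e-properties}(b) and $\omega(0^+)=0$, while the strict inequality $1 + 2^{p/(p-1)} < 3^{p/(p-1)}$ established above supplies the slack needed so that the inclusion of the contact set into $\{u \leq \tilde M_p\}$ is stable under the monotone convergence $u_\ve \uparrow u$.
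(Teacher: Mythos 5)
Your proposal follows essentially the paper's route: rescale by $r^{p/(p-1)}$ to reduce to opening $1$, confine the contact set inside $B_{4r}(z_0)\cap\{u<\tilde M_p\}$ via the elementary inequality $1+2^{p/(p-1)}<3^{p/(p-1)}$, and regularize through inf-convolution and the smooth approximants $w_k$ of Lemma~\ref{lem-visc-u-u-e-properties}(e) before applying Theorem~\ref{thm-abp-type-p-large}. One step in your treatment of the $\ve\to0$ limit is misattributed and, as written, does not work. You claim the curvature-correction term $\kappa(n+p-2)|\D u_\ve|^{p-2}\min\{\ve|\D u_\ve|^2,\,2\omega(2\sqrt{\ve m})\}$ disappears because ``$|\D u_\ve|$ is uniformly bounded by Lemma~\ref{lem-visc-u-u-e-properties}(b) and $\omega(0^+)=0$''; but that lemma only gives $|\D u_\ve|\leq\frac{3}{2\ve}\diam(\Omega)$, which \emph{degenerates} as $\ve\to0$, and for $p>2$ the resulting bound on $|\D u_\ve|^{p-2}\omega(2\sqrt{\ve m})$ need not tend to zero for a generic modulus $\omega$. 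The $\ve$-uniform gradient bound that actually does the job is the one you yourself derived a few lines earlier but did not carry through: on the contact set, $r^p|\D u_\ve(x)|^{p-1}=d_y(x)\leq 2R_0$. The paper handles this by keeping the intermediate integration domain $\{u_\ve<\tilde M_p,\ r^p|\D u_\ve|^{p-1}<2R_0\}\cap B_{4r}(z_0)$ explicit after letting $k\to\infty$, bounding the correction term only on that set, and then letting $\ve\to0$; it also replaces your informal appeal to ``slack'' with the $\eta$-shifted renormalization $\tilde u_\ve:=\frac{(p-1)/p}{(p-1)/p+4\eta}(u_\ve+2\eta)$ before letting $\eta\to0$. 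With these two points made precise, your argument coincides with the paper's.
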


  \begin{proof}
 (i)  First, we claim that for $u\in C^\infty(B_R(x_0))\cap C\left(\overline B_R(x_0)\right)$ satisfying \eqref{eq-abp-cond-u-p-geq2}, 
  \begin{equation}\label{eq-abp-type-measure-0-p-geq2-smooth}
  |B_r(z_0)|%&\leq \frac{1}{\min\{1,(p-1)^n\}}\int_{ \cA^p_{r^{-p/(p-1)}}(u)} \sS^n\left(\sqrt{\kappa}r^p|\D u|^{p-1}\right)\left\{\sH\left(\sqrt{\kappa}r^p|\D u|^{p-1}\right) +{\color{red}\frac{r^pf^+}{n}}\right\}^n\\
\leq  {\sS^{n-1}\left(2\sqrt{\kappa}R_0\right)} \int_{\left\{u< \tilde M_p,\,r^p|\D u|^{p-1}< 2R_0\right\}\cap   B_{4r}(z_0)} \left\{\sH\left( 2\sqrt{\kappa}R_0\right) +\frac{r^p \left(\La_pu\right)^+}{n}\right\}^n.
\end{equation}
 According to Theorem \ref{thm-abp-type-p-large}, 
 it suffices to prove  that 
 \begin{equation}\label{eq-abp-type-est-contact-set-level-set-grad}
   \cA^p_{1}\left( \overline B_{r}(z_0); \overline  B_{R}(x_0);r^{\frac{p}{p-1}}u\right)\subset B_{4r}(z_0)\cap \left\{ u< \tilde M_p,\, r^p|\D u|^{p-1}< 2R_0 \right\}
   \end{equation}
   since $\sS$ and $\sH$ are nondecreasing functions in $[0,\infty).$ 
  %  for a uniform constant $\tilde M_p>1.$ 
% and that 
  %$$r^p|\D u|^{p-1}\leq 2R_0\quad\mbox{ in $\cA^p_{r^{-p/(p-1)}}\left( \overline B_{r}(z_0); \overline  B_{R}(x_0);u\right)$}.$$
  Indeed,  for a fixed $y\in \overline B_r(z_0)$,  we consider 
$$w_y:=r^{\frac{p}{p-1}}u+ \frac{p-1}{p}d_y^{\frac{p}{p-1}}.$$
Using  \eqref{eq-abp-cond-u-p-geq2}, we have 
\begin{align*}
r^{-\frac{p}{p-1}}w_y&\geq 0+\frac{p-1}{p}3^{\frac{p}{p-1}}\quad\mbox{outside $B_{4r}(z_0)$},\\
\inf_{B_{r}(z_0)}r^{-\frac{p}{p-1}}w_y&\leq  \frac{p-1}{p} +  \frac{p-1}{p}2^{\frac{p}{p-1}} <\frac{p-1}{p}3^{\frac{p}{p-1}}.
\end{align*}
Then we   find  a point $x\in   B_{4r}(z_0)$ such that 
$$\inf_{B_{R}(x_0)} r^{-\frac{p}{p-1}}w_y=  r^{-\frac{p}{p-1}}w_y(x)=u(x)+\frac{p-1}{p}\left(\frac{d_y(x)}{r}\right)^{\frac{p}{p-1}}< \frac{p-1}{p}3^{\frac{p}{p-1}}.$$ 
  Proposition \ref{prop-jacobi-estimate} implies that   $| r^{\frac{p}{p-1}}\D u(x)|^{p-1}=d_y(x)< 8r\leq 2R_0.$ 
Then we deduce  that
  \begin{align*}
   \cA^p_{1}\left( \overline B_{r}(z_0); \overline  B_{4r}(z_0);r^{\frac{p}{p-1}}  u\right)&= \cA^p_{1}\left( \overline B_{r}(z_0); \overline  B_{R}(x_0);r^{\frac{p}{p-1}}  u\right)\\
  & \subset B_{4r}(z_0)\cap \{   u< \tilde M_p,\, r^p|\D   u|^{p-1}< 2R_0 \}
   \end{align*}
 %$$  \cA^p_{1}\left( \overline B_{r}(z_0); \overline  B_{R}(x_0);r^{\frac{p}{p-1}}u\right)\subset B_{4r}(z_0)\cap \left\{ u< \tilde M_p, \,r^p|\D u|^{p-1}< 2R_0 \right\}$$
  for   $\tilde M_p:= \frac{p-1}{p}3^{\frac{p}{p-1}}>1$. % from  Proposition \ref{prop-jacobi-estimate}.
      Thus, \eqref{eq-abp-type-measure-0-p-geq2-smooth} follows from Theorem \ref{thm-abp-type-p-large}. 
     
(ii) Now we assume that $u$ is a continuous viscosity supersolution.  Let $\eta>0.$ According to  Lemma  \ref{lem-visc-u-u-e-properties}, and Remark \ref{rmk-p-La-eq-u-e},  the inf-convolution $u_\ve$ of $u$ with respect to $\overline B_{R}(x_0)$ (for small $\ve>0$) satisfies 
\begin{equation*}
\left\{
\begin{split}
&u_\ve \to u\quad\mbox{uniformly in $B_{R}(x_0)$},\\
&u_\ve\geq -\eta\quad\mbox{in $B_{R}(x_0)\setminus B_{4r}(z_0) $,}\qquad\inf_{B_{r}(z_0)}u_\ve\leq \frac{p-1}{p}+\eta,\\
&\La_p u_\ve -\ve\tilde\kappa(n+p-2)|\D u_\ve|^p\leq f_\ve\quad\mbox{a.e. in $B_{4r}(z_0)$},
\end{split}\right. 
\end{equation*}
where $-\tilde\kappa $ ($\tilde\kappa\geq0$) is a lower bound of the sectional curvature on $\overline B_{R}(x_0),$ and 
$$f_\ve(z):= \sup_{B_{2\sqrt{m\ve } }(z)}f^+, \quad\forall z\in B_{4r}(z_0);\quad m:=\|u\|_{L^{\infty}(B_R(x_0))}.$$ %since the $p$-Laplacian operator for $p\geq2$ is intrinsically uniformly continuous    with $\omega_G\equiv0$ from Example \ref{exp-AFS} and Remark \ref{rmk-p-La-eq-u-e}. 
According to  Lemma \ref{lem-visc-u-u-e-properties}, we approximate $u_\ve$ by  a sequence $\{w_k\}_{k=1}^\infty$ of smooth functions    which satisfy the following:
\begin{equation*}
\left\{
\begin{split}
&w_k\to u_\ve \quad\mbox{uniformly in $B_{4r}(z_0)$},\\
&w_k\geq -2\eta\quad\mbox{in $B_{R}(x_0)\setminus B_{4r}(z_0)$,}\qquad\inf_{B_{r}(z_0)}w_k\leq \frac{p-1}{p}+2\eta,\\
&|\D w_k|\leq C,\,\, D^2w_k\leq C{\bf I} \quad\mbox{uniformly in  $M$ with respect to $k$},\\
& \D w_k\to\D u_\ve,\,\,\,D^2 w_k\to D^2u_\ve\quad\mbox{a.e. in $B_{4r}(z_0)\,\,$ as $k\to\infty,$}
%&\La_p u_\ve -\ve\tilde\kappa(n+p-2)|\D u_\ve|^p\leq f_\ve\quad\mbox{a.e. in $B_{4r}(z_0)$},
\end{split}\right.
\end{equation*}   
     for a uniform constant  $C>0$  independent of  $k\in\N.$ Then we apply \eqref{eq-abp-type-measure-0-p-geq2-smooth} to  the function $\tilde w_k:=\frac{(p-1)/p }{(p-1)/p+4\eta} ({w_k+2\eta})$ to obtain 
       \begin{equation*}%\label{eq-abp-type-measure-0-p-geq2-smooth}
  |B_r(z_0)|%&\leq \frac{1}{\min\{1,(p-1)^n\}}\int_{ \cA^p_{r^{-p/(p-1)}}(u)} \sS^n\left(\sqrt{\kappa}r^p|\D u|^{p-1}\right)\left\{\sH\left(\sqrt{\kappa}r^p|\D u|^{p-1}\right) +{\color{red}\frac{r^pf^+}{n}}\right\}^n\\
\leq  {\sS^{n-1}\left(2\sqrt{\kappa}R_0\right)} \int_{\{\tilde w_k< \tilde M_p,\,r^p|\D \tilde w_k|^{p-1}< 2R_0\}\cap   B_{4r}(z_0)} \left\{\sH\left( 2\sqrt{\kappa}R_0\right) +\frac{r^p \left(\La_p\tilde w_k\right)^+}{n}\right\}^n.
\end{equation*}
Letting $k\to\infty$ and $\tilde u_\ve:=\frac{(p-1)/p }{(p-1)/p+4\eta} ({u_\ve+2\eta})$,  we deduce that  
       \begin{align*}%\label{eq-abp-type-measure-0-p-geq2-smooth}
 & |B_r(z_0)|%&\leq \frac{1}{\min\{1,(p-1)^n\}}\int_{ \cA^p_{r^{-p/(p-1)}}(u)} \sS^n\left(\sqrt{\kappa}r^p|\D u|^{p-1}\right)\left\{\sH\left(\sqrt{\kappa}r^p|\D u|^{p-1}\right) +{\color{red}\frac{r^pf^+}{n}}\right\}^n\\
\leq  {\sS^{n-1}\left(2\sqrt{\kappa}R_0\right)} \int_{\{\tilde u_\ve\leq \tilde M_p,\,r^p|\D \tilde u_\ve|^{p-1}\leq 2R_0\}\cap   B_{4r}(z_0)} \left\{\sH\left( 2\sqrt{\kappa}R_0\right) +\frac{r^p \left(\La_p\tilde u_\ve\right)^+}{n}\right\}^n
%&\leq \frac{\sS^{n-1}\left(2\sqrt{\kappa}R_0\right)}{\min\{1,(p-1)^n\}}\int_{\{\tilde u_\ve\leq \tilde M_p,\,r^p|\D \tilde u_\ve|^{p-1}\leq 2R_0\}\cap   B_{4r}(z_0)} \left\{\sH\left( 2\sqrt{\kappa}R_0\right) +\frac{r^p \left(\La_p\tilde u_\ve\right)^+}{n}\right\}^n\\
\end{align*} 
%for $\tilde u_\ve:=\frac{(p-1)/p }{(p-1)/p+4\eta} ({u_\ve+2\eta})$
since $\left(\La_p\tilde w_k\right)^+$ %$\leq |\D \tilde w_k|^{p-2} \left(\tr D^2\tilde w_k^+ \right)$
 is uniformly bounded  with respect to $k$, and converges to $\left(\La_p\tilde u_\ve\right)^+$  almost everywhere in $B_{4r}(z_0)$ as $k$ tends to $\infty.$
  Since  
\begin{align*}
\left(\La_p\tilde u_\ve\right)^+&=\left(\frac{(p-1)/p}{(p-1)/p+4\eta}\right)^{p-1}\left(\La_p  u_\ve\right)^+\\
&\leq \left(\frac{(p-1)/p}{(p-1)/p+4\eta}\right)^{p-1}\left\{\ve\tilde\kappa(n+p-2)|\D u_\ve|^p+ f_\ve\right\}\\
&\leq\left(\frac{(p-1)/p+4\eta}{(p-1)/p}\right) \ve\tilde\kappa(n+p-2)(r^{-p}2R_0)^{\frac{p}{p-1}}+\left(\frac{(p-1)/p}{(p-1)/p+4\eta}\right)^{p-1}  f_\ve 
\end{align*}
almost everywhere in $ \left\{r^p|\D \tilde u_\ve|^{p-1}\leq 2R_0\right\}\cap   B_{4r}(z_0)$, 
 we  deduce  that 
 \begin{align*}%\label{eq-abp-type-measure-0-p-geq2-smooth}
 & |B_r(z_0)|%&\leq \frac{1}{\min\{1,(p-1)^n\}}\int_{ \cA^p_{r^{-p/(p-1)}}(u)} \sS^n\left(\sqrt{\kappa}r^p|\D u|^{p-1}\right)\left\{\sH\left(\sqrt{\kappa}r^p|\D u|^{p-1}\right) +{\color{red}\frac{r^pf^+}{n}}\right\}^n\\
\leq  {\sS^{n-1}\left(2\sqrt{\kappa}R_0\right)} \int_{\{  u\leq \tilde M_p\}\cap   B_{4r}(z_0)} \left\{\sH\left( 2\sqrt{\kappa}R_0\right) +\frac{r^p f^+}{n}\right\}^n
%&\leq \frac{\sS^{n-1}\left(2\sqrt{\kappa}R_0\right)}{\min\{1,(p-1)^n\}}\int_{\{\tilde u_\ve\leq \tilde M_p,\,r^p|\D \tilde u_\ve|^{p-1}\leq 2R_0\}\cap   B_{4r}(z_0)} \left\{\sH\left( 2\sqrt{\kappa}R_0\right) +\frac{r^p \left(\La_p\tilde u_\ve\right)^+}{n}\right\}^n\\
\end{align*}  
by letting     $\ve$ and    $\eta$ go  to $0$. 
 Lastly, the above estimate implies 
 \eqref{eq-abp-type-measure-p-geq2}   with the help of   Bishop-Gromov's volume comparison. 
  \end{proof}
  
We also have the  ABP type  estimate for viscosity supersolutions to  nonlinear $p$-Laplacian type equations. 
  
 \begin{cor}\label{cor-abp-type-measure-p-large-nonlinear}
 Assume that  $ 2\leq p<\infty,$ and 
 {$\cM^-_{\ld,\Ld }(R(e))\geq - {(n-1)} \kappa$}   with  $\kappa\geq0$ for any unit vector $e\in TM$.   % and $0<\ld\leq 1\leq \Ld<\infty$. 
For  $z_0,x_0\in M$ and $0<r\leq R\leq R_0,$ assume that $ \overline  B_{4r}(z_0)\subset B_{R}(x_0).$     Let   $\beta\geq0,$  
and  $u \in C(\overline  B_{R}(x_0))$ be such that  $$|\D u|^{p-2}\cM^-_{\ld,\Ld}(D^2u)-  \beta |\D u|^{p-1}\leq { {}r^{-p}} \quad\mbox{ in $B_{R}(x_0)$}$$
%$\cM^-(D^2u)\leq f$
in the viscosity sense,     
\begin{equation}\label{eq-abp-cond-u-p-geq2-nonlinear}
u\geq0\quad\mbox{on $  B_{R}(x_0)\setminus B_{4r}(z_0) \quad$ and $\quad \displaystyle\inf_{B_{r}(z_0)}u\leq \frac{p-1}{p} .$}
\end{equation}
Then  there exists a uniform constant $\delta\in(0,1)$ depending only on  $n, p, \sqrt{\kappa}R_0, \ld,\Ld,$ and $\beta R_0,$ such that 
  \begin{equation*} 
  \left|\left\{ u\leq \tilde M_p\right\}\cap B_{4r}(z_0)\right|>\delta|B_{4r}(z_0)|
  \end{equation*}
  for  $\tilde M_p=\frac{p-1}{p}3^{\frac{p}{p-1}}>1.$
\end{cor}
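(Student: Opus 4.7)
The argument will closely parallel that of Lemma \ref{lem-abp-type-measure-p-large}, with two modifications: the linear ABP estimate is replaced by its Pucci counterpart (Corollary \ref{cor-abp-type-p-large}), and the lower-order drift $\beta|\D u|^{p-1}$ is absorbed by the intrinsic gradient bound that is available on the $p$-contact set. Recall that for $p\geq 2$ the operator $G(\zeta,A):=|\zeta|^{p-2}\cM^-_{\ld,\Ld}(A)-\beta|\zeta|^{p-1}$ extends continuously to all of $TM\times_M\Sym TM$ with $G(\mathbf 0,\mathbf 0)=0$ and $G(\zeta,A)\leq 0$ for $A\leq 0$, and depends only on $|\zeta|$ and the eigenvalues of $A$, so is intrinsically uniformly continuous with $\omega_G\equiv 0$ (cf.~Example \ref{exp-AFS}); thus Lemma \ref{lem-visc-sol-singular-usual} applies.

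First I would treat smooth $u$. Setting $v:=r^{p/(p-1)}u$, the scaling relations $|\D v|=r^{p/(p-1)}|\D u|$ and $D^2v=r^{p/(p-1)}D^2u$ give, after a direct computation,
\[
|\D v|^{p-2}\cM^-_{\ld,\Ld}(D^2v)-\beta|\D v|^{p-1}\leq 1\quad\text{in }B_R(x_0),
\]
while the hypotheses become $v\geq 0$ outside $B_{4r}(z_0)$ and $\inf_{B_r(z_0)}v\leq (p-1)/p$. Repeating verbatim the contact-point computation in step (i) of the proof of Lemma \ref{lem-abp-type-measure-p-large}—that is, minimizing $v+\tfrac{p-1}{p}d_y^{p/(p-1)}$ over $\overline B_R(x_0)$ for each $y\in\overline B_r(z_0)$ and invoking Proposition \ref{prop-jacobi-estimate}(a)—yields the inclusion
\[
\cA^p_1\bigl(\overline B_r(z_0);\overline B_R(x_0);v\bigr)\subset B_{4r}(z_0)\cap\{v<\tilde M_p\}\cap\{|\D v|^{p-1}<2R_0\},
\]
with $\tilde M_p=\tfrac{p-1}{p}3^{p/(p-1)}$. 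On this contact set the differential inequality therefore collapses to $|\D v|^{p-2}\cM^-_{\ld,\Ld}(D^2v)\leq 1+2\beta R_0$, and Corollary \ref{cor-abp-type-p-large}, combined with the monotonicity of $\sS$ and $\sH$ on $[0,\infty)$, gives
\[
|B_r(z_0)|\leq C_1\,\bigl|B_{4r}(z_0)\cap\{v\leq \tilde M_p\}\bigr|,
\]
for some $C_1=C_1(n,p,\ld,\Ld,\sqrt\kappa R_0,\beta R_0)$. Bishop--Gromov (Theorem \ref{thm-BG}) furnishes a universal $C_2(n,\sqrt\kappa R_0)$ with $|B_{4r}(z_0)|\leq C_2|B_r(z_0)|$, and the desired density estimate with $\delta=(C_1C_2)^{-1}$ follows since $v\leq\tilde M_p$ is equivalent to $u\leq\tilde M_p$ after unscaling.

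For a general viscosity supersolution, I would regularize by Jensen's inf-convolution. Lemma \ref{prop-u-u-e-superj} shows that $u_\ve$ satisfies a perturbed version of the equation almost everywhere, and Lemma \ref{lem-visc-u-u-e-properties}(e) provides smooth approximants $w_k\to u_\ve$ with uniform Lipschitz and one-sided Hessian bounds; the cosmetic renormalization $\tilde w_k:=\tfrac{(p-1)/p}{(p-1)/p+4\eta}(w_k+2\eta)$ is used, exactly as in step (ii) of the proof of Lemma \ref{lem-abp-type-measure-p-large}, to restore the boundary and infimum conditions of \eqref{eq-abp-cond-u-p-geq2-nonlinear} with a margin $\eta>0$. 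Applying the smooth case to $\tilde w_k$ and passing to the successive limits $k\to\infty$, $\ve\to 0$, $\eta\to 0$ delivers the corollary.

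The main obstacle is precisely this regularization limit. The $\ve$-perturbation introduces an extra $\ve\tilde\kappa(n+p-2)|\D u_\ve|^p$ term into the inequality, so one must verify that it, together with the drift $\beta|\D u_\ve|^{p-1}$, stays uniformly controlled on the contact set by a constant depending only on $n,p,\ld,\Ld,\sqrt\kappa R_0,\beta R_0$. This is exactly the role of the a priori gradient estimate $|\D v|^{p-1}< 2R_0$ on the contact set produced in the smooth step: it renders the whole right-hand side of the approximate equation a universal constant, so that sending $\ve\to 0$ kills the $\ve$-error while leaving the promised dependence of $\delta$ intact.
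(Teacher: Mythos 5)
Your approach is exactly the paper's: replace Theorem \ref{thm-abp-type-p-large} by its Pucci counterpart Corollary \ref{cor-abp-type-p-large}, exploit the a priori gradient bound $r^p|\D u|^{p-1}<2R_0$ on the $p$-contact set to absorb the drift $\beta|\D u|^{p-1}$ into a universal constant, and regularize via Jensen's inf-convolution as in Lemma \ref{lem-abp-type-measure-p-large}, using the same contact-set gradient bound to render the $\ve$-error negligible in the limit. The key structural observations (that one only needs the differential inequality at contact points, and that the gradient is quantitatively controlled there) are identical to the paper's.

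There is, however, a concrete bookkeeping slip in the rescaling. Setting $v:=r^{p/(p-1)}u$, the hypothesis $\inf_{B_r(z_0)}u\leq(p-1)/p$ becomes $\inf_{B_r(z_0)}v\leq r^{p/(p-1)}(p-1)/p$, not $\inf_{B_r(z_0)}v\leq(p-1)/p$; these differ when $r\neq1$, and $r$ is not normalized here. Carrying the contact-point minimization through correctly, as in step (i) of the proof of Lemma \ref{lem-abp-type-measure-p-large}, a contact point $x$ of $v+\frac{p-1}{p}d_y^{p/(p-1)}$ with vertex $y\in\overline B_r(z_0)$ satisfies $v(x)+\frac{p-1}{p}d_y^{p/(p-1)}(x)<r^{p/(p-1)}\tilde M_p$, which yields $u(x)<\tilde M_p$, not $v(x)<\tilde M_p$. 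In particular your claim that ``$v\leq\tilde M_p$ is equivalent to $u\leq\tilde M_p$ after unscaling'' is false for $r\neq1$. The repair is simple: state the inclusion and the final density estimate for the level set $\{u<\tilde M_p\}$ throughout, which is what the corollary requires; the gradient bound $|\D v|^{p-1}<2R_0$ is the correct one and carries your control of both the drift and the $\ve$-term, so the remainder of your argument then goes through unchanged.
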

\begin{proof} 
  The  proof is similar to one for Lemma \ref{lem-abp-type-measure-p-large}    replacing Theorem  \ref{thm-abp-type-p-large} % and Corollary \ref{cor-abp-type-p-1} 
by  Corollary  \ref{cor-abp-type-p-large}.      We use    the same notation as in  the proof of Lemma \ref{lem-abp-type-measure-p-large},  and notice that  \eqref{eq-abp-type-est-contact-set-level-set-grad} follows from \eqref{eq-abp-cond-u-p-geq2-nonlinear}.  
    As in the proof of Lemma   \ref{lem-abp-type-measure-p-large}, let  $\eta>0,$ and  let $u_\ve$ be   the inf-convolution of $u $ with respect to $\overline B_R(x_0)$ for small $\ve>0$. 
Applying Corollary  \ref{cor-abp-type-p-large}  to  approximating smooth  functions for   $u_\ve$  from  Lemma \ref{lem-visc-u-u-e-properties},  we deduce that    for   sufficiently small $\ve>0,$
    \begin{equation*}%\label{eq-abp-type-measure-0-p-geq2-smooth-nonlinear}
  \begin{split}
& |B_r(z_0)|\leq  \frac{(p-1)}{n^n\ld^n} {\sS^{n-1}\left(2\sqrt{\frac{\kappa}{\ld}}R_0\right)}  
  \\  &\cdot\int_{\left\{\tilde  u_\ve \leq\tilde M_p,\,r^p|\D \tilde u_\ve|^{p-1}\leq 2R_0\right\}\cap   B_{4r}(z_0)} \left\{ \frac{\Ld}{p-1}+(n-1)\Ld\sH\left( 2\sqrt{\frac{\kappa}{\Ld}}R_0\right) + {r^p  |\D \tilde u_\ve|^{p-2}\left(\cM^-_{\ld,\Ld}(D^2 \tilde  u_\ve)\right)^+}\right\}^n, 
  \end{split}
\end{equation*}  
where $\displaystyle \tilde u_\ve:=\frac{(p-1)/p }{(p-1)/p+4\eta} ({u_\ve+2\eta}).$ Since the Pucci operators are intrinsically uniformly continuous with $\omega_{G}\equiv0$,  Lemma \ref{prop-u-u-e-superj} implies that 
 \begin{align*}
|\D \tilde u_\ve|^{p-2}\left(\cM^-_{\ld,\Ld}(D^2\tilde u_\ve)\right)^+&=\left(\frac{(p-1)/p}{(p-1)/p+4\eta}\right)^{p-1}|\D   u_\ve|^{p-2}\left(\cM^-_{\ld,\Ld}(D^2  u_\ve)\right)^+\\
&\leq \left(\frac{(p-1)/p}{(p-1)/p+4\eta}\right)^{p-1}\left\{\ve \tilde\kappa n\Ld |\D u_\ve|^p+ \beta|\D u_\ve|^{p-1} + r^{-p}\right\}\\
&\leq \frac{(p-1)/p+4\eta}{(p-1)/p}  \ve\tilde\kappa n\Ld(r^{-p}2R_0)^{\frac{p}{p-1}}+  2\beta R_0r^{-p} +\left(\frac{(p-1)/p}{(p-1)/p+4\eta}\right)^{p-1}r^{-p}
\end{align*}
almost everywhere in $ \left\{r^p|\D \tilde u_\ve|^{p-1}\leq 2R_0\right\}\cap   B_{4r}(z_0)$.  % , where $-\tilde\kappa $ ($\tilde\kappa\geq0$) is a lower bound of the sectional curvature on $\overline B_R(x_0). $
Therefore,  letting $\ve$ and $\eta$ go to $0$,  we obtain   
  \begin{equation*}%\label{eq-abp-type-measure-0-p-geq2-smooth-nonlinear}
  \begin{split}
  |B_r(z_0)|&\leq  \frac{p-1}{n^n\ld^n} {\sS^{n-1}\left(2\sqrt{\frac{\kappa}{\ld}}R_0\right)}  
   \int_{\left\{   u\leq  \tilde M_p\right\}\cap   B_{4r}(z_0)} \left\{ \frac{\Ld}{p-1}+(n-1)\Ld\sH\left( 2\sqrt{\frac{\kappa}{\Ld}}R_0\right) + {2\beta R_0+1}\right\}^n,
  \end{split}
\end{equation*}   
from which    the   result follows   by using  Bishop-Gromov's volume comparison. 
\end{proof}

  % \newpage

%%%%%%%%%%%%%%%%%%%%%%%%%%%%%%%%%%%%%%%%%%%%%%%%%%%%%%%%%%%%%%%%%%%%%%%%%%%%%%%%%%%%%%%%%%%%%%%%%%%%%%
\subsection{Singular operators}% for $1<p< 2$}%\label{sec-abp}
%%%%%%%%%%%%%%%%%%%%%%%%%%%%%%%%%%%%%%%%%%%%%%%%%%%%%%%%%%%%%%%%%%%%%%%%%%%%%%%%%%%%%%%%%%%%%%%%%%%%%%%  

Now we consider  the $p$-Laplacian type  operator for $1<p<2.$
In this case, the operator    $\La_p u$ %for a smooth function $u$ % :\Omega\to M$  
  %defined  as  $$\phi_p(x):=\exp_x\frac{|\D u(x)|^{p-2}}{a^{p-1}}\D u(x)$$
becomes  singular when its gradient  vanishes.  
To   deal with  singularities, we make use of a regularized operator %as follows; the proof of the following lemma is the same as the proof of \cite[Lemma 2]{ACP}. 
%\begin{lemma}\label{lem-pucci-delta-p-laplace-delta}
%Let $1<p<2$ and let $u$ be a viscosity supersolution to $\La_pu\leq f^+$ in $\Omega.$ Then  $u$ is  a viscosity supersolution to
$$\left(|\D u(x)|^2+\delta\right)^{\frac{p-2}{2}}\cM^-_{p-1,1}(D^2u)$$%\leq f^+\quad\mbox{in $\Omega$}$$
for any $  \delta>0$; see Lemma \ref{lem-pucci-delta-p-laplace-delta}.
So we first show the ABP type estimate for nonlinear $p$-Laplacian type operators. 
%\end{lemma}

 \begin{thm}[ABP type estimate]\label{thm-abp-type-p-1}
Assume that          $1<p<2$   and  $\cM^-_{\ld,\Ld}(R(e))\geq - {(n-1)}\kappa$   with  $\kappa\geq0$ for any unit vector $e\in TM$.  
  For a bounded open set  $\Omega\subset M,$  
let  $f\in C(\Omega),$ and     $u\in C\left(\overline\Omega\right)$  be smooth in $\Omega$   %  uniformly   semiconcave in $\Omega$} 
such that   $ |\D u|^{p-2}\cM^-_{\ld,\Ld}(D^2u)\leq f$ in $\Omega $ in the viscosity sense.      
%$$u\geq 0\,\,\,\mbox{ in $B_{7R}(z_0)\setminus B_{5R}(z_0),\quad$ and $\inf_{B_{2R}(z_0)}u\leq 1.$}$$
For   a compact set $E\subset M,$ we assume that 
  $$\cA^p_{1}\left(E;\overline \Omega;u\right)\subset \Omega.$$ Then  we have 
\begin{equation*}
|E|\leq \int_{\cA_1^{p}\left(E;\overline \Omega;u\right)}\sS^{n-1}\left(\sqrt{ \frac{\kappa}{\lambda}}   {|\D u|^{p-1}}{} \right)
\left[\frac{1}{n\ld}\left\{
   {f^+}  +\frac{\Ld}{p-1}+  {(n-1)}\Ld  \sH\left(\sqrt{ \frac{\kappa}{\Ld}}|\D u|^{p-1}\right)  \right\}\right]^n.
\end{equation*}
%where $\sH(\tau):=\tau\coth(\tau),$ and $\sS(\tau):= \sinh(\tau)/\tau$ for   $\tau\geq 0.$  
%In particular, if {\color{black}$\cM^-_{\ld,\Ld}(R(e))\geq 0$}     for any   $e\in TM$,   we have 
%\begin{equation*}
%|E|\leq \int_{\cA_1^{p}\left(E;\overline \Omega;u\right)}(p-1) \left\{\frac{f^+}{n\ld} +\frac{\Ld}{(p-1) \ld}\right\}^n.
%\end{equation*}
  \end{thm}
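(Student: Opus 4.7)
The plan is to follow the blueprint of Corollary~\ref{cor-abp-type-p-large} while replacing the singular map $\Phi_p$ with the regularized map $\Phi_{p,\delta}(x):=\exp_x(|\D u(x)|^2+\delta)^{(p-2)/2}\D u(x)$ for $\delta>0$, introduced in the introduction. For every contact point $x\in\cA^p_1(E;\overline{\Omega};u)$ with $|\D u(x)|>0$, Proposition~\ref{prop-jacobi-estimate}(a) furnishes a unique vertex $y:=\Phi_p(x)\notin\Cut(x)$, and the reparametrization identity $(|\D u|^2+\delta)^{(p-2)/2}\D u=t_\delta\,|\D u|^{p-2}\D u$ with $t_\delta:=(|\D u|^2/(|\D u|^2+\delta))^{(2-p)/2}\in(0,1]$ shows that $\Phi_{p,\delta}(x)=\phi_p(x,t_\delta)$ lies on the minimizing geodesic from $x$ to $y$ and is not in $\Cut(x)$. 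Hence Lemma~\ref{lem-jacobian-exp} applies to the smooth vector field $\xi_\delta:=(|\D u|^2+\delta)^{(p-2)/2}\D u$ and gives $\Jac\Phi_{p,\delta}(x)=\Jac\exp_x(\xi_\delta(x))\cdot|\det(\D\xi_\delta+B_\delta)(x)|$, where $B_\delta:=D^2(d^2_{\Phi_{p,\delta}(x)}/2)$.

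A direct covariant computation yields $\D\xi_\delta=(|\D u|^2+\delta)^{(p-2)/2}[{\bf I}+(p-2)\alpha_\delta\,e\otimes e]\circ D^2u$ with $\alpha_\delta:=|\D u|^2/(|\D u|^2+\delta)\in(0,1]$ and $e:=\D u/|\D u|$. Since $e$ is an eigenvector of $B_\delta$ with eigenvalue $1$, one can factor
\[
\D\xi_\delta+B_\delta=\bigl[{\bf I}+(p-2)\alpha_\delta\,e\otimes e\bigr]\circ\bigl(M(t_\delta)-\tilde c_\delta\,e\otimes e\bigr),
\]
where $\tilde c_\delta:=\frac{(2-p)(1-\alpha_\delta)}{(p-1)(1+(p-2)\alpha_\delta)}\ge 0$ and $M(t_\delta):=t_\delta|\D u|^{p-2}D^2u+B_\delta+\frac{2-p}{p-1}e\otimes e$ is symmetric and positive semi-definite by Proposition~\ref{prop-jacobi-estimate}(e). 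Using $1+(p-2)\alpha_\delta\le 1$ (valid for $1<p<2$) to drop the outer factor, and the fact that $\tilde c_\delta\to 0$ as $\delta\to 0$, the matrix determinant lemma (plus continuity of $\det$) gives $|\det(M(t_\delta)-\tilde c_\delta e\otimes e)|\le \det M(t_\delta)+\eta_\delta(x)$ with $\eta_\delta(x)\to 0$. Now $M(t_\delta)\ge 0$ gives $\cM^-_{\ld,\Ld}(M(t_\delta))=\ld\,\tr M(t_\delta)$, so AM-GM reads $\det M(t_\delta)\le (\cM^-_{\ld,\Ld}(M(t_\delta))/(n\ld))^n$; applying subadditivity of $\cM^-_{\ld,\Ld}$, the hypothesis $|\D u|^{p-2}\cM^-_{\ld,\Ld}(D^2u)\le f$ (valid pointwise a.e.\ on $\{|\D u|>0\}$ for smooth $u$ by Lemma~\ref{lem-visc-sol-singular-usual} and Definition~\ref{def-visc-sol}), the fact that $t_\delta\le 1$ gives $t_\delta|\D u|^{p-2}\cM^-_{\ld,\Ld}(D^2u)\le f^+$, and Lemma~\ref{lem-pucci-ric-dist-sqrd} for $\cM^+_{\ld,\Ld}(B_\delta)$, one obtains
\[
\cM^-_{\ld,\Ld}(M(t_\delta))\le f^+ +\frac{\Ld}{p-1}+(n-1)\Ld\,\sH\!\Bigl(\sqrt{\kappa/\Ld}\,|\xi_\delta(x)|\Bigr).
\]
Combining with the Bishop-Gromov bound $\Jac\exp_x(\xi_\delta)\le\sS^{n-1}(\sqrt{\kappa/\ld}\,|\xi_\delta|)$ from Theorem~\ref{thm-BG} and $|\xi_\delta|\to |\D u|^{p-1}$ as $\delta\to 0$, this yields the desired uniform pointwise Jacobian bound modulo an $o_\delta(1)$ term on the contact set intersected with $\{|\D u|>0\}$.

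To close the argument, Proposition~\ref{prop-jacobi-estimate}(a) (together with the trivial case $y=x$ when $\D u(x)=0$) gives $E\subset\Phi_p(\cA^p_1(E;\overline{\Omega};u))$, and since $\Phi_{p,\delta}(x)\to\Phi_p(x)$ pointwise on the contact set, the area formula for the smooth map $\Phi_{p,\delta}$ combined with a Fatou-type covering argument produces $|E|\le \liminf_{\delta\to 0}\int_{\cA^p_1}\Jac\Phi_{p,\delta}$; dominated convergence applied to the uniform Jacobian bound then yields the stated inequality. The main obstacle is the uniform-in-$\delta$ Jacobian estimate: because $\D\xi_\delta+B_\delta$ is generically non-symmetric and the rank-one correction $-\tilde c_\delta\,e\otimes e$ may push an eigenvalue of $M(t_\delta)$ below zero, one must verify that the error $\eta_\delta$ is genuinely uniform on $\cA^p_1(E;\overline{\Omega};u)$ and does not destroy the limit; here it is essential that $\tilde c_\delta=O(1-\alpha_\delta)\to 0$ precisely where the regularization is active, and that the inequality $1+(p-2)\alpha_\delta\le 1$ (specific to $1<p<2$) lets us dispense with the $(p-1)$ prefactor that would appear in the symmetric-degenerate analogue, producing the sharper-looking constant displayed in the statement.
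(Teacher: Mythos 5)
Your proposal is structurally the same as the paper's proof: same regularized map $\Phi_{p,\delta}$, same reparametrization $\Phi_{p,\delta}(x)=\phi_p(x,t_\delta)$, same factorization of $\D\xi_\delta+B_\delta$ through the operator ${\bf I}+(p-2)\alpha_\delta\,e\otimes e$ (the paper's $H_\delta$ is literally your $M(t_\delta)$, and your $\tilde c_\delta$ works out algebraically to the paper's $\frac{\delta(2-p)}{(p-1)\{(p-1)|\D u|^2+\delta\}}$), and the same observation that $\det({\bf I}+(p-2)\alpha_\delta\,e\otimes e)=1+(p-2)\alpha_\delta<1$ can be discarded in the singular regime $1<p<2$, which is exactly why the $(p-1)$ prefactor of Corollary~\ref{cor-abp-type-p-large} disappears. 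The final appeal to Lemma~\ref{lem-pucci-ric-dist-sqrd}, Bishop--Gromov, and dominated convergence also matches the paper.

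There are two genuine gaps that a complete write-up must close. First, the Jacobian estimate is established only on $\cA^p_1\cap\{|\D u|>0\}$; your factorization uses $e=\D u/|\D u|$, which is undefined on $\{|\D u|=0\}$. Since the area formula argument needs a uniform-in-$\delta$ Jacobian bound on \emph{all} of $\cA^p_1$ (and $\cA^p_1\cap\{|\D u|=0\}$ can have positive measure), this cannot simply be dropped. At such contact points one has $\Phi_{p,\delta}(x)=x$, $d\Phi_{p,\delta}(x)=\delta^{(p-2)/2}D^2u(x)+{\bf I}$, and $D^2u(x)\ge 0$ (because $\frac{p}{p-1}>2$ forces the touching power of the distance to have vanishing Hessian at its base point), from which the bound follows trivially; the paper encodes this uniformly by inserting the indicator $\chi_{\{|\D u|>0\}}$ into the definition of $H_\delta$. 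Second, the passage $|E|\le\liminf_{\delta\to 0}\int_{\cA^p_1}\Jac\Phi_{p,\delta}$ is asserted but not proved. The difficulty is that $E$ is the image of $\cA^p_1$ under the \emph{unregularized} map $\Phi_{p,0}$, which fails to be Lipschitz (indeed is not even defined) at $\{|\D u|=0\}$; the paper handles this by decomposing $E$, exhausting $\{|\D u|>0\}$ by the sets $\{|\D u|\ge 1/k\}$ on which $\Phi_{p,0}$ is Lipschitz, and using Fatou together with pointwise convergence of $\Jac\Phi_{p,\delta}$ to $\Jac\Phi_{p,0}$. A "Fatou-type covering argument" needs to be made explicit along these lines. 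You correctly flag the uniform boundedness of the error $\eta_\delta$ as the technical crux; the resolution (which the paper carries out) is that $\tilde c_\delta\le\frac{2-p}{(p-1)^2}$ uniformly and the cofactor $\det\tilde M(t_\delta)$ is bounded by $(\frac1n+\frac1n\tr M(t_\delta))^n$, which is controlled by the same Pucci estimate that bounds $\det M(t_\delta)$.
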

  
\begin{proof}
  %We refer to \cite[Chapter 14]{V} and \cite{WZ} for more details.  
 From Definition \ref{def-visc-sol}, it is not difficult to see that     $u$ is   a  viscosity supersolution to  
 \begin{equation}\label{eq-approx-op-p-less2}
 \left( |\D u|^2+\delta\right)^{\frac{p-2}{2}}\cM^-_{\ld,\Ld}(D^2u)\leq f^+\quad\mbox{ in $\Omega $ }
 \end{equation}
 for any $\delta>0$   since $1<p<2$; refer to the proof of \cite[Lemma 2]{ACP}. % from Lemma \ref{lem-pucci-delta-p-laplace-delta}. 
 Moreover, $u$ is a viscosity supersolution to  \eqref{eq-approx-op-p-less2} in the usual sense       according to  Lemma \ref{lem-visc-sol-singular-usual} since  the regularized  operator (for a given $\delta>0$) is      continuous   with respect  to $\D u$ and $D^2u$.   Thus, we deduce that  $u$ solves \eqref{eq-approx-op-p-less2} in the classical sense.

 For $\delta>0,$   define  a regularized  map $\Phi_{p,\delta}:\cA^p_{1}\left(E;\overline \Omega;u\right)\to M$   as 
$$\Phi_{p,\delta}(x):=\exp_x\left( |\D u(x)|^2+\delta\right)^{\frac{p-2}{2}}\D u(x).$$
Note that   if $|\D u(x)|>0,$ then $$\Phi_{p,\delta}(x)=\phi_p\left(x, t_{\delta}(x)\right)\quad\mbox{for}\,\, t_\delta(x):=\left(\frac{ |\D u(x)|^2}{|\D u(x)|^2+ \delta}\right)^{\frac{2-p}{2}}\in(0,1),$$ where 
$\phi_{p}(x,t):=\exp_x t{|\D u(x)|^{p-2}}{ }\D u(x)$ for $ t\in[0,1]$ as in Proposition \ref{prop-jacobi-estimate}.  
We also define $\Phi_{p,0}:   \cA^p_1\left(E;\overline \Omega;u\right)\cap \{|\D u|>0\} \to M $ %\Phi_{p,0}\left(  \cA^p_1\left(E;\overline \Omega;u\right)\cap \{|\D u|>0\}\right)$
 by $$\Phi_{p,0}(x):=\exp_x|\D u(x)|^{p-2}\D u(x).$$ % which is injective as mentioned before.    
 
 For any $x\in\cA^p_{1}\left(E;\overline \Omega;u\right)\subset \Omega,  $ there is $y\in E$ 
 such that 
% for     any  $x\in\cA^p_1\left(E;\overline \Omega;u\right)\cap\Omega,$ there is  $y\in E$ such that 
\begin{equation}\label{eq-inf-contact-pt-p<2}
\inf_{  \Omega}\left\{u+\frac{p-1}{p}d_y^{\frac{p}{p-1}}\right\}=u(x)+\frac{p-1}{p}d_y^{\frac{p}{p-1}}(x).
\end{equation} 
% \eqref{eq-inf-contact-pt-p<2} holds. 
%\begin{equation}\label{eq-inf-contact-pt-p<2}
%\inf_{  \Omega}\left\{u+\frac{p-1}{p}d_y^{\frac{p}{p-1}}\right\}=u(x)+\frac{p-1}{p}d_y^{\frac{p}{p-1}}(x).
%\end{equation} 
Since $\cA^p_{1}\left(E;\overline \Omega;u\right)\subset \Omega$,     the argument in the proof of Proposition \ref{prop-jacobi-estimate}   asserts  that 
for  each $x\in\cA^p_{1}\left(E;\overline \Omega;u\right),$ a point $y\in E$ satisfying \eqref{eq-inf-contact-pt-p<2} is not a cut point of $x.$ In fact, $y$ is uniquely determined, and the map $\cA^p_1\left(E;\overline \Omega;u\right)\owns x\mapsto y\in E$ is well-defined as 
\begin{equation*}
y=\left\{
\begin{split}
 &x,\quad &\mbox{for}\,\,\,|\D u(x)|=0,\\
&\exp_x {|\D u(x)|^{p-2}}{ }\D u(x)=\Phi_{p,0}(x)\not\in \Cut(x)\cup\{x\},\quad &\mbox{for}\,\,\,|\D u(x)|>0,
\end{split}
\right.
\end{equation*}
which is  surjective. %, $y\not\in\Cut(x)$ and 
% $${\color{black} y=x \iff \D u(x)= 0}. $$
Moreover,  when  $|\D u(x)|=0$ for $x\in\cA^p_1\left(E;\overline \Omega;u\right),$    we have  $y=x=\Phi_{p,\delta}(x)  \not\in\Cut(x)$ in \eqref{eq-inf-contact-pt-p<2}  for any $\delta>0,$ and   $D^2u(x)\geq- \frac{p-1}{p}D^2d^{\frac{p}{p-1}}_x(x)=0 $  %is  nonnegative definite 
  since $\frac{p}{p-1}>2.$  %Indeed, suppose that $\D u(x)=0$ and $x\not \eq y.$ Then \eqref{eq-inf-contact-pt-p-geq2} implies that $d_{y}^{\frac{1}{p-1}}(x)=|\D u(x)|=0,$ which is a contradiction.
 If $|\D u(x)|>0$ for $x\in\cA^p_1\left(E;\overline \Omega;u\right),$    then %Proposition \ref{prop-jacobi-estimate}  says that %  \eqref{eq-expression-contact-pt} says that 
\begin{align*} 
 y=\exp_x {|\D u(x)|^{p-2}}{ }\D u(x)=\Phi_{p,0}(x)=\lim_{\delta\to0}\Phi_{p,\delta}(x).
\end{align*} 
 In the latter case,  the curve  
 $\gamma(t):=\phi_p(x,t)=\exp_x t\,{|\D u|^{p-2}}{ }\D u(x)$ is a unique minimizing geodesic joining $\gamma(0)=x$ to $\gamma(1)=\exp_x {|\D u|^{p-2}}\D u(x)\not\in \Cut(x)\cup\{x\}$  with velocity $\dot\gamma(0)={|\D u|^{p-2}}\D u(x)\in \cE_x\setminus\{0\}$, and 
 $\gamma(t_\delta(x))=\Phi_{p,\delta}(x)\not\in \Cut(x)$ for $\delta>0$.  
 
 Notice that for each $\delta>0$,   the regularized map   $\Phi_{p,\delta}$  is Lipschitz continuous on a compact set  $\cA^p_1\left(E;\overline \Omega;u\right)\subset \Omega$   since    
 \begin{equation}\label{eq-Phi-delta-lip-lem-jac}
 \left( |\D u|^2+\delta\right)^{\frac{p-2}{2}}\D u(x)\in\cE_x,\quad\forall       x\in \cA_1^ {p}\left(E;\overline \Omega;u\right) \quad\mbox{ for any $\delta>0$}.
 \end{equation}

Now  we    claim   that 
\begin{equation}\label{eq-E-contatined-image-phi}
|E| %\leq \limsup_{\delta\to0}\left|\Phi_{p,\delta}\left(\cA_1^{p}\left(E;\overline \Omega;u\right)\right)\right|
\leq   \limsup_{\delta\to0} \int_{\cA_1^ {p}\left(E;\overline \Omega;u\right)} \Jac\Phi_{p,\delta}(x)dx.
\end{equation}
 From the argument above,  it follows  that  
\begin{equation*}%\label{eq-split-E}
\begin{split}
E&=\Phi_{p,0}\left(  \cA^p_1\left(E;\overline \Omega;u\right)\cap \{|\D u|>0\}\right)\,   \bigcup \,\left\{  \cA^p_1\left(E;\overline \Omega;u\right)\cap \{|\D u|=0\}\right\}\\
&=\Phi_{p,0}\left(    \cA^p_1\left(E;\overline \Omega;u\right)\cap \{|\D u|>0\} \right)\, \bigcup   \,\Phi_{p,\delta}\left(  \cA^p_1\left(E;\overline \Omega;u\right)\cap \{|\D u|=0\}\right)\quad\forall \delta>0,
\end{split}
\end{equation*}
   where we note  $\left.\Phi_{p,\delta}\right|_{\cA^p_1\left(E;\overline \Omega;u\right)\cap \{|\D u|=0\}}$ is the identity for $\delta>0$. 
Letting  $A_k:= \cA^p_1\left(E;\overline \Omega;u\right) \cap\left\{|\D u|\geq {1}/{k}\right\}$ for $   k\in\N,$   we have 
\begin{equation}\label{eq-split-E}
\begin{split}
E%&=\Phi_{p,0}\left(  \cA^p_1\left(E;\overline \Omega;u\right)\cap \{|\D u|>0\}\right)\,\dot \bigcup \,\left\{  \cA^p_1\left(E;\overline \Omega;u\right)\cap \{|\D u|=0\}\right\}\\
&=\Phi_{p,0}\left(   \bigcup_{k=1}^{\infty}  A_k \right)\, \bigcup   \,\Phi_{p,\delta}\left(  \cA^p_1\left(E;\overline \Omega;u\right)\cap \{|\D u|=0\}\right) \quad\forall \delta>0.
\end{split}
\end{equation}
Notice that $\left|\Phi_{p,0}\left(  \bigcup_{k=1}^{\infty}  A_k \right)\right|= \displaystyle\lim_{k\to\infty} \left|\Phi_{p,0}\left(A_k\right)\right|$, and %  since   $\Phi_{p,0}(A_k)$ is  nondecreasing with respect to $k.$ 
  the map $\left. \Phi_{p,0}\right|_{A_k}$  is    Lipschitz continuous % for any $\delta\geq0$ %, and $\Jac\Phi_{p,\delta}$ converges to $\Jac\Phi_{p,0}$  almost everywhere in  $A_k$ 
for each  $k\in\N$ since ${|\D u|^{p-2}}\D u(x)\in \cE_x\setminus \{ 0\}$ for   $x\in  \cA^p_1\left(E;\overline \Omega;u\right)\cap \{|\D u|>0\}.$ 
  We use the area formula to obtain  that for $k\in \N,$
$$\left|\Phi_{p,0}\left(A_k\right)\right|\leq\int_{A_k} \Jac\Phi_{p,0}(x)dx.%= \lim_{\delta\to 0}\int_{A_k} \Jac\Phi_{p,\delta}\leq \lim_{\delta\to 0}\int_{\cup_{j=1}^\infty A_j} \Jac\Phi_{p,\delta}
$$
% By a uniform estimate $\Jac\Phi_{p,\delta}$ on $\cA^p_1\left(E;\overline \Omega;u\right)$ with respect to $\delta>0,$ the dominated convergence theorem implies that 
Lemma \ref{lem-jacobian-exp} yields that   for $x\in\cA^p_1\left(E;\overline \Omega;u\right)\cap\{|\D u|>0\},$ and  for   $\tilde \delta\geq0,$ 
\begin{align*}
&\Jac\Phi_{p, \tilde\delta}(x)=\Jac  \exp_x \left(\left(|\D u|^2+ \tilde\delta\right)^{\frac{p-2}{2}}\D u(x)\right) \\
&  \cdot \left|\det\left\{\left(|\D u|^2+ \tilde\delta\right)^{\frac{p-2}{2}}\left({\bf I}+(p-2) \frac{\D u}{\sqrt{|\D u|^2+ \tilde\delta}}\otimes \frac{\D u}{\sqrt{|\D u|^2+ \tilde\delta}}\right)\circ D^2u(x) +D^2 (d_{ \Phi_{p, \tilde\delta}(x)}^2/2)(x)\right\}\right|, 
  \end{align*}
 % since $\left(|\D u|^2+ \tilde\delta\right)^{\frac{p-2}{2}}\D u(x)\not\in\Cut(x)$ for   $x\in\cA^p_1\left(E;\overline \Omega;u\right)\cap\{|\D u|>0\}$ and for   $\tilde\delta\geq0.$  
 and hence  we use Fatou's lemma to have  
 $$\left|\Phi_{p,0}\left(A_k\right)\right|\leq\int_{A_k} \Jac\Phi_{p,0}\leq \liminf_{\delta\to 0}\int_{A_k} \Jac\Phi_{p,\delta}\leq \liminf_{\delta\to 0}\int_{\cA^p_1\left(E;\overline \Omega;u\right)\cap \{|\D u|>0\}} \Jac\Phi_{p,\delta}.
 $$% since $\Jac\Phi_{p,\delta}$ coverages pointwise to $\Jac\Phi_{p,0}$ on $\cA^p_1\left(E;\overline \Omega;u\right)\cap\{|\D u|>0\}$ as $\delta$ tends to $0.$ 
 Thus  
  it follows from \eqref{eq-split-E} and  the area formula   that 
\begin{align*}
|E|&\leq   \limsup_{\delta\to0}\left\{  \int_{\cA^p_1\left(E;\overline \Omega;u\right)\cap \{|\D u|>0\}} \Jac\Phi_{p,\delta}(x)dx 
+\left|\Phi_{p,\delta}\left(  \cA^p_1\left(E;\overline \Omega;u\right)\cap \{|\D u|=0\}\right)\right|\right\} \\
 &\leq   \limsup_{\delta\to0} \int_{\cA_1^ {p}\left(E;\overline \Omega;u\right)} \Jac\Phi_{p,\delta}(x)dx
 \end{align*}
 since $\Phi_{p,\delta}$ is Lipschitz continuous on    $\cA^p_1\left(E;\overline \Omega;u\right)$ for each $\delta>0.$ This   
finishes  the proof of \eqref{eq-E-contatined-image-phi}. %, where we recall that $\Phi_{p,\delta}$ is Lipshitz continuous on $\cA^p_1\left(E;\overline \Omega;u\right)$ for any $\delta>0$.  

\begin{comment}
For $x\in \cA_1^{p}\left(E;\overline \Omega;u\right)\cap\{z\in\Omega:|\D u(z)|=0\}$, we recall  that $y=x$ in \eqref{eq-inf-contact-pt-p<2}, that is,   
\begin{equation*}
\inf_{  \Omega}\left\{u+\frac{p-1}{p}d_x^{\frac{p}{p-1}}\right\}=u(x)+\frac{p-1}{p}d_x^{\frac{p}{p-1}}(x),
\end{equation*} 
and hence $D^2u(x)$ is   positive semi-definite 
since $1<p<2$ implies $\frac{p}{p-1}>2$.   
From Proposition \ref{prop-jacobi-estimate}, if  $x\in \cA_1^{p}\left(E;\overline \Omega;u\right)\cap\{z\in\Omega: |\D u(z)|>0\}$, then 
 $\gamma(t):=\phi_p(x,t)$ is a unique minimizing geodesic joining $\gamma(0)=x$ and $\gamma(1)=\exp_x {|\D u(x)|^{p-2}}\D u(x)\not\in \Cut(x)$  with velocity $\dot\gamma(0)={|\D u(x)|^{p-2}}\D u(x)$, and 
 $\gamma(t_\delta(x))=\Phi_{p,\delta}(x)\not\in \Cut(x)$. 
 \end{comment}

% Now we compute Jacobian determinant of $\Phi_{p,\delta}(x)$ for $x\in \cA_1^{p}\left(E;\overline \Omega;u\right)$. % \cap\{x: |D u(x)|>0\}$.   
% Arguing in  a similar way as in the proof of \cite[Lemma 3.2]{Ca},   %Proposition \ref{prop-jacobi-estimate} and Lemma  \ref{lem-jacobi-estimate}  (see also  Lemma \ref{lem-jacobian-exp}), 
 
Now we will  prove a uniform  estimate for  Jacobian determinant of $\Phi_{p,\delta}$ on $\cA_1^{p}\left(E;\overline \Omega;u\right)$ withe respect to   $\delta>0.$     
%As seen before,  % if  $\delta>0, $ then  $\left( |\D u|^2+\delta\right)^{\frac{p-2}{2}}\D u(x)\in\cE_x$ for any  $x\in \cA_1^ {p}\left(E;\overline \Omega;u\right),$  and  $\Phi_{p,\delta}$ is Lipschitz continuous on    $\cA^p_1\left(E;\overline \Omega;u\right)$.
From   \eqref{eq-Phi-delta-lip-lem-jac} and  Lemma \ref{lem-jacobian-exp}, we have  that
  for $x\in  \cA_1^ {p}\left(E;\overline \Omega;u\right)$, %\setminus N,$ 
 \begin{equation*}%\label{eq-jac-est-differential-Phi-delta}
 \begin{split}
d&\Phi_{p,\delta}(x)= d \exp_x \left(\left(|\D u|^2+\delta\right)^{\frac{p-2}{2}}\D u(x)\right) \\
& \circ \left\{\left(|\D u|^2+\delta\right)^{\frac{p-2}{2}}\left({\bf I}+(p-2) \frac{\D u}{\sqrt{|\D u|^2+\delta}}\otimes \frac{\D u}{\sqrt{|\D u|^2+\delta}}\right)\circ D^2u(x) +D^2 (d_{ \Phi_{p,\delta}(x)}^2/2)(x)\right\}\\
&= d \exp_x \left(\left(|\D u|^2+\delta\right)^{\frac{p-2}{2}}\D u(x)\right) \circ\left({\bf I}+(p-2) \frac{\D u}{\sqrt{|\D u|^2+\delta}}\otimes \frac{\D u}{\sqrt{|\D u|^2+\delta}}\right) \\
&\circ\left\{ \left(|\D u|^2+\delta\right)^{\frac{p-2}{2}}  D^2u(x) +\left({\bf I}+\chi_{\{|\D u|>0\}}\frac{(2-p)|\D u|^2}{(p-1)|\D u|^2+\delta} \frac{\D u}{|\D u|}\otimes \frac{\D u}{|\D u|}\right)\circ D^2 (d_{ \Phi_{p,\delta}(x)}^2/2)(x)\right\}.
  \end{split}
  \end{equation*}
%   Note that  $\Phi_{p,\delta}$ is Lipschitz continuous on a compact subset  $\cA_1^{p}\left(E;\overline \Omega;u\right)$ of $\Omega$  for a given    $\delta>0$.
%  where $ \Phi_{p,\delta}(x)=\exp_x   \left(|\D u(x)|^2+\delta\right)^{\frac{p-2}{2}}\D u(x) .$ 
  %which is composed of three positive-definite operators for $\delta>0.$ And we have for $X\in T_xM,$
   For $ x\in \cA_1^{p}\left(E;\overline \Omega;u\right),$  we define   the     endomorphism  $H_\delta(x)$     on $T_xM$ by 
  $$ H_\delta(x):=\left(|\D u|^2+\delta\right)^{\frac{p-2}{2}}  D^2u(x) +\left({\bf I}+\chi_{\{|\D u|>0\}}\frac{2-p }{p-1 } \frac{\D u}{|\D u|}\otimes \frac{\D u}{|\D u|}\right)\circ D^2 (d_{ \Phi_{p,\delta}(x)}^2/2)(x).$$
  We  prove that   $H_\delta(x)$ is   symmetric,  and positive semi-definite   for   $x\in \cA_1^{p}\left(E;\overline \Omega;u\right)   $. In fact, 
for  $x\in \cA_1^{p}\left(E;\overline \Omega;u\right)\cap\{z\in\Omega    :|\D u(z)|>0\},$  it follows     from (e) of    
Proposition \ref{prop-jacobi-estimate} since  $\Phi_{p,\delta}(x)=\phi_p\left(x, t_{\delta}(x)\right)$ with $t_\delta(x)=\left(\frac{ |\D u(x)|^2+\delta}{|\D u(x)|^2}\right)^{\frac{p-2}{2}}\in(0,1)$.    For $x\in \cA_1^{p}\left(E;\overline \Omega;u\right)\cap\{z\in\Omega   :|\D u(z)|=0\}$, $H_\delta(x) $ is   symmetric and  positive  definite   since   
 $D^2u(x)\geq- \frac{p-1}{p}D^2d^{\frac{p}{p-1}}_x(x)=0,$ and 
 $D^2 (d_{x}^2/2)(x)={\bf I} .$

     From  \eqref{eq-eigenvector-along-gamma}, recall  that  if $|\D u(x)|>0$ for $x\in  \cA_1^{p}\left(E;\overline \Omega;u\right)   ,$ then 
 \begin{equation*}%\label{eq-eigenvector-along-gamma-Phi-delta}
    \frac{\D u}{|\D u|}\otimes \frac{\D u}{|\D u|} \circ  D^2(d^2_{   \Phi_{p,\delta}(x)}/2)(x)=\frac{\D u}{|\D u|}\otimes \frac{\D u}{|\D u|}(x). 
\end{equation*} % for   $t_\delta(x)\in[0,1].$
Thus we have 
 $$ H_\delta(x)=\left(|\D u|^2+\delta\right)^{\frac{p-2}{2}}  D^2u(x) +  D^2 (d_{ \Phi_{p,\delta}(x)}^2/2)(x) +\chi_{\{|\D u|>0\}}\frac{2-p }{p-1 }\, \frac{\D u}{|\D u|}\otimes \frac{\D u}{|\D u|}(x),$$
  which is symmetric and positive semi-definite. 
  %By      the positivity of $H_\delta$ and Lemma \ref{lem-hess-dist-sqrd},   we see  that  for a given $\delta>0,$  the Hessian of $d_{ \Phi_{p,\delta}(x)}^2/2(x)$ is uniformly  bounded     for $x\in \cA_1^{p}\left(E;\overline \Omega;u\right)$, and   hence  
%since $\frac{\dot\gamma(0)}{|\dot\gamma(0)|}=\frac{\D u(x)}{|\D u(x)|}$ is an eigenvector of $D^2d_{\gamma(t)}^2/2(x)$ associated to eigenvalue $1$ for $t\in[0,1]. $   
 Now we   claim  that for $x\in  \cA_1^{p}\left(E;\overline \Omega;u\right)   ,$
  \begin{equation}\label{eq-unif-bd-det-H-delta}
  \begin{split}
\det H_{\delta}&\leq  \left( \frac{1}{n}\tr H_\delta\right)^n   \\
  &\leq \left[\frac{1}{n\ld}\left\{
   {f^+}  +   {\Ld} +  {(n-1)}\Ld  \sH\left(\sqrt{\frac{\kappa}{\Ld}}|\D u|^{p-1}\right) 
 +\chi_{\{|\D u|>0\}}\frac{2-p }{p-1} \Ld\right\}\right]^n=:h(x).
 \end{split}
  \end{equation}
Since $H_\delta$ is symmetric and  positive semi-definite,   the arithmetic-geometric means inequality  combined with \eqref{eq-approx-op-p-less2} %Lemma \ref{lem-pucci-delta-p-laplace} 
implies that for $x\in  \cA_1^{p}\left(E;\overline \Omega;u\right)   ,$

\begin{align*}
\left(\det H_{\delta}(x)\right)^{\frac{1}{n}}&\leq\frac{1}{n}\tr H_\delta(x)=\frac{1}{n\ld}\cM^-_{\ld,\Ld} (H_\delta(x))\\
 &\leq \frac{1}{n\ld} \left\{\left(|\D u(x)|^2+\delta\right)^{\frac{p-2}{2}}  \cM^-_{\ld,\Ld}\left(D^2u(x)\right)+  \cM^+_{\ld,\Ld }\left(D^2 (d_{  \Phi_{p,\delta}(x)}^2/2)(x)\right)\right\}
 \\&+\frac{1}{n\ld}\chi_{\{|\D u|>0\}}\frac{2-p }{p-1 } \cM^+_{\ld,\Ld}\left(\frac{\D u}{|\D u|}\otimes \frac{\D u}{|\D u|}(x)\right)\\
  &\leq \frac{1}{n\ld} \left\{f^+(x)+  \cM^+_{\ld,\Ld}\left(D^2 (d_{  \Phi_{p,\delta}(x)}^2/2)(x)\right)+ 
\chi_{\{|\D u|>0\}}\frac{2-p}{p-1 } \Ld \right\}.
\end{align*}
According to Lemma \ref{lem-pucci-ric-dist-sqrd}, 
we have 
\begin{align*}
\cM^+_{\ld,\Ld}\left(D^2 (d_{   \Phi_{p,\delta}(x)}^2/2)(x)\right)%&\leq \Ld+(n-1)\Ld \sH\left(\sqrt{ (p-1){\kappa}}d_x(\gamma(t_\delta(x))\right)\\
&\leq\Ld+(n-1)\Ld \sH\left(\sqrt{  \frac{\kappa}{\Ld}}d\left(x,   \Phi_{p,\delta}(x)\right) \right)\\
&\leq\Ld+(n-1)\Ld \sH\left(\sqrt{  \frac{\kappa}{\Ld}}|\D u(x)|^{p-1}\right)
\end{align*}
since $\sH(\tau)$ is nondecreasing for $\tau\geq0$. This  proves \eqref{eq-unif-bd-det-H-delta}. 
 
%In order to compute the Jacobian determinant of $\Phi_{p,\delta,}$ we use  % the notion of 

In terms of 
the operator $H_\delta$,  we  have that  for $x\in  \cA_1^{p}\left(E;\overline \Omega;u\right)   ,$
  \begin{align*}
d\Phi_{p,\delta}(x)%&= d \exp_x \left(\left(|\D u(x)|^2+\delta\right)^{\frac{p-2}{2}}\D u(x)\right)  \circ\left({\bf I}+(p-2) \frac{\D u}{\sqrt{|\D u|^2+\delta}}\otimes \frac{\D u}{\sqrt{|\D u|^2+\delta}}\right) \\
%&\circ\left\{H_\delta(x) +\chi_{\{|\D u|>0\}}\left( \frac{(2-p)|\D u|^2}{(p-1)|\D u|^2+\delta}-\frac{2-p}{p-1} \right) \frac{\D u}{|\D u|}\otimes \frac{\D u}{|\D u|}\circ D^2 (d_{\gamma(t_\delta(x))}^2/2)(x)\right\}\\
&= d \exp_x \left(\left(|\D u|^2+\delta\right)^{\frac{p-2}{2}}\D u(x)\right)  \circ\left({\bf I}+(p-2) \frac{\D u}{\sqrt{|\D u|^2+\delta}}\otimes \frac{\D u}{\sqrt{|\D u|^2+\delta}}\right) \\
&\circ\left\{H_\delta(x) -\chi_{\{|\D u|>0\}}  \frac{\delta(2-p)}{(p-1)\{(p-1)|\D u|^2+\delta\}}  \frac{\D u}{|\D u|}\otimes \frac{\D u}{|\D u|} (x)%\circ D^2 (d_{\gamma(t_\delta(x))}^2/2)(x)
\right\},
  \end{align*}
  where we used again that  $\frac{\D u}{|\D u|}\otimes \frac{\D u}{|\D u|} \circ  D^2(d^2_{   \Phi_{p,\delta}(x)}/2)(x)=\frac{\D u}{|\D u|}\otimes \frac{\D u}{|\D u|}(x) $ for $x\in \cA_1^{p}\left(E;\overline \Omega;u\right)\cap\{z\in\Omega:|\D u(z)|>0\}$.  
 We notice that 
  \begin{equation}\label{eq-jac-det-phi-du-0}
|\det d\Phi_{p,\delta}(x)|=\det H_\delta(x),\quad\forall x\in \cA_1^{p}\left(E;\overline \Omega;u\right)\cap\{z\in\Omega   : |\D u(z)|=0\}.
\end{equation}

Now let    $x\in \cA_1^{p}\left(E;\overline \Omega;u\right)\cap\{z\in\Omega   : |\D u(z)|>0\}$, and
let $\{e_1,\cdots,e_n\}$ be an orthonormal basis of $T_xM$ with $e_1=%\frac{\dot\gamma(0)}{|\dot\gamma(0)|}=
\frac{\D u(x)}{|\D u(x)|}$. %, and let  $\{e_1(t),\cdots,e_n(t)\}$ be an orthonormal basis of $T_{\gamma(t)}M$ with $e_1(t)=\frac{\dot\gamma(t)}{|\dot\gamma(t)|}$ by using parallel transportation along $\gamma(t).$
Denote  
$$H_{\delta,ij}:=\left\langle  H_\delta(x)e_j,e_i\right\rangle,$$
and let $ \tilde H_{\delta} $ be the $(n-1) \times (n-1)$-matrix that results from $\left(H_{\delta,ij}\right)$ by removing the $1$-st row and the $1$-st column, % in terms of the basis $\{e_i\}_{i=1}^n$, 
which is  symmetric and positive semi-definite. Then we have 
\begin{equation*}
\begin{split}
\Jac\Phi_{p,\delta}(x)&=\Jac \exp_x \left(\left(|\D u|^2+\delta\right)^{\frac{p-2}{2}}\D u(x)\right)\cdot  \det\left({\bf I}+(p-2) \frac{\D u}{\sqrt{|\D u|^2+\delta}}\otimes \frac{\D u}{\sqrt{|\D u|^2+\delta}}\right)  \\
&\cdot \left|\det\left(H_{\delta,ij}-\chi_{\{|\D u|>0\}}  \frac{\delta(2-p)}{(p-1)\{(p-1)|\D u|^2+\delta\}}\delta_{1i}\delta_{1j}\right)\right|\\
&=\Jac \exp_x \left(\left(|\D u|^2+\delta\right)^{\frac{p-2}{2}}\D u(x)\right)\cdot  \det\left({\bf I}+(p-2) \frac{\D u}{\sqrt{|\D u|^2+\delta}}\otimes \frac{\D u}{\sqrt{|\D u|^2+\delta}}\right)  \\
&\cdot \left|\det H_{\delta}-  \frac{\chi_{\{|\D u|>0\}}\,\delta(2-p)}{(p-1)\{(p-1)|\D u|^2+\delta\}} \det \tilde H_\delta\right|\\
&\leq \Jac \exp_x \left(\left(|\D u|^2+\delta\right)^{\frac{p-2}{2}}\D u(x)\right) \cdot  \det\left({\bf I}+(p-2) \frac{\D u}{\sqrt{|\D u|^2+\delta}}\otimes \frac{\D u}{\sqrt{|\D u|^2+\delta}}\right)  \\
&\cdot    \left\{\det H_{\delta}+ \frac{\chi_{\{|\D u|>0\}}\, \delta(2-p)}{(p-1)\{(p-1)|\D u|^2+\delta\}} \det \tilde H_\delta\right\}. 
\end{split}
\end{equation*}
%since  $e_1=\frac{\D u(x)}{|\D u(x)|}$ is an eigenvector of $D^2d_{\gamma(t)}^2/2(x)$ associated to eigenvalue $1,$
% where we recall that $H_\delta$ and $\tilde H_\delta$ are positive definite. 
Since 
%\begin{align*}
 $\left(1\cdot \det \tilde H_{\delta}\right)^{\frac{1}{n}}\leq  \frac{1}{n}\left(1+\tr \tilde H_\delta\right) \leq  \frac{1}{n}\left(1+\tr   H_\delta\right),
 $
%\end{align*}
it follows from \eqref{eq-unif-bd-det-H-delta}   that 
$$\det H_{\delta}(x)\leq h(x),\,\,\,\mbox{and}\,\,\,\det \tilde H_{\delta} (x)\leq\left\{ \frac{1}{n}+\frac{1}{n}\tr   H_\delta (x)\right\}^n\leq  \left\{\frac{1}{n}+ h(x)^{\frac{1}{n}}\right\}^n,\quad\forall \delta>0,$$ and hence 
\begin{equation}\label{eq-jac-Phi-delta-det-H-delta}
\begin{split}
\Jac\Phi_{p,\delta}(x)
&\leq \Jac \exp_x \left(\left(|\D u|^2+\delta\right)^{\frac{p-2}{2}}\D u(x)\right)  \cdot  \det\left({\bf I}+(p-2) \frac{\D u}{\sqrt{|\D u|^2+\delta}}\otimes \frac{\D u}{\sqrt{|\D u|^2+\delta}}\right)  \\
&\cdot   \left\{\det H_{\delta}(x)+ \frac{\chi_{\{|\D u|>0\}}\, \delta(2-p)}{(p-1)\{(p-1)|\D u|^2+\delta\}} \left( \frac{1}{n} +\frac{1}{n}\tr   H_\delta(x)\right)^n\right\}\\
&\leq \Jac \exp_x \left(\left(|\D u|^2+\delta\right)^{\frac{p-2}{2}}\D u(x)\right) 
 \left\{h(x)+\frac{  \chi_{\{|\D u|>0\}} \,\,\delta\,(2-p)}{(p-1)\{(p-1)|\D u|^2+\delta\}}\left(\frac{1}{n}+ h(x)^{\frac{1}{n}}\right)^n\right\}.
\end{split}
\end{equation}

Using \eqref{eq-unif-bd-det-H-delta}, \eqref{eq-jac-det-phi-du-0} and Theorem \ref{thm-BG},   we deduce that  for $ x\in \cA_1^{p}\left(E;\overline \Omega;u\right)   ,$
  \begin{align*}
\Jac \Phi_{p,\delta}(x)%&\leq\Jac \exp_x \left(\left(|\D u |^2+\delta\right)^{\frac{p-2}{2}}\D u(x)\right)     \left\{\det H_{\delta}+ \frac{\chi_{\{|\D u|>0\}} \,\,\delta(2-p)}{(p-1)\{(p-1)|\D u|^2+\delta\}} \det \tilde H_\delta\right\} \\
&\leq\sS^{n-1}\left( \sqrt{\frac{\kappa}{\lambda}} |\D u(x)|^{p-1}\right) \left\{h(x)+\frac{  \chi_{\{|\D u|>0\}} \,\,\delta\,(2-p)}{(p-1)\{(p-1)|\D u|^2+\delta\}}\left(\frac{1}{n}+ h(x)^{\frac{1}{n}}\right)^n\right\}
\end{align*}
 since  $\Ric(e,e)\geq \cM^-_{\ld,\Ld}(R(e))/\ld \geq-(n-1)\kappa/\ld $ for any unit vector $e\in TM$, and  $\sS(\tau)$ is nondecreasing for $\tau\geq0$. 
  Notice that  the last term  $ \chi_{\{|\D u|>0\}}  \frac{\,\delta\, (2-p)}{(p-1)\{(p-1)|\D u|^2+\delta\}}\left(\frac{1}{n}+ h(x)^{\frac{1}{n}}\right)^n
$  is uniformly bounded with respect to $\delta>0$, and converges pointwise to $0$ for any $x\in \cA_1^{p}\left(E;\overline \Omega;u\right)   $ as $\delta$ tends to $0.$  Therefore,     the dominated convergence theorem asserts that % for  $ x\in \cA_1^{p}\left(E;\overline \Omega;u\right)$,
%  \begin{align*}
%\limsup_{\delta\to0}\Jac \Phi_{p,\delta}(x)&\leq\Jac \exp_x \left( |\D u(x)|^{p-1}\right) h(x)\\
%&\leq\Jac \exp_x \left( |\D u(x)|^{p-1}\right)\left[\frac{1}{n(p-1)}\left\{
  % {f^+}  +  {(n-1)}  \sH\left(\sqrt{(p-1) {\kappa}}|\D u|^{p-1}\right) +\frac{1}{p-1} \right\}\right]^n.
%\end{align*}
% Therefore we conclude that 
 \begin{equation*}%\label{eq-jac-Phi-delta-uniform}
 \begin{split}
%|E|&\leq 
%& \limsup_{\delta\to0}\left|\Phi_{p,\delta}\left(\cA_1^{p}\left(E;\overline \Omega;u\right)\right)\right|\\
&   \limsup_{\delta\to0} \int_{\cA_1^ {p}\left(E;\overline \Omega;u\right)} \Jac\Phi_{p,\delta}(x)dx\\
%&\leq  \limsup_{\delta\to0} \int_{\cA_1^ {p}\left(E;\overline \Omega;u\right)} (p-1)\sS^{n-1}\left( \sqrt{\kappa} |\D u(x)|^{p-1}\right)
 %\cdot \left\{h(x)+\chi_{\{|\D u|>0\}}\frac{\delta(2-p)}{(p-1)\{(p-1)|\D u|^2+\delta\}}\left(\frac{1}{n}+ h(x)^{\frac{1}{n}}\right)^n\right\}\\
&\leq  \int_{\cA_1^ {p}\left(E;\overline \Omega;u\right)}  \sS^{n-1}\left( \sqrt{\frac{\kappa}{\lambda}} |\D u(x)|^{p-1}\right)
 \cdot  h(x)dx\\
 &  \leq   \int_{\cA_1^ {p}\left(E;\overline \Omega;u\right)} \sS^{n-1}\left(\sqrt{ \frac{\kappa}{\ld}}   {|\D u|^{p-1}}{} \right)\left[\frac{1}{n\ld}\left\{
   {f^+} +\frac{\Ld}{p-1} +  {(n-1)}\Ld  \sH\left(\sqrt{ \frac{\kappa}{\Ld}}|\D u|^{p-1}\right)  \right\}\right]^ndx, 
\end{split}
\end{equation*}
 which finishes the proof together with 
 \eqref{eq-E-contatined-image-phi} 
  \end{proof}

Now we show that Theorem \ref{thm-abp-type-p-1} holds true for semiconcave functions.  

 \begin{lemma}%[ABP type estimate for $1<p<2$]
 \label{lem-abp-type-p-1-nonlinear}
Assume that          $1<p<2$   and  $\cM^-_{\ld,\Ld}(R(e))\geq - {(n-1)}\kappa$   with  $\kappa\geq0$ for any unit vector $e\in TM$.  
 For a bounded open set  $\Omega\subset M,$  
let  $f\in C(\Omega)$ and   $u\in C\left(\overline\Omega\right)$  be   $C_0$-semiconcave in $\Omega$  for  $C_0>0$ %  uniformly   semiconcave in $\Omega$} 
such that   $ |\D u|^{p-2}\cM^-_{\ld,\Ld}(D^2u)\leq f$ in $\Omega $ in the viscosity sense.      
%$$u\geq 0\,\,\,\mbox{ in $B_{7R}(z_0)\setminus B_{5R}(z_0),\quad$ and $\inf_{B_{2R}(z_0)}u\leq 1.$}$$
For   a compact set $E\subset M,$ we assume that 
  $$\cA^p_{1}\left(E;\overline \Omega;u\right)\subset \Omega.$$ Then  we have 
\begin{equation*}
|E|\leq \int_{\cA_1^{p}\left(E;\overline \Omega;u\right)}\sS^{n-1}\left(\sqrt{ \frac{\kappa}{\lambda}}   {|\D u|^{p-1}}{} \right)
\left[\frac{1}{n\ld}\left\{
   {f^+}  +\frac{\Ld}{p-1}+  {(n-1)}\Ld  \sH\left(\sqrt{ \frac{\kappa}{\Ld}}|\D u|^{p-1}\right)  \right\}\right]^n.
\end{equation*}
  \end{lemma}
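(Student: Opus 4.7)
The plan is to run the proof of Theorem \ref{thm-abp-type-p-1} almost verbatim, replacing the smooth Jacobian identity of Proposition \ref{prop-jacobi-estimate} by its semiconcave counterpart Corollary \ref{cor-jacobi-estimate}, which is available because the Aleksandrov--Bangert Theorem \ref{thm-AB} produces a pointwise Hessian $D^2u(x)$ at almost every $x\in\Omega$ once $u$ is known to be $C_0$-semiconcave.

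First, I will verify that $u$ is a viscosity supersolution of the regularized inequality
\begin{equation*}
\bigl(|\D u|^2+\delta\bigr)^{(p-2)/2}\cM^-_{\ld,\Ld}(D^2u)\le f^+
\end{equation*}
for every $\delta>0$, by the argument of \cite[Lemma~2]{ACP} applied to the Pucci operator in place of $\cM^-_{p-1,1}$. Since the regularized operator is continuous in $(\D u, D^2u)$, Lemma \ref{lem-visc-sol-singular-usual} upgrades $u$ to a supersolution in the standard viscosity sense; together with the one-sided second-order touching property of semiconcave functions at points of twice differentiability, this yields the regularized inequality pointwise almost everywhere in $\Omega$.

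Next, for almost every $x\in\cA_1^{p}(E;\overline\Omega;u)$, Corollary \ref{cor-jacobi-estimate} identifies the associated vertex $y\in E$: when $|\D u(x)|>0$, $y=\Phi_{p,0}(x):=\exp_x|\D u|^{p-2}\D u(x)\notin\Cut(x)\cup\{x\}$; whereas when $|\D u(x)|=0$ we have $y=x$ and $D^2u(x)\ge 0$, because $\tfrac{p}{p-1}>2$ forces the contact inequality to be degenerate. Defining $\Phi_{p,\delta}(x):=\exp_x(|\D u|^2+\delta)^{(p-2)/2}\D u(x)$ as in the proof of Theorem \ref{thm-abp-type-p-1}, the same surjectivity decomposition then realises $E$, modulo a set of measure zero, as the union of $\Phi_{p,0}$-images of the part of the contact set where $|\D u|>0$ and $\Phi_{p,\delta}$-images of the part where $|\D u|=0$. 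The pointwise Jacobian upper bound for $\Phi_{p,\delta}$ derived in the proof of Theorem \ref{thm-abp-type-p-1} from Lemma \ref{lem-pucci-ric-dist-sqrd}, Theorem \ref{thm-BG} and the regularized inequality then carries over word-for-word at almost every point of $\cA_1^p$.

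The main technical obstacle will be the area formula for $\Phi_{p,\delta}$: in contrast to the smooth case of Theorem \ref{thm-abp-type-p-1}, for a merely semiconcave $u$ the map $\Phi_{p,\delta}$ is only Borel measurable and is not Lipschitz on the full contact set \emph{a priori}, so the classical area formula does not apply globally. I plan to overcome this by partitioning $\cA_1^p$ (off a null set) into countably many Borel sets on which $\D u$ is Lipschitz; this is possible because the semiconcavity of $u$ forces $\D u$ to be of locally bounded variation, and a standard Lusin-type decomposition produces such a partition. Applying the classical area formula on each piece, summing the resulting bounds, and passing $\delta\to 0$ by dominated convergence exactly as at the end of the proof of Theorem \ref{thm-abp-type-p-1}, will then deliver the stated estimate.
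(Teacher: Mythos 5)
Your proposal tracks the paper's strategy closely up to the area-formula step, but the Lusin-type decomposition you propose there does not close the argument, and the gap is genuine. A Lusin decomposition produces Borel sets $B_j$ with $\cA^p_1\left(E;\overline\Omega;u\right)=N\cup\bigcup_j B_j$, where $|N|=0$ and $\D u|_{B_j}$ is Lipschitz; the area formula on each $B_j$ then controls $\left|\Phi_{p,\delta}\left(\bigcup_j B_j\right)\right|$ by $\int_{\cA^p_1}\Jac\Phi_{p,\delta}$. But the target is $|E|$, and one has only $E\subset\Phi_{p,0}\left(\cA^p_1\cap\{|\D u|>0\}\right)\cup\Phi_{p,\delta}\left(\cA^p_1\cap\{|\D u|=0\}\right)$, so you would additionally need $|\Phi_{p,0}(N)|=0$ (Lusin's property (N)). That does not follow from the decomposition: a Borel map, even one built from a monotone/BV gradient, can carry a Lebesgue-null set to a set of positive measure (the Cantor function is a standard one-dimensional example), and BV regularity of $\D u$ does not preclude this. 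Your proposal nowhere supplies this missing Lusin-(N) property.

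The paper circumvents the issue entirely by showing that $\D u$ is \emph{genuinely} Lipschitz on the whole compact contact set $\cA^p_1\left(E;\overline\Omega;u\right)$, so that $\Phi_{p,\delta}$ and $\Phi_{p,0}|_{\{|\D u|\geq 1/k\}}$ are Lipschitz maps and the classical area formula applies with no exceptional set. This is the real technical core of the lemma and it rests on an observation your proposal misses: at contact points one has a \emph{two-sided} second-order bound, namely the upper quadratic bound from $C_0$-semiconcavity combined with the \emph{lower} quadratic bound coming from the contact condition (the function is touched from below by $\frac{p-1}{p}d_y^{\frac{p}{p-1}}$, which is $C^2$ away from the cut locus since $1<p<2$, with Hessian controlled by the curvature bound). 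Together these give the estimate
\[
\left|u(z)-u(x)-\left\langle \D u(x),\exp_x^{-1}z\right\rangle\right|\leq C_1\, d^2(x,z)
\]
uniformly for $x$ in the contact set, and a parallel-transport comparison (using \cite[Claim 5.3]{AF}) upgrades this to Lipschitz continuity of $\D u$ on $\cA^p_1\left(E;\overline\Omega;u\right)$. Without establishing this two-sided estimate, the area-formula step in your argument is not justified.
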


\begin{proof} 
First,  we    prove   that $u$ is differentiable on $\cA^p_{1}\left(E;\overline \Omega;u\right)\subset \Omega$.   
%$$\cA^p_{1}\left(E;\overline \Omega;u\right)\cap \Omega\subset \left\{ \mbox{$u$ is both semiconcave, and semi-convex.}\right\}.$$
It suffices to prove subdifferentiability of $u$ on $\cA^p_{1}\left(E;\overline \Omega;u\right)$ since   $u$ is superdifferentiable  in $\Omega$ from semiconcavity.  
Indeed, for     any  $x\in\cA^p_1\left(E;\overline \Omega;u\right),$ there is  $y\in E$ such that 
\begin{equation}\label{eq-inf-contact-pt-p<2-semiconcave}
\inf_{  \Omega}\left\{u+\frac{p-1}{p}d_y^{\frac{p}{p-1}}\right\}=u(x)+\frac{p-1}{p}d_y^{\frac{p}{p-1}}(x).
\end{equation} 
As seen in  the proofs  of Corollary \ref{cor-jacobi-estimate} and  Proposition \ref{prop-jacobi-estimate},  $x$ is not a cut point of $y$  since $u $  is semiconcave  in $ \Omega$.  Since $d_y^\frac{p}{p-1}$  is of class  $C^2$ on $M\setminus\Cut(y)$ for $1<p<2,$   it follows from \eqref{eq-inf-contact-pt-p<2-semiconcave}   that $u$ is subdifferential at    $x\in\cA^p_{1}\left(E;\overline \Omega;u\right). $  Hence 
%with the help of the  local characterization of semi-convexity(see \cite[Lemma 3.11]{CMS}) 
%$u$ is semi-convex at $x$,  which proves the claim.  So
   $u$ is differentiable at any $x\in\cA^p_{1}\left(E;\overline \Omega;u\right) \subset\Omega.$   
  Notice that $u$ is locally Lipschitz continuous in $\Omega$ from the semi-concavity, and then $|\D u|$ is uniformly bounded in a compact set $\cA^p_{1}\left(E;\overline \Omega;u\right)\subset\Omega.$

      As in the proof of Theorem \ref{thm-abp-type-p-1},  for $\delta>0,$  define  a regularized  map $\Phi_{p,\delta}:\cA^p_{1}\left(E;\overline \Omega;u\right)\to M$   as 
$$\Phi_{p,\delta}(x):=\exp_x\left( |\D u(x)|^2+\delta\right)^{\frac{p-2}{2}}\D u(x).$$
We also define $\Phi_{p,0}:   \cA^p_1\left(E;\overline \Omega;u\right)\cap \{|\D u|>0\} \to M $ %\Phi_{p,0}\left(  \cA^p_1\left(E;\overline \Omega;u\right)\cap \{|\D u|>0\}\right)$
 by $$\Phi_{p,0}(x):=\exp_x|\D u(x)|^{p-2}\D u(x).$$ 
 Since  $u$ is differentiable on $\cA^p_{1}\left(E;\overline \Omega;u\right)\subset \Omega$,     a similar  argument to  the proofs of Proposition \ref{prop-jacobi-estimate} (a),  and Theorem \ref{thm-abp-type-p-1} yields   that 
for  each $x\in\cA^p_{1}\left(E;\overline \Omega;u\right),$ a point $y\in E$ satisfying \eqref{eq-inf-contact-pt-p<2-semiconcave} is not a cut point of $x.$ In fact,  $y$ is uniquely determined, and    
the map $\cA^p_1\left(E;\overline \Omega;u\right)\owns x\mapsto y\in E$ is well-defined  as 
\begin{equation*}
y=\left\{
\begin{split}
 &x,\quad &\mbox{for}\,\,\,|\D u(x)|=0,\\
&\exp_x {|\D u(x)|^{p-2}}{ }\D u(x)=\Phi_{p,0}(x)\not\in \Cut(x)\cup\{x\},\quad &\mbox{for}\,\,\,|\D u(x)|>0,
\end{split}
\right.
\end{equation*}
which is  surjective. %, $y\not\in\Cut(x)$ and 
% $${\color{black} y=x \iff \D u(x)= 0}. $$
Moreover,  when  $|\D u(x)|=0$ for $x\in\cA^p_1\left(E;\overline \Omega;u\right),$    we have  $y=x=\Phi_{p,\delta}(x)  \not\in\Cut(x)$ in \eqref{eq-inf-contact-pt-p<2-semiconcave}  for any $\delta>0. $   %Indeed, suppose that $\D u(x)=0$ and $x\not \eq y.$ Then \eqref{eq-inf-contact-pt-p-geq2} implies that $d_{y}^{\frac{1}{p-1}}(x)=|\D u(x)|=0,$ which is a contradiction.
 If $|\D u(x)|>0$ for $x\in\cA^p_1\left(E;\overline \Omega;u\right),$    then %Proposition \ref{prop-jacobi-estimate}  says that %  \eqref{eq-expression-contact-pt} says that 
\begin{align*} 
 y=\exp_x {|\D u(x)|^{p-2}}{ }\D u(x)=\Phi_{p,0}(x)=\lim_{\delta\to0}\Phi_{p,\delta}(x).
\end{align*} 
 In the latter case,  the curve  
 $\gamma(t):=\phi_p(x,t)=\exp_x t\,{|\D u|^{p-2}}{ }\D u(x)$ is a unique minimizing geodesic joining $\gamma(0)=x$ to $\gamma(1)=\exp_x {|\D u|^{p-2}}\D u(x)\not\in \Cut(x)\cup\{x\}$  with velocity $\dot\gamma(0)={|\D u|^{p-2}}\D u(x)\in \cE_x\setminus\{0\}$, and 
 $\gamma(t_\delta(x))=\Phi_{p,\delta}(x)\not\in \Cut(x)$ for $\delta>0$, where $ t_\delta(x):=\left(\frac{ |\D u(x)|^2}{|\D u(x)|^2+ \delta}\right)^{\frac{2-p}{2}}\in(0,1)$.  We remark that  
  \begin{equation}\label{eq-Phi-delta-lip-lem-jac-semiconcave}
 \left( |\D u|^2+\delta\right)^{\frac{p-2}{2}}\D u(x)\in\cE_x,\quad\forall       x\in \cA_1^ {p}\left(E;\overline \Omega;u\right) \quad\mbox{ for any $\delta>0$},
 \end{equation}
 and      the above argument   implies that  
 \begin{equation}\label{eq-split-E-semiconcave}
\begin{split}
E%&=\Phi_{p,0}\left(  \cA^p_1\left(E;\overline \Omega;u\right)\cap \{|\D u|>0\}\right)\,\dot \bigcup \,\left\{  \cA^p_1\left(E;\overline \Omega;u\right)\cap \{|\D u|=0\}\right\}\\
&=\Phi_{p,0}\left(     \cA^p_{1}\left(E;\overline \Omega;u\right)\cap \{|\D u| >0\} \right)\, \bigcup   \,\Phi_{p,\delta}\left(  \cA^p_1\left(E;\overline \Omega;u\right)\cap \{|\D u|=0\}\right) \quad\forall \delta>0.
\end{split}
\end{equation}

According to Theorem \ref{thm-AB},  $u$ is twice differentiable almost everywhere in $\Omega$  since $u$ is semiconcave in $\Omega$.
So there is a set $N\subset\Omega$ of measure zero such that    $u$ is twice differentiable in $\Omega\setminus N,$  and then      $u$ satisfies 
   \begin{equation}\label{eq-approx-op-p-less2-semiconcave}
 \left( |\D u|^2+\delta\right)^{\frac{p-2}{2}}\cM^-_{\ld,\Ld}(D^2u)\leq f^+\quad\mbox{ in $\Omega\setminus N \,\,\,$  for any $\delta>0,$ }
 \end{equation}
where $D^2u$ is the Hessian  in the sense of Theorem \ref{thm-AB}. 
 %where $D^2u$ is the Hessian in the sense of the Aleksandrov theorem.  
Using \eqref{eq-Phi-delta-lip-lem-jac-semiconcave}, \eqref{eq-approx-op-p-less2-semiconcave}   and     Corollary \ref{cor-jacobi-estimate},    a  similar  argument to  the proof of Theorem \ref{thm-abp-type-p-1}     yields    the  following  Jacobian estimate: 
for any $\delta>0$ and  for $ x\in \cA_1^{p}\left(E;\overline \Omega;u\right)\setminus N   ,$
  \begin{align*}
\Jac \Phi_{p,\delta}(x)\leq\sS^{n-1}\left( \sqrt{\frac{\kappa}{\lambda}} |\D u(x)|^{p-1}\right) \left\{h(x)+\frac{  \chi_{\{|\D u|>0\}} \,\,\delta\,(2-p)}{(p-1)\{(p-1)|\D u|^2+\delta\}}\left(\frac{1}{n}+ h(x)^{\frac{1}{n}}\right)^n\right\},
\end{align*}
 where  $$h(x):=\left[\frac{1}{n\ld}\left\{
   {f^+}(x)  +   {\Ld} +  {(n-1)}\Ld  \sH\left(\sqrt{\frac{\kappa}{\Ld}}|\D u(x)|^{p-1}\right) 
 +\chi_{\{|\D u|>0\}}(x)\Ld\frac{2-p }{p-1}\right\}\right]^n.$$
From the dominated convergence theorem,  we deduce that
  \begin{equation}\label{eq-jac-Phi-delta-uniform-semiconcave}
 \begin{split}
%|E|&\leq 
%& \limsup_{\delta\to0}\left|\Phi_{p,\delta}\left(\cA_1^{p}\left(E;\overline \Omega;u\right)\right)\right|\\
&   \limsup_{\delta\to0} \int_{\cA_1^ {p}\left(E;\overline \Omega;u\right)} \Jac\Phi_{p,\delta}(x)dx\\
 &  \leq   \int_{\cA_a^ {p}\left(E;\overline \Omega;u\right)} \sS^{n-1}\left(\sqrt{ \frac{\kappa}{\ld}}   {|\D u|^{p-1}}{} \right)\left[\frac{1}{n\ld}\left\{
   {f^+}  +  {(n-1)}\Ld  \sH\left(\sqrt{ \frac{\kappa}{\Ld}}|\D u|^{p-1}\right) +\frac{\Ld}{p-1} \right\}\right]^ndx
\end{split}
\end{equation}
since $N$ has measure zero.  

% We will show  that $\D u$ is   Lipschitz continuous  on  the contact set $\cA^p_{1}\left(E;\overline \Omega;u\right)$.  
%To proceed with  a similar  argument  to    
%To apply the area formula as in the  proof of   Theorem \ref{thm-abp-type-p-1},

Following  the argument in   the  proof of  Theorem \ref{thm-abp-type-p-1}, we claim that
%As in   the  proof of  Theorem \ref{thm-abp-type-p-1}, we claim that  
\begin{align}\label{eq-E-contatined-image-phi-semiconcave}
|E|\leq   \limsup_{\delta\to0} \int_{\cA_1^ {p}\left(E;\overline \Omega;u\right)} \Jac\Phi_{p,\delta}(x)dx. 
 \end{align}
In order to  prove \eqref{eq-E-contatined-image-phi-semiconcave}, 
%To proceed with  a similar  argument  to   the  proof of  Theorem \ref{thm-abp-type-p-1},  
we will  show  that $\D u$ is   Lipschitz continuous  on   $\cA^p_{1}\left(E;\overline \Omega;u\right)\subset\Omega$ by proving the following:  there is a  uniform   constant $C_1>0$  such that for each $x_0\in \cA^p_{1}\left(E;\overline \Omega;u\right)  $, there exists  
  $0< r_0 <  \min ( i_{\overline\Omega }, c_{\overline\Omega }) $  
   such that   
     \begin{equation}\label{eq-grad-u-lip}
    |\D u(x_1)- L_{x_2,x_1}\D u(x_2)|\leq C_1 \,d(x_1,x_2),\quad\forall x_1,x_2 \in B_{r_0}(x_0)\cap \cA^p_{1}\left(E;\overline \Omega;u\right),
    \end{equation}
    %and 
  %  $$ -C_0{\bf I}\leq D^2u(x)\leq C_0{\bf I}\quad \forall x\in \cA^p_{1}\left(E;\overline \Omega;u\right)\setminus N,$$
   where  $L_{x_2,x_1}$ is the parallel transport along the minimizing geodesic joining $x_2$ to $x_1,$ and  $i_{\overline\Omega}>0$ and $c_{\overline\Omega}>0$ are   the minimum of the injectivity radius  and the convexity  radius  of $x\in\overline\Omega$, respectively.   See     \cite[Definition 1.2]{AF} for the notions  of   $C^{1,1}$ smoothness. %  Here, a constant $C_1>0$  is independent of 

 Once    Lipschitz continuity of $\D u$ on  a compact set  $ \cA^p_{1}\left(E;\overline \Omega;u\right)$ is achieved,  
 we deduce        Lipschitz continuity of  the maps $\left.\Phi_{p,0}\right|_{\cA^p_{1}\left(E;\overline \Omega;u\right)\cap \{|\D u|\geq 1/k\}}$ ($k\in\N$), and $\left.\Phi_{p,\delta}\right|_{\cA^p_{1}\left(E;\overline \Omega;u\right)}$  by using  \eqref{eq-Phi-delta-lip-lem-jac-semiconcave} since the exponential map $\exp\left|_{\cE}: \cE\to M\right.$ is smooth.  
 For   the  proof of \eqref{eq-E-contatined-image-phi-semiconcave},  we apply  the area formula to   Lipschitz maps $\left.\Phi_{p,0}\right|_{\cA^p_{1}\left(E;\overline \Omega;u\right)\cap \{|\D u|\geq 1/k\}}$ ($k\in\N$), and $\left.\Phi_{p,\delta}\right|_{\cA^p_{1}\left(E;\overline \Omega;u\right)}$ as  in the proof of    Theorem \ref{thm-abp-type-p-1}.  
 Here,  we  remark that  the area formula on Riemannian  manifolds     follows from  the area formula in Euclidean space   using    a partition of unity. So the above notion of  Lipschitz continuity on     $\cA^p_{1}\left(E;\overline \Omega;u\right)$,  which is    uniformly locally Lipschitz,  suffices to employ  the area formula in our setting.  %For the Euclidean case, we refer to the proof of \cite[Proposition 3.5]{IS}. 
 Thus  a similar  argument      to      the  proof of \eqref{eq-E-contatined-image-phi} in   Theorem \ref{thm-abp-type-p-1}     with  the help of Lipschitz continuity  and       \eqref{eq-split-E-semiconcave}  yields   \eqref{eq-E-contatined-image-phi-semiconcave}, 
%\begin{align}\label{eq-E-contatined-image-phi-semiconcave}
%|E|\leq   \limsup_{\delta\to0} \int_{\cA_1^ {p}\left(E;\overline \Omega;u\right)} \Jac\Phi_{p,\delta}(x)dx. 
 %\end{align}
 %which is still valid for semiconcave functions,  that
  and therefore   it follows  from \eqref{eq-jac-Phi-delta-uniform-semiconcave}  that 
  \begin{equation*}
|E|\leq \int_{\cA_1^{p}\left(E;\overline \Omega;u\right)}\sS^{n-1}\left(\sqrt{ \frac{\kappa}{\lambda}}   {|\D u|^{p-1}}{} \right)
\left[\frac{1}{n\ld}\left\{
   {f^+}  +  {(n-1)}\Ld  \sH\left(\sqrt{ \frac{\kappa}{\Ld}}|\D u|^{p-1}\right) +\frac{\Ld}{p-1} \right\}\right]^ndx.
\end{equation*}

To complete the proof,    
 it remains   to show   Lipschitz continuity of $\D u$ on $\cA^p_{1}\left(E;\overline \Omega;u\right)$.  We claim that  there is a  uniform   constant $C_1>0$  such that for each  $x_0\in \cA^p_{1}\left(E;\overline \Omega;u\right),$  there is   $0< r_0  < \min( i_{\overline\Omega}, c_{\overline\Omega}) $  such that  for any $x\in B_{r_0}(x_0)\cap \cA^p_{1}\left(E;\overline \Omega;u\right)$, 
   \begin{equation}\label{eq-grad-u-lip-claim}
   \left| u(z)-u(x)- \left\langle\D u(x),\exp_x^{-1}z\right\rangle\right|\leq C_1d^2(x,z),\quad \forall z\in B_{r_0}(x_0). 
   \end{equation}
%   where $C_1>0$ is independent of $x$ in $\cA^p_{1}\left(E;\overline \Omega;u\right)$.  
We put off the proof of \eqref{eq-grad-u-lip-claim}, and first prove that this claim \eqref {eq-grad-u-lip-claim}
implies  \eqref{eq-grad-u-lip}.  Assume that the claim holds. Let us  fix  $x_0\in \cA^p_{1}\left(E;\overline \Omega;u\right),$ and let    $0< r_0  < \min( i_{\overline\Omega}, c_{\overline\Omega}) $ be a constant satisfying \eqref{eq-grad-u-lip-claim}. Let   $x_1,x_2\in B_{r_0/2}(x_0)\cap \cA^p_{1}\left(E;\overline \Omega;u\right)$,   $r:= d(x_1,x_2)$ and $V:=\exp_{x_1}^{-1}x_2$. From \eqref{eq-grad-u-lip-claim}, it follows   that for  any  $z=\exp_{x_2} L_{x_1,x_2}X $ with $X\in T_{x_1}M$    and  $|X|\leq r/4$, 
\begin{align*}
&\left|  \left\langle\D u(x_2),\exp_{x_2}^{-1}z\right\rangle- \left\langle\D u(x_1),\exp_{x_1}^{-1}z-\exp_{x_1}^{-1}(x_2)\right\rangle \right|\\
=&\left| u(z)-u(x_1) -  \left\langle\D u(x_1),\exp_{x_1}^{-1}z\right\rangle-\left\{u(z)-u(x_2) -  \left\langle\D u(x_2),\exp_{x_2}^{-1}z\right\rangle\right\}\right.\\
&\left. - \left\{ u(x_2)-u(x_1)-  \left\langle\D u(x_1),\exp_{x_1}^{-1}x_2\right\rangle\right\}\right|\\
\leq&  C_1\left\{d^2(x_1,z)+d^2(x_2,z)+d^2(x_1,x_2)\right\}\\
<&  C_1\left\{\left(d(x_1,x_2)+d(x_2,z)\right)^2+(r/4)^2+r^2\right\} < 6C_1 r^2
\end{align*} 
since $d(z,x_0)\leq d(z,x_2)+d(x_2,x_0)< r/4+ r_0/2< r_0.$ 
Setting $z=\exp_{x_1} W=\exp_{x_2} L_{x_1,x_2}X $ for $W\in T_{x_1 } M$,  we can rewrite that 
\begin{equation}\label{eq-grad-u-lip-claim-to-lip-pre}
\begin{split}
6C_1r^2>&\left|  \left\langle\D u(x_2),\exp_{x_2}^{-1}z\right\rangle- \left\langle\D u(x_1),\exp_{x_1}^{-1}z-\exp_{x_1}^{-1}(x_2)\right\rangle \right|\\
=& \left|  \left\langle\D u(x_2), L_{x_1,x_2}X\right\rangle- \left\langle\D u(x_1), W-V\right\rangle \right|\\
=& \left|  \left\langle L_{x_2,x_1}\D u(x_2) -\D u(x_1), \, X\right\rangle- \left\langle\D u(x_1), W-X-V\right\rangle \right|. 
\end{split}
\end{equation} 
  Recall from    \cite[Claim 5.3]{AF} that   for a fixed  point $x_0,$  there are     $r_2>0$  and     $C_2>0 $ such that 
  $$|W-X-V|  \leq    C_2\left(|V|+|X|\right)^3 $$
  for any   $x_1,x_2\in B_{r_2}(x_0) $ and       $X\in T_{x_1}M$  with   $|X|\leq 2r_2$, where $V= \exp_{x_1}^{-1} x_2,  $ and $W=\exp_{x_1} ^{-1}\left(\exp_{x_2} L_{x_1,x_2}X\right)  $.     
Thus   we can  choose       $0<r_0 <r_2$ sufficiently small such  that  
  \begin{align*}
  \left| \left\langle\D u(x_1), W-X-V\right\rangle \right| &\leq ||\D u||_{L^{\infty}\left(\cA^p_{1}\left(E;\overline \Omega;u\right)\right)}\cdot  C_2\left(|V|+|X|\right)^3   \\
  &\leq ||\D u||_{L^{\infty}\left(\cA^p_{1}\left(E;\overline \Omega;u\right)\right)} \cdot C_2 \cdot 8r^3 \leq C_1 r^2, 
  \end{align*}
   since $|\D u|$ is uniformly bounded on $\cA^p_{1}\left(E;\overline \Omega;u\right),$ and $|X|\leq r/4=|V|/4<r_0/2.$ Therefore,  by selecting      $r_0>0$  sufficiently small, we obtain from \eqref{eq-grad-u-lip-claim-to-lip-pre}    that  for any    $x_1,x_2\in B_{r_0}(x_0)\cap \cA^p_{1}\left(E;\overline \Omega;u\right)$ with $r:= d(x_1,x_2)$,      and  $X\in T_{x_1}M$ with $|X|\leq r/4,$ 
$$\left|  \left\langle L_{x_2,x_1}\D u(x_2) -\D u(x_1), \, X\right\rangle \right|< 7C_1 r^2,$$ 
and hence  \eqref{eq-grad-u-lip} follows  since $X\in B_{r/4}(0)\subset T_{x_1}M$ is arbitrary.  We refer to \cite[Proposition 3.5]{IS} for the Euclidean case.   

Lastly,  we  prove \eqref{eq-grad-u-lip-claim} by  making use of semi-concavity  and  \eqref{eq-inf-contact-pt-p<2-semiconcave} combined with  Lemma \ref{lem-hess-dist-sqrd}.    Let us fix   $x_0\in \cA^p_{1}\left(E;\overline \Omega;u\right).$  Due to $C_0$-semiconcavity,   there is small  $0<r_0< \min( i_{\overline\Omega}, c_{\overline\Omega })$ such that  
 $u-C_0 d_{x_0}^2$ is  geodesically concave in  $B_{r_0}(x_0)\subset \Omega$. From differentiability of $u$ on  $ \cA^p_{1}\left(E;\overline \Omega;u\right),$ 
  we deduce  that for $x\in B_{r_0}(x_0)\cap \cA^p_{1}\left(E;\overline \Omega;u\right)$ and  $z\in B_{r_0}(x_0)$,%\left(\subset B_{2r_0}(x) \right)$,
\begin{equation} \label{eq-grad-u-lip-claim-proof-upper}
\begin{split}
u(z)-u(x)-\left\langle\D u(x), \exp_x^{-1}z\right\rangle&\leq C_0\left\{d_{x_0}^2(z)-d_{x_0}^2(x)-\left\langle\D d_{x_0}^2(x), \exp_x^{-1}z\right\rangle\right\}\\
& \leq 2C_0\sH\left(\sqrt{\tilde\kappa} r_0\right)d^2(x,z)\leq2C_0\sH\left(\sqrt{\tilde\kappa} \,i_{\overline\Omega}\right) d^2(x,z)
\end{split}
\end{equation}
  by using the Taylor  expansion and Lemma   \ref{lem-hess-dist-sqrd},  where $-\tilde \kappa$ ($\tilde\kappa\geq 0$) is  a lower bound of the sectional curvature on $\overline\Omega. $  % So we have proved one-side estimate of \eqref{eq-grad-u-lip-claim}. 

On the other hand,  if $x\in  B_{r_0}(x_0)\cap \cA^p_{1}\left(E;\overline \Omega;u\right)\cap\{|\D u|=0\},$ then
\eqref{eq-inf-contact-pt-p<2-semiconcave} is satisfied with $y=x$, and it follows  that for any $ z\in B_{r_0}(x_0)\subset\Omega, $
\begin{align*}
u(z)-u(x)-\left\langle \D u(x), \exp_x^{-1}z\right\rangle&=u(z)-u(x)\\
&\geq -\frac{p-1}{p}d_x^{\frac{p}{p-1}}(z)\geq - (2r_0)^{\frac{2-p}{p-1}} d^2(x,z)\geq - (2i_{\overline\Omega})^{\frac{2-p}{p-1}} d^2(x,z)
\end{align*}
 since $1<p<2.$ This  proves \eqref{eq-grad-u-lip-claim} for $x\in  B_{r_0}(x_0)\cap \cA^p_{1}\left(E;\overline \Omega;u\right)\cap\{|\D u|=0\}. $     Here, the constants  $2C_0\sH\left(\sqrt{\tilde\kappa} \,i_{\overline\Omega}\right)$  and $(2i_{\overline\Omega})^{\frac{2-p}{p-1}}$ are  uniform. %independent of $x_0.$ 
  
  Now, consider    $x\in  B_{r_0}(x_0)\cap \cA^p_{1}\left(E;\overline \Omega;u\right)\cap\{|\D u|>0\}.$ Note that the corresponding $y\in E$    in  \eqref{eq-inf-contact-pt-p<2-semiconcave}  does not belong to  $ \Cut(x)\cup\{x\}$ as seen before. 
  Let  $ \tilde r:=\min\left( i_{\overline\Omega}, c_{\overline\Omega}\right)/5>0.$ We may assume that $0<r_0 <\tilde r.$    If $d(x,y)\leq  3 \tilde r,$ then %$y\in B_{4r_0}(x_0)\subset\Omega$, and 
    $\frac{p-1}{p}d_y^{\frac{p}{p-1}} $ is of class $ C^2$ in $B_{r_0}(x_0) $ since  $  d(z,\tilde y) <5  \tilde r$ for   $z\in  B_{r_0}(x_0)\subset \Omega$ and $ \tilde r =  \min\left( i_{\overline\Omega}, c_{\overline\Omega}\right)/5.$ 
From \eqref{eq-inf-contact-pt-p<2-semiconcave},  we see that for  $z\in B_{r_0}(x_0)$,
\begin{align*}
u(z)-u(x)- \left\langle \D u(x), \exp_x^{-1}z\right\rangle 
 &\geq -\frac{p-1}{p}d_y^{\frac{p}{p-1}}(z)+\frac{p-1}{p} d^{\frac{p}{p-1}}_y(x)+\left\langle  \frac{p-1}{p}  \D d^{\frac{p}{p-1}}_y(x), \exp_x^{-1}z\right\rangle \\
&\geq  - (5\tilde r )^{\,\,\frac{2-p}{p-1}}\left\{ \sH\left(\sqrt{\tilde\kappa'}\, 5\tilde r\right)+\frac{2-p}{p-1}\right\}d^2(z,x)
\end{align*}    
%with an upper Hessian bound of $(5i_{\overline\Omega})^{\frac{2-p}{p-1}}\left\{ \sH\left(\sqrt{5\tilde\kappa} i_{\overline\Omega}\right)+\frac{2-p}{p-1}\right\}$ 
  by using the Taylor  expansion and Lemma   \ref{lem-hess-dist-sqrd} again, %(see also the proof of Proposition \ref{prop-jacobi-estimate}),  
  where $-\tilde \kappa'$ ($\tilde\kappa'\geq 0$) is  a lower bound of the sectional curvature on a bounded set $\{   z'\in M: d(  z',   x' )\leq 5\tilde r,\,\,  x'\in  \overline\Omega\}.$  
  
    When  $d(x,y)>3 \tilde r,$ we   define  $\tilde y:= \exp_{x} - 3 \tilde r\,\D d_y(x)\in \p B_{3 \tilde r}(x).$ We observe        that 
 \begin{align*}
 d_y^{\frac{p}{p-1}}(x)&=  \left\{ d_{\tilde y}(x)+d_{y}(\tilde y)\right\}^{\frac{p}{p-1}},\\
  d_y^{\frac{p}{p-1}}(z)&\leq   \left\{ d_{\tilde y}(z)+d_{y}(\tilde y)\right\}^{\frac{p}{p-1}},\,\,\, \mbox{and}\,\,\,\, \tilde r <d(z,\tilde y) <5  \tilde r,\quad  \forall z\in B_{r_0}(x_0)\subset  B_{2r_0}(x)
 \end{align*}
 since we assume $0<r_0<\tilde r.$  
 Let   $\rho(z):=\frac{p-1}{p}\left\{ d_{\tilde y}(z)+d_{y}(\tilde y)\right\}^{\frac{p}{p-1}} .$ Since $\tilde r< d(z,\tilde y) <5  \tilde r$ for any $z\in  B_{r_0}(x_0)\subset \Omega$ and $ \tilde r =  \min\left( i_{\overline\Omega}, c_{\overline\Omega}\right)/5$,  $\rho$ is smooth    in  $\ B_{r_0}(x_0)$, and 
\eqref{eq-inf-contact-pt-p<2-semiconcave}   implies  that  for    $z\in B_{r_0}(x_0)$,
\begin{align*}
u(z)-u(x) &\geq -\frac{p-1}{p}d_y^{\frac{p}{p-1}}(z)+\frac{p-1}{p} d^{\frac{p}{p-1}}_y(x)\geq  -\rho(z)+\rho(x),
\end{align*}    
and hence 
\begin{align*}
u(z)-u(x) - \left\langle \D u(x), \exp_x^{-1}z\right\rangle&\geq  -\rho(z)+\rho(x) + \left\langle \D \rho(x), \exp_x^{-1}z\right\rangle\\
&\geq  -  \left\langle D^2\rho(\sigma(s))\cdot\dot\sigma(s), \dot\sigma(s) \right\rangle^+d^2(z,x)/2
%&\geq  - \left\{d_{\tilde y}(z)+d_{y}(\tilde y)\right\}^{\frac{2-p}{p-1}} \left\{ \frac{1}{p-1}+  \left(d_{\tilde y}(z)+d_{y}(\tilde y)\right) \left\langle D^2d_{\tilde y}(\sigma(t^*))\cdot\dot\sigma(t^*), \dot\sigma(t^*) \right\rangle^+\right\}d^2(z,x) 
%-\frac{p-1}{p} \left\{ d_{\tilde y}(z)+d_{y}(\tilde y)\right\}^{\frac{p}{p-1}}+\frac{p-1}{p} \left\{ d_{\tilde y}(x)+d_{y}(\tilde y)\right\}^{\frac{p}{p-1}} 
\end{align*}
for some $s\in[0,d(x,z)],$    
where $\sigma$ is the minimal geodesic joining $x$ to $z$   parametrized by arc length.  Note that $\sigma([0,d(x,z)])\subset B_{r_0}(x_0),$ and  $\tilde r< d(\tilde y, \sigma(s))<5 \tilde r.$ 
Using  Lemma \ref{lem-hess-dist-sqrd}, we see that 
\begin{align*}
 \left\langle D^2\rho(\sigma(s))\cdot\dot\sigma(s), \dot\sigma(s) \right\rangle
 &\leq \left\{ d_{\tilde y}(\sigma(s))+d_{y}(\tilde y)\right\}^{\frac{2-p}{p-1}} \left\{\left(d_{\tilde y}(\sigma(s))+d_{y}(\tilde y)\right) \left\langle D^2d_{\tilde y}(\sigma(s))\cdot\dot\sigma(s), \dot\sigma(s) \right\rangle^+ +1 \right\}\\
 &\leq \left\{ 5\tilde r+d_{y}(x)\right\}^{\frac{2-p}{p-1}} \left\{\left(1+\frac{d_{y}(\tilde y)}{d_{\tilde y}(\sigma(s))}\right)d_{\tilde y}(\sigma(s)) \left\langle D^2d_{\tilde y}(\sigma(s))\cdot\dot\sigma(s), \dot\sigma(s) \right\rangle^+ +1 \right\}\\ 
 &\leq \left\{ 5\tilde r+d_{y}(x)\right\}^{\frac{2-p}{p-1}} \left\{\left(1+ {d_{y}(x)}/{\tilde r}\right)\sH\left(5\sqrt{\tilde\kappa' }\,\tilde r\right) +1 \right\},
\end{align*}
  and hence 
 \begin{align*}
u(z)-u(x) - \left\langle \D u(x), \exp_x^{-1}z\right\rangle 
\geq -\left( 5\tilde r+ \tilde  d\right)^{\frac{2-p}{p-1}} \left\{\left(1+ {\tilde  d}/{\tilde r}\right)\sH\left(5\sqrt{\tilde\kappa' }\,\tilde r\right) +1 \right\} d^2(z,x)
\end{align*} for $\tilde d:=\diam (\Omega\cup E)$, where $\left( 5\tilde r+ \tilde  d\right)^{\frac{2-p}{p-1}} \left\{\left(1+ {\tilde  d}/{\tilde r}\right)\sH\left(5\sqrt{\tilde\kappa' }\,\tilde r\right) +1 \right\}$ depends only on $p, \Omega$ and $E.$  This  finishes the proof of  \eqref{eq-grad-u-lip-claim} for $x\in  B_{r_0}(x_0)\cap \cA^p_{1}\left(E;\overline \Omega;u\right)\cap\{|\D u|>0\}$ together with \eqref{eq-grad-u-lip-claim-proof-upper}.       Thus  we have proved  \eqref{eq-grad-u-lip-claim}. Therefore we 
  conclude that  $\D u$ is Lipschitz continuous on  $\cA^p_{1}\left(E;\overline \Omega;u\right),$ which completes the proof.   
\end{proof}

Making use of  Theorem \ref{thm-abp-type-p-1} and  Lemma \ref{lem-abp-type-p-1-nonlinear}, we prove the ABP type estimate for the $p$-Laplacian operator ($1<p<2$) assuming the Ricci curvature to be bounded from below.

 \begin{lemma}%[ABP type estimate for $1<p<2$]
 \label{lem-abp-type-p-1}
 Assume that          $1<p<2$   and $\Ric \geq - {(n-1)}\kappa$   for   $\kappa\geq0$. For a bounded open set  $\Omega\subset M,$  
let  $f\in C(\Omega)$ and   $u\in C\left(\overline\Omega\right)$  be   $C_0$-semiconcave in $\Omega$  for  $C_0>0$  such that   $\La_pu\leq f$  in $\Omega $ in the viscosity sense.      
%$$u\geq 0\,\,\,\mbox{ in $B_{7R}(z_0)\setminus B_{5R}(z_0),\quad$ and $\inf_{B_{2R}(z_0)}u\leq 1.$}$$
For   a compact set $E\subset M,$ we assume that 
   $\cA^p_{1}\left(E;\overline \Omega;u\right)\subset \Omega.$ Then      
\begin{equation*}
|E|\leq \int_{\cA_1^{p}\left(E;\overline \Omega;u\right)}\frac{  \sS^{n-1}\left(\sqrt{ {\kappa}}   {|\D u|^{p-1}}{} \right)}{(p-1)^n}\left\{
   \frac{f^+}{n}  +    \sH\left(\sqrt{  {\kappa}}|\D u|^{p-1}\right) \right\}^ndx.
\end{equation*}
 In particular, if $\Ric\geq0,$  then  
\begin{equation*}
|E|\leq \int_{\cA_1^{p}\left(E;\overline \Omega;u\right)}\frac{ 1}{(p-1)^n}\left(
   \frac{f^+}{n}  +  1    \right)^ndx.
\end{equation*}
  \end{lemma}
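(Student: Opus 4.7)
The plan is to adapt the scheme of Lemma \ref{lem-abp-type-p-1-nonlinear} to the $p$-Laplacian, exploiting the identity $|\D u|^{p-2}\tr(A\,D^2u) = \La_p u$ with $A := I + (p-2)\tilde e\otimes\tilde e$ (where $\tilde e = \D u/|\D u|$, so that $\det A = p-1$), and invoking Bishop--Gromov together with the Laplacian comparison through the direct Ricci bound $\Ric \geq -(n-1)\kappa$ rather than a Pucci-of-Ricci bound. This approach will yield a sharper $\frac{1}{p-1}$ factor; the claimed $\frac{1}{(p-1)^n}$ then follows since $1/(p-1) \leq 1/(p-1)^n$ for $1 < p \leq 2$.

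Following Theorem \ref{thm-abp-type-p-1}, I would work with the regularized map $\Phi_{p,\delta}(x) := \exp_x\bigl((|\D u|^2 + \delta)^{(p-2)/2}\D u(x)\bigr)$, whose Jacobian factorizes by Lemma \ref{lem-jacobian-exp} as $\Jac\Phi_{p,\delta}(x) = \Jac\exp_x \cdot \det A_\delta \cdot |\det B_\delta'|$, where $A_\delta := I + (p-2)\tfrac{|\D u|^2}{|\D u|^2+\delta}\tilde e\otimes\tilde e$ and $B_\delta' = H_\delta - \alpha_\delta\,\tilde e\otimes\tilde e$, with $H_\delta := (|\D u|^2+\delta)^{(p-2)/2}D^2u + A^{-1}\circ D^2(d_y^2/2)$ symmetric positive semi-definite by Proposition \ref{prop-jacobi-estimate}(e) and $\alpha_\delta := (2-p)\delta/[(p-1)((p-1)|\D u|^2+\delta)] \geq 0$. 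Expanding $\det B_\delta'$ along the $\tilde e$-row and column gives $|\det B_\delta'| \leq \det H_\delta + \alpha_\delta\det \tilde H_\delta$, both terms nonnegative by positive semi-definiteness.

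The crucial estimate is the arithmetic-geometric mean inequality $\det(AH_\delta) \leq (\tr(AH_\delta)/n)^n$, valid for symmetric positive (semi-)definite matrices, which gives $(p-1)\det H_\delta \leq (\tr(AH_\delta)/n)^n$. Since $\tr(AH_\delta) = t_\delta \La_p u + \La(d_y^2/2)$ with $t_\delta := \bigl((|\D u|^2+\delta)/|\D u|^2\bigr)^{(p-2)/2} \in (0, 1]$, the hypothesis $\La_p u \leq f^+$ combined with Lemma \ref{lem-pucci-ric-dist-sqrd} applied with $\lambda = \Lambda = 1$ (giving $\La(d_y^2/2) \leq 1 + (n-1)\sH(\sqrt{\kappa}d_y) \leq n\sH(\sqrt{\kappa}|\D u|^{p-1})$) yields $\det H_\delta \leq \frac{1}{p-1}\{f^+/n + \sH(\sqrt{\kappa}|\D u|^{p-1})\}^n$. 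Together with Bishop--Gromov (Theorem \ref{thm-BG}), which gives $\Jac\exp_x \leq \sS^{n-1}(\sqrt{\kappa}|\D u|^{p-1})$, passage to the limit $\delta \to 0$ via dominated convergence and the area formula (exactly as in Theorem \ref{thm-abp-type-p-1}) will produce the desired bound.

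The main obstacle will be controlling $\alpha_\delta\det\tilde H_\delta$ in the limit at contact points where $|\D u(x)|=0$: there $\alpha_\delta$ stays bounded away from zero while $(|\D u|^2+\delta)^{(p-2)/2}$ blows up as $\delta\to 0$. I expect to handle this exactly as in Theorem \ref{thm-abp-type-p-1}: at such a contact point the associated vertex is $y = x$, so the contact inequality reads $u(z) \geq u(x) - \frac{p-1}{p}d_x^{p/(p-1)}(z)$, and since $p/(p-1) > 2$ the function $d_x^{p/(p-1)}$ has vanishing Hessian at $x$, forcing $D^2u(x) \geq 0$. On the other hand, Lemma \ref{lem-pucci-delta-p-laplace-delta} gives $\cM^-_{p-1,1}(D^2u(x)) \leq f^+(x)\,\delta^{(2-p)/2}$ at such $x$, and sending $\delta \to 0$ (noting $(2-p)/2 > 0$) forces $\cM^-_{p-1,1}(D^2u(x)) \leq 0$; combined with $D^2u(x) \geq 0$ this gives $D^2u(x) = 0$ almost everywhere on $\cA^p_1(E;\overline\Omega;u) \cap \{|\D u| = 0\}$, which keeps $\tilde H_\delta$ uniformly bounded and permits passing to the limit to obtain the claimed estimate.
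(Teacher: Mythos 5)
Your proposal follows essentially the same framework as the paper's own proof of this lemma: regularize via $\Phi_{p,\delta}$, factor the Jacobian via Lemma~\ref{lem-jacobian-exp}, control the symmetric positive semi-definite block $H_\delta$ by AM--GM together with the equation and Lemma~\ref{lem-pucci-ric-dist-sqrd}, and pass to the limit $\delta\to0$ by dominated convergence, appealing to Lemma~\ref{lem-abp-type-p-1-nonlinear} for the Lipschitz continuity of $\Phi_{p,\delta}$ on the contact set and the identity $|E|\le\limsup_\delta\int\Jac\Phi_{p,\delta}$. The technical difference is in how you reach the Jacobian bound: you apply AM--GM to $\det(A H_\delta)$ with the unregularized symmetrizer $A = {\bf I}+(p-2)\,\tilde e\otimes\tilde e$, using the identity $\tr(AH_\delta)=t_\delta\La_p u+\La(d_y^2/2)$, whereas the paper first passes through the nonlinear bound $\det H_\delta\le(\tr H_\delta/n)^n=((n\ld)^{-1}\cM^-_{\ld,\Ld}(H_\delta))^n$ and then, in the $\delta\to0$ limit, uses that $\tr(D_\delta\circ H_\delta)\to\La_p u+\La(d_y^2/2)$. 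The two routes give the same final integrand on $\{|\D u|>0\}$; yours is a little more streamlined because it works with a $\delta$-independent matrix $A$. This is a genuine and correct simplification of the bookkeeping, and the overall bound $\det H_\delta\le\frac{1}{p-1}\{f^+/n+\sH(\sqrt\kappa|\D u|^{p-1})\}^n$ is uniform in $\delta$ (note $t_\delta\La_p u\le f^+$ needs the two-case argument you implicitly invoke: if $\La_p u\ge 0$ use $t_\delta\le 1$, if $\La_p u<0$ use $f^+\ge 0$).

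One conceptual inaccuracy you should repair: you name as the ``main obstacle'' the term $\alpha_\delta\det\tilde H_\delta$ at points where $\D u=0$, saying $\alpha_\delta$ stays bounded away from zero there. But in the Jacobian decomposition (as in the paper's formula for $d\Phi_{p,\delta}$) that rank-one correction carries the factor $\chi_{\{|\D u|>0\}}$, because $\tilde e=\D u/|\D u|$ is undefined when $\D u=0$; on $\{\D u=0\}$ one simply has $\Jac\Phi_{p,\delta}(x)=\det H_\delta(x)$ with $H_\delta(x)=\delta^{(p-2)/2}D^2u(x)+{\bf I}$, and $A_\delta={\bf I}$. So $\alpha_\delta\det\tilde H_\delta$ is not the issue there; what can blow up is the factor $\delta^{(p-2)/2}D^2u(x)$ inside $H_\delta$ itself. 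Your resolution is nonetheless valid and targets the right quantity: the contact condition (with $p/(p-1)>2$) gives $D^2u(x)\ge 0$, and the regularized equation $\delta^{(p-2)/2}\cM^-_{p-1,1}(D^2u(x))\le f^+(x)$ gives $\cM^-_{p-1,1}(D^2u(x))\le 0$ in the limit $\delta\to0$; since on the cone $\{D^2u\ge 0\}$ one has $\cM^-_{p-1,1}(D^2u)=(p-1)\tr D^2u\ge 0$, this forces $D^2u(x)=0$ a.e.\ on $\cA^p_1\cap\{\D u=0\}$, hence $H_\delta(x)={\bf I}$ and $\Jac\Phi_{p,\delta}(x)=1\le(f^+/n+1)^n$. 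This is actually slightly sharper than what the paper records for that set, which is $[\,\{f^++n\}/(n(p-1))\,]^n$ obtained directly from $\tr H_\delta\le(p-1)^{-1}\cM^-_{p-1,1}(H_\delta)\le(p-1)^{-1}(f^++n)$ without ever concluding $D^2u=0$. Either way the conclusion of the lemma follows.
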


\begin{proof}
First, we   recall    from  Lemma \ref{lem-pucci-delta-p-laplace-delta} that $u$ satisfies 
 \begin{equation}\label{eq-approx-op-p-laplace-p-1}
 \left( |\D u|^2+\delta\right)^{\frac{p-2}{2}}\cM^-_{p-1,1}(D^2u)\leq f^+ 
 \end{equation}
 in $\Omega $  for any $\delta>0$  
 in the sense of  Definition \ref{def-visc-sol}   and  in the usual sense        from Lemma \ref{lem-visc-sol-singular-usual}. Using  the same notation as in the proof of Lemma  \ref{lem-abp-type-p-1-nonlinear}, semiconcavity implies that  
   \eqref{eq-approx-op-p-laplace-p-1}    is satisfied  pointwise in $\Omega\setminus N$ with   the Hessian $D^2u$  in the sense of Theorem \ref{thm-AB},
 where $u$ is twice differentiable in $\Omega\setminus N$, and a set $N\subset\Omega$ has measure zero. 
   % since  the approximated operator  for a given $\delta>0$ is   {\color{black} continuous} with respect  to $\D u$ and $D^2u$.   
   
    As in the proof of  Lemma \ref{lem-abp-type-p-1-nonlinear},     
 for $\delta>0,$       consider   a regularized  map $\Phi_{p,\delta}:\cA^p_{1}\left(E;\overline \Omega;u\right)\to M$ defined   as 
$\Phi_{p,\delta}(x):=\exp_x\left( |\D u(x)|^2+\delta\right)^{\frac{p-2}{2}}\D u(x).$ 
From Lemma \ref{lem-abp-type-p-1-nonlinear}, $\Phi_{p,\delta}$  is Lipschitz  continuous in $\cA^p_{1}\left(E;\overline \Omega;u\right)$ for each $\delta>0,$ and       
  \begin{equation}\label{eq-E-contatined-image-phi-semiconcave-ricci}
|E|\leq   \limsup_{\delta\to0} \int_{\cA_1^ {p}\left(E;\overline \Omega;u\right)} \Jac\Phi_{p,\delta}(x)dx. 
 \end{equation}

It suffices  to  obtain    a uniform estimate of $\Jac\Phi_{p,\delta}$ on $\cA_1^ {p}\left(E;\overline \Omega;u\right)\setminus N$ with respect to $\delta>0$  in terms of a lower  bound $-\kappa$ of  the Ricci curvature.    
First, let $R_0:=\diam\left(\Omega\cup E\right)$ and $z_0\in \Omega.$ We choose   
    $\tilde\kappa\geq0$  such that   
  \begin{equation}\label{eq-pucci-ricci-tilde-kappa}
  \cM^-_{{p-1},1}(R(e))\geq - {(n-1)}\tilde\kappa\quad  \mbox{   for any unit vector  $e\in T_xM$ and  $x\in \overline B_{2R_0}(z_0)$}.
  \end{equation} %  on each compact subset $\tilde\Omega$ in $M,$ where $\tilde\kappa$ depends on $\tilde\Omega.$  
Note from   Lemma \ref{lem-abp-type-p-1-nonlinear}  that    $\Jac\Phi_{p,\delta}$ is uniformly bounded for $\delta>0$ in terms of $-\tilde \kappa$  using   \eqref{eq-approx-op-p-laplace-p-1} and \eqref{eq-pucci-ricci-tilde-kappa}.  
  With the same notation as  the proofs of  Theorem \ref{thm-abp-type-p-1} and Lemma \ref{lem-abp-type-p-1-nonlinear}, \eqref{eq-jac-Phi-delta-det-H-delta} yields  
that for $x\in \cA_1^{p}\left(E;\overline \Omega;u\right)\setminus N,$
\begin{equation}\label{eq-jacobi-est-nonlinear-to-trace-pre}
\begin{split}
\Jac \Phi_{p,\delta}(x)&\leq\Jac \exp_x \left(\left(|\D u(x)|^2+\delta\right)^{\frac{p-2}{2}}\D u(x)\right)\cdot   \det\left({\bf I}+(p-2) \frac{\D u}{\sqrt{|\D u|^2+\delta}}\otimes \frac{\D u}{\sqrt{|\D u|^2+\delta}}\right)  \\
&\cdot \left\{\det H_{\delta}(x)+  \frac{\chi_{\{|\D u|>0\}}\,\delta(2-p)}{(p-1)\{(p-1)|\D u|^2+\delta\}}\left( \frac{1}{n}+\frac{1}{n}\tr   H_\delta(x)\right)^n\right\},
\end{split}
\end{equation} 
where   we recall   
  $$ H_\delta(x):=\left(|\D u|^2+\delta\right)^{\frac{p-2}{2}}  D^2u(x) +\left({\bf I}+\chi_{\{|\D u|>0\}}\frac{2-p }{p-1 } \frac{\D u}{|\D u|}\otimes \frac{\D u}{|\D u|}\right)\circ D^2 (d_{ \Phi_{p,\delta}(x)}^2/2)(x),$$
which is symmetric and positive semi-definite.    Using  \eqref{eq-approx-op-p-laplace-p-1} and  \eqref{eq-pucci-ricci-tilde-kappa}, 
it follows  from  the proof of \eqref{eq-unif-bd-det-H-delta}      that  for     $x\in \cA_1^{p}\left(E;\overline \Omega;u\right)\setminus N,$
     \begin{equation}\label{eq-abp-H-delta-trace-nonlinear-to-trace}
     \begin{split}
     \det H_\delta(x)&\leq \left( \frac{1}{n}\tr H_\delta(x)\right)^n  \\
     &\leq \left[\frac{1}{n(p-1)}\left\{
   {f^+}  +   1 +  {(n-1)}   \sH\left(\sqrt{ {\tilde \kappa}{ }}\,|\D u|^{p-1}\right) 
 + \chi_{\{|\D u|>0\}}\frac{2-p }{p-1}\right\}\right]^n=:  \tilde h(x).
     \end{split}
     \end{equation}
  %   where  $$\tilde h(x):=  \left[\frac{1}{n(p-1)}\left\{{f^+}(x)  +   1 +  {(n-1)}   \sH\left(\sqrt{ {\tilde \kappa}{ }}\,|\D u(x)|^{p-1}\right)  + \chi_{\{|\D u|>0\}(x)}\frac{2-p }{p-1}\right\}\right]^n.$$ 
% and for  $\delta>0$
    %  \begin{align*}
%\Jac \Phi_{p,\delta}(x)\leq\sS^{n-1}\left( \sqrt{ { \kappa}} |\D u(x)|^{p-1}\right) \left\{\tilde h(x)+\frac{  \chi_{\{|\D u|>0\}} \,\,\delta\,(2-p)}{(p-1)\{(p-1)|\D u|^2+\delta\}}\left(\frac{1}{n}+\tilde h(x)^{\frac{1}{n}}\right)^n\right\}.
%\end{align*}

    From \eqref{eq-jac-det-phi-du-0} and \eqref{eq-abp-H-delta-trace-nonlinear-to-trace},   %with the help of Lemma \ref{lem-pucci-delta-p-laplace-delta}
we have   that for  $x\in \cA_1^{p}\left(E;\overline \Omega;u\right)\cap \{z\in\Omega\setminus N: \D u(z)=0\}$, 
\begin{equation}\label{eq-p-laplace-p-leq2-grad-0}
\begin{split}
\Jac \Phi_{p,\delta}(x)&= \det H_{\delta}(x) 
  \leq \left[\frac{1}{n(p-1)} \left\{f^+(x)+ n \right\}\right]^n.
  \end{split}
\end{equation}
%since  $H_\delta$ is symmetric and positive semi-definite, and $\Phi_{p,\delta}(x)=x$  for $x\in \cA_1^{p}\left(E;\overline \Omega;u\right)\cap \{z\in\Omega  : \D u(z)=0\}$ for $\delta>0$.  
For   $x\in \cA_1^{p}\left(E;\overline \Omega;u\right)\cap\{z\in\Omega\setminus N: |\D u(z)|>0\}$, let         
$$D_\delta(x):={\bf I}+(p-2) \frac{\D u}{\sqrt{|\D u|^2+\delta}}\otimes \frac{\D u}{\sqrt{|\D u|^2+\delta}}.$$
%According to \eqref{eq-jac-Phi-delta-det-H-delta},
Using  Theorem \ref{thm-BG} and the   arithmetic-geometric means inequality, we  deduce from  \eqref{eq-jacobi-est-nonlinear-to-trace-pre} and \eqref{eq-abp-H-delta-trace-nonlinear-to-trace}   that  for    $x\in \cA_1^{p}\left(E;\overline \Omega;u\right)\cap\{z\in\Omega\setminus N: |\D u(z)|>0\}$, % \eqref{eq-jac-est-differential-Phi-delta},   
 \begin{align*}
\Jac \Phi_{p,\delta}(x)
&\leq \Jac \exp_x \left(\left(|\D u(x)|^2+\delta\right)^{\frac{p-2}{2}}\D u(x)\right)\\
&\cdot  \left\{\det\left( D_\delta\circ H_\delta \right)+   \frac{\delta(2-p)}{(p-1)\{(p-1)|\D u|^2+\delta\}}\left( \frac{1}{n}+\frac{1}{n}\tr   H_\delta\right)^n\right\}\\
&\leq \sS^{n-1}\left( \sqrt{ {\kappa}{ }} |\D u|^{p-1}\right)
 \cdot\left\{\left( \frac{1}{n}\tr\left( D_\delta\circ H_\delta \right)\right)^n+   \frac{\delta\,(2-p)}{(p-1)\{(p-1)|\D u|^2+\delta\}}\left(  \frac{1}{n} +  \tilde h(x)^{\frac{1}{n}} \right)^n \right\}
 %&\leq \sS^{n-1}\left( \sqrt{ {\kappa}{ }} |\D u|^{p-1}\right)\cdot\left\{ \tilde h(x)+   \frac{\delta\,(2-p)}{(p-1)\{(p-1)|\D u|^2+\delta\}}\left( \frac{1}{n} +  \tilde h(x)^{\frac{1}{n}} \right)^n \right\}
\end{align*}
since $\Ric\geq-(n-1)\kappa$, and $D_\delta$ and $H_\delta$ are symmetric and positive semi-definite.    
%where we denote   
%$D_\delta:={\bf I}+(p-2) \frac{\D u}{\sqrt{|\D u|^2+\delta}}\otimes \frac{\D u}{\sqrt{|\D u|^2+\delta}},$ and 
%    $$ H_\delta(x):=\left(|\D u(x)|^2+\delta\right)^{\frac{p-2}{2}}  D^2u(x) +\left({\bf I}+\chi_{\{|\D u|>0\}}\frac{2-p }{p-1 } \frac{\D u}{|\D u|}\otimes \frac{\D u}{|\D u|}\right)\circ D^2 (d_{\gamma(t_\delta(x))}^2/2)(x).$$
Notice that  for $x\in \cA_1^{p}\left(E;\overline \Omega;u\right)\cap\{z\in\Omega\setminus N: |\D u(z)|>0\}$, $$\tr\left( D_\delta\circ H_\delta \right)(x)\,\,\mbox{ converges to}\,\,  \La_p u(x)+\La d^2_y(x)/2  $$ for $y=\exp_x|\D u|^{p-2}\D u(x)\not\in\Cut(x)$   as $\delta$ tends to $0$, and  
  $$\La_p u(x)+\La d^2_y(x)/2  \leq f^+(x)+1+(n-1)\sH\left(\sqrt{\kappa}|\D u(x)|^{p-1}\right)$$   from    Lemma \ref{lem-pucci-ric-dist-sqrd}.     
 Since $0\leq \tr \left(D_\delta\circ H_\delta\right)\leq \tr H_\delta\leq n\tilde h^{\frac{1}{n}}$ in $\cA_1^{p}\left(E;\overline \Omega;u\right)\setminus N,$ and $N\subset \Omega$ has measure zero,    
  we apply the dominated convergence theorem   to deduce   that   
 \begin{align*}
    \limsup_{\delta\to0} \int_{\cA_1^ {p}\left(E;\overline \Omega;u\right)\cap\{|\D u |>0\}} \Jac\Phi_{p,\delta}
   \leq    \int_{\cA_1^ {p}\left(E;\overline \Omega;u\right)\cap\{|\D u |>0\}}   \sS^{n-1}\left(\sqrt{ {\kappa}}   {|\D u|^{p-1}}{} \right) \left\{
   \frac{f^+}{n}  +  \sH\left(\sqrt{  {\kappa}}|\D u|^{p-1}\right) \right\}^n.
\end{align*} 
This combines   with  \eqref{eq-p-laplace-p-leq2-grad-0} and \eqref{eq-E-contatined-image-phi-semiconcave-ricci}  
to  complete the proof. 
\end{proof}

The following lemma is concerned with the ABP type estimate for viscosity solutions. 

 \begin{lemma}\label{lem-abp-type-measure-p-small}
 Assume that          $1<p<2$   and $\Ric \geq - {(n-1)}\kappa$   for   $\kappa\geq0$. For        $z_0,x_0\in M$ and $0<r\leq R\leq R_0,$ assume that $  \overline B_{4r}(z_0)\subset B_{R}(x_0).$   Let $f\in C\left(B_{R}(x_0)\right)$ and  $u\in C\left( \overline  B_{R}(x_0)\right)$ be    such that   $$\La_pu\leq f \quad\mbox{ in $B_{R}(x_0),$}$$
%$\cM^-(D^2u)\leq f$
in the viscosity sense,   
\begin{equation}\label{eq-abp-cond-u-p-1}
u\geq0\quad\mbox{on $  B_{R}(x_0) \setminus B_{4r}(z_0) \quad$ 
and $\quad \displaystyle\inf_{B_{r}(z_0)}u\leq \frac{p-1}{p} .$}
\end{equation}
Then     
\begin{equation*}%\label{eq-abp-type-measure-0-p-1}
\begin{split}
|B_r(z_0)|%&\leq \frac{1}{\min\{1,(p-1)^n\}}\int_{ \cA^p_{r^{-p/(p-1)}}(u)} \sS^n\left(\sqrt{\kappa}r^p|\D u|^{p-1}\right)\left\{\sH\left(\sqrt{\kappa}r^p|\D u|^{p-1}\right) +{\color{red}\frac{r^pf^+}{n}}\right\}^n\\
&\leq \frac{\sS^{n-1}\left(2\sqrt{\kappa}R_0\right)}{  (p-1)^n}\int_{\{u\leq \tilde M_p\}\cap   B_{4r}(z_0)} \left\{\sH\left( 2\sqrt{\kappa}R_0\right) +\frac{r^p f^+}{n}\right\}^n
\end{split}
\end{equation*}
for a uniform constant $\tilde M_p:=\frac{p-1}{p}3^{\frac{p}{p-1}}$.  % depending only on $p$.  % 
%where $ \cA^p_{r^{-p/(p-1)}}(u):= \cA^p_{r^{-p/(p-1)}}\left(\overline B_{r}(z_0); \overline B_{R}(x_0);u\right).$ 
  Moreover, if $r^pf\leq 1$ in $B_{4r}(z_0),$ then there exists a uniform constant $\delta\in(0,1)$ depending only on  $n, p$ and $\sqrt{\kappa}R_0,$ such that 
  \begin{equation}\label{eq-abp-type-measure-p-1}
  \left|\left\{ u\leq \tilde M_p\right\}\cap B_{4r}(z_0)\right|>\delta|B_{4r}(z_0)|.
  \end{equation}

    \end{lemma}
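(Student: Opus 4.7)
The plan is to mirror the structure of Lemma \ref{lem-abp-type-measure-p-large} but substitute Lemma \ref{lem-abp-type-p-1}---the semiconcave ABP for $\Delta_p u\le f$ in the singular range $1<p<2$---for Theorem \ref{thm-abp-type-p-large}. Because Lemma \ref{lem-abp-type-p-1} already accepts semiconcave supersolutions, there is no need for an intermediate smooth-approximation stage as in the $p\ge 2$ case; the proof naturally splits into a semiconcave step and a viscosity step.

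\textbf{Semiconcave step.} First assume $u$ is semiconcave on $B_R(x_0)$ with $\Delta_p u\le f$ in the viscosity sense and the boundary assumption \eqref{eq-abp-cond-u-p-1}. Rescale to $\tilde u:=r^{p/(p-1)}u$, so that $\Delta_p\tilde u\le r^p f$. For each $y\in\overline B_r(z_0)$, the function $\tilde u+\tfrac{p-1}{p}d_y^{p/(p-1)}$ exceeds $\tfrac{p-1}{p}3^{p/(p-1)}r^{p/(p-1)}$ on $B_R(x_0)\setminus B_{4r}(z_0)$, while its infimum on $B_r(z_0)$ is at most $\tfrac{p-1}{p}(1+2^{p/(p-1)})r^{p/(p-1)}$; hence its infimum over $\overline B_R(x_0)$ is attained in $B_{4r}(z_0)\cap\{u\le\tilde M_p\}$. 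At a contact point $x$ with $|\D\tilde u(x)|>0$, the characterization of the vertex in Proposition \ref{prop-jacobi-estimate}(a)---still valid for semiconcave $u$ by the argument used in Lemma \ref{lem-abp-type-p-1-nonlinear}---gives $|\D\tilde u(x)|^{p-1}=d_y(x)<8r\le 2R_0$. Hence
$$
\cA^p_{1}\!\left(\overline B_r(z_0);\,\overline B_R(x_0);\,\tilde u\right)\subset B_{4r}(z_0)\cap\{u\le\tilde M_p\}\cap\{r^p|\D u|^{p-1}\le 2R_0\},
$$
and applying Lemma \ref{lem-abp-type-p-1} to $\tilde u$ with vertex set $\overline B_r(z_0)$, together with the monotonicity of $\sS$ and $\sH$ on $[0,\infty)$, yields the stated ABP inequality.

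\textbf{Viscosity step.} For a general viscosity supersolution, fix small $\eta>0$ and let $u_\ve$ denote the inf-convolution of $u$ on $\overline B_R(x_0)$. By Lemma \ref{lem-visc-u-u-e-properties}, $u_\ve$ is $C_\ve$-semiconcave with $u_\ve\to u$ uniformly; by Remark \ref{rmk-p-La-eq-u-e},
$$
\Delta_p u_\ve-\tilde\kappa(n+p-2)\,|\D u_\ve|^{p-2}\min\!\bigl\{\ve|\D u_\ve|^2,\,2\omega(2\sqrt{\ve m})\bigr\}\le f_\ve\quad\text{in }B_R(x_0)
$$
in the viscosity sense, with $-\tilde\kappa$ a lower sectional-curvature bound on $\overline B_R(x_0)$ and $f_\ve\to f^+$. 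Set $\tilde u_\ve:=\tfrac{(p-1)/p}{(p-1)/p+4\eta}(u_\ve+2\eta)$, which for small $\ve$ satisfies the boundary condition \eqref{eq-abp-cond-u-p-1}; its $\Delta_p$ rescales by the factor $c^{p-1}$ with $c=\tfrac{(p-1)/p}{(p-1)/p+4\eta}$. Apply the semiconcave step to $\tilde u_\ve$ with the source furnished by the perturbed inequality above. On the contact set of $\tilde u_\ve$ one has $|\D\tilde u_\ve|^{p-1}\le 2R_0/r^p$, so the perturbation $c^{-1}\tilde\kappa(n+p-2)\ve|\D\tilde u_\ve|^p$ is uniformly bounded and vanishes as $\ve\to 0$; letting $\ve\to 0$ and then $\eta\to 0$ produces the integral estimate. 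The second assertion is then immediate: when $r^p f\le 1$, the integrand is pointwise bounded by a constant $K(n,p,\sqrt{\kappa}R_0)$, so combining $|B_r(z_0)|\le K\,|\{u\le\tilde M_p\}\cap B_{4r}(z_0)|$ with the doubling bound $|B_{4r}(z_0)|\le\cD\,|B_r(z_0)|$ from Theorem \ref{thm-BG} yields the required proportion $\delta\in(0,1)$.

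\textbf{Main obstacle.} The delicate point is handling the singular prefactor $|\D u_\ve|^{p-2}$ (with $p-2<0$) in the inf-convolution perturbation. It couples with $\ve|\D u_\ve|^2$ to give $\ve|\D u_\ve|^p$, which is harmless only because the Jacobian analysis of Proposition \ref{prop-jacobi-estimate} forces $|\D\tilde u_\ve|^{p-1}$ to be a priori bounded on the $p$-contact set. Keeping this gradient control compatible with the viscosity-sense hypothesis of Lemma \ref{lem-abp-type-p-1}, and carefully tracking the $(p-1)$-homogeneity of $\Delta_p$ through the rescaling by $c$, are the main bookkeeping tasks.
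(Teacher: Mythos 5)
Your overall architecture---regularize by inf-convolution, invoke the semiconcave ABP estimate of Lemma~\ref{lem-abp-type-p-1}, rescale by a factor $c_\eta$, and pass to the limits $\ve\to 0$ then $\eta\to 0$---is the same as the paper's. The contact-set inclusion you derive in the semiconcave step is also correct and matches the paper's estimate. However, there is a genuine gap in how you handle the gradient-dependent perturbation introduced by the inf-convolution.

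By Remark~\ref{rmk-p-La-eq-u-e} the inf-convolution satisfies
\begin{equation*}
\La_p u_\ve - \tilde\kappa(n+p-2)\,|\D u_\ve|^{p-2}\min\bigl\{\ve|\D u_\ve|^2,\,2\omega(2\sqrt{m\ve})\bigr\}\leq f_\ve,
\end{equation*}
and you propose to apply Lemma~\ref{lem-abp-type-p-1} ``with the source furnished by the perturbed inequality.'' But that ``source'' depends on $|\D u_\ve|$, which is only defined almost everywhere, so it is not a function $f\in C(\Omega)$ as Lemma~\ref{lem-abp-type-p-1} requires. Your observation that the perturbation is bounded on the $p$-contact set (because $r^p|\D\tilde u_\ve|^{p-1}\leq 2R_0$ there) arrives too late in the logical chain: the lemma needs a fixed continuous source before one can even identify the contact set, not afterwards. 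The paper's key step, which you omit, is to observe that for $1<p<2$ one has, for every test gradient $\D\vp\neq 0$,
\begin{equation*}
|\D\vp|^{p-2}\min\bigl\{\ve|\D\vp|^2,\,2\omega(2\sqrt{m\ve})\bigr\}\leq\max\bigl\{\ve,\,2\omega(2\sqrt{m\ve})\bigr\},
\end{equation*}
by splitting into $0<|\D\vp|\leq 1$ (where $\ve|\D\vp|^p\leq\ve$ since $p>0$) and $|\D\vp|>1$ (where $|\D\vp|^{p-2}<1$ since $p-2<0$). This converts the gradient-dependent perturbation into the constant $\tilde\kappa(n+p-2)\max\{\ve,2\omega(2\sqrt{m\ve})\}$, so that $\La_p u_\ve\leq f_\ve+\text{const}$ holds in the viscosity sense with a genuinely continuous source, and only then does Lemma~\ref{lem-abp-type-p-1} apply. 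Your ``main obstacle'' paragraph correctly flags the singular prefactor $|\D u_\ve|^{p-2}$ as the delicate point but characterizes the needed work as bookkeeping rather than supplying this specific estimate, which is the crux of the singular-case argument.
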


    \begin{proof}
         Let $\eta>0.$ According to  Lemma  \ref{lem-visc-u-u-e-properties} %and \ref{prop-u-u-e-superj},
     and Remark \ref{rmk-p-La-eq-u-e},   the inf-convolution $u_\ve$ of $u$ with respect to $\overline B_{R}(x_0)$ (for small $\ve>0$) satisfies 
\begin{equation*}
\left\{
\begin{split}
&u_\ve \to u\quad\mbox{uniformly in $B_{R}(x_0)$},\\
&u_\ve\geq -\eta\quad\mbox{in $B_{R}(x_0)\setminus B_{4r}(z_0) $,}\qquad\inf_{B_{r}(z_0)}u_\ve\leq \frac{p-1}{p}+\eta,\\
&\mbox{$u_\ve $ is    $C_\ve$-semiconcave in $B_{R}(x_0),$} \\
&\La_pu_\ve -\tilde \kappa(n+p-2)|\D u_\ve|^{p-2}  {}{ } \min\left\{\ve|\D u_\ve|^2,  2{ }\omega\left(2\sqrt{m\ve}\right)\right\}     \leq f_\ve  \,\,\mbox{in $B_{4r}(z_0)$} 
%&\left( |\D u_\ve|^2+\delta\right)^{\frac{p-2}{2}}\cM^-_{p-1,1}\left(D^2u_\ve -\ve\tilde\kappa|\D u_{\ve}|^2{\bf I} \right)\leq f^+_\ve   \quad\mbox{a.e. in $B_{4r}(z_0),$  $\quad\forall\delta>0$}, 
 \end{split}\right. 
\end{equation*}
 in the viscosity sense, 
where $-\tilde\kappa $ ($\tilde\kappa\geq0$) is a lower bound of the sectional curvature on $\overline B_{3R}(x_0),$  $\displaystyle C_\ve:=\frac{1}{\ve}{\sH\left(2\sqrt{ \tilde\kappa}\,R\right)}$,  and 
$$f_\ve(z):= \sup_{B_{2\sqrt{m\ve } }(z)}f^+, \quad\forall z\in B_{4r}(z_0);\quad m:=\|u\|_{L^{\infty}(B_R(x_0))}.$$ 
 %since the $p$-Laplacian operator for $p\geq2$ is intrinsically uniformly continuous    with $\omega_G\equiv0$ from Remarks \ref{rmk-AFS} and \ref{rmk-p-La-eq-u-e}.
 Then we show  that 
 $$\La_pu_\ve  \leq f_\ve + \tilde\kappa (n+p-2)\, \max\left\{\ve,  2{ }\omega\left(2\sqrt{m\ve}\right)\right\}      \quad\mbox{in $B_{4r}(z_0)\,\,$ in the viscosity sense}.$$
 Indeed,  let   $\varphi\in C^2(B_{4r}(z_0))$ be  such that 
$u_\ve-\varphi$ has a local minimum     at $x$ with $\D \varphi(x)\not=0.$
 If   $0<|\D \vp (x)|\leq 1,$ then $|\D \vp|^{p-2}  {}{ } \min\left\{\ve|\D \vp|^2,  2{ }\omega\left(2\sqrt{m\ve}\right)\right\}\leq \ve |\D \vp|^{p}\leq\ve .$  If $|\D \vp (x)|> 1,$  then  $|\D \vp|^{p-2}  {}{ } \min\left\{\ve|\D  \vp|^2,  2{ }\omega\left(2\sqrt{m\ve}\right)\right\}\leq 2 \omega\left(2\sqrt{m\ve}\right) .$  So it follows  that $$\La_p\vp(x)  \leq f_\ve(x) + \tilde\kappa (n+p-2)\, \max\left\{\ve,  2{ }\omega\left(2\sqrt{m\ve}\right)\right\}.$$  
   
  For small $\ve>0,$ define   
  $$ c_\eta:=\frac{(p-1)/p }{(p-1)/p+2\eta} ,\quad \mbox{and}\quad\tilde u_\ve:=c_\eta\left(u_\ve+\eta\right),$$ which 
  satisfies \eqref{eq-abp-cond-u-p-1} and 
 \begin{align*}
 r^{ {p}}\La_p\tilde u_\ve &= r^pc_\eta^{p-1} \La_p u_\ve
 \leq r^pc_\eta^{p-1} \left[f_\ve +\tilde \kappa n\, \max\left\{\ve,  2{ }\omega\left(2\sqrt{m\ve}\right)\right\} \right]  \quad\mbox{in $B_{4r}(z_0)$}
\end{align*}
 in the viscosity sense. 
 Using  \eqref{eq-abp-cond-u-p-1}, a similar argument  to  the proof of  Lemma \ref{lem-abp-type-measure-p-large} implies   
  \begin{equation}\label{eq-p-contact-set-level-set-grad-p-leq-2}
  \begin{split}
   \cA^p_{1}\left( \overline B_{r}(z_0); \overline  B_{4r}(z_0);r^{\frac{p}{p-1}}\tilde u_\ve\right)&= \cA^p_{1}\left( \overline B_{r}(z_0); \overline  B_{R}(x_0);r^{\frac{p}{p-1}}\tilde u_\ve\right)\\
  & \subset B_{4r}(z_0)\cap \{ \tilde u_\ve< \tilde M_p,\, r^p|\D \tilde u_\ve|^{p-1}< 2R_0 \}
   \end{split}
   \end{equation}
   for   $\tilde M_p:=\frac{p-1}{p}3^{\frac{p}{p-1}}>1,$ where we note that $r^{\frac{p}{p-1}}\tilde u_\ve$ is differentiable in  $  \cA^p_{1}\left( \overline B_{r}(z_0); \overline  B_{4r}(z_0);r^{\frac{p}{p-1}}\tilde u_\ve\right).$
Using \eqref{eq-p-contact-set-level-set-grad-p-leq-2}, we    apply     Lemma \ref{lem-abp-type-p-1}  to $r^{\frac{p}{p-1}}\tilde u_\ve$ in order  to   deduce that
  \begin{align*}
| B_{r}(z_0)|&\leq \frac{  \sS^{n-1}\left(2\sqrt{ {\kappa}}R_0  \right)}{n^n(p-1)^n}\int_{\cA_1^{p}\left(\overline B_{r}(z_0);  \overline  B_{4r}(z_0);r^{\frac{p}{p-1}}\tilde u_\ve\right)}\left[r^pc_\eta^{p-1} \left\{f_\ve +\tilde \kappa n\, \max\left\{\ve,  2{ }\omega\left(2\sqrt{m\ve}\right)\right\} \right\}  +  n  \sH\left({  2 \sqrt{\kappa}} R_0\right)\right]^n\\
&\leq \frac{  \sS^{n-1}\left(2\sqrt{ {\kappa}}R_0  \right)}{n^n(p-1)^n}\int_{B_{4r}(z_0)\cap \{ \tilde u_\ve< \tilde M_p \}} \left[ r^pc_\eta^{p-1} \left\{f_\ve +\tilde \kappa n\, \max\left\{\ve,  2{ }\omega\left(2\sqrt{m\ve}\right)\right\} \right\}  +  n  \sH\left(\ 2\sqrt{ {\kappa}}R_0 \right)\right]^n.
\end{align*}
Letting    $\ve$ go  to $0$ and then  $\eta$  go to $0$, we conclude     that 
 \begin{align*}%\label{eq-abp-type-measure-0-p-geq2-smooth}
 & |B_r(z_0)| \leq \frac{\sS^{n-1}\left(2\sqrt{\kappa}R_0\right)}{ (p-1)^n}\int_{\{  u\leq \tilde M_p\}\cap   B_{4r}(z_0)} \left\{\frac{r^p f^+}{n}+\sH\left( 2\sqrt{\kappa}R_0\right) \right\}^n.
%&\leq \frac{\sS^{n-1}\left(2\sqrt{\kappa}R_0\right)}{\min\{1,(p-1)^n\}}\int_{\{\tilde u_\ve\leq \tilde M_p,\,r^p|\D \tilde u_\ve|^{p-1}\leq 2R_0\}\cap   B_{4r}(z_0)} \left\{\sH\left( 2\sqrt{\kappa}R_0\right) +\frac{r^p \left(\La_p\tilde u_\ve\right)^+}{n}\right\}^n\\
\end{align*} 
 Lastly,   \eqref{eq-abp-type-measure-p-1}   follows from   Bishop-Gromov's volume comparison. 
    \end{proof}

   \begin{cor}\label{cor-abp-type-measure-p-small-nonlinear}
 Assume that  $ 1< p<2,$ and 
 {$\cM^-_{\ld,\Ld }(R(e))\geq - {(n-1)} \kappa$}   with  $\kappa\geq0$ for any unit vector $e\in TM$.   % and $0<\ld\leq 1\leq \Ld<\infty$. 
For  $z_0,x_0\in M$ and $0<r\leq R\leq R_0,$ assume that $ \overline  B_{4r}(z_0)\subset B_{R}(x_0).$     Let  $\beta\geq0,$ and  $u \in C(\overline  B_{R}(x_0))$ be such that  $$|\D u|^{p-2}\cM^-_{\ld,\Ld}(D^2u)-  \beta |\D u|^{p-1}\leq { {}r^{-p}} \quad\mbox{ in $B_{R}(x_0)$}$$
%$\cM^-(D^2u)\leq f$
in the viscosity sense,     
\begin{equation}\label{eq-abp-cond-u-p-small-nonlinear}
u\geq0\quad\mbox{on $  B_{R}(x_0)\setminus B_{4r}(z_0) \quad$ and $\quad \displaystyle\inf_{B_{r}(z_0)}u\leq \frac{p-1}{p} .$}
\end{equation}
Then  there exists a uniform constant $\delta\in(0,1)$ depending only on  $n, p, \sqrt{\kappa}R_0, \ld,\Ld,$ and $\beta R_0$, such that 
  \begin{equation*} 
  \left|\left\{ u\leq \tilde M_p\right\}\cap B_{4r}(z_0)\right|>\delta|B_{4r}(z_0)|
  \end{equation*}
  for $\tilde M_p=\frac{p-1}{p}3^{\frac{p}{p-1}}>1.$
\end{cor}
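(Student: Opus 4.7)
The plan is to imitate the strategy of Corollary \ref{cor-abp-type-measure-p-large-nonlinear} (the degenerate nonlinear analogue) but replacing Theorem \ref{thm-abp-type-p-large} by the singular ABP estimate in Lemma \ref{lem-abp-type-p-1-nonlinear}, exactly as Lemma \ref{lem-abp-type-measure-p-small} mirrored Lemma \ref{lem-abp-type-measure-p-large} in the linear case. The only substantive new point compared to Lemma \ref{lem-abp-type-measure-p-small} is that we must carry the Pucci operator $\cM^-_{\ld,\Ld}$ together with the first-order term $-\beta|\D u|^{p-1}$ through the inf-convolution regularization.

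Concretely, fix a small $\eta>0$ and for small $\ve>0$ let $u_\ve$ denote the inf-convolution of $u$ with respect to $\overline B_R(x_0)$. By Lemma \ref{lem-visc-u-u-e-properties}, $u_\ve$ is $C_\ve$-semiconcave in $B_R(x_0)$, $u_\ve\to u$ uniformly, and
\begin{equation*}
u_\ve\geq -\eta \,\,\,\text{on}\,\,\, B_R(x_0)\setminus B_{4r}(z_0),\qquad \inf_{B_r(z_0)}u_\ve\leq \tfrac{p-1}{p}+\eta
\end{equation*}
for $\ve$ small. Because $\cM^-_{\ld,\Ld}$ is intrinsically uniformly continuous with $\omega_G\equiv 0$ (Example \ref{exp-AFS}), Lemma \ref{prop-u-u-e-superj} produces the distributional inequality
\begin{equation*}
|\D u_\ve|^{p-2}\cM^-_{\ld,\Ld}\!\left(D^2u_\ve-\tilde\kappa\min\bigl\{\ve|\D u_\ve|^2,\,2\omega(2\sqrt{m\ve})\bigr\}\,{\bf I}\right)-\beta|\D u_\ve|^{p-1}\leq r^{-p}
\end{equation*}
in $B_{4r}(z_0)$ in the viscosity sense and, by semiconcavity and Theorem \ref{thm-AB}, almost everywhere (with the Hessian in the Aleksandrov--Bangert sense) as well; here $-\tilde\kappa$ bounds the sectional curvature on $\overline B_R(x_0)$ from below and $m=\|u\|_{L^\infty(\overline B_R(x_0))}$. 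Splitting the minimum into $|\D u_\ve|\leq 1$ and $|\D u_\ve|>1$ as in the proof of Lemma \ref{lem-abp-type-measure-p-small}, the corrective term is dominated by a quantity $\sigma(\ve)\to 0$, so
\begin{equation*}
|\D u_\ve|^{p-2}\cM^-_{\ld,\Ld}(D^2u_\ve)-\beta|\D u_\ve|^{p-1}\leq r^{-p}+\Ld n(n+p-2)\tilde\kappa\,\sigma(\ve)
\end{equation*}
holds in the viscosity sense on $B_{4r}(z_0)$.

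Next, renormalize by setting $c_\eta:=\frac{(p-1)/p}{(p-1)/p+2\eta}$ and $\tilde u_\ve:=c_\eta(u_\ve+\eta)$, so that $\tilde u_\ve\geq 0$ off $B_{4r}(z_0)$ and $\inf_{B_r(z_0)}\tilde u_\ve\leq (p-1)/p$. The inequality scales to $|\D\tilde u_\ve|^{p-2}\cM^-_{\ld,\Ld}(D^2\tilde u_\ve)-\beta|\D\tilde u_\ve|^{p-1}\leq c_\eta^{p-1}\{r^{-p}+o_\ve(1)\}$, and $\tilde u_\ve$ remains $C_\ve'$-semiconcave. Repeating the contact-set computation of Lemma \ref{lem-abp-type-measure-p-small} verbatim with $r^{p/(p-1)}\tilde u_\ve$ in place of $r^{p/(p-1)}u$ (the touching paraboloids are $\frac{p-1}{p}d_y^{p/(p-1)}$ with $y\in\overline B_r(z_0)$), we obtain
\begin{equation*}
\cA^p_1\bigl(\overline B_r(z_0);\overline B_{4r}(z_0);r^{\frac{p}{p-1}}\tilde u_\ve\bigr)\subset B_{4r}(z_0)\cap\{\tilde u_\ve<\tilde M_p,\,r^p|\D\tilde u_\ve|^{p-1}<2R_0\}.
\end{equation*}
Applying Lemma \ref{lem-abp-type-p-1-nonlinear} to $r^{p/(p-1)}\tilde u_\ve$ on $\overline B_{4r}(z_0)$ (note $\tilde u_\ve$ is semiconcave and the Pucci inequality holds in the viscosity sense), the right-hand side is controlled by $\Ld$, $\sH(2\sqrt{\kappa/\Ld}\,R_0)$, $\beta R_0$ and $1$, giving a bound
\begin{equation*}
|B_r(z_0)|\leq C(n,p,\ld,\Ld,\sqrt{\kappa}R_0,\beta R_0)\,\bigl|\{\tilde u_\ve\leq\tilde M_p\}\cap B_{4r}(z_0)\bigr|+o_{\ve,\eta}(1).
\end{equation*}
Letting $\ve\downarrow 0$ then $\eta\downarrow 0$ and using uniform convergence of $\tilde u_\ve$ to $u$, the level-set measure converges to $|\{u\leq\tilde M_p\}\cap B_{4r}(z_0)|$; the conclusion follows from Bishop-Gromov's volume comparison (Theorem \ref{thm-BG}), which yields $|B_r(z_0)|\geq c|B_{4r}(z_0)|$ for a constant depending only on $n$ and $\sqrt{\kappa}R_0$.

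The main obstacle I anticipate is verifying that the viscosity inequality survives inf-convolution and the subsequent linear scaling in a form that Lemma \ref{lem-abp-type-p-1-nonlinear} can ingest, since $\tilde u_\ve$ is not smooth but merely semiconcave; this is essentially Lemma \ref{prop-u-u-e-superj} plus the singular-test-function framework of Definition \ref{def-visc-sol}, but care is needed to bound the inf-convolution remainder uniformly on the sublevel set $\{r^p|\D\tilde u_\ve|^{p-1}\leq 2R_0\}$ where the contact points lie, so that the error term $\sigma(\ve)$ vanishes in the limit.
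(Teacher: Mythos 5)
Your proposal follows the same route as the paper: inf-convolution with Lemma \ref{lem-visc-u-u-e-properties} and Lemma \ref{prop-u-u-e-superj}, the affine renormalization $\tilde u_\ve = c_\eta(u_\ve + \eta)$, the contact-set inclusion into $B_{4r}(z_0)\cap\{\tilde u_\ve<\tilde M_p,\ r^p|\D\tilde u_\ve|^{p-1}<2R_0\}$, then Lemma \ref{lem-abp-type-p-1-nonlinear} applied to $r^{p/(p-1)}\tilde u_\ve$, letting $\ve,\eta\downarrow 0$, and Bishop--Gromov. The only presentational differences are that you package the inf-convolution remainder as a single $\sigma(\ve)$ by the same split $|\D u_\ve|\lessgtr 1$ (exploiting $p-2<0$) rather than carrying $\max\{\ve,2\omega(2\sqrt{m\ve})\}$ through, and you are slightly less explicit than the paper about where the $\beta$-term gets absorbed: the mechanism is precisely the gradient sublevel set $\{r^p|\D\tilde u_\ve|^{p-1}<2R_0\}$ containing the contact set, which forces $\beta|\D\tilde u_\ve|^{p-1}\leq 2\beta R_0 r^{-p}$ there; you flag this concern correctly at the end, so the substance of the argument matches.
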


\begin{proof} 
The  proof is similar to  Lemma \ref{lem-abp-type-measure-p-small} by  replacing % and Corollary   
Lemma  \ref{lem-abp-type-p-1}  by Lemma  \ref{lem-abp-type-p-1-nonlinear}.   %and Theorem   \ref{thm-abp-type-p-1}
% since $\ld \Ric(e)\geq \cM^-_{\ld,\Ld}(R(e))\geq -(n-1)\kappa  $ for any $e\in TM.$  
Let $\eta>0.$ Making use of  Lemmas  \ref{lem-visc-u-u-e-properties} and \ref{prop-u-u-e-superj},
   the  inf-convolution $u_\ve$ of $u$ with respect to $\overline B_{R}(x_0)$ (for small $\ve>0$) satisfies 
\begin{equation*}
\left\{
\begin{split}
&u_\ve \to u\quad\mbox{uniformly in $B_{R}(x_0)$},\\
&u_\ve\geq -\eta\quad\mbox{in $B_{R}(x_0)\setminus B_{4r}(z_0) $,}\qquad\inf_{B_{r}(z_0)}u_\ve\leq \frac{p-1}{p}+\eta,\\
&\mbox{$u_\ve $ is      $C_\ve$-semiconcave in $B_{R}(x_0),$}\\
%&\La_pu_\ve -\tilde \kappa(n+p-2)|\D u_\ve|^{p-2}  {}{ } \min\left\{\ve|\D u_\ve|^2,  2{ }\omega\left(2\sqrt{m\ve}\right)\right\}     \leq f_\ve  \quad\mbox{in $B_{4r}(z_0)$ in the viscosity sense}, \\
& |\D u_\ve|^{ {p-2}}\cM^-_{\ld,\Ld}\left(D^2u_\ve \right)-\tilde \kappa n\Ld |\D u_\ve|^{p-2}  {}{ } \min\left\{\ve|\D u_\ve|^2,  2{ }\omega\left(2\sqrt{m\ve}\right)\right\} -\beta|\D u_\ve|^{p-1}    \leq r^{-p}   \,\,\mbox{ in $B_{4r}(z_0)$} 
 \end{split}\right. 
\end{equation*}
 in the viscosity sense, 
where $-\tilde\kappa $ ($\tilde\kappa\geq0$) is a lower bound of the sectional curvature on $\overline B_{3R}(x_0),$ $\displaystyle C_\ve:=\frac{1}{\ve}{\sH\left(2\sqrt{ \tilde\kappa}R\right)}$, and $m:=\|u\|_{L^{\infty}(B_R(x_0))}.$ 
%$$f_\ve(z):= \sup_{B_{2\sqrt{m\ve } }(z)}f^+ \quad\forall z\in B_{4r}(z_0);\quad m:=\|u\|_{L^{\infty}(B_R(x_0))}.$$ 

 For small $\ve>0,$ consider  $\displaystyle \tilde u_\ve:= \frac{(p-1)/p }{(p-1)/p+2\eta} \left(u_\ve+\eta\right)=c_\eta\left(u_\ve+\eta\right)$.  Arguing similarly as in   the proof of Lemma \ref{lem-abp-type-measure-p-small} together with \eqref{eq-abp-cond-u-p-small-nonlinear},    we deduce that   
   \begin{align*}
   \cA^p_{1}\left( \overline B_{r}(z_0); \overline  B_{4r}(z_0);r^{\frac{p}{p-1}}\tilde u_\ve\right)&= \cA^p_{1}\left( \overline B_{r}(z_0); \overline  B_{R}(x_0);r^{\frac{p}{p-1}}\tilde u_\ve\right)\\
  & \subset B_{4r}(z_0)\cap \{ \tilde u_\ve< \tilde M_p,\, r^p|\D \tilde u_\ve|^{p-1}< 2R_0 \},
   \end{align*}
   and 
   %$u_\ve$ satisfies 
$$r^{p}|\D \tilde u_\ve|^{ {p-2}}\cM^-_{\ld,\Ld}(D^2 \tilde u_\ve)\leq  c_\eta^{p-1}\left[ r^{p} \tilde\kappa n\Ld\max\left\{\ve,  2{ }\omega\left(2\sqrt{m\ve}\right)\right\} +1
   \right]+ 2\beta R_0$$ in $B_{4r}(z_0)\cap \{  r^p|\D \tilde u_\ve|^{p-1}< 2R_0 \}$ 
 in the viscosity sense.  
%and \eqref{eq-p-contact-set-level-set-grad-p-leq-2} holds.  
Applying     Lemma  \ref{lem-abp-type-p-1-nonlinear} to $r^{\frac{p}{p-1}}\tilde u_\ve$,   we    obtain  
\begin{align*}
|B_r(z_0)|&\leq \sS^{n-1}\left(2\sqrt{ \frac{\kappa}{\lambda}}  R_0{} \right) \int_{\cA_1^{p}\left(\overline B_{r}(z_0);  \overline  B_{4r}(z_0);r^{\frac{p}{p-1}}\tilde u_\ve\right)}
\left[\frac{1}{n\ld}\left\{
   c_\eta^{p-1}\left(   r^{p} \tilde\kappa n\Ld\max\left\{\ve,  2{ }\omega\left(2\sqrt{m\ve}\right)\right\} +1\right)\right.\right.\\
&\qquad\left.\left.+ 2\beta R_0 +\frac{\Ld}{p-1}+  {(n-1)}\Ld  \sH\left(2\sqrt{ \frac{\kappa}{\Ld}} R_0\right) \right\}\right]^ndx\\
&\leq \sS^{n-1}\left(2\sqrt{ \frac{\kappa}{\lambda}}  R_0{} \right) \int_{ B_{4r}(z_0)\cap \{ \tilde u_\ve< \tilde M_p \}}
\left[\frac{1}{n\ld}\left\{
   c_\eta^{p-1}\left(   r^{p} \tilde\kappa n\Ld\max\left\{\ve,  2{ }\omega\left(2\sqrt{m\ve}\right)\right\} +1\right)\right.\right.\\
&\qquad\left.\left.  +2\beta R_0+\frac{\Ld}{p-1}+  {(n-1)}\Ld  \sH\left(2\sqrt{ \frac{\kappa}{\Ld}} R_0\right) \right\}\right]^ndx.
\end{align*}
Letting    $\ve$  and  $\eta$  go to $0$, we conclude     that 
 \begin{align*}%\label{eq-abp-type-measure-0-p-geq2-smooth}
 & |B_r(z_0)| \leq  \sS^{n-1}\left(2\sqrt{ \frac{\kappa}{\lambda}}  R_0{} \right) \int_{\{  u\leq \tilde M_p\}\cap   B_{4r}(z_0)} \left[\frac{1}{n\ld}\left\{ 1+2\beta R_0+\frac{\Ld}{p-1}+ {(n-1)}\Ld  \sH\left(2\sqrt{ \frac{\kappa}{\Ld}} R_0\right)  \right\}\right]^n.
%&\leq \frac{\sS^{n-1}\left(2\sqrt{\kappa}R_0\right)}{\min\{1,(p-1)^n\}}\int_{\{\tilde u_\ve\leq \tilde M_p,\,r^p|\D \tilde u_\ve|^{p-1}\leq 2R_0\}\cap   B_{4r}(z_0)} \left\{\sH\left( 2\sqrt{\kappa}R_0\right) +\frac{r^p \left(\La_p\tilde u_\ve\right)^+}{n}\right\}^n\\
\end{align*} 
 Since  $\Ric(e,e)\geq \cM^-_{\ld,\Ld}(R(e))/\ld \geq-(n-1)\kappa/\ld $ for any unit vector $e\in TM$, this finishes  the proof from Bishop-Gromov's volume comparison.
     \end{proof}

 \section{$L^{\epsilon}$-estimates} % for $p$-Laplace operator}
\label{sec-L-e-est}

In this section, we derive   $L^\epsilon$-estimates for $p$-Laplacian type operators by comparing a viscosity solution with a barrier function, and using the volume doubling property in Lemma \ref{lem-doubling-property}.  We follow  a similar  argument to    \cite[Sections 4 and 5]{IS}, but  it is not straightforward due to  the existence of   the cut-locus in     the setting of Riemannian manifolds. 

\begin{lemma}\label{lem-barrier} 
Let     $1<p<\infty ,$ and $\Ric \geq - {(n-1)} \kappa$    for $\kappa\geq0$. % for any $e\in TM$. 
For  $z_0,x_0\in M$ and $0<r\leq R\leq R_0$,    assume that $    B_{5r}(z_0)\subset B_{R}(x_0).$ %   Let $u$ be a continuous  function  on $    B_{R}(x_0)$.   
  There exists a large constant $\tilde  M>1$  such that if for $\beta\geq0,$      $u\in C\left(  \overline    B_{5r}(z_0)\right)$  is {\color{black}  semiconcave} in  $   B_{5r}(z_0)$,   and satisfies    
\begin{equation*}%\label{eq-assump-supersol}
\left\{
\begin{split}
&u\geq 0\quad\mbox{in $\overline B_{5r}(z_0)$},\qquad  u> \tilde  M\quad\mbox{ in $ B_{r}(z_0)$,} \\%boundary
& \La_pu- \beta |\D u|^{p-1}\leq  r^{-p}\quad\mbox{ in $B_{5r}(z_0)$\,\,  in the viscosity sense},
%&|\{u>M\}\cap K_1|>(1-\delta)|K_1|, 
\end{split}\right.
\end{equation*} 
then     %$u>1$ in $K_{1/4}$. 
\begin{equation*} 
u> 1 \quad\mbox{in $B_{4r}(z_0)$,}
\end{equation*}
where   $\tilde M>1$ depends     only on  $n,   p, \sqrt{\kappa} R_0 ,$  and $\beta R_0.$
\end{lemma}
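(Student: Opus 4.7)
The plan is to construct an explicit radial barrier on the annulus $\Omega := B_{5r}(z_0)\setminus\overline B_r(z_0)$ and invoke a viscosity comparison principle against $u$. Define
\[
\Phi(x) := A\Bigl[\bigl(5r/d(x,z_0)\bigr)^\alpha - 1\Bigr],
\]
where $A,\alpha>0$ are positive constants to be chosen depending only on $n, p, \sqrt{\kappa}R_0,\beta R_0$. Then $\Phi$ is smooth and radial on $M\setminus(\{z_0\}\cup\Cut(z_0))$, vanishes on $\partial B_{5r}(z_0)$, equals $A(5^\alpha - 1)$ on $\partial B_r(z_0)$, and exceeds $A[(5/4)^\alpha - 1]$ throughout $B_{4r}(z_0)\setminus\overline B_r(z_0)$.

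First, away from $\Cut(z_0)$, setting $\rho:=d(\cdot,z_0)$ and using $|\D\rho|=1$ together with the fact that $D^2\rho$ annihilates $\D\rho$, a direct radial computation gives
\[
\La_p\Phi - \beta|\D\Phi|^{p-1} = (A\alpha)^{p-1}(5r)^{\alpha(p-1)}\,\rho^{-(p-1)\alpha - p}\bigl[(p-1)(\alpha+1) - \rho\La\rho - \beta\rho\bigr].
\]
By the Laplacian comparison theorem (encoded in Theorem~\ref{thm-BG}), $\rho\La\rho\leq (n-1)\sH(\sqrt{\kappa}\rho)\leq (n-1)\sH(5\sqrt{\kappa}R_0)$ on $\rho\in[r,5r]$, and $\beta\rho\leq 5\beta R_0$. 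Choose $\alpha$ large enough that $(p-1)(\alpha+1)\geq (n-1)\sH(5\sqrt{\kappa}R_0) + 5\beta R_0 + 1$, so the bracket exceeds $1$ on the annulus; then choose $A$ so that $(A\alpha)^{p-1}\geq 5^p$ and $A[(5/4)^\alpha-1]>1$ simultaneously (both are compatible, since each just demands $A$ be sufficiently large). Using $\rho^{-(p-1)\alpha-p}\geq (5r)^{-(p-1)\alpha-p}$ on $[r,5r]$, the first constraint yields $\La_p\Phi-\beta|\D\Phi|^{p-1}\geq r^{-p}$ off the cut locus. Set $\tilde M := A(5^\alpha - 1) + 1$.

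To extend the subsolution property of $\Phi$ across $\Cut(z_0)\cap\Omega$ in the viscosity sense, note that $\Phi$ is a smooth strictly decreasing function of $\rho$. By Corollary~\ref{cor-dist-sqrd-cut-compoed-increasing-ft}, at any cut point of $z_0$ the upper second-difference of $\rho^2$, and hence of $\Phi$, is $-\infty$ along some direction; consequently no $C^2$ test function can touch $\Phi$ from above at such a point, and the viscosity subsolution inequality holds vacuously there. Thus $\Phi$ is a viscosity subsolution of $\La_p\Phi - \beta|\D\Phi|^{p-1}\geq r^{-p}$ throughout $\Omega$, and it is important here that $\D\Phi\neq 0$ wherever $\Phi$ is smooth, so the adapted viscosity framework of Section~\ref{sec-viscosity} applies.

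On $\partial\Omega$, the hypothesis and continuity yield $\Phi = 0 \leq u$ on $\partial B_{5r}(z_0)$ and $u \geq \tilde M > A(5^\alpha-1) = \Phi$ on $\partial B_r(z_0)$. A viscosity comparison principle on $\Omega$---standard in the continuous case $p\geq 2$ via Lemma~\ref{lem-visc-sol-singular-usual}, and for $1<p<2$ via the regularized Pucci formulation of Lemma~\ref{lem-pucci-delta-p-laplace-delta} applied to $u$ (using its hypothesized semiconcavity) followed by the limit $\delta\downarrow 0$ in the spirit of Section~\ref{sec-abp}---gives $\Phi \leq u$ throughout $\Omega$. Therefore $u > A[(5/4)^\alpha - 1] > 1$ in $B_{4r}(z_0)\setminus\overline B_r(z_0)$, which combined with the assumption $u > \tilde M > 1$ in $B_r(z_0)$ proves the lemma. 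The main obstacle is making the comparison step rigorous in the singular regime $1<p<2$; the semiconcavity of $u$ is precisely what enables the reduction to the regularized operator and the passage to the limit, avoiding any direct appeal to a classical comparison principle for the singular $p$-Laplacian.
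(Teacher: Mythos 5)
Your barrier $\Phi(x) = A[(5r/d(x,z_0))^\alpha - 1]$ is the same as the paper's $v$ after the rescaling $A = \tilde M 5^{-\alpha}$, and the radial computation that makes the annulus a strict subsolution region matches the paper. But the comparison step, which is the crux, has two genuine problems.

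First, the cut-locus argument has a sign error that corrupts the logic. You assert that by Corollary~\ref{cor-dist-sqrd-cut-compoed-increasing-ft} the upper second difference of $\rho^2$, \emph{and hence of} $\Phi$, is $-\infty$ at cut points, and conclude that no $C^2$ test function touches $\Phi$ from above. But $\Phi$ is a \emph{decreasing} function of $\rho^2$, while Corollary~\ref{cor-dist-sqrd-cut-compoed-increasing-ft} only applies to increasing compositions $\psi(d_y^2)$; applying it to $-\Phi$ (an increasing composition) and negating shows that the second difference of $\Phi$ itself is $+\infty$, not $-\infty$. Moreover, a function whose second difference along some direction is $-\infty$ (a downward cusp, like $-|x|$) \emph{can} be touched from above by a $C^2$ function --- only touching from below is excluded. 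So your stated chain of reasoning would actually fail to establish vacuity of the subsolution test; it is the corrected sign $+\infty$ that gives that. The final claim survives, but not for the reasons given.

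Second, and more seriously, the step $\Phi \leq u$ on $\Omega$ is deduced from ``a viscosity comparison principle on $\Omega$,'' which you describe as ``standard.'' No such comparison principle for these singular/degenerate operators on Riemannian manifolds is proved or even stated in the paper, and it is not standard: it would require a doubling-of-variables / Jensen--Ishii argument, which is delicate on manifolds, and still more so for the singular regime $1<p<2$. The paper avoids it entirely by arguing \emph{directly at a minimum point}: if $\inf(u - v) < 0$ on the closed annulus, the minimum is interior, and either (a) it lies off $\Cut(z_0)$, in which case $v$ is a genuine $C^2$ test function touching the supersolution $u$ from below and the supersolution inequality for $u$ contradicts $r^p\La_p v - \beta r^p|\nabla v|^{p-1} \geq 2$, or (b) it is a cut point, in which case the $-\infty$ second difference of $\psi(d_{z_0}^2)$ (where $v = -\psi\circ d_{z_0}^2$) forces the upper second difference of $u$ to be $+\infty$ in some direction, contradicting the hypothesized semiconcavity of $u$. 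This is precisely where the semiconcavity is used, not merely as a technical input for the singular case as you suggest. Your route therefore substitutes an unproved abstract comparison principle for a self-contained pointwise argument, leaving a real gap.
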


 \begin{proof}
For  $\alpha>1$ and $\tilde M>1$, define  
$$v(x):= \tilde M\left\{ r^{\alpha}d(x,z_0)^{-\alpha}- 5^{-\alpha}\right\}.$$
  By selecting    uniform constants  $\alpha>1$ and $\tilde  M>1$,    depending only on  $n,  p, \sqrt{\kappa} R_0$  and $\beta R_0,$ we have    
\begin{equation}\label{eq-barrier-properties}
\left\{
\begin{split}
&r^{p}\La_p  v-\beta r^p|\D v|^{p-1} \geq  2\quad\mbox{ in $B_{5r}(z_0)\setminus \left(\Cut(z_0)\cup \{z_0\}\right),$}\\
&v= 0\quad\mbox{ in $  \p B_{5r}(z_0)$},  \\
& v>1 \quad\mbox{in $B_{4r}(z_0)$},\\
&  \sup_{\p B_{r}(z_0)}v<\tilde  M\\
&    |\D v(x)|>0   \quad \forall x\not\in\Cut(z_0) \cup \{z_0\},
%&    |\D \phi| >0   \quad\mbox{ in $  B_{4r}(z_0)\setminus B_{2r}(z_0)$}
%&\sup\left\{|\D \phi|, |\p_t\phi|\right\} \geq \ve_0\quad\mbox{in $  \widetilde K\setminus   K^-$}
\end{split}
\right.
\end{equation}
 In fact,  for $x \in B_{5r}(z_0)\setminus\left(\Cut(z_0)\cup\{z_0\}\right),$
\begin{align*}
\beta r^p |\D v(x)|^{p-1}&= \left(\frac{r}{d_{z_0}(x)}\right)^{p} \left\{\alpha \tilde M \left(\frac{r}{d_{z_0}(x)} \right)^{\alpha} \right\}^{p-1}\beta d_{z_0}(x)\leq \left(\frac{r}{d_{z_0}(x)}\right)^{p} \left\{\alpha \tilde M \left(\frac{r}{d_{z_0}(x)} \right)^{\alpha} \right\}^{p-1}\beta R_0, 
\end{align*}
and 
\begin{align*}
r^p\La_pv(x)&=  r^p|\D v|^{p-2}\tr\left\{\left({\bf I}+(p-2)\D d_{z_0}\otimes \D d_{z_0}\right)D^2v(x)\right\}\\
&= \left(\frac{r}{d_{z_0}(x)}\right)^{p} \left\{\alpha \tilde  M \left(\frac{r}{d_{z_0}(x)} \right)^{\alpha} \right\}^{p-1} \left\{(\alpha+1)(p-1)+1-\La d^2_{z_0}(x)/2  \right\},
\end{align*}
where we recall that $\D d_{z_0}(x)$ is an eigenvector of  $D^2 d_{z_0}(x)$ and $D^2(d_{z_0}^2/2)(x)$ associated with  eigenvalues $0$ and $1$, respectively.  
By using  Lemma \ref{lem-pucci-ric-dist-sqrd}, we choose  $\alpha>1$ and $\tilde M>1$ sufficiently  large to  obtain 
\begin{align*}
r^p\La_pv-\beta r^p |\D v|^{p-1}&\geq \left(\frac{r}{d_{z_0}}\right)^{p} \left\{\alpha  \tilde  M \left(\frac{r}{d_{z_0}} \right)^{\alpha} \right\}^{p-1}\left\{ (\alpha+1)(p-1)-(n-1)\sH\left(\sqrt{\kappa}R_0\right)-  \beta R_0\right\}\\
&\geq 5^{-p }\left(\alpha \tilde M 5^{-\alpha} \right)^{p-1}\cdot 1\geq  2 \qquad\mbox{ in $B_{5r}(z_0)\setminus \left(\Cut(z_0)\cup \{z_0\}\right)$}. 
\end{align*}
   It is  easy to check   other properties in \eqref{eq-barrier-properties}  for  large $\alpha>1$ and $\tilde  M>1$. 

Now we prove that  if $u-v$ has a minimum at  an interior point $x\in B_{5r}(z_0)\setminus \overline B_{r}(z_0),$ then $x$ is not a cut  point of $z_0$. %, whose proof is the same  as one for (a) in Proposition \ref{prop-jacobi-estimate}.   
  Suppose to the contrary that     $x\in (B_{5r}(z_0)\setminus \overline B_{r}(z_0))\cap \Cut(z_0)$  is  an interior minimum point  of $u-v.$ 
    Note  that $x\not= z_0$ if $x\in\Cut(z_0)$.  Define $\psi(s):=- \tilde M\left\{ r^{\alpha}s^{-\alpha/2}- 5^{-\alpha}\right\}.$ We notice that  
  $v=-\psi\circ d_{z_0}^2$  and $\psi'>0$ in $(0,\infty).$   By Corollary \ref{cor-dist-sqrd-cut-compoed-increasing-ft},  there is   a unit vector $X\in T_xM$ such that  
  $$ \liminf_{t\to0}\frac{1}{t^2}\left\{\psi\left(d_{z_0}^2\left(\exp_xtX\right)\right)+\psi\left(d_{z_0}^2\left(\exp_x -tX\right)\right)-2\psi\left(d^2_{z_0}(x)\right)\right\} =-\infty. 
$$ Since $u-v$ has a minimum at  an interior point $x\in (B_{5r}(z_0)\setminus \overline B_{r}(z_0) )\cap\Cut(z_0),$ we have 
   \begin{align*}
-\infty&=   \liminf_{t\to0}\frac{1}{t^2}\left\{\psi\left(d_{z_0}^2\left(\exp_xtX\right)\right)+\psi\left(d_{z_0}^2\left(\exp_x -tX\right)\right)-2\psi\left(d^2_{z_0}(x)\right)\right\} \\
&\geq -\limsup_{t\to0} \frac{1}{t^2}\left\{  u\left(\exp_xtX\right)+u\left(\exp_x-tX\right) -2 u\left(x\right)\right\},
%>-\infty%=-\left\langle D^2u(x)\cdot X,X\right\rangle
\end{align*} 
which  is a contradiction due to semi-concavity of $u$. %, where we recall  $u-v$ has a minimum at  an interior point $x\in B_{5r}(z_0)\setminus B_{r}(z_0).$ 
 So   $x$ is not a cut point of $z_0.$  
 
According to  
 the  comparison principle, we   
conclude that $u-v\geq0$ in $B_{5r}(z_0)\setminus \overline B_r(z_0)$ since $v$ is smooth  in  $ B_{5r}(z_0)\setminus \left(\{z_0\}\cup\Cut(z_0)\right)  $ with  non-vanishing gradient, and $u-v\geq 0$ on $\p B_{5r}(z_0)\cup\p  B_{r}(z_0).$  Thus \eqref{eq-barrier-properties} implies that $u>1$ in $B_{4r}(z_0).$
        \end{proof}

\begin{cor}\label{cor-barrier-viscosity} 
Let   $1<p<\infty ,$ and $\Ric \geq - {(n-1)} \kappa$    for $\kappa\geq0$. 
For  $z_0,x_0\in M$ and $0<r\leq R\leq R_0$,  assume that $    B_{6r}(z_0)\subset B_{R}(x_0).$ %   Let $u$ be a continuous  function  on $    B_{R}(x_0)$.   
  There exists a large constant $\tilde  M>1$  such that if    $u\in C\left(  \overline    B_{6r}(z_0)\right)$ satisfies  
\begin{equation*}%\label{eq-assump-supersol}
\left\{
\begin{split}
&u\geq 0\quad\mbox{in $  B_{6r}(z_0)$},\qquad  u> \tilde  M\quad\mbox{ in $ B_{r}(z_0)$,}\\
 %boundary
& \La_pu \leq  r^{-p}\quad\mbox{ in $B_{6r}(z_0)\,\,$  in the viscosity sense},
%&|\{u>M\}\cap K_1|>(1-\delta)|K_1|, 
\end{split}\right.
\end{equation*} 
then     %$u>1$ in $K_{1/4}$. 
\begin{equation*} 
u> 1 \quad\mbox{in $B_{4r}(z_0)$,}
\end{equation*}
where   $\tilde M>1$ depends     only on  $n,   p, $  and $\sqrt{\kappa}R_0.$
\end{cor}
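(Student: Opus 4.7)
The plan is to deduce the corollary from Lemma \ref{lem-barrier} by regularizing the viscosity solution $u$ via inf-convolution, which is the standard route for dropping a semiconcavity assumption in this barrier-type argument. Compared with the hypotheses of Lemma \ref{lem-barrier}, what is missing in the corollary is both the semiconcavity of $u$ and the lower-order term $\beta|\D u|^{p-1}$ on the left-hand side of the equation; the latter slot will be used precisely to absorb the error generated by the inf-convolution.

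I would take $H := B_{5r}(z_0)$ and $\Omega := B_{6r}(z_0)$ and let $u_\ve$ denote the inf-convolution of $u$ with respect to $\overline\Omega$ for small $\ve>0$. By Lemma \ref{lem-visc-u-u-e-properties}, $u_\ve$ is continuous on $\overline\Omega$, semiconcave in $\Omega$, and converges uniformly to $u$ as $\ve \to 0$; by Remark \ref{rmk-p-La-eq-u-e}, it satisfies, in the viscosity sense in $H$,
\[
\La_p u_\ve - \kappa(n+p-2)|\D u_\ve|^{p-2}\min\!\left\{\ve|\D u_\ve|^2,\,2\omega(2\sqrt{\ve m})\right\} \leq r^{-p},
\]
with $m := \|u\|_{L^\infty(\overline\Omega)}$. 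Splitting into the cases $|\D u_\ve|\leq 1$ and $|\D u_\ve|>1$, exactly as in the proof of Lemma \ref{lem-abp-type-measure-p-small}, and using that $|\D u_\ve|^{p-2} \leq |\D u_\ve|^{p-1}$ whenever $|\D u_\ve|\geq 1$ (since $p>1$), I expect to conclude
\[
\La_p u_\ve - \beta_\ve|\D u_\ve|^{p-1} \leq r^{-p} + C_0\ve \quad\text{in } H
\]
in the viscosity sense, where $\beta_\ve := 2\kappa(n+p-2)\omega(2\sqrt{\ve m}) \to 0$ as $\ve \to 0$ and $C_0 := \kappa(n+p-2)$ is a uniform constant.

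To absorb the additive $C_0\ve$ without altering the geometry, I would set $\lambda_\ve := (1+C_0 r^p\ve)^{1/(p-1)}$ (close to $1$ for small $\ve$) and, for small $\eta>0$, rescale $w_\ve := (u_\ve+\eta)/(2\lambda_\ve)$. Since $\La_p$ is $(p-1)$-homogeneous under positive scaling, a short computation gives
\[
\La_p w_\ve \leq \frac{r^{-p}+C_0\ve}{(2\lambda_\ve)^{p-1}} + \beta_\ve|\D w_\ve|^{p-1} = \frac{r^{-p}}{2^{p-1}} + \beta_\ve|\D w_\ve|^{p-1} \leq r^{-p} + \beta_\ve|\D w_\ve|^{p-1}.
\]
I then take $\tilde M := 3\tilde M_0$, where $\tilde M_0$ is the constant supplied by Lemma \ref{lem-barrier} for $\beta R_0=1$. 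For $\ve,\eta$ small enough, the uniform convergence $u_\ve\to u$ and the hypotheses on $u$ yield $w_\ve\geq 0$ on $\overline B_{5r}(z_0)$ and $w_\ve>\tilde M_0$ on $B_r(z_0)$, while $\beta_\ve R_0\leq 1$; since $w_\ve$ is semiconcave, Lemma \ref{lem-barrier} applies and gives $w_\ve>1$ on $B_{4r}(z_0)$, i.e.\ $u_\ve>2\lambda_\ve-\eta$. Letting first $\ve\to 0$ (so $\lambda_\ve\to 1$) and then $\eta\to 0$ recovers $u\geq 2>1$ on $B_{4r}(z_0)$.

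The only subtle point is choosing the rescaling so that the perturbed inequality for $u_\ve$ becomes exactly $\La_p w_\ve - \beta|\D w_\ve|^{p-1} \leq r^{-p}$ on the same geometric scale, since Lemma \ref{lem-barrier} is stated with fixed radii $r,4r,5r$; this is what forces the particular choice of $\lambda_\ve$ and the factor $2$ in the denominator. Everything else---semiconcavity of $u_\ve$, the viscosity transfer of the inequality, and the passage to the limit---is routine and already packaged in Section \ref{sec-viscosity}.
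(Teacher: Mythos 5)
Your proof is correct and reaches the conclusion by the same overall strategy as the paper: inf-convolve $u$ so that $u_\ve$ is semiconcave, push the inf-convolution error into the $\beta|\D u_\ve|^{p-1}$ slot of Lemma~\ref{lem-barrier}, and pass to the limit. The one place where you diverge is in how you control the term $\tilde\kappa(n+p-2)\,|\D u_\ve|^{p-2}\min\{\ve|\D u_\ve|^2,\,2\omega(2\sqrt{m\ve})\}$ coming from Remark~\ref{rmk-p-La-eq-u-e}. You split into $|\D u_\ve|\le 1$ and $|\D u_\ve|>1$ (correctly extending the argument of Lemma~\ref{lem-abp-type-measure-p-small} to all $p>1$ via $|\D u_\ve|^{p-2}\le|\D u_\ve|^{p-1}$ on $\{|\D u_\ve|>1\}$), which produces both a drift coefficient $\beta_\ve\to 0$ and an additive error $C_0\ve$, and you then absorb the additive error by the scaling factor $\lambda_\ve=(1+C_0 r^p\ve)^{1/(p-1)}$. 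The paper instead uses $\min\{a,b\}\le\sqrt{ab}$ directly, giving $|\D u_\ve|^{p-2}\min\{\ve|\D u_\ve|^2,\,2\omega\}\le\{2\ve\,\omega(2\sqrt{m\ve})\}^{1/2}|\D u_\ve|^{p-1}$ with no additive term at all; for small $\ve$ this is bounded by $\frac{1}{R_0}|\D u_\ve|^{p-1}$, so Lemma~\ref{lem-barrier} applies with $\beta R_0=1$ after the simple normalization $\tilde u_\ve=(u_\ve+\eta)/(1+2\eta)$ and $\tilde M:=\tilde M_0+1$, without any $\lambda_\ve$. Both arguments are sound; the paper's AM--GM bound makes the $\lambda_\ve$ rescaling and the factor $2$ in your denominator unnecessary, and also avoids the bookkeeping that $C_0$ depends on the (local, $\tilde\kappa$-dependent) sectional curvature rather than on $\kappa$ alone -- which your proof silently relies on being harmless because $\lambda_\ve\to1$ as $\ve\to0$.
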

\begin{proof}
We use  again Jensen's inf-convolution to   approximate a viscosity supersolution $u$. 
 Let $\eta>0.$ According to  Lemma  \ref{lem-visc-u-u-e-properties} %and \ref{prop-u-u-e-superj},
     and Remark \ref{rmk-p-La-eq-u-e},    the inf-convolution $u_\ve$ of $u$ with respect to $\overline B_{6r}(z_0)$ (for small $\ve>0$)   satisfies the following: 
  \begin{equation*}
\left\{
\begin{split}
&u_\ve \to u\quad\mbox{uniformly in $B_{6r}(z_0),$}\\
&u_\ve\geq -\eta\quad\mbox{in $B_{6r}(z_0)  $,}\qquad\inf_{B_{r}(z_0)}u_\ve>  \tilde M -\eta , \\%\quad\mbox{for large $\tilde M>1,$}\\
&\mbox{$u_\ve $ is     $C_\ve$-semiconcave in $B_{6r}(z_0),$}\\
&\La_pu_\ve - \tilde\kappa(n+p-2) \left\{2\ve\omega\left(2\sqrt{m\ve}\right)\right\}^{\frac{1}{2}}|\D u_{\ve}|^{p-1}    \leq r^{-p}  \,\,\mbox{in $B_{5r}(z_0)$ }
%&\left( |\D u_\ve|^2+\delta\right)^{\frac{p-2}{2}}\cM^-_{p-1,1}\left(D^2u_\ve -\ve\tilde\kappa|\D u_{\ve}|^2{\bf I} \right)\leq f^+_\ve   \quad\mbox{a.e. in $B_{4r}(z_0),$  $\quad\forall\delta>0$}, 
 \end{split}\right. 
\end{equation*}
in the viscosity sense, 
where $-\tilde\kappa $ ($\tilde\kappa\geq0$) is a lower bound of the sectional curvature on $\overline B_{3R}(x_0),$ $C_\ve:=\frac{1}{\ve}{\sH\left(2\sqrt{ \tilde\kappa}R\right)}$,    
 $m:=\|u\|_{L^{\infty}(B_{6r}(z_0))},$ and $\omega$  denotes a modulus of continuity of $u$ on $\overline B_{6r}(z_0).$  We observe that 
 for   sufficiently     small $\ve>0$, $u_\ve$ satisfies 
$$\La_p u_{\ve} - \frac{1}{ R_0}|\D u_{\ve}|^{p-1}\leq r^{-p}   \quad\mbox{in $B_{5r}(z_0) $}$$
in the viscosity sense. 

Let $\tilde M_0>1$ be the constant as in Lemma  \ref{lem-barrier} with $\beta= 1/R_0.$ Here, $\tilde M_0>1$ depends     only on  $n,   p, $ and $ \sqrt{\kappa} R_0 .$    Let  $\tilde M:=\tilde M_0+1. $  For  $0<\eta<1/ (2\tilde M_0),$ we apply Lemma \ref{lem-barrier} to $\tilde u_\ve:=\frac{u_\ve+\eta }{1+2\eta}$ in order to deduce that $\tilde u_\ve>1$ in $B_{4r}(z_0)$. By letting $\ve$ go to $0$,  it follows that $ u \geq {1+\eta}>1$ in $B_{4r}(z_0)$.  \end{proof}

Similarly,   we  obtain the following corollary by  constructing  a barrier function   for nonlinear $p$-Laplacian type operators with the help of   Lemma \ref{lem-pucci-ric-dist-sqrd}.

\begin{cor}\label{lem-barrier-nonlinear} 
Let    $1<p<\infty ,$ and $\cM^-_{\ld,\Ld}(R(e)) \geq - {(n-1)} \kappa$   with  $\kappa\geq0$ for any unit vector $e\in TM$.  
For  $z_0,x_0\in M$ and $0<r\leq R\leq R_0,$ assume that $    B_{6r}(z_0)\subset B_{R}(x_0).$  
  There exists a large constant $\tilde  M>1$  such that if  for $\beta\geq0,$   $u\in C\left(   \overline    B_{6r}(z_0)\right)$ satisfies 
\begin{equation*}%\label{eq-assump-supersol}
\left\{
\begin{split}
&u\geq 0\quad\mbox{in $  B_{6r}(z_0)$},\qquad  u> \tilde  M\quad\mbox{ in $ B_{r}(z_0)$,}\\ %boundary
&|\D u|^{p-2}\cM^-_{\ld,\Ld}(D^2u)-\beta|\D u|^{p-1}\leq  r^{-p}\quad\mbox{ in $B_{6r}(z_0)\,\,$  in the viscosity sense},
%&|\{u>M\}\cap K_1|>(1-\delta)|K_1|, 
\end{split}\right.
\end{equation*} 
then     %$u>1$ in $K_{1/4}$. 
\begin{equation*} 
u> 1 \quad\mbox{in $B_{4r}(z_0)$,}
\end{equation*}
where   $\tilde M>1$ depends     only on  $n,   p,  \sqrt{\kappa} R_0,\ld,\Ld,$  and $\beta R_0.$
\end{cor}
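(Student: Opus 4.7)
The plan is to mirror the two-step strategy used for the $p$-Laplacian case (Lemma \ref{lem-barrier} followed by Corollary \ref{cor-barrier-viscosity}), replacing the trace-type computation by a Pucci-type computation that invokes Lemma \ref{lem-pucci-ric-dist-sqrd}. First I would establish a semiconcave version of the statement, then bootstrap to viscosity solutions via inf-convolution.

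For the semiconcave step, I will use the radial barrier
\[
v(x):=\tilde M\bigl\{r^{\alpha}d_{z_0}(x)^{-\alpha}-5^{-\alpha}\bigr\}
\]
with $\alpha>1$ and $\tilde M>1$ to be chosen. On $B_{5r}(z_0)\setminus(\Cut(z_0)\cup\{z_0\})$, the function $v$ is smooth and radial, so $\D d_{z_0}$ is a unit eigenvector of $D^2 v$ associated with a known (negative) radial eigenvalue, and the remaining $n-1$ eigenvalues of $D^2 v$ come from the tangential part governed by $D^2(d_{z_0}^2/2)$. A direct calculation then expresses $|\D v|^{p-2}\cM^-_{\ld,\Ld}(D^2v)$ in terms of $d_{z_0}$ and the Pucci operator applied to $D^2(d_{z_0}^2/2)$; by Lemma \ref{lem-pucci-ric-dist-sqrd} the latter is bounded above by $\Ld+(n-1)\Ld\,\sH(\sqrt{\kappa/\Ld}\,R_0)$. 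Choosing $\alpha$ large enough so that the ``good'' radial eigenvalue dominates both this upper bound and the $\beta|\D v|^{p-1}$ term (using $|\D v|d_{z_0}\le \alpha\tilde M\,R_0(r/d_{z_0})^{\alpha}$), and then choosing $\tilde M$ large, I get
\[
|\D v|^{p-2}\cM^-_{\ld,\Ld}(D^2v)-\beta|\D v|^{p-1}\ge 2\,r^{-p}\quad\text{in }B_{5r}(z_0)\setminus(\Cut(z_0)\cup\{z_0\}),
\]
together with $v=0$ on $\p B_{5r}(z_0)$, $v>1$ on $B_{4r}(z_0)$, $\sup_{\p B_r(z_0)}v<\tilde M$, and $|\D v|>0$ off $\Cut(z_0)\cup\{z_0\}$. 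The comparison then amounts to showing $u\ge v$ on the annulus $B_{5r}(z_0)\setminus\overline B_r(z_0)$. If $u-v$ attained an interior minimum at a point $x\in\Cut(z_0)$, writing $v=-\psi\circ d_{z_0}^2$ with $\psi(s):=-\tilde M\{r^{\alpha}s^{-\alpha/2}-5^{-\alpha}\}$ (so $\psi'>0$ on $(0,\infty)$), Corollary \ref{cor-dist-sqrd-cut-compoed-increasing-ft} supplies a unit direction along which the second symmetric difference of $v$ equals $+\infty$; this contradicts the semiconcavity of $u$, exactly as in the proof of Lemma \ref{lem-barrier}. On the complement of $\Cut(z_0)$ the standard viscosity comparison against the smooth strict supersolution $v$ applies.

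For the viscosity step, given $u\in C(\overline B_{6r}(z_0))$ satisfying the nonlinear inequality, I apply the inf-convolution $u_\ve$ of $u$ with respect to $\overline B_{6r}(z_0)$. By Lemma \ref{lem-visc-u-u-e-properties}, $u_\ve$ is $C_\ve$-semiconcave in $B_{6r}(z_0)$ and $u_\ve\to u$ uniformly. Since the Pucci operator is intrinsically uniformly continuous with $\omega_G\equiv 0$ (Example \ref{exp-AFS}(b)), Lemma \ref{prop-u-u-e-superj} yields, for small $\ve>0$,
\[
|\D u_\ve|^{p-2}\cM^-_{\ld,\Ld}(D^2u_\ve)-\beta|\D u_\ve|^{p-1}-\tilde\kappa n\Ld|\D u_\ve|^{p-2}\min\bigl\{\ve|\D u_\ve|^2,2\omega(2\sqrt{m\ve})\bigr\}\le r^{-p}
\]
in $B_{5r}(z_0)$ in the viscosity sense, where $-\tilde\kappa$ bounds $\Sec$ from below on $\overline B_{6r}(z_0)$. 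Splitting as in the proof of Corollary \ref{cor-barrier-viscosity} according to whether $|\D u_\ve|\lessgtr 1$, the extra perturbation can be absorbed into an augmented $\beta'=\beta+o(1)$ with $\beta' R_0$ still uniformly controlled for $\ve$ small. Applying the already-proven semiconcave version to a slightly rescaled $\tilde u_\ve:=(u_\ve+\eta)/(1+2\eta)$ for $\eta\in(0,\tfrac{1}{2\tilde M_0})$, where $\tilde M_0$ denotes the constant from the semiconcave step with this augmented $\beta'$, gives $\tilde u_\ve>1$ on $B_{4r}(z_0)$; letting first $\ve\downarrow 0$ and then $\eta\downarrow 0$ finishes the proof with $\tilde M:=\tilde M_0+1$.

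The main obstacle I anticipate is the cut-locus part of the comparison argument: because $v$ is not $C^2$ across $\Cut(z_0)$, one cannot use the usual PDE comparison principle directly, and the workaround requires matching the nonsmoothness of $v$ (an upward cusp in the second symmetric difference via Corollary \ref{cor-dist-sqrd-cut-compoed-increasing-ft}) against the downward control given by semiconcavity of $u$. A secondary technical point is verifying that after inf-convolution the nonlinear Pucci inequality persists with the single extra term coming from Lemma \ref{prop-u-u-e-superj}, and that this term can be absorbed into $\beta R_0$ uniformly as $\ve\downarrow 0$.
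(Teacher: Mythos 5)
Your proposal is correct and follows exactly the route the paper intends when it states this as a corollary ``obtained similarly by constructing a barrier function \dots with the help of Lemma \ref{lem-pucci-ric-dist-sqrd}'': the radial-barrier Pucci computation (expressing $\cM^-_{\ld,\Ld}(D^2v)$ as $\ld g'' + g'\cM^+_{\ld,\Ld}(D^2 d_{z_0})$, with $g'<0$ making the upper bound of Lemma \ref{lem-pucci-ric-dist-sqrd} on $\cM^+_{\ld,\Ld}(D^2(d_{z_0}^2/2))$ go the right way), the cut-locus exclusion via Corollary \ref{cor-dist-sqrd-cut-compoed-increasing-ft} against semiconcavity, and the inf-convolution bootstrap using Lemma \ref{prop-u-u-e-superj} with $\omega_G\equiv 0$ for the Pucci operator. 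The filled-in details are consistent with Lemma \ref{lem-barrier} and Corollaries \ref{cor-barrier-viscosity} and \ref{cor-abp-type-measure-p-small-nonlinear}.
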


Combined with Lemmas \ref{lem-abp-type-measure-p-large} and  \ref{lem-abp-type-measure-p-small}, we have the following measure estimate. 
   \begin{cor}\label{cor-barrier}   
   Let   $1<p<\infty ,$ and $\Ric \geq - {(n-1)} \kappa$   for $\kappa\geq0$. % for any $e\in TM$. 
For  $z_0,x_0\in M$ and $0<r\leq R\leq R_0,$ assume that $  B_{2r}(z_0)\subset B_{R}(x_0).$ There exist     constants $\tilde  M>1$  and   $0<\delta<1$ such that if   $u\in C(B_{2r}(z_0))$ satisfies 
\begin{equation*}%\label{eq-assump-supersol}
\left\{
\begin{split}
&u\geq 0\quad\mbox{in $  B_{2r}(z_0)$},\\ %boundary
&r^{p} \La_pu\leq  1\quad\mbox{ in $B_{2r}(z_0)$  in the viscosity sense},\\%&|\{u>M\}\cap K_1|>(1-\delta)|K_1|, 
& \left|\left\{u> \tilde M\right\} \cap B_{r}(z_0)\right|> (1-\delta)|B_{r}(z_0)|,
\end{split}\right.
\end{equation*} 
then     %$u>1$ in $K_{1/4}$. 
\begin{equation*} 
u> 1 \quad\mbox{in $B_{r}(z_0)$,}
\end{equation*}
where   $ \tilde M>1$ and  $\delta\in(0,1)$   depend  only on  $n, p, $ and $\sqrt{\kappa}R_0.$
\end{cor}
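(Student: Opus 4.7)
The plan is to argue by contradiction, combining the ABP measure estimates of Section~\ref{sec-abp} (Lemma~\ref{lem-abp-type-measure-p-large} if $p\ge 2$, and Lemma~\ref{lem-abp-type-measure-p-small} if $1<p<2$) with the Bishop--Gromov volume comparison (Theorem~\ref{thm-BG}). Suppose, toward contradiction, that there exists $y_0\in B_r(z_0)$ with $u(y_0)\le 1$.

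First, I would rescale by setting $v:=\tfrac{p-1}{p}u$, so that $v\ge 0$ in $B_{2r}(z_0)$, $v(y_0)\le\tfrac{p-1}{p}$, and $\rho^p\Lambda_p v\le\bigl(\tfrac{p-1}{p}\bigr)^{p-1}(\rho/r)^p\le 1$ for $\rho\le r$. Taking vertex ball $B_{r/8}(y_0)$ and outer ball $B_{r/2}(y_0)\subset B_{3r/2}(z_0)\subset B_{2r}(z_0)$ (with containing ball $B_R(x_0)\supset B_{2r}(z_0)$), the ABP measure estimate yields
\begin{equation*}
\bigl|\{u\le M'\}\cap B_{r/2}(y_0)\bigr|>\delta_1\bigl|B_{r/2}(y_0)\bigr|, \qquad M':=3^{p/(p-1)},
\end{equation*}
with $\delta_1\in(0,1)$ depending only on $n$, $p$, $\sqrt{\kappa}R_0$.

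Next, I would exploit Bishop--Gromov to ensure a uniformly positive fraction of $B_{r/2}(y_0)$ sits inside $B_r(z_0)$: taking $p^*$ on the minimizing geodesic from $y_0$ to $z_0$ at distance $\min\{r/8,\,d(y_0,z_0)\}$ from $y_0$, the triangle inequality gives $B_{r/8}(p^*)\subset B_{r/2}(y_0)\cap B_r(z_0)$, and the doubling estimate \eqref{eq-doubling-cont} yields $|B_{r/8}(p^*)|\ge c_0|B_{r/2}(y_0)|$ for a uniform $c_0>0$. In the favorable regime $\delta_1+c_0>1$, the set-theoretic bound
\begin{equation*}
|\{u\le M'\}\cap B_r(z_0)|\ge(\delta_1+c_0-1)|B_{r/2}(y_0)|\ge c_1|B_r(z_0)|
\end{equation*}
contradicts the density hypothesis once $\tilde M\ge M'$ and $\delta<c_1$.

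In the unfavorable regime $\delta_1+c_0\le 1$, I would iterate along a chain of near-minimizers moving toward $z_0$. The ABP measure estimate, combined with the uniform volume fraction of the ``closer-to-$z_0$'' portion of $B_{r/2}(y_0)$ provided by Bishop--Gromov (after, if necessary, replacing the radii $r/8,r/2$ by smaller uniform fractions so that the corresponding fraction exceeds $1-\delta_1$), produces $y_1\in B_{r/2}(y_0)\cap\{u\le M'\}$ with $d(y_1,z_0)\le d(y_0,z_0)-\eta r$ for a uniform $\eta>0$. Rescaling by $c_k:=(p-1)/(pM'^{k-1})<1$ at the $k$th step keeps the ABP hypothesis $\rho^p\Lambda_p(c_k u)\le c_k^{p-1}<1$ admissible (valid for any $p>1$, since $c_k<1$ implies $c_k^{p-1}<1$), so ABP applied at $y_k$ delivers $y_{k+1}\in B_{r/2}(y_k)\cap\{u\le M'^{k+1}\}$ satisfying $d(y_{k+1},z_0)\le d(y_k,z_0)-\eta r$. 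After at most $K\le 1/\eta$ uniform iterations one reaches $y_K\in B_{r/2}(z_0)$ with $u(y_K)\le M'^K$; one final ABP application at $y_K$ has its outer ball $B_{r/2}(y_K)\subset B_r(z_0)$, producing $|\{u\le M'^{K+1}\}\cap B_r(z_0)|>\delta_1 c_2|B_r(z_0)|$ by volume doubling, for a uniform $c_2>0$. Selecting $\tilde M:=M'^{K+1}$ and $\delta<\delta_1 c_2$ closes the contradiction.

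The main obstacle is the iteration step in the unfavorable regime: one must control both the number of iterations $K$ and the chained level $M'^K$ uniformly in the position of $y_0\in B_r(z_0)$. This hinges on the uniformity of $\delta_1$ across scales $\rho\in(0,R_0]$ in the ABP estimate, together with the Bishop--Gromov comparison yielding a uniform positive volume fraction on each side of the bisector between $y_k$ and $z_0$; these together convert the local ABP density into a global measure estimate on $B_r(z_0)$.
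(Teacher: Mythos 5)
Your proposal takes a genuinely different route from the paper's, and it misses the key structural ingredient. The paper's proof is short: apply the ABP measure estimate (Lemma~\ref{lem-abp-type-measure-p-large} for $p\ge 2$, Lemma~\ref{lem-abp-type-measure-p-small} for $1<p<2$) in the \emph{contrapositive}, with $B_r(z_0)$ itself as the outer ball and $B_{r/4}(z_0)$ as the inner (vertex) ball, applied to the rescaled function $\tfrac{p-1}{p}\,u/\tilde M_1$ where $\tilde M_1$ is the constant from Corollary~\ref{cor-barrier-viscosity}. With $\tilde M:=\tfrac{p}{p-1}\tilde M_0\tilde M_1$, the density hypothesis $|\{u>\tilde M\}\cap B_r(z_0)|>(1-\delta)|B_r(z_0)|$ is exactly the negation of the ABP conclusion, and since all the other hypotheses of the ABP lemma are readily verified, the remaining hypothesis $\inf_{B_{r/4}(z_0)}\tfrac{p-1}{p}u/\tilde M_1\le\tfrac{p-1}{p}$ must fail. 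This gives $\inf_{B_{r/4}(z_0)}u>\tilde M_1$, and then the barrier Corollary~\ref{cor-barrier-viscosity}, applied with inner ball $B_{r/4}(z_0)$, spreads this lower bound to $u>1$ on $B_r(z_0)$. You never invoke Lemma~\ref{lem-barrier} or Corollary~\ref{cor-barrier-viscosity}, which are the tools the paper built precisely for converting a lower bound on a small concentric ball into a lower bound on the full ball.

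Beyond being circuitous, your chaining argument in the ``unfavorable regime'' has real gaps. First, the ABP density $\delta_1$ from Lemmas~\ref{lem-abp-type-measure-p-large}/\ref{lem-abp-type-measure-p-small} is typically small, so your ``favorable regime'' $\delta_1+c_0>1$ with $c_0\lesssim 1/2$ should not be expected to occur; the unfavorable case is the main one. Second, to extract $y_{k+1}\in B_{\rho_2}(y_k)\cap\{u\le M'^{k+1}\}$ with $d(y_{k+1},z_0)\le d(y_k,z_0)-\eta\rho_2$, you need the ``closer-to-$z_0$'' portion of $B_{\rho_2}(y_k)$ to occupy a fraction $>1-\delta_1$ of the ball; since $\delta_1$ is small, this forces $\eta$ to be very small (depending quantitatively on $\delta_1$ through volume comparison), which you never make explicit, and in turn inflates the number of steps $K\approx 1/\eta$ and the final level $\tilde M=M'^{K+1}$. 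Third, the rescaling constant $c_k=(p-1)/(pM'^{k-1})$ is off: with that choice $c_k u(y_k)\le\tfrac{p-1}{p}M'$, not $\le\tfrac{p-1}{p}$ as the ABP hypothesis requires; you need $c_k=(p-1)/(pM'^{k})$. Also, with only the inequality $\La_pu\le f$ available (a supersolution), the weak Harnack chain you are sketching is essentially re-deriving the $L^\epsilon$-estimate of Section~\ref{sec-L-e-est} inside the proof of this corollary, which is the wrong level of the argument: the density-to-pointwise step is supposed to come from the barrier, not from chaining ABP.
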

\begin{proof}
Let $  \tilde M_0=\frac{p-1}{p}3^{\frac{p}{p-1}}>1$ and  $0<\delta<1$ be the constants 
  as    in Lemmas \ref{lem-abp-type-measure-p-large} and   \ref{lem-abp-type-measure-p-small}, and let $\tilde M_1$  be as in Corollary \ref{cor-barrier-viscosity}. 
Let $\tilde M:=\frac{p}{p-1} \tilde M_0\tilde M_1.$   Applying Lemmas \ref{lem-abp-type-measure-p-large} and   \ref{lem-abp-type-measure-p-small}  to $\displaystyle \frac{p-1}{p}\frac{u}{\tilde M_1}$ in $B_{r}(z_0) $, we  obtain  that   $\displaystyle\inf_{B_{r/4}(z_0)} u > \tilde M_1.$  From Corollary  \ref{cor-barrier-viscosity},  it follows that $u>1$ in $B_r(z_0). $
\end{proof}
   
    The homogeneity of the $p$-Laplacian operator implies the following. 
   \begin{cor}\label{cor-barrier-theta}   
   Let    $1<p<\infty ,$ and $\Ric \geq - {(n-1)} \kappa$   for $\kappa\geq0$. % for any $e\in TM$. 
For  $z_0,x_0\in M$ and $0<r\leq R\leq R_0,$ assume that $  B_{2r}(z_0)\subset B_{R}(x_0).$   %Let $u$ be a smooth function  on $ \overline  B_{R}(x_0)$.  
There exist     constants $\tilde  M>1$  and   $0<\delta<1$ such that if  for $ \theta>0$,    $u\in C(B_{2r}(z_0))$ satisfies 
\begin{equation*}%\label{eq-assump-supersol}
\left\{
\begin{split}
&u\geq 0\quad\mbox{in $  B_{2r}(z_0)$},\\ %boundary
&r^{p} \La_pu\leq  \theta^{p-1}\quad\mbox{ in $B_{2r}(z_0)\,\,$  in the viscosity sense,  }\\%&|\{u>M\}\cap K_1|>(1-\delta)|K_1|, 
& \left|\left\{u>\theta \tilde M\right\} \cap B_{r}(z_0)\right|> (1-\delta)|B_{r}(z_0)|,
\end{split}\right.
\end{equation*} 
then     %$u>1$ in $K_{1/4}$. 
\begin{equation*} 
u> \theta \quad\mbox{in $B_{r}(z_0)$,}
\end{equation*}
where     $ \tilde M>1$ and $\delta\in(0,1)$ are the constants in Corollary \ref{cor-barrier}.     
\end{cor}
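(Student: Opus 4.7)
The plan is to reduce this statement to Corollary \ref{cor-barrier} by the natural scaling of the $p$-Laplacian. The key observation is that for any constant $\theta>0$, the rescaled function $v:=u/\theta$ satisfies $\nabla v = \nabla u/\theta$ and $D^2 v = D^2 u/\theta$, while the tensor $\frac{\nabla u}{|\nabla u|}\otimes \frac{\nabla u}{|\nabla u|}$ is invariant under positive rescaling of $u$. Hence at any point where the gradient does not vanish,
\[
\La_p v = |\nabla v|^{p-2}\tr\!\left[\left({\bf I}+(p-2)\tfrac{\nabla v}{|\nabla v|}\otimes \tfrac{\nabla v}{|\nabla v|}\right)D^2 v\right] = \theta^{-(p-1)}\La_p u,
\]
so the assumption $r^p\La_p u\le \theta^{p-1}$ translates into $r^p\La_p v\le 1$.

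Next, I would verify that this scaling is valid in the viscosity sense of Definition \ref{def-visc-sol}. If $\varphi\in C^2(B_{2r}(z_0))$ touches $v$ from above at $x$ with $\nabla\varphi(x)\neq 0$, then $\theta\varphi$ touches $u$ from above at $x$ with $\theta\nabla\varphi(x)\neq 0$, and the identity above applied to $\theta\varphi$ gives $\La_p\varphi(x)=\theta^{-(p-1)}\La_p(\theta\varphi)(x)\le r^{-p}$. The alternative case in Definition \ref{def-visc-sol}, where $u$ is locally constant on a ball and the source term is nonnegative, transfers trivially to $v$ since the source after scaling is still $\ge 0$. Thus $v$ is a viscosity supersolution of $r^p\La_p v\le 1$ in $B_{2r}(z_0)$.

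Finally, the level set condition $\left|\{u>\theta\tilde M\}\cap B_r(z_0)\right|>(1-\delta)|B_r(z_0)|$ is, by definition of $v$, the same as $\left|\{v>\tilde M\}\cap B_r(z_0)\right|>(1-\delta)|B_r(z_0)|$, and clearly $v\ge 0$ on $B_{2r}(z_0)$. The function $v$ therefore meets all the hypotheses of Corollary \ref{cor-barrier} with the same constants $\tilde M>1$ and $\delta\in(0,1)$ depending only on $n$, $p$ and $\sqrt{\kappa}R_0$. Applying that corollary yields $v>1$ in $B_r(z_0)$, i.e., $u>\theta$ in $B_r(z_0)$, which is the desired conclusion.

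There is no real obstacle here beyond bookkeeping: the only point requiring care is the scaling of the viscosity notion, which is immediate from the $(p-1)$-homogeneity of $\La_p$ in $u$ and the fact that Definition \ref{def-visc-sol} is built to be compatible with multiplication by positive constants.
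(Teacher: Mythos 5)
Your proof is correct and takes exactly the approach the paper has in mind: the paper simply remarks that ``the homogeneity of the $p$-Laplacian operator implies the following,'' and you have filled in that one-line remark with the explicit scaling $v=u/\theta$, the identity $\La_p v=\theta^{-(p-1)}\La_p u$, and its compatibility with Definition \ref{def-visc-sol}. One small slip: since $v$ is a \emph{supersolution}, the test functions $\varphi$ in Definition \ref{def-visc-sol} are those for which $v-\varphi$ has a local \emph{minimum} (touching from below), not a local maximum; the scaling argument goes through verbatim with this corrected.
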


By using Corollaries \ref{cor-abp-type-measure-p-large-nonlinear}, \ref{cor-abp-type-measure-p-small-nonlinear} and \ref{lem-barrier-nonlinear},  the same argument  as the proof of  Corollary \ref{cor-barrier}  yields  the following measure estimate for nonlinear $p$-Laplacian type operators. 
   \begin{cor}\label{cor-barrier-nonlinear}   
   Let   $1<p<\infty $, and $\cM^-_{\ld,\Ld}(R(e)) \geq - {(n-1)} \kappa$ with  $\kappa\geq0$ for any unit vector $e\in TM$. For  $z_0,x_0\in M$ and $0<r\leq R\leq R_0,$ assume that $  B_{2r}(z_0)\subset B_{R}(x_0).$   There exist   constants $\tilde  M>1$  and  $0<\delta<1$ such that if for   $\beta\geq0$ and $\theta>0,$     $u\in C(B_{2r}(z_0))$ satisfies 
\begin{equation*}%\label{eq-assump-supersol}
\left\{
\begin{split}
&u\geq 0\quad\mbox{in $  B_{2r}(z_0)$},\\ %boundary
&|\D u|^{p-2}\cM^-_{\ld,\Ld}(D^2 u)-\beta |\D u|^{p-1}\leq  \theta^{p-1}r^{-p}\quad\mbox{ in $B_{2r}(z_0)\,\,$  in the viscosity sense},\\%&|\{u>M\}\cap K_1|>(1-\delta)|K_1|, 
& \left|\left\{u>\theta \tilde M\right\} \cap B_{r}(z_0)\right|> (1-\delta)|B_{r}(z_0)|,
\end{split}\right.
\end{equation*} 
then     %$u>1$ in $K_{1/4}$. 
\begin{equation*} 
u> \theta \quad\mbox{in $B_{r}(z_0)$,}
\end{equation*}
where   $ \tilde M>1$ and $\delta\in(0,1)$ depend  only on  $n, p, \sqrt{\kappa} R_0,\ld,\Ld, $ and $\beta R_0.$
\end{cor}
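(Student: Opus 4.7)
The plan is to closely mirror the proof of Corollary \ref{cor-barrier}, substituting the nonlinear $p$-Laplacian-type measure and barrier estimates just established. Two ingredients are needed: the nonlinear ABP-type measure estimates (Corollaries \ref{cor-abp-type-measure-p-large-nonlinear} and \ref{cor-abp-type-measure-p-small-nonlinear}, for the regimes $p \geq 2$ and $1 < p < 2$ respectively), and the nonlinear barrier bound (Corollary \ref{lem-barrier-nonlinear}). The extra scaling parameter $\theta$ is absorbed using the $(p-1)$-homogeneity of the operator $u \mapsto |\D u|^{p-2}\cM^-_{\ld,\Ld}(D^2u) - \beta |\D u|^{p-1}$ under dilation of $u$ by a positive constant.

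Fix $\tilde M_0 := \tfrac{p-1}{p}3^{\frac{p}{p-1}}$ (the constant appearing in the ABP conclusion) and let $\delta \in (0,1)$ be the smaller of the two measure constants produced by Corollaries \ref{cor-abp-type-measure-p-large-nonlinear} and \ref{cor-abp-type-measure-p-small-nonlinear}, so that whichever regime $p$ falls in, the ABP conclusion $|\{\cdot \leq \tilde M_0\} \cap B_r| > \delta |B_r|$ is available. Let $\tilde M_1 > 1$ be the barrier threshold from Corollary \ref{lem-barrier-nonlinear}, enlarged if necessary so that $(\tfrac{p-1}{p\tilde M_1})^{p-1} \leq 4^p$; its dependence is only on $n, p, \sqrt{\kappa}R_0, \ld, \Ld$, and $\beta R_0$. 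Set $\tilde M := \tfrac{p}{p-1}\,\tilde M_0 \tilde M_1$.

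The first step is a contradiction argument giving $\inf_{B_{r/4}(z_0)} u > \theta \tilde M_1$. Assume instead that $\inf_{B_{r/4}(z_0)} u \leq \theta \tilde M_1$, and set $w := \tfrac{p-1}{p\tilde M_1}\cdot \tfrac{u}{\theta}$. Then $w \geq 0$ on $B_{2r}(z_0)$, $\inf_{B_{r/4}(z_0)} w \leq \tfrac{p-1}{p}$, and $(p-1)$-homogeneity combined with the choice of $\tilde M_1$ yields
\begin{equation*}
|\D w|^{p-2}\cM^-_{\ld,\Ld}(D^2 w) - \beta |\D w|^{p-1} \;\leq\; \Bigl(\tfrac{p-1}{p\tilde M_1}\Bigr)^{p-1} r^{-p} \;\leq\; (r/4)^{-p}
\end{equation*}
in $B_{2r}(z_0)$ in the viscosity sense. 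Applying Corollary \ref{cor-abp-type-measure-p-large-nonlinear} (resp.\ \ref{cor-abp-type-measure-p-small-nonlinear}) with ambient ball $B_{2r}(z_0)$ (playing the role of $B_R(x_0)$), effective radius $r/4$, and vertex ball $B_{r/4}(z_0)$, we obtain $|\{w \leq \tilde M_0\} \cap B_r(z_0)| > \delta\,|B_r(z_0)|$, which rewrites as $|\{u \leq \theta \tilde M\} \cap B_r(z_0)| > \delta\,|B_r(z_0)|$, contradicting the measure hypothesis.

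For the second step, apply Corollary \ref{lem-barrier-nonlinear} to $\bar u := u/\theta$ with center $z_0$ and radius $r_0 := r/4$; the inclusion $\overline{B_{6r_0}(z_0)} = \overline{B_{3r/2}(z_0)} \subset B_{2r}(z_0)$ secures continuity of $\bar u$ on the closed ball, and $\bar u$ satisfies $|\D \bar u|^{p-2}\cM^-_{\ld,\Ld}(D^2 \bar u) - \beta |\D \bar u|^{p-1} \leq r^{-p} \leq r_0^{-p}$. Since $\bar u > \tilde M_1$ on $B_{r_0}(z_0)$ by Step~1, the corollary yields $\bar u > 1$ on $B_{4r_0}(z_0) = B_r(z_0)$, i.e.\ $u > \theta$ on $B_r(z_0)$. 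No substantive obstacle is anticipated: the argument is essentially bookkeeping of the $\theta$-dilation composed with the two nonlinear estimates, and the only care needed is enlarging $\tilde M_1$ so the rescaled right-hand side in Step~1 falls below $(r/4)^{-p}$, a one-line constant adjustment.
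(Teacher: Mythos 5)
Your proof is correct and takes essentially the same approach the paper intends: rescale by $\theta$ and by $\tfrac{p-1}{p\tilde M_1}$, run the contradiction argument via the nonlinear ABP measure estimates (Corollaries \ref{cor-abp-type-measure-p-large-nonlinear} and \ref{cor-abp-type-measure-p-small-nonlinear}) applied at radius $r/4$, then finish with the nonlinear barrier (Corollary \ref{lem-barrier-nonlinear}). The only superfluous bit is the enlargement of $\tilde M_1$ so that $\bigl(\tfrac{p-1}{p\tilde M_1}\bigr)^{p-1}\le 4^p$ -- this holds automatically since $\tilde M_1>1$ forces $\tfrac{p-1}{p\tilde M_1}<1$.
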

 
 Now we prove $L^{\e}$-estimates for viscosity supersolutions by obtaining power decay   of measure of  super-level sets. 

\begin{thm}%[$L^{\epsilon}$-estimate]
\label{thm-L-epsilon}
  Let    $1<p<\infty ,$ and $\Ric \geq - {(n-1)} \kappa$   for $\kappa\geq0$. % for any $e\in TM$. 
Let   $x_0\in M$ and $0< R\leq R_0.$   
There exist   constants $\tilde  M>1$  and  $0<\delta<1$ such that if    $u\in C(B_{2R}(x_0))$ satisfies 
\begin{equation*}%\label{eq-assump-supersol}
\left\{
\begin{split}
&u\geq 0\quad\mbox{in $  B_{2R}(x_0)$},\qquad \inf_{B_R(x_0)}u\leq1,
\\ %boundary
&R^{p} \La_pu\leq  1\quad\mbox{ in $B_{2R}(x_0)\,\,$  in the viscosity sense},%&|\{u>M\}\cap K_1|>(1-\delta)|\\
\end{split}\right.
\end{equation*} 
then     %$u>1$ in $K_{1/4}$. 
\begin{equation*} 
 \left |\left\{u> \tilde M^k\right\} \cap B_{R}(x_0)\right|< (1-\delta)^k|B_{R}(x_0)|,\quad\forall k=1,2,\cdots. 
 \end{equation*}
 Furthermore,  
 \begin{equation*} 
 \left|\left\{u>t \right\} \cap B_{R}(x_0)\right|<  ct^{-\epsilon}|B_{R}(x_0)|,
 \end{equation*}
where    $\tilde M>1,$ $\delta\in(0,1),$ $c>0,$ and $\epsilon\in(0,1)$ depend  only on  $n,   p, $  and $\sqrt{\kappa}R_0.$
\end{thm}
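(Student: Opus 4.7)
The plan is to establish geometric decay of the super-level sets of $u$ by iterating Corollary \ref{cor-barrier-theta} via the Calder\'on-Zygmund-type lemma (Lemma \ref{lem-doubling-property}), following the standard Krylov-Safonov scheme. Let $\tilde M>1$ and $\delta\in(0,1)$ be the constants from Corollary \ref{cor-barrier-theta} (and $c_0\in(0,1)$ the constant from Lemma \ref{lem-doubling-property} applied with this $\delta$), and set $A_k:=\{u>\tilde M^k\}\cap B_R(x_0)$ for $k\in\mathbb N$.

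First I would establish the base case $|A_1|\leq(1-\delta)|B_R(x_0)|$. Were this to fail, applying Corollary \ref{cor-barrier-theta} with $\theta=1$ on the ball $B_R(x_0)$ (whose doubled ball lies inside $B_{2R}(x_0)$, where the PDE hypothesis $R^p\Delta_p u\leq 1$ is valid) would force $u>1$ everywhere in $B_R(x_0)$, contradicting $\inf_{B_R(x_0)}u\leq 1$; the boundary-attainment technicality (the inf could, a priori, be attained only on $\partial B_R(x_0)$) is handled by the routine perturbation of replacing $u$ with $u+\eta$, applying the result, and letting $\eta\downarrow 0$.

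Then I would carry out the inductive step: assume $|A_k|\leq(1-\delta)(1-c_0\delta)^{k-1}|B_R(x_0)|$ for some $k\geq 1$, and apply Lemma \ref{lem-doubling-property} to $E:=A_{k+1}$, $F:=A_k$, both open subsets of $B_R(x_0)$. Hypothesis (b) holds since $A_{k+1}\subset A_1$, so $|A_{k+1}|\leq|A_1|\leq(1-\delta)|B_R(x_0)|$. For hypothesis (a), take $B_r(z)\subset B_R(x_0)$ with $|A_{k+1}\cap B_r(z)|>(1-\delta)|B_r(z)|$. Since $r\leq R$, the doubled ball $B_{2r}(z)$ lies inside $B_{2R}(x_0)$, and
\[
r^p\Delta_p u\leq r^p R^{-p}\leq 1\leq \tilde M^{k(p-1)}=(\tilde M^k)^{p-1}\quad\text{on }B_{2r}(z),
\]
which permits the application of Corollary \ref{cor-barrier-theta} with $\theta=\tilde M^k$, yielding $u>\tilde M^k$ on $B_r(z)$, i.e., $B_r(z)\subset A_k=F$. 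Lemma \ref{lem-doubling-property} then gives $|A_{k+1}|\leq(1-c_0\delta)|A_k|$, closing the induction. The resulting geometric decay $|A_k|\leq C(1-c_0\delta)^k|B_R(x_0)|$ produces the power-law bound by the standard layer-cake step: for $t\geq\tilde M$, choose $k$ with $\tilde M^k\leq t<\tilde M^{k+1}$ and take $\epsilon:=-\log(1-c_0\delta)/\log\tilde M\in(0,1)$ (absorbing the range $t<\tilde M$ into the constant $c$).

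The main obstacle is bookkeeping the uniform scaling across generations of the induction, namely verifying that, as $r$ shrinks, both the PDE hypothesis of Corollary \ref{cor-barrier-theta} and the containment $B_{2r}(z)\subset B_{2R}(x_0)$ persist without introducing any $r$-dependence in the constants. This works cleanly thanks to the homogeneity of the $p$-Laplacian already encoded in Corollary \ref{cor-barrier-theta} (whose right-hand side scales as $\theta^{p-1}r^{-p}$), so the only $r$-dependence is the trivial monotone factor $(r/R)^p\leq 1$, compatible with every $k\geq 0$ since $\tilde M>1$. The Riemannian volume doubling of Theorem \ref{thm-BG} enters implicitly through Lemma \ref{lem-doubling-property} to control the ratios $|B_{2r}|/|B_r|$ uniformly up to the scale $R_0$, which is the source of the $\sqrt{\kappa}R_0$-dependence in the final constants.
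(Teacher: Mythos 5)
Your proof is correct and follows essentially the same approach as the paper's: define $A_k:=\{u>\tilde M^k\}\cap B_R(x_0)$, establish the base case via Corollary \ref{cor-barrier}, and then iterate Corollary \ref{cor-barrier-theta} (with $\theta=\tilde M^k$) through Lemma \ref{lem-doubling-property} to obtain geometric decay, followed by the layer-cake conversion to a power law. The only differences are matters of exposition: you make explicit the verification of hypotheses (a) and (b) of Lemma \ref{lem-doubling-property}, the scaling inequality $r^p\La_p u\le (r/R)^p\le 1\le(\tilde M^k)^{p-1}$, and the $\eta$-perturbation at the boundary, all of which the paper leaves implicit in its sketch.
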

\begin{proof}
We give a sketch of the proof  
since the proof is  similar to   \cite[Theorem 5.1]{IS}. 
Let $A_k:=\{u>\tilde M^k\}\cap B_R(x_0)$, where  $\tilde M>1$ is the constant in Corollary \ref{cor-barrier}. According to Corollary \ref{cor-barrier}, we have $|A_1|\leq (1-\delta)|B_R(x_0)|.$ We claim that
 $$|A_k|\leq (1-c_0\delta )^k|B_R(x_0)|\quad \forall k=1,2,\cdots,$$ 
 where $0<c_0<1$ is the constant as in Lemma \ref{lem-doubling-property} and depends  on $n,$ and $\sqrt{\kappa} R_0$.  By induction,   suppose that 
$|A_k|\leq (1-c_0\delta)^k|B_R(x_0)|$ for  some $k\in\N.$  Applying  Corollary \ref{cor-barrier-theta} with $\theta=\tilde M^k$, it follows that  if a   ball  $B\subset B_R(x_0)$ satisfies the property that $|A_{k+1}\cap B|>(1-\delta)|B|,$ then    $B\subset A_k$.  
 Therefore, Lemma \ref{lem-doubling-property} yields  that $$|A_{k+1}|\leq (1-c_0\delta)|A_k|\leq (1-c_0\delta)^{k+1}|B_R(x_0)|,$$ which finishes the proof. 
\end{proof}

\begin{cor}[$L^{\epsilon}$-estimate]
   Let   $1<p<\infty, $ and $\Ric \geq - {(n-1)} \kappa$  for $\kappa\geq0$. % for any $e\in TM$. 
Let  $x_0\in M$ and $0< R\leq R_0.$   For  $C_0\geq0$,  let $u\in C(B_{2R}(x_0))$ be a  nonnegative viscosity supersolution   of   
%\begin{equation*}%\label{eq-assump-supersol}
 $$ \La_pu\leq   C_0 \quad\mbox{  in $B_{2R}(x_0)$.  }$$
%  \end{equation*} 
Then    %$u>1$ in $K_{1/4}$. 
\begin{equation*} 
\left(\fint_{B_{R}(x_0)} u^\epsilon(x)dx\right)^{1/\epsilon}\leq C\left(\inf_{B_R(x_0)}u+R^{\frac{p}{p-1}} C_0^{\frac{1}{p-1}} \right),
  \end{equation*}
where the   constants $\epsilon\in(0,1)$ and  $C>0$ depend only on $n, p$, and $\sqrt{\kappa}R_0.$
 
 \end{cor}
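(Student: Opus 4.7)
The plan is to reduce the general case to the normalized hypothesis of Theorem~\ref{thm-L-epsilon} by the scaling homogeneity of the $p$-Laplacian, and then use a layer cake integration to pass from power decay of super-level sets to an $L^{\epsilon'}$ bound for slightly smaller $\epsilon'$.

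First I would normalize. Set
\[
M := \inf_{B_R(x_0)} u \,+\, R^{\frac{p}{p-1}} C_0^{\frac{1}{p-1}} \,+\, \eta
\]
for an arbitrary $\eta>0$ (to avoid the degenerate case where both terms vanish; one sends $\eta\to 0^{+}$ at the end). Define $v := u/M$. Using the homogeneity $\Delta_p(cw)=c^{p-1}\Delta_p w$ (which is clear in non-divergence form, and extends to viscosity supersolutions directly from Definition~\ref{def-visc-sol}), the hypothesis on $u$ gives
\[
\Delta_p v \le \frac{C_0}{M^{p-1}} \quad\text{in } B_{2R}(x_0),
\]
and the choice of $M$ ensures both $R^{p}\,C_0/M^{p-1}\le 1$ and $\inf_{B_R(x_0)}v\le 1$. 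Thus $v$ is a nonnegative viscosity supersolution satisfying exactly the hypotheses of Theorem~\ref{thm-L-epsilon}.

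Applying Theorem~\ref{thm-L-epsilon} to $v$, there exist constants $\tilde c>0$ and $\epsilon_0\in(0,1)$, depending only on $n,p,\sqrt{\kappa}R_0$, such that
\[
\bigl|\{v>t\}\cap B_R(x_0)\bigr| \le \tilde c\, t^{-\epsilon_0}\,|B_R(x_0)| \quad\text{for all } t>0,
\]
and re-expressed in terms of $u$,
\[
\bigl|\{u>sM\}\cap B_R(x_0)\bigr| \le \tilde c\, s^{-\epsilon_0}\,|B_R(x_0)| \quad\text{for all } s>0.
\]
Fix any $\epsilon\in(0,\epsilon_0)$. By the layer cake representation,
\[
\int_{B_R(x_0)} u^{\epsilon}\,dx \;=\; \epsilon \int_0^{\infty} s^{\epsilon-1}\,\bigl|\{u>s\}\cap B_R(x_0)\bigr|\,ds.
\]
Splitting at $s=M$ and using the trivial bound $|\{u>s\}\cap B_R(x_0)|\le|B_R(x_0)|$ for $s\le M$ and the decay estimate above (with $s=tM$) for $s\ge M$, the integral is bounded by
\[
\Bigl( M^{\epsilon} + \tilde c\,M^{\epsilon}\,\tfrac{\epsilon}{\epsilon_0-\epsilon}\Bigr)|B_R(x_0)| \;=\; C(n,p,\sqrt{\kappa}R_0,\epsilon)\, M^{\epsilon}\,|B_R(x_0)|,
\]
since $\int_M^{\infty} s^{\epsilon-1-\epsilon_0}ds = M^{\epsilon-\epsilon_0}/(\epsilon_0-\epsilon)$. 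Taking $1/\epsilon$ powers, dividing by $|B_R(x_0)|^{1/\epsilon}$, and letting $\eta\to 0^{+}$ yields the desired estimate with the claimed dependence of constants.

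The argument is essentially routine once Theorem~\ref{thm-L-epsilon} is in hand, and I anticipate no serious obstacle. The one point that requires mild care is the scaling step, where one must verify that dividing a viscosity supersolution by a positive constant is again a viscosity supersolution of the scaled equation — this is immediate from Definition~\ref{def-visc-sol} since $\Delta_p(c\varphi) = c^{p-1}\Delta_p\varphi$ holds for smooth test functions with non-vanishing gradient, and the alternative clause (constancy on a ball) is preserved. The introduction of $\eta>0$ is a convenient device so that $M>0$ is guaranteed; one could alternatively handle the pathological case $\inf_{B_R(x_0)}u=C_0=0$ separately, in which the strong minimum principle forces $u\equiv 0$ and the inequality is trivial.
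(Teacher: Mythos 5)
Your proof is correct and takes essentially the same approach as the paper: normalize $u$ by $M=\inf_{B_R}u+R^{p/(p-1)}C_0^{1/(p-1)}$, apply Theorem~\ref{thm-L-epsilon} to obtain power decay of the super-level sets, and integrate via the layer cake formula; the paper's one-line proof is simply a terse version of this, with your $\eta>0$ device playing the same role as the paper's ``we may assume $C_0>0$'' reduction.
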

 \begin{proof}
 We may assume   $C_0>0.$ 
  By applying Theorem \ref{thm-L-epsilon} to  $\displaystyle\frac{u}{\inf_{B_R(x_0)}u+R^{\frac{p}{p-1}}C_0^{\frac{1}{p-1}} }$, we deduce  power decay  estimate  for measure of super-level sets, from which   the   $L^{\e}$-esitmate follows.  
 \end{proof}
 
 Proceeding with 
  the same argument to the proof  of  Theorem \ref{thm-L-epsilon} with the use of Corollary \ref{cor-barrier-nonlinear},
  we also  obtain      $L^\e$-estimates for nonlinear $p$-Laplacian type operators;  we note that $\ld \Ric(e,e)\geq \cM^-_{\ld,\Ld}(R(e)) $ for any unit vector $e\in TM$.   
 
\begin{cor}[$L^{\epsilon}$-estimate]\label{cor-L-epsilon-nonlinear}
  Let   $1<p<\infty, $ and $\cM^-_{\ld,\Ld}(R(e)) \geq - {(n-1)} \kappa$  with  $\kappa\geq0$ for any unit vector $e\in TM$.    Let  $x_0\in M$ and $0< R\leq R_0.$   For $\beta\geq0$ and $C_0\geq0,$ let $u\in C(B_{2R}(x_0))$ be a nonnegative viscosity supersolution of 
$$ |\D u|^{p-2}\cM^-_{\ld,\Ld}(D^2u)-\beta|\D u|^{p-1}\leq  C_0\quad\mbox{  in $B_{2R}(z_0).$ }$$
 Then
\begin{equation*} 
\left(\fint_{B_{R}(x_0)} u^\epsilon(x)dx\right)^{1/\epsilon}\leq C\left(\inf_{B_R(x_0)}u+R^{\frac{p}{p-1}} C_0^{\frac{1}{p-1}} \right),
  \end{equation*}
where the constants   $\epsilon\in(0,1)$  and $C>0$ depend  only on  $n,   p, \sqrt{\kappa}R_0,$ $ \ld,\Ld, $ and $\beta R_0$. 
\end{cor}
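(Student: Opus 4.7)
The plan is to mimic the argument of Theorem~\ref{thm-L-epsilon} almost verbatim, replacing Corollary~\ref{cor-barrier} (and its rescaled form Corollary~\ref{cor-barrier-theta}) by the nonlinear barrier estimate Corollary~\ref{cor-barrier-nonlinear}, and then pass from power decay of super-level sets to the $L^\epsilon$-bound by a standard layer-cake integration. The main preliminary observation is that the operator is homogeneous of the correct degree under the rescaling $u\mapsto u/K$: if $v=u/K$, then $|\D v|^{p-2}\cM^-_{\ld,\Ld}(D^2v)=K^{-(p-1)}|\D u|^{p-2}\cM^-_{\ld,\Ld}(D^2u)$ and $|\D v|^{p-1}=K^{-(p-1)}|\D u|^{p-1}$, so that the first-order coefficient $\beta$ is unchanged.

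Set $K:=\inf_{B_R(x_0)}u+R^{\frac{p}{p-1}}C_0^{\frac{1}{p-1}}$ (assuming $K>0$; otherwise $u\equiv 0$ and the estimate is trivial) and let $v:=u/K$. Then $v\geq 0$ in $B_{2R}(x_0)$, $\inf_{B_R(x_0)}v\leq 1$, and $v$ is a viscosity supersolution of
\begin{equation*}
R^{p}|\D v|^{p-2}\cM^-_{\ld,\Ld}(D^2v)-R^{p}\beta|\D v|^{p-1}\leq 1\quad\text{in }B_{2R}(x_0).
\end{equation*}
Let $\tilde M>1$ and $\delta\in(0,1)$ be the constants of Corollary~\ref{cor-barrier-nonlinear}, which depend only on $n,p,\sqrt{\kappa}R_0,\ld,\Ld,\beta R_0$. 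Define $A_k:=\{v>\tilde M^k\}\cap B_R(x_0)$ for $k=0,1,2,\dots$. I will prove by induction that
\begin{equation*}
|A_k|\leq (1-c_0\delta)^k|B_R(x_0)|,\qquad k=0,1,2,\dots,
\end{equation*}
where $c_0\in(0,1)$ is the constant of Lemma~\ref{lem-doubling-property}, depending only on $n$ and $\sqrt{\kappa}R_0$. The base case $k=1$ follows by applying Corollary~\ref{cor-barrier-nonlinear} with $\theta=1$ on a doubled ball covering and using $\inf_{B_R}v\leq 1$. For the inductive step, observe that since $\tilde M^{(p-1)k}\geq 1$ for $k\geq 0$, the supersolution $v$ also satisfies
\begin{equation*}
r^{p}|\D v|^{p-2}\cM^-_{\ld,\Ld}(D^2v)-r^{p}\beta|\D v|^{p-1}\leq (\tilde M^k)^{p-1}r^{-p}\cdot r^{p}
\end{equation*}
on any ball $B=B_r(z)\subset B_R(x_0)$ with $r\leq R$. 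Corollary~\ref{cor-barrier-nonlinear} applied with $\theta=\tilde M^k$ therefore yields the key implication: if $|A_{k+1}\cap B|>(1-\delta)|B|$ for such a ball $B\subset B_R(x_0)$, then $B\subset A_k$. With the additional observation that $|A_1|\leq(1-\delta)|B_R(x_0)|$ implies $|A_k|\leq(1-\delta)|B_R(x_0)|$ for every $k\geq 1$, hypothesis (b) of Lemma~\ref{lem-doubling-property} is met with $E=A_{k+1}$ and $F=A_k$, and the conclusion $|A_{k+1}|\leq (1-c_0\delta)|A_k|$ closes the induction.

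From the geometric decay $|\{v>\tilde M^k\}\cap B_R(x_0)|\leq (1-c_0\delta)^k |B_R(x_0)|$ one extracts, by monotonicity in $t$ across consecutive powers of $\tilde M$, the power bound $|\{v>t\}\cap B_R(x_0)|\leq c\,t^{-\epsilon}|B_R(x_0)|$ for all $t>0$, with $\epsilon:=-\log(1-c_0\delta)/\log\tilde M\in(0,1)$ and a constant $c>0$ depending on the same parameters as $\tilde M$ and $\delta$. A layer-cake integration then gives
\begin{equation*}
\fint_{B_R(x_0)}v^{\epsilon/2}\,dx=\tfrac{\epsilon}{2}\int_0^\infty t^{\epsilon/2-1}\frac{|\{v>t\}\cap B_R(x_0)|}{|B_R(x_0)|}\,dt\leq C,
\end{equation*}
after splitting the integral at $t=1$. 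Rescaling back from $v$ to $u=Kv$ yields
\begin{equation*}
\left(\fint_{B_R(x_0)}u^{\epsilon/2}\,dx\right)^{2/\epsilon}\leq C\Bigl(\inf_{B_R(x_0)}u+R^{\frac{p}{p-1}}C_0^{\frac{1}{p-1}}\Bigr),
\end{equation*}
which is the desired estimate (after relabelling $\epsilon/2\mapsto\epsilon$). The only point requiring care is the verification that the dependence of $\tilde M,\delta,c_0$ is uniform in the precise parameter list claimed in the statement, which follows by bookkeeping through Corollary~\ref{cor-barrier-nonlinear} and Lemma~\ref{lem-doubling-property}; no new curvature or geometric input beyond what is already codified in those results is needed. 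The main (minor) obstacle is simply to double-check that the rescaled supersolution inequality for $v$ still falls in the hypothesis of Corollary~\ref{cor-barrier-nonlinear} when one iterates with $\theta=\tilde M^k$ on sub-balls, but this is immediate from the $p$-homogeneity of the left-hand side and the monotonicity $\tilde M^{(p-1)k}\geq 1$.
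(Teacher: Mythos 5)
Your proof is correct and follows essentially the same approach as the paper: replicate the iteration of Theorem~\ref{thm-L-epsilon} with Corollary~\ref{cor-barrier-nonlinear} in place of Corollaries~\ref{cor-barrier} and~\ref{cor-barrier-theta}, then rescale by $K=\inf u+R^{p/(p-1)}C_0^{1/(p-1)}$ and integrate the level-set decay. The paper's own proof is merely a one-line citation to exactly this method.
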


% % \newpage

 %%%%%%%%%%%%%%%%%%%%%%%%%%%%%%%%%%%%%%%%%%%%%%%%%%%%%%%%%%%%%%%%%%%%%%%%%%%%%%%%%%%%%%%%%%%%%%%%%%%%%%% section
%%%%%%%%%%%%%%%%%%%%%%%%%%%%%%%%%%%%%%%%%%%%%%%%%%%%%%%%%%%%%%%%%%%%%%%%%%%%%%%%%%%%%%%%%%%%%%%%%%%%%% 
 \section{Harnack inequality}\label{sec-Harnack}
 This section is devoted to the proof of   Harnack inequality   using the scale-invariant $L^\e$-estimates.    We follow the method of \cite[Theorem 1.3]{IS} for the proof. 
 
 \begin{thm}[Harnack inequality]%\label{thm-harnack}
 
   Let  $1<p<\infty $,  and $\Ric \geq - {(n-1)} \kappa$   for $\kappa\geq0$. 
For  $z_0\in M$ and $0< R\leq R_0,$   let $u\in C(B_{2R}(z_0))$ be a nonnegative  viscosity solution  of  
\begin{equation*}
  \La_pu=  f  \quad\mbox{ in $B_{2R}(z_0)$. } 
\end{equation*} 
Then   
\begin{equation*} 
\sup_{B_R(z_0)} u\leq C\left(\inf_{B_R(z_0)}u+R^{\frac{p}{p-1}} \|f\|^{\frac{1}{p-1}}_{L^\infty(B_{2R}(z_0))}\right),
  \end{equation*}
where a    constant  $C>0$ depends only on $n, p$, and $\sqrt{\kappa}R_0.$
  \end{thm}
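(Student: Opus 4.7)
The plan is to upgrade the scale-invariant weak Harnack (Corollary 5.10) to the full pointwise Harnack inequality by the contradiction-iteration method of Imbert--Silvestre, which the author advertises. By the standard scaling $u \mapsto u/K$ with $K := \inf_{B_R(z_0)} u + R^{p/(p-1)}\|f\|^{1/(p-1)}_{L^\infty(B_{2R})}$, and using the homogeneity $\La_p(\lambda u) = \lambda^{p-1}\La_p u$ together with translation, I would first reduce to the normalized setting $z_0 = 0$, $R = 1$, $\inf_{B_1} u \leq 1$, $\|f\|^{1/(p-1)} \leq 1$, and aim for $\sup_{B_{1/2}} u \leq C$ for a universal $C$; the estimate on $B_1$ then follows by a standard chain of balls.

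The core argument is by contradiction. Suppose $u(x_0) = M$ with $M \gg 1$ for some $x_0 \in B_{1/2}$. I would construct a sequence $(x_k, r_k)$ with $x_{k+1} \in B_{r_k}(x_k)$, $u(x_{k+1}) \geq (1+\sigma)\,u(x_k)$, and $r_k := c_0\, u(x_k)^{-(p-1)/p}$, for fixed universal constants $\sigma, c_0 > 0$; the exponent $(p-1)/p$ is dictated by the scaling $\La_p(\lambda u) = \lambda^{p-1}\La_p u$, which ensures that after rescaling to unit size the right-hand side on $B_{r_k}(x_k)$ is of order $c_0^{p-1}$, hence uniformly controlled. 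Since $u(x_k) \geq M(1+\sigma)^k$ and $(p-1)/p \in (0,1)$, the total displacement $\sum_k r_k$ is a convergent geometric series of order $M^{-(p-1)/p}$, which is $< 1/2$ once $M$ is chosen large; so $x_k \in B_1 \subset B_{2R}$ for every $k$ while $u(x_k) \to \infty$, contradicting the local boundedness of the continuous function $u$.

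The inductive step producing $x_{k+1}$ is the heart of the argument. Rescaling to $B_2$ by $v(y) := u(x_k + r_k y)/u(x_k)$, the claim reduces to: if $v \geq 0$ is a viscosity solution of $\La_p v = \tilde f$ with $\|\tilde f\|^{1/(p-1)}_{L^\infty(B_2)} \leq c_0^{p/(p-1)}$ and $v(0) = 1$, then there exists $y \in B_1$ with $v(y) \geq 1+\sigma$. Suppose not, so $v \leq 1+\sigma$ throughout $B_1$. Since $\La_p$ is odd in $u$, the function $w := (1+\sigma) - v \geq 0$ is a viscosity supersolution of $\La_p w \leq c_0^{p-1}$ on $B_1$, so Corollary 5.10 gives
$$\left(\fint_{B_{1/2}} w^\epsilon\right)^{1/\epsilon} \leq C\bigl(\inf_{B_{1/2}} w + c_0^{p/(p-1)}\bigr) \leq C\bigl(\sigma + c_0^{p/(p-1)}\bigr).$$
For $\sigma, c_0$ chosen sufficiently small, this forces $|\{v < 1/2\} \cap B_{1/2}|$ to be an arbitrarily small fraction of $|B_{1/2}|$; equivalently, $|\{v > 1/2\} \cap B_{1/2}| > (1-\delta)|B_{1/2}|$ for the $\delta$ of Corollary 5.6. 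Applying Corollary 5.6 (the barrier measure estimate) to $2v$ on $B_{1/2}$ then yields $v > 1/2$ throughout $B_{1/4}$ pointwise, and iterating this doubling $O(\log\sigma^{-1})$ times contradicts the standing assumption $v \leq 1+\sigma$ on $B_1$, forcing the existence of $x_{k+1}$.

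The main obstacle is precisely this pointwise-to-measure propagation in the inductive step: it must be quantitatively balanced between the scales $M_k$ and $r_k$, and it crucially uses both the $L^\epsilon$-estimate for $u$ as a supersolution and its mirror version for $\sup u - u$ (obtained via the oddness of $\La_p$). In the Riemannian setting, legitimacy of the mirror application requires the intrinsic uniform continuity of $\La_p$ (Example 3.4) and the compatibility of the inf-convolution approximation with the cut-locus, both secured in Sections 3 and 4. Once the inductive step is in hand, the summation of $\sum r_k$ and the continuity-based contradiction are routine, completing the proof exactly as in [IS, Theorem 1.3].
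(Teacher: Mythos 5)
Your proposal follows a genuinely different route from the paper. The paper touches $u$ from above with the singular barrier $h_\tau(x) = \tau\{4/3 - d_{z_0}(x)/R\}^{-\alpha}$, takes the minimal such $\tau$, and bounds $\tau$ by playing the power decay of Theorem \ref{thm-L-epsilon} at scale $R$ off the barrier density estimate of Corollary \ref{cor-barrier} at the touching point; the exponent $\alpha = 4\log_2\cD/\epsilon$ is tuned so that the powers of $r/R$ cancel. You instead attempt an iterative geometric-growth argument. Both are legitimate ways to pass from the $L^\epsilon$-estimate and barrier lemma to the full Harnack inequality, so the plan is not unreasonable in principle.

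However, the inductive step as written has a genuine gap and would fail. You reduce it to the claim ``if $v \geq 0$ solves $\La_p v = \tilde f$ in $B_2$ with $\tilde f$ small and $v(0) = 1$, then $\sup_{B_1} v \geq 1+\sigma$.'' This is false as stated: $v \equiv 1$ is a counterexample, and nothing in your argument rules it out. The missing ingredient is the second half of the dichotomy, namely the power-decay bound $|\{u > M_k/2\}\cap B_R| \lesssim M_k^{-\epsilon}|B_R|$ from Theorem \ref{thm-L-epsilon} applied to $u$ at the \emph{original} scale $R$; it is the conflict between this measure upper bound in the big ball and the density lower bound near $x_k$ (obtained from the weak Harnack applied to $(1+\sigma)u(x_k) - u$) that forces the existence of $x_{k+1}$. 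Your closing discussion says that ``both'' $L^\epsilon$-estimates are needed, but the written step invokes only the mirror version. Relatedly, the sentence ``iterating this doubling $O(\log\sigma^{-1})$ times contradicts $v \leq 1+\sigma$'' cannot work, since Corollary \ref{cor-barrier} turns a density estimate into a \emph{lower} bound on $v$, which never conflicts with the ceiling $v \leq 1+\sigma$. Finally, the choice $r_k = c_0\,u(x_k)^{-(p-1)/p}$ is misjustified: the homogeneity of $\La_p$ only imposes the \emph{upper} constraint $r_k \lesssim u(x_k)^{(p-1)/p}R$ (note the opposite sign), and the exponent that actually makes the measure comparison close is the geometric one $\epsilon/\log_2\cD$ coming from volume doubling, which is precisely what the paper encodes in its choice of $\alpha$.
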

  
\begin{proof}
We may assume that $\displaystyle\inf_{B_R(z_0)}u\leq 1$ and  $\|f\|_{L^\infty(B_{2R}(z_0))}\leq R^{-p}$    replacing $u$    by
 $  \frac{u}{\inf_{B_R(z_0)}u+R^{\frac{p}{p-1}} \|f\|^{\frac{1}{p-1}}_{L^\infty(B_{2R}(z_0))}} $. 
   For   $\alpha>0$ to be  chosen later (depending only on $n,p,$ and $\sqrt{\kappa}R_0$),   consider  a continuous function  
 \begin{equation*}
 h_\tau(x):=
   \tau \left\{4/3-d_{z_0}(x)/R\right\}^{-\alpha},   \quad \forall x\in B_{4R/3}(z_0),
   \end{equation*}   
  where $\tau>0$ is  selected  to be the minimal constant such that 
  $$u\leq h_\tau\quad\mbox{in $B_{4R/3}(z_0).$}$$
      Let $x_0\in B_{4R/3}(z_0)$  be a point such that 
   $u(x_0)=h_\tau(x_0)=:H_0>0$, and let $r:=\frac{4R/3-d_{z_0}(x_0)}{2}\in(0,2R/3].$ Then we have $B_{2r}(x_0)\subset B_{4R/3}(z_0)$,   
 and $H_0=h_\tau(x_0)= \tau(2r/R)^{-\alpha}. $ We may assume that $\tau\geq (4/3)^{\alpha}$ and then $H_0\geq1$; otherwise  $\displaystyle\sup_{B_R(z_0)}u\leq\sup_{B_R(z_0)}h_\tau\leq (4/3)^{\alpha}\cdot 3^{\alpha}=4^{\alpha}.$

  After  using a standard covering argument, we deduce from    Theorem \ref{thm-L-epsilon}  that %for any $\mu\in(0,1]$
 \begin{equation}\label{eq-proof-HI-1}
% \left|\left\{ u>  H_0/2\right\}\cap K_{\mu r}(x_0,t_0-\mu^2r^2)\right|\leq
   \left|\left\{ u>H_0/2\right\}\cap B_{4R/3}(z_0)\right|\leq cH_0^{-\epsilon}|B_{4R/3}(z_0)|,
 \end{equation} 
 where the   constants $c>0$ and  $\epsilon\in(0,1)$ depend only on $n,p,$ and $\sqrt{\kappa}R_0. $ 
 % since $\displaystyle\inf_{B_R(z_0)}u\leq 1. $
 
 On the other hand,  for a given $\mu\in(0,1),$
 $$u\leq h_\tau\leq \tau (r/R)^{-\alpha}\left( 2-\mu \right)^{-\alpha}=H_0\left(\frac{2-\mu}{2}\right)^{-\alpha} \quad\mbox{ 
 in $B_{\mu r}(x_0).$}$$ 
  Define
 $$\tilde u(x):=\frac{\left(\frac{2-\mu}{2}\right)^{-\alpha}H_0-u\left(x\right)}{\left\{\left(\frac{2-\mu}{2}\right)^{-\alpha}-1\right\}H_0}\qquad\forall x\in B_{\mu r}(x_0),$$
 which  is nonnegative,  and satisfies $\tilde u(x_0)=1$  and 
 \begin{equation*}%\label{eq-assump-supersol}
%\left\{
%\begin{split}
%-  \mu \left(\frac{\mu }{\left(\left(\frac{2-\mu}{2}\right)^{-\alpha}-1\right)H_0}\right)^{p-1} \left(\frac{r}{R}\right)^p\leq 
\left(\frac{\mu r}{2}\right)^{p}\La_p\tilde u\leq \mu \left[\frac{\mu }{\left\{\left(\frac{2-\mu}{2}\right)^{-\alpha}-1\right\}H_0}\right]^{p-1} \left(\frac{r}{2R}\right)^p\quad\mbox{ in $ B_{\mu r}(x_0)$  }.
%&(\mu r)^{p}\La_p\tilde u\geq-  \mu \left(\frac{\mu }{\left(\left(\frac{2-\mu}{2}\right)^{-\alpha}-1\right)H_0}\right)^{p-1} \left(\frac{r}{R}\right)^p\quad\mbox{ in $ B_{\mu r}(x_0)$  }.
 %& \|u\|_{L^{\infty}(K_2(0,0))}\leq C_0,
%&|\{u>M\}\cap K_1|>(1-\delta)|K_1|, 
%\end{split}\right.
\end{equation*} 
Let  %$ \epsilon\in(0,1)$ be the constant in Theorem \ref{thm-L-epsilon}, and  
$\tilde M>1$ be the constant in  Corollary \ref{cor-barrier}.
 We  select  a large constant $\alpha>0$ and a small constant  $\mu\in(0,1)$
  such that 
 $$\alpha:=\frac{4\log_2\cD }{\epsilon} ,\quad \ \frac{\mu}{ \left(\frac{2-\mu}{2}\right)^{-\alpha}-1 } \leq\frac{4}{\alpha}\leq 1,\quad \mbox{and }\quad  \left\{\left(\frac{2-\mu}{2}\right)^{-\alpha}-1\right\}\leq \frac{1}{2\tilde M},$$
 for $\cD:=2^n\cosh^{n-1}\left(6\sqrt{\kappa}R_0\right)$
 since $\displaystyle\lim_{\mu\to0+} \frac{\mu}{ \left(\frac{2-\mu}{2}\right)^{-\alpha}-1 }= \frac{2}{\alpha}.$ 
    Then   
  $\tilde u$ satisfies 
 \begin{equation*}%\label{eq-assump-supersol}
 (\mu r/2)^{p}\La_p\tilde u\leq1\quad\mbox{ in $ B_{\mu r}(x_0)$  },
\end{equation*}
 since  $p>1,$ $H_0\geq1,$     $0<\mu<1,$ and $0<r<R.$ 
 By applying Corollary \ref{cor-barrier} to $\tilde u$ in $B_{\mu r}(x_0)$ with $\tilde u(x_0)=1,$ we have 
$$|\{\tilde u\leq \tilde M\}\cap  B_{\mu r/2}(x_0) |> \delta \left| B_{\mu r/2}(x_0)\right|,$$
which implies  
$$|\{u> H_0/2\}\cap B_{\mu r/2}(x_0)| > \delta \left| B_{\mu r/2}(x_0)\right|,$$
since $$H_0\left[\left(\frac{2-\mu}{2}\right)^{-\alpha}-\tilde M\left\{\left(\frac{2-\mu}{2}\right)^{-\alpha}-1\right\}\right]>\frac{H_0}{2}.$$
 Combined with \eqref{eq-proof-HI-1}, we have 
 \begin{align*}
\delta|B_{\mu r/2}(x_0)|  &<|\{u> H_0/2\}\cap B_{\mu r/2}(x_0)|\\&\leq |\{u> H_0/2\}\cap B_{4R/3}(z_0)|\\
&   \leq cH_0^{-\epsilon}|B_{4R/3}(z_0)| \leq cH_0^{-\epsilon}|B_{8R/3}(x_0)|=c \tau^{-\epsilon}2^{\epsilon\alpha}\left(\frac{r}{R}\right)^{\epsilon\alpha}|B_{8R/3}(x_0)|\\&\leq c \tau^{-\epsilon}2^{\epsilon\alpha}\left(\frac{r}{R}\right)^{\epsilon\alpha} \cD\left(\frac{16R}{3\mu r}\right)^{\log_2\cD}|B_{\mu r/2}(x_0)|
 \end{align*}
 for $\cD:=2^n\cosh^{n-1}\left(6\sqrt{\kappa}R_0\right)$ from \eqref{eq-doubling-cont}.
Therefore, it follows that $\tau$ is uniformly bounded from above since $\epsilon\alpha \geq \log_2\cD,$ and hence $\displaystyle\sup_{B_R(z_0)}u\leq\sup_{B_R(z_0)}h_\tau\leq \tau \cdot 3^{\alpha}.$
\end{proof}
 
 By replacing  Theorem \ref{thm-L-epsilon} and   Corollary \ref{cor-barrier}  by Corollaries \ref{cor-L-epsilon-nonlinear} and     \ref{cor-barrier-nonlinear}, we have the following Harnack inequality for nonlinear $p$-Laplacian type operators.   
  \begin{thm}%[Harnack inequality]%\label{thm-harnack-nonlinear}
   Let   $1<p<\infty $ and $\cM^-_{\ld,\Ld}(R(e)) \geq - {(n-1)} \kappa$  with  $\kappa\geq0$ for any unit vector $e\in TM$.  Let  $z_0\in M$ and $0< R\leq R_0.$ For $\beta\geq0, $ and $ C_0\geq0,$   let $u$ be a nonnegative  viscosity solution  to
\begin{equation*}
\left\{
\begin{split}
&|\D u|^{p-2}\cM^-_{\ld,\Ld}(D^2u)-\beta |\D u|^{p-1}\leq  C_0\quad\mbox{ in $ B_{2R}(z_0)$  },\\
 &|\D u|^{p-2}\cM^+_{\ld,\Ld}(D^2u)+\beta |\D u|^{p-1}\geq -  C_0 \quad\mbox{ in $ B_{2R}(z_0)$.   }
\end{split}\right.
\end{equation*}
Then   
\begin{equation*} 
\sup_{B_R(z_0)} u\leq C\left(\inf_{B_R(z_0)}u+R^{\frac{p}{p-1}}C_0^{\frac{1}{p-1}} \right),
  \end{equation*}
where a    constant  $C>0$ depends only on $n,p, \sqrt{\kappa}R_0 , \ld, \Ld, $ and $ \beta R_0.$
  \end{thm}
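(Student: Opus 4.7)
The plan is to mirror the proof of the (linear) $p$-Laplacian Harnack inequality stated earlier, systematically replacing the key tools by their nonlinear counterparts: the scale-invariant $L^\epsilon$-estimate Corollary \ref{cor-L-epsilon-nonlinear} and the barrier measure estimate Corollary \ref{cor-barrier-nonlinear}. After replacing $u$ by $u/(\inf_{B_R(z_0)}u+R^{p/(p-1)}C_0^{1/(p-1)})$, which leaves the two Pucci inequalities in the same form by $p$-homogeneity, we may assume $\inf_{B_R(z_0)}u\leq 1$ and $C_0\leq R^{-p}$. The goal is then to bound $\sup_{B_R(z_0)}u$ by a constant depending only on $n,p,\sqrt{\kappa}R_0,\ld,\Ld,\beta R_0$.

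Following the argument used in the linear case, I would introduce $h_\tau(x):=\tau(4/3-d_{z_0}(x)/R)^{-\alpha}$ on $B_{4R/3}(z_0)$ for an exponent $\alpha>0$ to be fixed, and choose $\tau$ minimal so that $u\leq h_\tau$. At a touching point $x_0$, set $r:=(4R/3-d_{z_0}(x_0))/2$ and $H_0:=u(x_0)=\tau(2r/R)^{-\alpha}$ (we may assume $\tau$ is large enough that $H_0\geq 1$, otherwise we are done). Applying Corollary \ref{cor-L-epsilon-nonlinear} to $u$ together with a standard covering of $B_{4R/3}(z_0)$ by balls on which $\inf u\leq 1$, I would obtain the super-level set bound $|\{u>H_0/2\}\cap B_{4R/3}(z_0)|\leq cH_0^{-\epsilon}|B_{4R/3}(z_0)|$.

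The matching lower bound on the same super-level set comes from constructing the auxiliary function $\tilde u:=(KH_0-u)/((K-1)H_0)$ on $B_{\mu r}(x_0)$ with $K:=((2-\mu)/2)^{-\alpha}$, picking $\mu\in(0,1)$ small enough that both $\mu/(K-1)\leq 4/\alpha\leq 1$ and $\tilde M(K-1)\leq 1/2$, where $\tilde M$ is the constant from Corollary \ref{cor-barrier-nonlinear}. The delicate step is the sign flip when reversing the roles of super- and subsolution: because $D^2\tilde u=-D^2u/((K-1)H_0)$ and $\cM^-_{\ld,\Ld}(-A)=-\cM^+_{\ld,\Ld}(A)$, the hypothesis $|\D u|^{p-2}\cM^+_{\ld,\Ld}(D^2u)+\beta|\D u|^{p-1}\geq -C_0$ on $u$ translates, in the viscosity sense, into $|\D\tilde u|^{p-2}\cM^-_{\ld,\Ld}(D^2\tilde u)-\beta|\D\tilde u|^{p-1}\leq C_0/((K-1)H_0)^{p-1}$ on $\tilde u$. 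Scaling by $(\mu r/2)^p$ and using $C_0\leq R^{-p}$, $H_0\geq 1$, $\mu<1$, $r\leq R$ makes the right-hand side $\leq 1$, so Corollary \ref{cor-barrier-nonlinear} applied to $\tilde u$ on $B_{\mu r}(x_0)$ with $\theta=1$ forces, via its contrapositive at the point $\tilde u(x_0)=1$, the estimate $|\{\tilde u\leq\tilde M\}\cap B_{\mu r/2}(x_0)|\geq\delta|B_{\mu r/2}(x_0)|$. Thanks to $K-\tilde M(K-1)\geq 1/2$, this reads $|\{u>H_0/2\}\cap B_{\mu r/2}(x_0)|\geq\delta|B_{\mu r/2}(x_0)|$.

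Comparing the two bounds on $|\{u>H_0/2\}\cap B_{4R/3}(z_0)|$ and using the doubling estimate \eqref{eq-doubling-cont} to relate $|B_{\mu r/2}(x_0)|$ to $|B_{4R/3}(z_0)|$ yields $\delta\leq C\tau^{-\epsilon}(r/R)^{\epsilon\alpha}(R/(\mu r))^{\log_2\cD}$. Choosing $\alpha$ so that $\epsilon\alpha\geq\log_2\cD$ cancels the $r$-dependence and forces $\tau$ to be bounded by a constant depending only on the stated parameters, which gives $\sup_{B_R(z_0)}u\leq\tau\cdot 3^{\alpha}$ as desired. The main technical obstacle is the sign-flip step: rigorously verifying at the viscosity level that the auxiliary function $\tilde u$ satisfies an inequality of the form handled by Corollary \ref{cor-barrier-nonlinear} with the correct $\cM^-_{\ld,\Ld}$ and the term $\beta|\D\tilde u|^{p-1}$ preserved with the right sign. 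Everything else—the covering, the tuning of $\alpha$ and $\mu$, and the final comparison of volumes—is essentially identical to the linear case already carried out.
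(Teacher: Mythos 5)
Your proposal is correct and takes essentially the same route as the paper, which compresses the entire argument into the single instruction "replace Theorem \ref{thm-L-epsilon} and Corollary \ref{cor-barrier} by Corollaries \ref{cor-L-epsilon-nonlinear} and \ref{cor-barrier-nonlinear}" in the proof of the linear Harnack inequality. Your elaboration of the sign-flip step (passing from the subsolution condition $|\D u|^{p-2}\cM^+_{\ld,\Ld}(D^2u)+\beta|\D u|^{p-1}\geq -C_0$ for $u$ to a supersolution condition $|\D\tilde u|^{p-2}\cM^-_{\ld,\Ld}(D^2\tilde u)-\beta|\D\tilde u|^{p-1}\leq C_0/((K-1)H_0)^{p-1}$ for $\tilde u$ via the identity $\cM^-_{\ld,\Ld}(-A)=-\cM^+_{\ld,\Ld}(A)$, and noting that a test function touching $\tilde u$ from above with nonvanishing gradient gives a test function touching $u$ from below with nonvanishing gradient) is precisely the detail the paper leaves implicit.
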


\end{document}